\newtheorem{theorem}{Theorem}[section]
\newtheorem{definition}[theorem]{Definition}
\newtheorem{lemma}[theorem]{Lemma}
\newtheorem{corollary}[theorem]{Corollary}
\newtheorem{proposition}[theorem]{Proposition}
\newtheorem{remark}[theorem]{Remark}
\newtheorem{example}[theorem] {Example}
\newtheorem{problem}{Problem}
\newtheorem{conjecture}[theorem]{Conjecture}
\def\subsubsection{\@startsection{subsubsection}{3}%
  \z@{.5\linespacing\@plus.7\linespacing}{.1\linespacing}%
  {\normalfont\itshape}}
\newcommand{\al}{\alpha}
\newcommand {\bC} {\mathbb {C}}
\newcommand {\bR} {\mathbb R}
\newcommand {\cD} {\mathcal D}
\newcommand {\Ga} {\Gamma}
\newcommand {\eps} {\epsilon}
\newcommand {\cA} {\mathcal A}
\newcommand {\cC} {\mathcal C}
\newcommand {\R}{\mathbb R}
\newcommand {\RR}{\mathcal R}
\DeclareMathOperator{\Ker}{Ker}
\DeclareMathOperator{\Coker}{Coker}
\DeclareMathOperator{\Ve}{Vert}
\DeclareMathOperator{\Diam}{Diam}
\theoremstyle{plain}
 \theoremstyle{definition}
             \numberwithin{equation}{section}
\begin{document}

             \title [Return of the plane evolute]
             {Return of the plane evolute}
             
 \author[R.~Piene]{Ragni Piene}
 \address{Department of Mathematics, University of Oslo, NO-0316 Oslo, Norway}  
 \email{ragnip@math.uio.no}

 \author[C.~Riener]{Cordian Riener}
\address{Department of Mathematics and Statistics, UiT The Arctic University of Norway, NO-9037 Troms\o, Norway} 
\email {cordian.riener@uit.no}

\author[B.~Shapiro]{Boris Shapiro}
\address{Department of Mathematics, Stockholm University, SE-106 91, Stockholm,
            Sweden}
\email{shapiro@math.su.se}

\dedicatory{``If I have seen further it is by standing on the shoulders of Giants." Isaac Newton, from a letter to Robert Hooke}

\date{\today}
\keywords{evolute,   plane real algebraic curve}
\subjclass[2000]{Primary 	14H50,  Secondary  	51A50, 51M05}

\begin{abstract}
Below we consider the evolutes of  plane real-algebraic curves and discuss some of  their  complex and real-algebraic properties. In particular, for a given degree $d\ge 2$, we provide lower bounds for the following four numerical invariants: 
1)  the maximal number of times a real line can intersect the evolute of a  real-algebraic curve of degree $d$; 
2) the maximal number of  real cusps which can occur on  the evolute of a real-algebraic curve of degree $d$; 
3) the maximal number of  (cru)nodes which can occur on  the dual curve to the evolute of a  real-algebraic curve of degree $d$;
4) the maximal number of  (cru)nodes which can occur on the evolute of a real-algebraic curve of degree $d$.
\end{abstract}

\maketitle

\section{Short historical account}
\label{sec:int}

As we usually tell our students in  calculus classes, the evolute of a   curve in the Euclidean plane is the locus of  its centers of curvature. The following intriguing information about evolutes can be found on Wikipedia \cite{Wi}:
 ``Apollonius  (c. 200 BC) discussed evolutes in Book V of his treatise Conics. However, Huygens is sometimes credited with being the first to study them, see \cite{HuyB}.  Huygens formulated his theory of evolutes sometime around 1659 to help solve the problem of finding the tautochrone curve, which in turn helped him construct an isochronous pendulum. This was because the tautochrone curve is a cycloid, and cycloids have the unique property that their evolute is a cycloid of the same type. The theory of evolutes, in fact, allowed Huygens to achieve many results that would later be found using calculus, see \cite{Ar1, Yo}."  

Notice that \cite{Huy},  originally published in 1673 and freely available on the internet, contains a large number of beautiful illustrations including those of evolutes.  
Further exciting pictures of evolutes  can be found in the small book \cite{Ya} written about hundred years ago for high-school teachers.    

Among several dozens of books on (plane) algebraic curves available now, only very few  \cites{Co, Hi, Ya, Sa} mention evolutes at all, the best of them being \cite{Sa}, first published more than one and half century  ago. 
Some properties of evolutes have been studied in connection with the so-called 4-vertex theorem of Mukhopadhyaya--Kneser as well as its generalizations, see e.g. \cites{Fu, Ta}. Their definition has been generalized from the case of plane curves to that of plane fronts and also from the case of Euclidean plane to that of the Poincar\'e disk, see e.g. \cite{FuTa}. Singularities of evolutes and involutes have been discussed in details by  V.~Arnold and his school, see e.g. \cite{Ar1, Ar2} and more recently in  \cite{ST, ScT}. 

In recent years the notion of Euclidean distance degree of an algebraic variety studied in e.g. \cite{DHOST} and earlier in \cites{CT, JoPe} has attracted substantial attention of the algebraic geometry community. In  the case when the variety under consideration is a plane curve, the ED-discriminant in this theory is exactly the standard evolute. In our opinion, this connection calls for more studies of the classical evolute since in spite of more than three hundred years passed since its mathematical debut, the evolute of a  plane algebraic curve is still far from being well-understood. Below we attempt to develop some real algebraic geometry around the evolutes of  real-algebraic curves and their duals hoping to attract the attention of the fellow mathematicians to this beautiful and classical topic.

\section{Initial facts about evolutes and problems under consideration} \label{sec:init} 
From the computational point of view the most useful presentation of the evolute of a plane curve  is as follows.  
Using a local parametrization of a curve $\Ga$ in $\bR^2$, one can parameterize its evolute $E_\Ga$ as 
\begin{equation}\label{eq:basic}
E_\Ga(t)=\Ga(t)+\rho(t) \bar n(t),
\end{equation}
where $\rho(t)$ is its curvature radius at the point $\Ga(t)$ (assumed non-vanishing) and $\bar n(t)$ is  the unit normal at $\Ga(t)$  pointing towards the curvature center. In Euclidean coordinates, for $\Ga(t)=(x(t),y(t))$ and $E_\Ga(t)=(X(t),Y(t))$, one gets  the following explicit expression
\begin{equation}\label{eq:basic2}
\left\{
X(t)=x(t)-\frac{y^\prime(t)(x^\prime(t)^2+y^\prime(t)^2)}{x^\prime(t)y^{\prime\prime}(t)-x^{\prime\prime}(t)y^\prime(t)},\; Y(t)=y(t)+\frac{x^\prime(t)(x^\prime(t)^2+y^\prime(t)^2)}{x^\prime(t)y^{\prime\prime}(t)-x^{\prime\prime}(t)y^\prime(t)}
\right\}.
\end{equation}

If a curve $\Ga$ is given by an equation $f(x,y)=0$, then (as noticed in e.g., \cite{Hi},  Ch. 11, \S~2) the equation of its evolute can be obtained as follows. 
Consider the system 
\begin{equation}\label{eq:basic3}
\begin{cases}
f(x,y)=0\\
X=x+\frac{f^\prime_x((f^\prime_x)^2+(f^\prime_y)^2)}{2f^\prime_xf^\prime_yf^{\prime\prime}_{xy} -(f^\prime_y)^2f^{\prime\prime}_{xx}-(f^\prime_x)^2f^{\prime\prime}_{yy}}\\
Y=y+\frac{f^\prime_y((f^\prime_x)^2+(f^\prime_y)^2)}{2f^\prime_xf^\prime_yf^{\prime\prime}_{xy} -(f^\prime_y)^2f^{\prime\prime}_{xx}-(f^\prime_x)^2f^{\prime\prime}_{yy}}\\
\end{cases}
\end{equation}
defining the original curve and the family of centers of its curvature circles. Then eliminating the variables $(x,y)$ from \eqref{eq:basic3} one obtains a single equation defining the evolute in variables $(X,Y)$. For concrete bivariate polynomials $f(x,y)$ of lower degrees, such an elimination procedure can be carried out in, for example,  Macaulay 2 \cite{M2}. 

\begin{example}\label{ex:basic}{\rm Two basic examples of evolutes are as follows. 
\begin{enumerate}
\item For the parabola $\Ga=(t,t^2)$, its evolute is given by \[\textstyle E_\Ga(t)=(-4t^3, \frac{1}{2}+3t^2)\] which is a semicubic parabola satisfying  the equation $27X^2=16(Y-\frac{1}{2})^3$, see Fig.~  \ref{fig:conic1a}. 

\item For the ellipse  $\Ga=(a\cos t, b \sin t)$, the evolute is given by  \[\textstyle E_\Ga(t)=(\frac{a^2-b^2}{a}\cos^3 t, \frac{b^2-a^2}{b}\sin^3t),\]
 which is an astroid satisfying the equation $(aX)^{2/3}+(bY)^{2/3}=(a^2-b^2)^{2/3}$, see Fig.~\ref{fig:conic1b}. 
 \end{enumerate}}
\end{example}

\begin{figure}
     \centering
     \begin{subfigure}[t]{0.3\textwidth}
         \centering
      \includegraphics[scale=0.25]{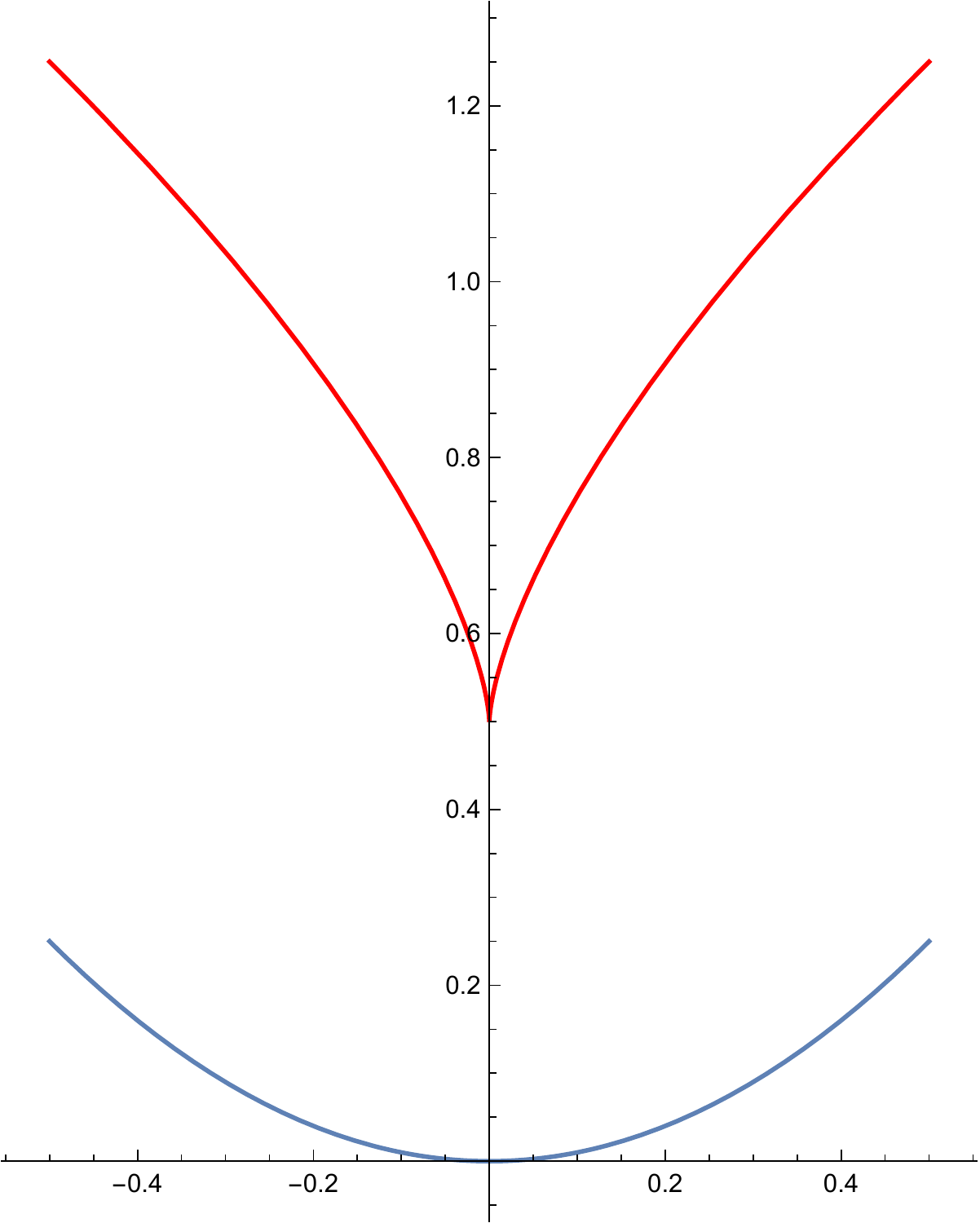} 
       \caption{Evolute of a parabola.}
        \label{fig:conic1a}
     \end{subfigure}
     \hfill
     \begin{subfigure}[t]{0.3\textwidth}
         \centering
        \includegraphics[scale=0.28]{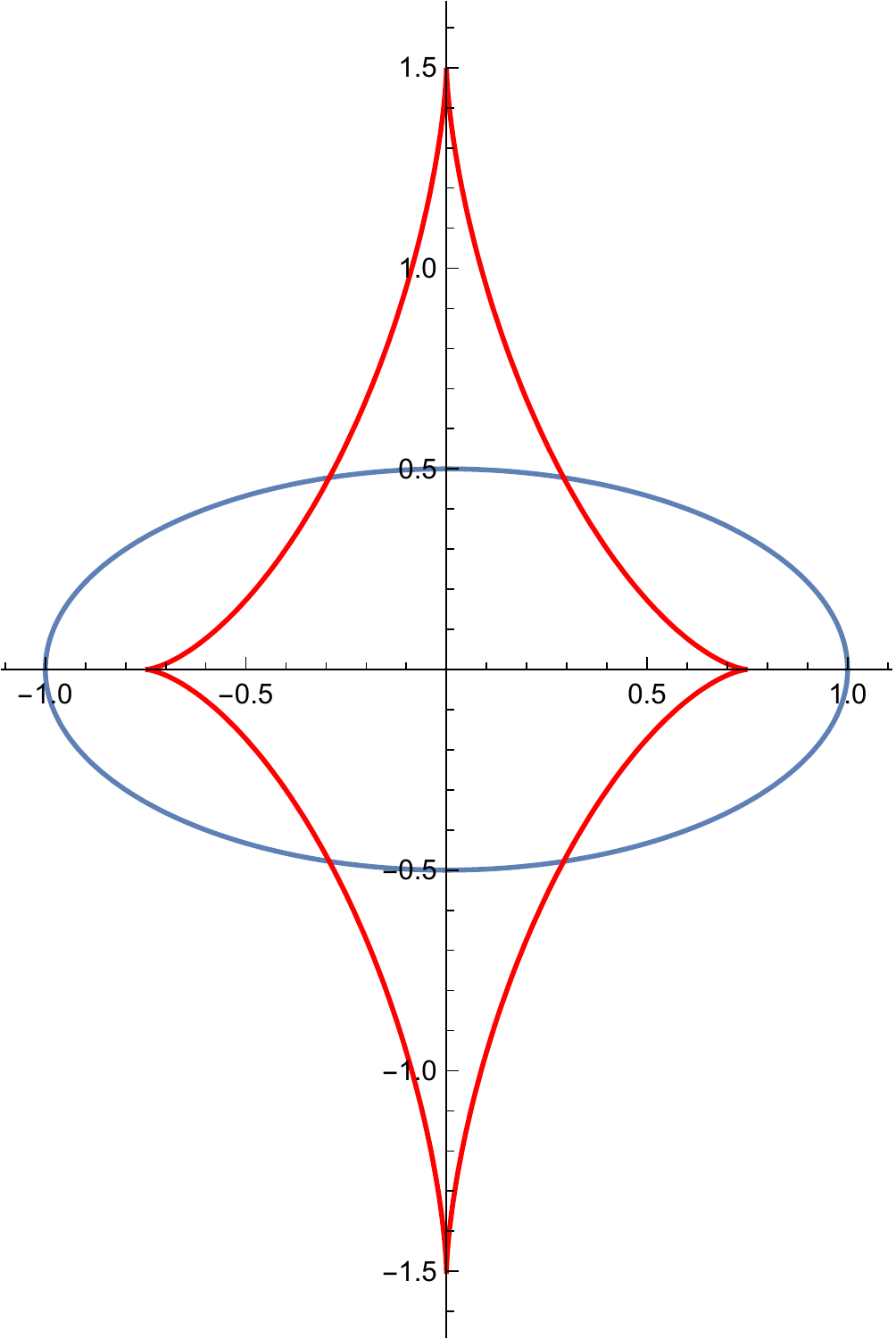}
        \caption{Evolute of an ellipse.} \label{fig:conic1b}
     \end{subfigure}
     \hfill
         \begin{subfigure}[t]{0.3\textwidth}
         \centering
          \includegraphics[scale=0.28]{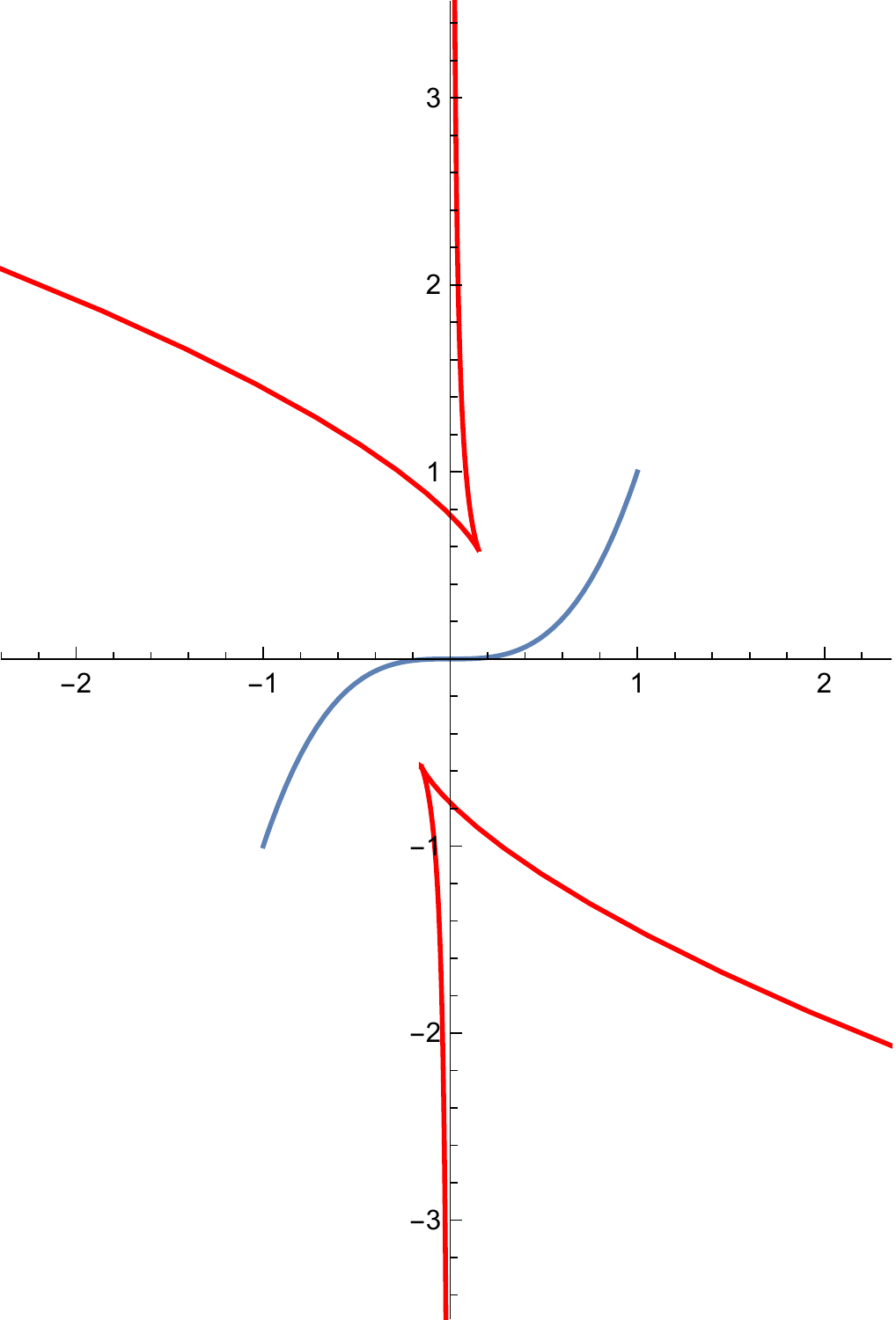}
        \caption[short]{Behavior of the evolute near a simple inflection point.}
     \label{fig:x^3}
     \end{subfigure}
     
       \caption{First examples of evolutes.}
\label{fig:one}
\end{figure}

If $\Ga$ has an inflection point, then the curvature radius becomes infinite, which means that the evolute $E_\Ga$ goes to infinity with its asymptote being the straight line passing through the inflection point and orthogonal to $\Ga$ (i.e., the normal to $\Ga$ at the inflection point), see Fig.~\ref{fig:x^3}. In case the point is a higher order inflection point, this point at infinity will be a critical point of the curvature and give a cusp on the evolute (see Example IX in Subsection \ref{subs:higher}). Observe that if $\Ga$ is a rational algebraic curve, then the above recipe provides the global parametrization of $E_\Ga$.  

\smallskip
Given a plane curve $\Ga$, the alternative definition of its evolute $E_\Ga$ which will be particularly useful for us is that $E_\Ga$ is the envelope of the family of normals to $\Ga$, where a \emph{normal}  is an affine line perpendicular to (the tangent line to) $\Ga$ at some point of $\Ga$. In other words, each normal to $\Ga$ is a tangent line to $E_\Ga$ and each tangent to $E_\Ga$ is a normal to (the analytic continuation) of $\Ga$.  

From this definition it follows that  the evolute $E_\Ga\subset \bR^2$  is a \emph{caustic}, i.e.,  the critical locus  of the Lagrangian projection of the cotangent bundle $T^\ast \Ga\subset T^\ast \bR^2$ of the initial curve $\Ga$   to the (phase) plane $\bR^2$. This circumstance explains, in particular,  why typical  singularities of the evolutes behave differently from those of (generic) plane algebraic curves and why generic evolutes have no inflection points. 

\smallskip
For an affine or projective real curve $\Ga$, we denote by $\Ga^\bC$ its complexification. For an affine real curve $\Ga \subset \bR^2$, we let $\tilde\Ga\subset \bR P^2$ denote its projective closure and $\tilde \Ga ^\bC \subset \bC P^2$ the complexification of $\tilde \Ga$ (equal to the projective closure of $\Ga^\bC \subset \bC^2$).

\begin{definition} {\rm For a plane algebraic curve $\Ga\subset \bR^2 \subset \bR P^2$, define its \emph{ curve of normals} $\tilde N_\Ga\subset (\bR P^2)^\vee$ as the curve in the dual projective plane whose points are the normals of $\Ga$.  (We start with the quasiprojective curve $N_\Ga$ of all normals to the affine $\Ga$ and take its projective closure  in $(\bR P^2)^\vee$.)}
\end{definition} 

Similarly to the above, for a (locally) parameterized curve $\Ga(t)=(x(t),y(t))$ and $N_\Ga(t)=(u(t),v(t))$, one gets 
\begin{equation}\label{eq:basic4}
\left\{ u(t)=\frac{x^\prime(t)}{y^\prime(t)},\;v(t)=-\frac{x(t)x^\prime(t)+y(t)y^\prime(t)}{y^\prime(t)}\right\}.
\end{equation}
(Here we assume that the equation of the normal line to $\Ga$ at the point $(x(t),y(t))$ is taken in the standard form $y+u(t)x+v(t)=0$.)

\smallskip
If a curve $\Ga$ is given by an equation $f(x,y)=0$, then  the equation of its curve of normals can be obtained as follows. 
Consider the system 
\begin{equation}\label{eq:eqnorm}
\begin{cases}
f(x,y)=0\\
u=-\frac{f^\prime_y}{f^\prime_x}\\
v=\frac{xf^\prime_y-yf^\prime_x}{f^\prime_x}\\
\end{cases}
\end{equation}
defining the original curve and the coefficients of the family of normals. Eliminating the variables $(x,y)$ from \eqref{eq:eqnorm} one obtains a single algebraic equation defining the curve of normals in the variables $(u,v)$. 

By the above alternative definition, for a plane algebraic curve $\Ga$, we get that  $(\tilde N_\Ga)^\vee=\tilde E_\Ga$ where $\vee$ stands for the usual projective duality of the plane projective curves.  \medskip

Two types of  real singularities of the evolute/curve of normals have natural classical interpretations in terms of the original curve $\Ga$.  Recall that real nodes of real-algebraic curves are  subdivided into \emph{crunodes} and \emph{acnodes}, the former being transversal intersections of two real branches and the latter being transversal intersections of two complex conjugate branches. 
 
 Now observe that  a crunode of $N_\Ga$  (i.e., the real node with two real branches)  corresponds to a  \emph{diameter} of $\Ga$ which is a straight segment  connecting pairs of points on $\Ga$ and  which is perdendicular to the tangent lines to $\Ga$ at both endpoints. On the other hand, a real cusp of $E_\Ga$ (resp. an inflection point on $N_\Ga$) corresponds to  a \emph {vertex} of $\Ga$ which is, by definition,  a critical point of $\Ga$'s curvature.  

As we mentioned above, vertices of plane curves appear, for example, in the classical $4$-vertex theorem and its numerous generalizations.  Beautiful lower bounds on the number of diameters of plane curves, plane wavefronts as well as their higher dimensional generalizations have been obtained in  symplectic geometry, see e.g. \cites{Pu1, Pu2}. 
\medskip

To formulate the problems  we consider below, let us  recall the following notion which deserves to be better known \cite[Def.~1]{LSS}. 

 \begin{definition}{\rm Given a real algebraic hypersurface $H \subset  \bR^n$, we define its \emph{$\R$-degree} as the supremum of the cardinality of $H \cap L$ taken over all lines $L \subset  \bR^n$ such that $L$ intersects $H$ transversally.}  \end{definition}
 
In what follows, we denote  the $\R$-degree of $H$ by $\R \deg(H)$.  Obviously,  $\R \deg(H)\le \deg(H)$, where $\deg(H)$ is the usual degree of $H$. 
 \medskip
 
 In what follows,  we discuss  four real-algebraic questions related to the evolutes and curves of normals of  plane real-algebraic curves. 

\begin{problem}\label{R-deg EG-NG}
For a given positive integer $d$, what are the maximal possible $\R$-degrees of the evolute  $E_\Ga$ and of the curve of normals $N_\Ga$ where $\Ga$ runs over the set of all real-algebraic curves of degree $d$?
\end{problem}

\begin{problem}\label{cusps EGa}
For a given positive integer $d$, what is the maximal possible number of real cusps   on $E_\Ga$ where $\Ga$ runs over the set of all real-algebraic curves of degree $d$? In other words, what is the maximal possible number of vertices  a real-algebraic curve $\Ga$ of degree $d$ might have? 
\end{problem}

To make Problem~\ref{cusps EGa} well-defined  we have to assume that $\Ga$ does not have a circle as its irreducible component since in the latter case the answer is infinite. Note that since by  \emph{vertices} of $\Ga$ we understand the critical points of the curvature,  it is possible that a vertex can be a real cusp of the projective closure of $E_\Ga$. This happens, in particular,  when the curvature  vanishes at a critical point  (see Example IX in Subsection \ref{subs:higher}).

\begin{problem}\label{nodesNGa}
For a given positive integer $d$, what its the maximal possible number of crunodes  on  $N_\Ga$ where $\Ga$ runs over the set of all real-algebraic curves of degree $d$? In other words, what is the maximal possible number of diameters $\Ga$ might have? 
\end{problem}

Here   again  we have to assume that $\Ga$ does not have a circle as its irreducible component since in this case the answer is infinite. 

\begin{problem}\label{nodesEGa}
For a given positive integer $d$, what is the maximal possible number of crunodes on  $E_\Ga$ where $\Ga$ runs over the set of all real-algebraic curves of degree $d$? In other words, what is the maximal possible number of points in $\bR^2$ which are the centers for at least two distinct  (real) curvature circles of  $\Ga$? 
\end{problem}

\begin{remark}{\rm As we mentioned above, questions similar to  Problems~\ref{cusps EGa} and \ref{nodesNGa} have been  studied in classical differential geometry and symplectic geometry/topology. They can also be connected to the study of plane curves of constant breadth which has been carried out by such celebrities as L.~Euler, A.~Hurwitz, H.~Minkowski, and W.~Blaschke, see e.g. \cite{Ba} and references therein. To the best of our knowledge, Problem~\ref{nodesEGa} has not been  previously discussed in the literature.}
\end{remark}

Our results related to Problems~1--4 can be found in Sections \ref{sec:rdeg}--\ref{sec:Ecru} below. They are mostly obtained by using small deformations of line arrangements in the plane. In all cases the lower bounds for the above quantities  which we obtain are polynomials of the same degree as their complex counterparts. However, in all cases but one their leading coefficients are smaller than those of the complex answers. At the moment we do not know whether  our estimates are sharp even on the level of the leading coefficients.

\section{Various preliminaries}

\subsection{Basic complex algebraic facts} 
We first summarize some information about the evolute and the curve of normals mainly borrowed from the classical treatise \cite{Sa}.   

\begin{proposition}[see Art. 111, 112, p.~94--96, in \cite {Sa}]\label{prop:1} For an affine real-algebraic curve $\Ga\subset \bR^2$ of degree $d$, which is in general position with respect to the line at infinity and has 
 only $\delta$ nodes and $\kappa$ ordinary cusps as singularities, the curves $\tilde \Ga^\bC$, $\tilde E_\Ga^\bC$ and $\tilde N_\Ga^\bC$ are birationally equivalent.  The degree of $\tilde E_\Ga$ equals $3d(d-1)-6\delta-8\kappa$, and the degree of $\tilde N_\Ga$ equals $d^2-2\delta-3\kappa$. 
\end{proposition}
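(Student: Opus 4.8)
The plan is to exploit the birational equivalence of the three curves and then compute degrees via the Plücker-type formulas applied twice: once to pass from $\tilde\Ga^\bC$ to $\tilde N_\Ga^\bC$, and once more to pass from $\tilde N_\Ga^\bC$ to its dual $\tilde E_\Ga^\bC$. The key structural fact, already built into the alternative definition recalled above, is that $\tilde E_\Ga = (\tilde N_\Ga)^\vee$; so the degree of $\tilde E_\Ga$ equals the \emph{class} (degree of the dual) of $\tilde N_\Ga$, and I would reduce everything to understanding the singularities of $\tilde N_\Ga$.

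First I would establish the birational equivalence. The normal line to $\Ga$ at a smooth point $p$ is determined by $p$ together with the tangent direction there, i.e. by the $1$-jet of $\Ga$ at $p$; since $\Ga$ has only nodes and ordinary cusps, the Gauss-type map $p\mapsto N_\Ga(p)$ (given explicitly in \eqref{eq:basic4}) is generically one-to-one, hence birational onto $\tilde N_\Ga$. Composing with the duality $\tilde N_\Ga \dashrightarrow (\tilde N_\Ga)^\vee = \tilde E_\Ga$ gives the birational equivalence of all three. Next, for the degree of $\tilde N_\Ga$: intersect $\tilde N_\Ga$ with a generic line in $(\bR P^2)^\vee$, which corresponds to asking how many normals of $\Ga$ pass through a generic point $q$ of the plane. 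A normal through $q$ meets $\Ga$ at a point $p$ where $q-p$ is orthogonal to $\Ga$; this is the Euclidean-distance/foot-of-perpendicular condition, cut out on $\Ga^\bC$ by a curve of degree $2$ (the polar-type condition $\langle q - p, T_p\Ga\rangle = 0$ has degree $1$ in the tangent direction and picks up one more degree from clearing the gradient, giving effective degree $d-1+1$... I would instead count via Bézout: the locus is a curve of degree $d$ meeting $\Ga$, but one must subtract the contributions of the nodes and cusps, each of which lies on the auxiliary curve with the appropriate multiplicity). Carrying out this local analysis should yield $\deg\tilde N_\Ga = d^2 - 2\delta - 3\kappa$; the cusp contributes $3$ and the node $2$ because the foot-of-perpendicular condition meets a node with multiplicity $2$ and a cusp with multiplicity $3$ (the cusp being "worth" a node plus a smooth point of inflectionary type). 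I would double-check the coefficients against the two worked examples: for the conic ($d=2$, $\delta=\kappa=0$) this gives $4$, which matches the astroid/semicubic being of degree... well, $\deg N_\Ga = 4$ and $\deg E_\Ga = 6$, consistent with the ellipse's evolute.

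For the degree of $\tilde E_\Ga = (\tilde N_\Ga)^\vee$, I would apply the classical Plücker formula $\deg(C^\vee) = n(n-1) - 2\delta' - 3\kappa'$ for a curve $C$ of degree $n$ with $\delta'$ nodes and $\kappa'$ cusps, to $C = \tilde N_\Ga$ with $n = d^2 - 2\delta - 3\kappa$. This requires identifying the singularities of $\tilde N_\Ga$. By birationality $\tilde N_\Ga$ has the same geometric genus as $\tilde\Ga$, so its $\delta' + \kappa'$ count is pinned down by genus; but to apply Plücker I need the split into nodes versus cusps, and here the geometry enters: cusps of $\tilde N_\Ga$ correspond to inflection points of $\tilde\Ga$ (where the normal direction is stationary), and the remaining singularities of $\tilde N_\Ga$ are nodes (generic diameters/double normals plus the nodes and cusps of $\Ga$ itself contributing in a controlled way), together with the behaviour at the line at infinity which the "general position" hypothesis is there to control. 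Tracking these through, together with the known Plücker data of $\tilde\Ga$ (its class $d(d-1) - 2\delta - 3\kappa$, its $3d(d-2) - 6\delta - 8\kappa$ inflections, etc.), should produce exactly $3d(d-1) - 6\delta - 8\kappa$.

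The main obstacle, I expect, is the bookkeeping of the singular and infinite behaviour of $\tilde N_\Ga$: one must correctly account for (i) how each node and each ordinary cusp of $\Ga$ propagates into the normal curve and into the intersection-with-a-generic-line count, and (ii) what happens along the line at infinity, where the normal map degenerates (the isotropic/circular points at infinity play a special role here, since "perpendicularity" is the conic $I + J$ at infinity). Getting the multiplicities $6\delta$, $8\kappa$ (rather than, say, $6\delta$, $6\kappa$) right is exactly the delicate point, and the cleanest route is probably to cite the relevant articles of \cite{Sa} for the local contributions and merely assemble them, rather than re-derive each multiplicity from scratch.
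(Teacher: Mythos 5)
Your treatment of $\deg \tilde N_\Ga$ is essentially sound and is in fact the classical route: the normals through a fixed generic point $(X,Y)$ are cut out on $\Ga$ by the degree-$d$ auxiliary curve $(X-x)f'_y-(Y-y)f'_x=0$, which passes through every singular point of $\Ga$ (since $f'_x=f'_y=0$ there) with intersection multiplicity $2$ at a node and $3$ at an ordinary cusp, giving $d^2-2\delta-3\kappa$. The paper gets the same number differently, as $d+d^\vee$, by interpreting $\deg N_\Ga$ as the number of normals through a circular point $q\in Q_\infty$ (where normal $=$ tangent, contributing $d^\vee$, plus $d$ copies of $L_\infty$); both are fine, as is your birationality argument.

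The genuine gap is in the evolute half. You propose to compute $\deg\tilde E_\Ga$ as the class of $\tilde N_\Ga$ via $n(n-1)-2\delta'-3\kappa'$ and assert that the cusps of $\tilde N_\Ga$ are the images of the inflection points of $\Ga$. That is false, and if you carry the plan through as written you obtain $\deg\tilde E_\Ga=3d(d-1)-6\delta-8\kappa-\iota$, off by exactly the number $\iota=3d(d-2)-6\delta-8\kappa$ of inflections. Indeed, using the parametrization \eqref{eq:basic4}, at an inflection $x'y''-x''y'=0$ makes the first two coordinates of the velocity of $t\mapsto(x':y':-xx'-yy')$ proportional to the point, but the map is non-immersive only if additionally $x'^2+y'^2=0$; so a generic inflection of $\Ga$ gives a \emph{smooth} point of $N_\Ga$, and corresponds instead to a transversal intersection of $E_\Ga$ with $L_\infty$ (the asymptote phenomenon of Section~\ref{sec:init}). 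The fact you actually need --- and it is the one nontrivial input, stated as the Lemma following Proposition~\ref{prop:1} and proved in Salmon, Art.~113 --- is that $\tilde N_\Ga^\bC$ has \emph{no} cusps for generic $\Ga$, equivalently $\tilde E_\Ga^\bC$ has no inflections. Once $\kappa(N)=0$ is known, the clean finish is the genus form of the Pl\"ucker formula, $\deg\tilde E_\Ga=2\deg\tilde N_\Ga+2g-2-\kappa(N)$, which requires no enumeration of the nodes of $N_\Ga$ at all; with $2g-2=d-2d^\vee+\iota$ this yields $3d+\iota=3d(d-1)-6\delta-8\kappa$, exactly as in Proposition~\ref{pr:7} and its corollaries. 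Your plan of determining $\delta'$ from the genus and then splitting off the cusps is the same computation run backwards, but it is fragile: besides the cusp misidentification, $\tilde N_\Ga^\bC$ also has an ordinary $d$-uple point at the point dual to $L_\infty$, which a nodes-and-cusps-only formula silently ignores (harmlessly here, since an ordinary $m$-fold point contributes to the class drop exactly like $\binom{m}{2}$ nodes, but this needs to be said). Note also that $\delta_N$ is classically \emph{derived} from the degree and class of $N_\Ga$ (Corollary~\ref{diam}), so it should not be used as an input to compute that class.
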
 

The genericity assumption for the birationality can be substantially weakened (but not completely removed).

\begin{lemma}[see Art. 113, p.~96, in \cite {Sa}] For a generic affine real-algebraic curve $\Ga\subset \bR^2$ of degree $d$, $\tilde E_\Ga^\bC$ has no inflection points. 
\end{lemma}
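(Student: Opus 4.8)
The plan is to localize the question: I will show that an inflection point of $\tilde E_\Ga^\bC$ can only lie over one of finitely many ``special'' points of $\Ga$, and then rule these out by open-condition/dimension arguments. The conceptual reason behind the lemma is that $\tilde E_\Ga$ is a caustic --- the envelope of the normals of $\Ga$ --- and a short computation with the parametrization \eqref{eq:basic2} shows that the curvature of $E_\Ga$ at the centre of curvature $E_\Ga(t)$ equals, up to sign, $1/(\rho\,\rho')$, where $\rho$ is the curvature radius of $\Ga$ and $\rho'$ its derivative with respect to arclength; this is finite and nonzero, and I will recast it as an algebraic identity valid on the complexification.

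Concretely, set $Q(t)=x'(t)^2+y'(t)^2$ and $J(t)=x'(t)y''(t)-x''(t)y'(t)$, and $P=Q/J$, so that $E_\Ga=(x-Py',\,y+Px')$ by \eqref{eq:basic2}. First I would verify the envelope identity $E_\Ga'=\lambda\cdot(-y',x')$ for a rational function $\lambda=P'+PQ'/(2Q)$ --- that is, $E_\Ga'$ is everywhere proportional to the normal direction --- which is immediate from $Q=x'^2+y'^2$ and the definition of $J$; differentiating once more yields the clean identity $E_\Ga'\times E_\Ga''=\lambda^2 J$. Hence at every parameter $t_0$ for which $E_\Ga(t_0)$ is a finite immersed point of this parametrization (i.e.\ $P(t_0)$ finite and $\lambda(t_0)\neq0$) the curve $\tilde E_\Ga^\bC$ is not inflectional there, since $J(t_0)=0$ would force $P(t_0)=\infty$ unless also $Q(t_0)=0$; and for generic $\Ga$ the loci $\{J=0\}$ (the inflections of $\Ga$) and $\{Q=0\}$ (the points where $\Ga$ is tangent to an isotropic line) are disjoint, while at a point of the latter one computes $P(t_0)=0$ and $\lambda(t_0)=\tfrac{3}{2}\,Q'(t_0)/J(t_0)\neq0$, so again $E_\Ga'\times E_\Ga''\neq0$. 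The parameters with $\lambda(t_0)=0$ are exactly the vertices of $\Ga$, which for generic $\Ga$ produce ordinary cusps --- not inflections --- of $\tilde E_\Ga^\bC$.

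This leaves the points of $\tilde E_\Ga^\bC$ on the line at infinity, which arise from the inflection points of $\Ga$ (where $P$ has a pole) and from the intersection points of $\tilde\Ga^\bC$ with $\ell_\infty$. For each such point one can repeat the computation in an affine chart adapted to it and check that, for generic $\Ga$ (in particular, $\Ga$ with only ordinary inflections and meeting $\ell_\infty$ transversally away from the circular points), the local branch of $\tilde E_\Ga^\bC$ is immersed and non-inflectional; the degree-$d$ curves failing any of these open conditions form a proper subvariety. A tidier way to dispatch this last step is a Plücker count: by Proposition~\ref{prop:1} one has $\deg\tilde E_\Ga^\bC=3d(d-1)=:m$, $\deg\tilde N_\Ga^\bC=d^2$, and $\tilde E_\Ga^\bC$ is birational to the smooth curve $\tilde\Ga^\bC$ of genus $\binom{d-1}{2}$; granting that for generic $\Ga$ the evolute has only nodes and ordinary cusps, the Plücker relations for the class and the genus pin down its number $\delta_E$ of nodes and $\kappa_E=2m-3d$ of cusps, and then the Plücker formula $3m(m-2)-6\delta_E-8\kappa_E$ for the number of inflections evaluates to $0$.

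The main obstacle is precisely the behaviour at infinity and at the isotropic-tangency points, where the affine differential-geometric formulas degenerate: in the direct approach this requires a somewhat delicate local (Puiseux-type) analysis at each point at infinity; in the Plücker approach it is concealed in the genericity input that the evolute of a generic curve has only nodes and ordinary cusps (a Bertini/transversality statement of the kind underlying Proposition~\ref{prop:1}), and in the accompanying need for the word ``generic'' to exclude degenerate curves such as conics, whose evolutes do acquire an inflection at infinity --- witness the semicubical parabola of Example~\ref{ex:basic}.
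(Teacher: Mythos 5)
Your affine computation is correct and captures the conceptual heart of the matter: writing $E_\Ga'=\lambda\,(-y',x')$ and hence $E_\Ga'\times E_\Ga''=\lambda^2 J$ shows that the parametrized evolute has no inflections at any finite immersed point, and your treatment of the isotropic tangencies ($Q=0$, where $\lambda=\tfrac32 Q'/J\neq0$) is also fine. The genuine gap is that for an algebraic curve of degree $d$ this settles almost nothing: $\tilde E_\Ga^\bC$ meets the line at infinity with total multiplicity $3d(d-1)$ --- a cusp at $p^\perp$ for each of the $d$ points $p\in\tilde\Ga^\bC\cap L_\infty$, each counting three times, plus one crossing for each of the $3d(d-2)$ inflections of $\Ga$ --- and these points at infinity are precisely where inflections of evolutes actually occur for non-generic curves (your own example: the semicubical parabola has its single inflection at infinity). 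Your handling of them is the sentence ``one can repeat the computation in an affine chart adapted to it and check\dots'', i.e.\ the decisive local analysis is announced but not performed; the open-condition argument only shows that a generic $\Ga$ avoids the circular points and meets $L_\infty$ transversally, not that the resulting points at infinity of the evolute are non-inflectional.

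The fallback Pl\"ucker count does not close the gap, because its inputs are equivalent to, or at least as deep as, the conclusion. The class of $\tilde E_\Ga^\bC$ is $\deg \tilde N_\Ga^\bC$, which by Proposition~\ref{pr:7} equals $d+d^\vee-\iota_E$; feeding ``class $=d^2=d+d^\vee$'' into the Pl\"ucker relations therefore presupposes $\iota_E=0$. Likewise the hypothesis that the evolute has only nodes and ordinary cusps is a genericity claim of the same nature as the lemma (and in the paper's own logic Proposition~\ref{prop:cusps+nodes} deduces the absence of cusps on $\tilde N_\Ga^\bC$ \emph{from} the absence of inflections on $\tilde E_\Ga^\bC$, not conversely). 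The paper's route, in Appendix~\ref{sec:Ragni} following Salmon, sidesteps the local analysis entirely: $\iota_E$ is identified with the defect $\deg c_1(\mathcal K^\vee)+\deg c_1(\mathcal O_C(1))-\deg c_1(\mathcal E)$ of the Euclidean normal bundle, equivalently with Salmon's $f+g$, the number of passages of $\Ga$ through the circular points plus the number of tangencies with $L_\infty$; since $\mathcal E=\mathcal K^\vee\oplus\mathcal O_C(1)$ for a curve in general position, $\iota_E=0$. To rescue your direct approach you must actually carry out the two local computations: at a simple inflection of $\Ga$, show the evolute crosses $L_\infty$ at a smooth point whose tangent (the asymptote) meets it with multiplicity exactly two; at a transversal, non-circular point of $\tilde\Ga^\bC\cap L_\infty$, show the evolute has a genuine cusp at $p^\perp$ rather than a smooth inflectional branch.
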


\begin{proposition}[see  Art. 113, p.~97, of \cite{Sa}]\label{prop:cusps+nodes} For an affine real-algebraic curve $\Ga\subset \bR^2$ as in Proposition \ref{prop:1}, the only singularities of $\tilde E_\Ga^\bC$ and $\tilde N_\Ga^\bC$ are nodes and cusps, except that $\tilde N_\Ga^\bC$ has an ordinary $d$-uple point (corresponding to the line at infinity).    
The number $\kappa_E$ of cusps of $\tilde E_\Ga^\bC$  equals $3(d(2d-3)-4\delta-5\kappa)$. 

If $\Ga$ is nonsingular, the number $\delta_E$ of nodes of $\tilde E_\Ga^\bC$  equals $\frac{d}{2} (3d-5)(3d^2-d-6)$, and the number $\delta_N$ of nodes of $\tilde N_\Ga^\bC$  equals 
$\binom{d^2-1}{2}\binom{d-1}{2}-\binom{d}{2}=(d^2+d-4)\binom{d}{2}$. 
  The curve $\tilde N_\Ga^\bC$ has no cusps   (since $\tilde E_\Ga^\bC$ has no inflection points). 
\end{proposition}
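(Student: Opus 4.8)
The plan is to follow Salmon \cite{Sa} and reduce every assertion to the Plücker formulas, feeding them three invariants of each of $\tilde E_\Ga^\bC$ and $\tilde N_\Ga^\bC$: the degree, the class, and the geometric genus. Proposition~\ref{prop:1} supplies the degrees, $m:=\deg \tilde E_\Ga=3d(d-1)-6\delta-8\kappa$ and $n:=\deg \tilde N_\Ga=d^2-2\delta-3\kappa$, and tells us that $\tilde \Ga^\bC$, $\tilde E_\Ga^\bC$, $\tilde N_\Ga^\bC$ are birational, hence all have geometric genus $g=\binom{d-1}{2}-\delta-\kappa$. Since $(\tilde N_\Ga)^\vee=\tilde E_\Ga$, biduality of plane curves gives $(\tilde E_\Ga)^\vee=\tilde N_\Ga$, so the class of $\tilde E_\Ga^\bC$ equals $n$ and the class of $\tilde N_\Ga^\bC$ equals $m$.

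The first task is the qualitative claim: the only singularities of $\tilde E_\Ga^\bC$ are nodes and cusps, and those of $\tilde N_\Ga^\bC$ are nodes, cusps and one ordinary $d$-uple point. For the evolute this is the classical normal-form analysis of the envelope of the family of normals (a caustic): away from the vertices of $\Ga$ the parametrization $t\mapsto E_\Ga(t)$ is an immersion, at a simple vertex it acquires an ordinary ($A_2$) cusp, for generic $\Ga$ the self-intersections are transversal, and the behaviour at the inflection points and at the points of $\Ga$ at infinity described in Section~\ref{sec:init} produces only smooth branches. For the curve of normals, formula~\eqref{eq:basic4} shows that as $t$ approaches one of the $d$ distinct points of $\Ga$ at infinity one has $u(t)\to u_0$ finite and $v(t)\to\infty$, so the corresponding branch of $N_\Ga$ runs through the point $[0:1:0]$ of $(\bR P^2)^\vee$ with tangent line $U=u_0W$; distinct asymptotic directions of $\Ga$ give distinct values of $u_0$, so this is an ordinary $d$-uple point. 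That $\tilde N_\Ga^\bC$ has no cusps is then immediate from duality: cusps of a plane curve correspond to inflection points of the dual curve, and $\tilde E_\Ga^\bC=(\tilde N_\Ga)^\vee$ has none by the Lemma above.

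Granting this, the cusp count is bookkeeping. Write $\delta_E,\kappa_E$ for the numbers of nodes and cusps of $\tilde E_\Ga^\bC$. The genus (Plücker) formula gives $\delta_E+\kappa_E=\binom{m-1}{2}-g$, and the class formula gives $2\delta_E+3\kappa_E=m(m-1)-n$. Subtracting twice the first equation from the second yields $\kappa_E=m(m-1)-n-2\binom{m-1}{2}+2g=2m-3d+\kappa$, and inserting the value of $m$ gives $\kappa_E=3\bigl(d(2d-3)-4\delta-5\kappa\bigr)$. When $\Ga$ is nonsingular ($\delta=\kappa=0$, so $m=3d(d-1)$, $g=\binom{d-1}{2}$, $\kappa_E=3d(2d-3)$) the first equation returns $\delta_E=\binom{m-1}{2}-\binom{d-1}{2}-\kappa_E$, which expands to $\frac{d}{2}(3d-5)(3d^2-d-6)$. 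Finally, applying the genus formula to $\tilde N_\Ga^\bC$, whose only singular points are its $\delta_N$ ordinary nodes together with the ordinary $d$-uple point (contributing $\binom{d}{2}$ to the genus drop), gives $\binom{d-1}{2}=\binom{d^2-1}{2}-\delta_N-\binom{d}{2}$, whence $\delta_N=\binom{d^2-1}{2}-\binom{d-1}{2}-\binom{d}{2}=(d^2+d-4)\binom{d}{2}$; the same value drops out of the class formula $m=n(n-1)-2\delta_N-d(d-1)$ as a cross-check.

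The algebra above is harmless, so the real content of the proof---and the step I expect to be the main obstacle to a fully rigorous write-up---is the qualitative statement of the second paragraph: that for a sufficiently general $\Ga$ the evolute and the curve of normals acquire no singularities worse than nodes, cusps and the forced $d$-uple point, that every self-intersection is an honest transversal node rather than a tacnode or a point of higher contact, and that no unexpected cusp lurks at infinity. This is precisely where Salmon's treatment is least explicit; making it airtight calls either for a transversality/dimension count in the linear system of degree-$d$ plane curves (applied to the incidence correspondences cutting out the vertices and the common normals of $\Ga$), or for an appeal to the general theory of caustics of generic fronts.
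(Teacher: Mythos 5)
Your computation is correct and follows essentially the same route as the paper's own proof (given in Appendix~\ref{sec:Ragni}, Proposition~\ref{pr:7} and the two corollaries following it): degrees from Proposition~\ref{prop:1}, the class of each curve from biduality, and then Pl\"ucker/genus bookkeeping — the paper derives $\kappa(E)$ from the identity $3(\deg E_C-\deg N_C)=\kappa(E)-\iota(E)$ while you solve the two Pl\"ucker relations directly, which is the same algebra; your $\delta_N$ also silently corrects an obvious typo in the statement ($\binom{d^2-1}{2}\binom{d-1}{2}$ should be $\binom{d^2-1}{2}-\binom{d-1}{2}$). The qualitative step you flag as the real obstacle is likewise not proved in the paper: its corollaries explicitly carry the hypothesis that all cusps are ordinary and all remaining singularities are ordinary nodes, so your proposal is no less complete than the source.
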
 

For the sake of completeness and for the convenience of our readers, we included in Appendix~\ref{sec:Ragni} below  (some) modern proofs of the above claims, further results, and discussions related to  the enumerative geometry of envelopes and evolutes over the field of complex numbers.

\subsection {Klein's formula}
For further use, let us  recall a classical real-algebraic result of F.~Klein, see \cites{Kl, Co}.

\begin{theorem}\label{th:Klein}
If a real-algebraic curve $\Ga$ has no singularities except nodes, cusps, bitangents and inflection points, then 
$$d+2\tau^\prime+i^\prime=d^\vee+2\delta^\prime+\kappa^\prime,$$
where $d$ is the degree, $\tau^\prime$ the number of conjugate tangents, $i^\prime$ the number of real inflections, $d^\vee$ is the class, i.e., the degree of the dual curve, $\delta^\prime$ the number of real conjugate points, and $\kappa^\prime$ the number of real cusps of $\Ga$. 
\end{theorem}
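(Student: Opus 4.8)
The plan is to recognize Klein's identity as the assertion that a single integer attached to a real plane curve is invariant under projective duality, and then to compute that integer. Rearranging,
\[
d-\kappa^\prime-2\delta^\prime \;=\; d^\vee-i^\prime-2\tau^\prime .
\]
Projective duality interchanges $\Ga$ with $\Ga^\vee$, sending real inflections to real cusps and back, and sending \emph{conjugate tangents} of $\Ga$ (real lines tangent at a pair of complex conjugate points, so that the corresponding point of $\Ga^\vee$ is met by two conjugate branches) to \emph{conjugate points} of $\Ga^\vee$ (acnodes) and back; it also interchanges $d$ and $d^\vee$. Hence the left-hand side is $\chi(\Ga):=d-\kappa^\prime-2\delta^\prime$, the right-hand side is $\chi(\Ga^\vee)$, and Klein's formula is precisely $\chi(\Ga)=\chi(\Ga^\vee)$. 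Since $\Ga$ and $\Ga^\vee$ are birational, their normalizations are isomorphic as real algebraic curves, so the natural route is to express $\chi(\Ga)$ through the real normalization $\tilde\Ga(\bR)$ — a disjoint union of circles — together with the tangent (Gauss) map $\ga\colon\tilde\Ga(\bR)\to(\bR P^2)^\vee$ and the parametrization map $\nu\colon\tilde\Ga(\bR)\to\bR P^2$, both of whose data are symmetric under duality.

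First I would treat the smooth case, where $\delta^\prime=\kappa^\prime=0$ and the claim reads $d+i^\prime+2\tau^\prime=d(d-1)$. Fix a generic real point $O$ and count the number $t(O)$ of real tangent lines to $\Ga$ through $O$: over $\bC$ this equals the class $d^\vee=d(d-1)$, so $t(O)\equiv d(d-1)\pmod 2$, and $t(O)$ jumps by $\pm 2$ exactly when $O$ crosses $\Ga$ itself, a bitangent line, or an inflectional tangent (these are the walls where the line $\ell_O^\vee\subset(\bR P^2)^\vee$ becomes tangent to, or passes through a singular point of, $\Ga^\vee$). Moving $O$ to a very special position — e.g.\ to a generic point at infinity, where $t(O)$ becomes the number of real critical points of a linear form on $\tilde\Ga(\bR)$ — rewrites $d(d-1)-t(O)$ as a sum of local contributions of inflections and bitangents, and a turning-number computation along each circle of $\Ga(\bR)$ (the tangent direction map to $\bR P^1$ is monotone except for a reversal at each real inflection) identifies this sum with $i^\prime+2\tau^\prime$; the dual count, with lines in place of points, gives the symmetric statement.

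The general case I would obtain by a controlled real perturbation $\Ga\rightsquigarrow\Ga_\eps$ to a nearby smooth real curve of the same degree, analysing each real singularity and special tangent locally. A crunode contributes $0$ to both $\kappa^\prime$ and $\delta^\prime$ and is resolved in $\Ga_\eps$ without changing $d$, while it alters the count of real inflections/bitangents nearby; an acnode contributes $1$ to $\delta^\prime$ and in $\Ga_\eps$ either spawns a small oval or vanishes into a conjugate pair, with a compensating change in $i^\prime$ and $\tau^\prime$; a real cusp contributes $1$ to $\kappa^\prime$ and smooths to an arc carrying $0$ or $2$ real inflections; a conjugate tangent contributes $1$ to $\tau^\prime$ and behaves dually. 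In each case one checks that the net change of $d-\kappa^\prime-2\delta^\prime$ matches the net change of $d^\vee-i^\prime-2\tau^\prime$, so the identity for smooth curves propagates to the stated class; this is in essence Klein's original bookkeeping, cf.\ \cite{Kl} and the exposition in \cite{Co}.

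The main obstacle is exactly that bookkeeping: determining, with the correct signs, the local contribution of every type of real singularity and special tangent to both sides of the identity, and doing so uniformly, which forces the line at infinity to be chosen in general position so that the affine special points do not interact with points at infinity, and makes essential use of the hypothesis that $\Ga$ has no worse singularities than nodes, cusps, bitangents and inflections. In the purely topological approach the same difficulty resurfaces as the need to make the turning-number count on $\tilde\Ga(\bR)$ rigorous at real cusps and at points where several real branches meet, and to pin down the universal constant relating that count to $d$ and $d^\vee$ via the generic-line and generic-pencil intersection numbers.
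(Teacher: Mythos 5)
The paper does not actually prove this statement: Theorem~\ref{th:Klein} is recalled as Klein's classical result with references to \cite{Kl} and \cite{Co}, and for a complete modern proof of the more general Klein--Schuh formula (Theorem~\ref{th:genKlein}) the paper points to \cite{Wa}, where it is derived from Viro's integral calculus with respect to the Euler characteristic. So your argument has to stand on its own, and as written it is an outline of Klein's original degeneration argument rather than a proof: the two places where all the content lives are exactly the places you defer to ``bookkeeping''.

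Beyond incompleteness there are concrete problems with the mechanisms you describe. First, the wall-crossing claim for $t(O)$ is wrong as stated: $t(O)$ is the number of real points of $\ell_O\cap\Ga^\vee$, and this count jumps by $\pm 2$ when $\ell_O$ becomes tangent to $\Ga^\vee$ (i.e.\ when $O$ crosses $\Ga=\Ga^{\vee\vee}$) or when $\ell_O$ sweeps through a real cusp of $\Ga^\vee$ (i.e.\ when $O$ crosses an inflectional tangent); but sweeping $\ell_O$ transversally through a node of $\Ga^\vee$ --- crunode or acnode --- does not change the number of real intersection points, so ``crossing a bitangent line'' is not a wall for this count. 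Second, the conjugate tangents counted by $\tau'$ have no real tangency points: they are invisible to the real Gauss map of $\Ga(\bR)$ and to any turning-number computation along the circles of $\tilde\Ga(\bR)$, appearing only as isolated real points (acnodes) of $\Ga^\vee$. Hence the proposed identification of the defect $d(d-1)-t(O)$ with $i'+2\tau'$ by a rotation-index argument on the real locus cannot work as described; the term $2\tau'$ must be extracted from the comparison between the real point count and the complex intersection number $d^\vee$, not from the real picture of $\Ga$. Finally, the reduction of the singular case to the smooth case requires verifying, for each local model (crunode, acnode, real cusp, each type of bitangent, and each choice of smoothing), that the change in $d-\kappa'-2\delta'$ equals the change in $d^\vee-i'-2\tau'$ --- e.g.\ smoothing a real cusp raises $d^\vee$ by $3$ and lowers $\kappa'$ by $1$, which must be balanced by exactly two new real inflections or one new conjugate tangent near the cusp, depending on the smoothing --- and none of these verifications is carried out. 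Until those local computations are done with correct signs, this is a plan, not a proof; if you want to avoid the bookkeeping altogether, the Euler-characteristic proof in \cite{Wa} is the route to follow.
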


Klein's theorem was generalized by Schuh \cite{Sch}, see  \cites{Vi, Wa}. In particular, the beautiful paper \cite{Wa} contains the following result (usually referred to as Klein--Schuh's theorem)  together with its detailed proofs and references to the earlier literature.

\begin{theorem}\label{th:genKlein} For any real-algebraic plane curve $\Ga$, 
\[\textstyle d-d^\vee=\sum_p (m_p(\Ga)-r_{p}(\Ga))
-\sum_q (m_q(\Ga^\vee)-r_{q}(\Ga^\vee)),
\]
where $\Ga^\vee$ is the dual curve, $d$ is the degree and $d^\vee$ is the class of $\Ga$, $m_p(\Ga)$ (resp. $m_q(\Ga^\vee)$) is the multiplicity of a real singular point $p\in \Ga$ (resp. $q\in \Ga^\vee$), and $r_{p}(\Ga)$ (resp. $r_{q}(\Ga^\vee)$) is the number of local real branches of $\Ga$ at $p$ (resp.  of $\Ga^\vee$ at $q$). 
\end{theorem}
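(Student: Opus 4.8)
The plan is to realise both $\Ga$ and $\Ga^\vee$ through their common normalization and to reduce the identity to a sum of purely local contributions at the real singular points of the two curves. Let $\nu\colon\widehat\Ga\to\Ga^\bC\subset\bC P^2$ be the normalization of the complexified projective curve; complex conjugation lifts to an anti-holomorphic involution $\sigma$ of $\widehat\Ga$, and $\widehat\Ga_\bR:=\mathrm{Fix}(\sigma)$ is a disjoint union of finitely many circles. The conormal lift embeds $\widehat\Ga$ into $\mathbb P(T^\ast\bC P^2)$, and its two projections present $\widehat\Ga$ simultaneously as the normalization of $\Ga^\bC$ and of $(\Ga^\vee)^\bC$ in a $\sigma$-equivariant way; on real points this gives maps $p\colon\widehat\Ga_\bR\to\bR P^2$ and $p^\vee\colon\widehat\Ga_\bR\to(\bR P^2)^\vee$ with images the real loci of $\Ga$ and of $\Ga^\vee$, which are immersions away from the preimages of the real singular points of $\Ga$ and of $\Ga^\vee$, and which together form a Legendrian immersion into the flag $3$-manifold. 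Since the same circles and the same involution serve $\Ga$ and $\Ga^\vee$, the goal is to show that the quantity $d(\Ga)-\sum_{p}\bigl(m_p(\Ga)-r_p(\Ga)\bigr)$ is unchanged when $\Ga$ is replaced by $\Ga^\vee$; by biduality $(\Ga^\vee)^\vee=\Ga$ this is exactly the asserted identity.

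Two things then have to be proved. The first is local. For a generic real line $\ell_0$ one has $d(\Ga)=a+2c$, where $a=\#\{t\in\widehat\Ga_\bR:p(t)\in\ell_0\}$ and $c$ is the number of complex-conjugate pairs in $\nu^{-1}(\ell_0)$; sweeping $\ell_0$ in a generic pencil until it passes through a real singular point $p$ of $\Ga$ and reading the Puiseux expansions of the branches of $\Ga$ at $p$, one shows that the algebraic jump of $a$ produced by this event is governed by $m_p(\Ga)-r_p(\Ga)=\sum_{B\ni p}(m_B-\varepsilon_B)$, where $\varepsilon_B=1$ for a real branch $B$ and $0$ for a complex one: a real smooth branch transverse to the sweeping line contributes a genuine crossing ($m_B-\varepsilon_B=0$), a real branch of higher order such as a cuspidal branch $(t^2,t^3)$ contributes a fold worth $m_B-1$, and each complex branch carries no real intersection point and contributes its full multiplicity $m_B$. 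The same analysis on $(\bR P^2)^\vee$ handles $d^\vee$ and the corrections $m_q(\Ga^\vee)-r_q(\Ga^\vee)$, the branchwise dictionary $(t^2,t^3)\mapsto(\tfrac32 t,-\tfrac12 t^3)$, $(t,t^3)\mapsto(3t^2,-2t^3)$ matching a cuspidal branch of $\Ga$ with a smooth inflectional branch of $\Ga^\vee$ and an inflectional branch of $\Ga$ with a cuspidal branch of $\Ga^\vee$. The second ingredient is global and links $d$ to $d^\vee$: projection from a generic real point $O$ gives a smooth map $\pi_O\colon\widehat\Ga_\bR\to\bR P^1$ whose generic fibre is a line through $O$ meeting $\widehat\Ga_\bR$ in $a$ points, while the critical points of $\pi_O$, corrected by the cusp terms above, count the real tangent lines to $\Ga$ through $O$; letting $O$ move along a generic path and tracking how these two cardinalities change across the singular and cuspidal events --- via the local computation --- forces the corrected versions of $d$ and $d^\vee$ to coincide (this step, like the local one, has to be run as a signed count, since a naive comparison of cardinalities gives only a congruence mod $2$).

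The substance of the proof, and its main obstacle, is the local analysis. One must extract the \emph{exact} contribution, not merely its parity (a cusp contributes $1$, so the correction is genuinely not a mod-$2$ invariant), of an arbitrary cluster of real and complex-conjugate Puiseux branches, keeping honest track of local orientations; and one must control the non-generic positions that necessarily occur along the sweeps (the moving line tangent to a branch, or passing through two singular points, and so on), the real singular points at infinity, and the ordinary $d$-uple point which $\Ga^\vee$ carries over the line at infinity, cf.\ Proposition~\ref{prop:cusps+nodes}. When $\Ga$ already lies in the Pl\"ucker class (only nodes, cusps, bitangents, inflections) treated in Theorem~\ref{th:Klein}, an alternative is to invoke that theorem and then propagate to arbitrary singularities by a one-parameter degeneration, using the additivity of $m_p-r_p$ over branches; but that route is itself delicate, since deforming a singularity of $\Ga$ changes its class and hence alters $\Ga^\vee$ globally --- its degree jumps --- so the direct local bookkeeping above is the cleaner path, and is the one carried out in \cite{Wa}.
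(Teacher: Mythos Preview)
The paper does not supply its own proof of Theorem~\ref{th:genKlein}; it quotes the Klein--Schuh formula from the literature and refers the reader to \cite{Wa} (and \cite{Sch}, \cite{Vi}) for the argument. So there is no in-paper proof to compare against.

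Your proposal is not a proof but an outline, and you acknowledge this yourself: the local branch-by-branch computation of the exact (signed, not mod~$2$) contribution $m_p-r_p$, the handling of non-generic positions along the sweep, and the passage from the congruence to the equality are all deferred rather than carried out. As an outline of Wall's approach it is accurate --- the conormal/Legendrian lift, the common normalization serving both $\Ga$ and $\Ga^\vee$, the reduction to local contributions at real singular points, and the branchwise additivity are exactly the ingredients of \cite{Wa}, which you correctly cite at the end. Since the paper itself only cites \cite{Wa}, your write-up is in the same position as the paper: a pointer to the literature rather than a self-contained proof. If you intend this to stand as a proof, the local analysis (your ``main obstacle'') must actually be written out; otherwise it is best labeled a proof sketch with reference to \cite{Wa}.
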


\begin{example}{\rm 
Let $\Gamma$ be the curve of normals of an ellipse. Then $\Gamma^\vee$ is the evolute of the ellipse. We have $d=4$ and $d^\vee =6$. The singularities of $\Gamma$ are two crunodes, corresponding to  the two diameters of the ellipse, and an acnode, corresponding to the line at infinity. The evolute $\Gamma^\vee$ has four real cusps and no other real singular points. This gives $4-6=2-4$, which checks with the formula, see Fig.~\ref{fig:conic1b}.}
\end{example}

\subsection{Brusotti's theorem and small deformations of real line arrangements} Let $\Ga\subset \bC^2$ be any (possibly reducible) plane real-algebraic curve  with only real and complex nodes as singularities. Denote by $\Ga_\bR\subset \bR^2$ the real part of $\Ga$. Observe that there are two topological types of small real deformations/smoothenings of a crunode. Namely,  being a transversal intersection of two smooth real branches a crunode has locally four connected components of its complement. Under a small real deformation  of a crunode either one or the other pair of opposite connected components will merge forming a single component. Similarly, there exist two topological types of small real deformations/smoothenings of a acnode.  Either it disappears in the complex domain or a small real oval will be created near the acnode. The following useful result was proved by  Brusotti in  \cite{Br}. 

\begin{theorem}\label{th:brusotti}
 Any real-algebraic curve $\Ga\subset \bC^2$ with only nodes as singularities  
admits a small real deformation of the same degree which realizes any collection of independently prescribed smoothing types 
of  its real nodes.
\end{theorem}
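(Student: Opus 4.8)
The plan is to produce the deformation as a pencil $f_\epsilon = f + \epsilon g$, where $f$ is a real polynomial of degree $d$ defining $\Gamma$, $g$ is a cleverly chosen real polynomial of degree $\le d$, and the behaviour at each node is read off from the value $g$ takes there. Passing to projective closures / homogeneous forms of degree $d$ is harmless, so I will phrase things affinely.

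\textbf{Local model of a smoothing.} Fix a node $p$ of $\Gamma$. Since $p$ is an $A_1$-point, it is a nondegenerate critical point of $f$ with critical value $0$; if $g(p)\ne 0$, then for small $\epsilon$ the function $f_\epsilon$ has, near $p$, a nondegenerate critical point with critical value $c(\epsilon) = \epsilon g(p) + O(\epsilon^2)$, and by the Morse lemma with parameters the germ of $\{f_\epsilon = 0\}$ at $p$ is real-analytically equivalent to $\{\xi\eta = -c(\epsilon)\}$ when $p$ is a crunode or a genuine complex node, and to $\{\xi^2+\eta^2 = -c(\epsilon)\}$ when $p$ is an acnode. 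Hence, for $\epsilon>0$ fixed and small, the smoothing type at a real node $p$ is governed entirely by $\operatorname{sign} g(p)$: the two ways of reconnecting the four local sheets of a crunode correspond to the two signs of $g(p)$, while at an acnode one sign creates a small real oval near $p$ and the other sends the point into the complex domain. At a non-real complex node $q$ we have $g(q)=0 \iff g(\bar q)=0$, and if $g(q)\ne 0$ the node is simply smoothed, conjugation-compatibly; the theorem prescribes nothing there.

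\textbf{Reduction to interpolation.} Thus it suffices to find a real $g$ with $\deg g\le d$, $\operatorname{sign} g(p_i) = \sigma_i$ at the real nodes $p_1,\dots,p_a$ (where $\sigma_i$ are the prescribed signs), and $g(q_j)\ne 0$ at the complex nodes $q_1,\dots,q_b$. For such a $g$ and all small $\epsilon>0$, $f_\epsilon$ again has degree $d$ (its leading form is $f_d+\epsilon g_d\ne 0$), and near each node it behaves as in the local model, so it realises precisely the prescribed smoothings of the real nodes; in fact, $\Gamma$ being smooth away from its nodes, a compactness argument shows $\{f_\epsilon=0\}$ is globally nonsingular for small $\epsilon>0$, although we do not need this.

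\textbf{The main point: independence of the nodal conditions.} The step I expect to be the real obstacle is the existence of such a $g$, i.e. the surjectivity of the $\mathbb{R}$-linear evaluation map $g\mapsto \bigl(g(p_1),\dots,g(p_a),g(q_1),\dots,g(q_b)\bigr)$ from real polynomials of degree $\le d$ onto $\mathbb{R}^a\times\mathbb{C}^b$. Equivalently (homogenizing) this is the classical assertion that the $\delta=a+2b$ nodes of a reduced real plane curve of degree $d$ impose independent linear conditions on the linear system of plane curves of degree $d$, i.e. $h^1(\mathbb{P}^2,\mathcal{I}_Z(d))=0$ for the node scheme $Z$. The complex evaluation map is defined over $\mathbb{R}$, so its surjectivity over $\mathbb{C}$ yields surjectivity of the real form; one then takes $g$ with $g(p_i)$ equal to any prescribed nonzero real number of sign $\sigma_i$ and $g(q_j)=1$. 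To prove the independence one shows that for each node $p$ there is a curve of degree $d$ through the other $\delta-1$ nodes but not through $p$; the geometric input is that a line passes through at most $d$ of the nodes of $\Gamma$ (if it passed through more it would be a component of $\Gamma$, hence carry only the $\le d-1$ points where it meets the residual curve), so the nodes lie in sufficiently general position and the auxiliary curves can be assembled from unions of lines avoiding $p$. The detailed verification is Brusotti's \cite{Br}; see also the classical treatment in \cite{Sa}. Granting it, the pencil $f_\epsilon$ constructed above realises any independently prescribed collection of smoothing types of the real nodes of $\Gamma$, which is the assertion.
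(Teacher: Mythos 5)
The paper offers no proof of this statement: Theorem~\ref{th:brusotti} is quoted verbatim as Brusotti's classical result with a citation to \cite{Br}, and the surrounding text (Proposition~\ref{prop:signs} and Corollary~\ref{cor:signs}) merely packages the same two-step mechanism you describe --- choose a polynomial $H$ of degree $\le d$ with prescribed signs at the crunodes/acnodes, then pass to $G+\epsilon H$ for small $\epsilon$. So your outline is exactly the route the authors have in mind, and your local analysis is correct: at an $A_1$-point the perturbed critical value is $\epsilon g(p)+O(\epsilon^2)$, and the Morse lemma with parameters shows that $\operatorname{sign} g(p)$ (together with the signature of the Hessian) determines which pair of opposite sectors merges at a crunode and whether an acnode spawns an oval or vanishes.

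The genuine gap is the step you yourself flag as the main point: the surjectivity of the evaluation map, i.e.\ that the nodes of a reduced nodal plane curve of degree $d$ impose independent conditions on curves of degree $d$ (equivalently $h^1(\mathbb P^2,\mathcal I_Z(d))=0$). This \emph{is} the content of Brusotti's theorem --- everything else in your argument is routine --- and your sketch of it does not work as stated. Knowing that a line contains at most $d$ of the nodes does not let you ``assemble unions of lines'' of total degree $d$ passing through $\delta-1$ prescribed nodes while avoiding the last one; for a general nodal curve the nodes need not lie on any convenient configuration of lines, and for $\delta$ close to $\binom{d-1}{2}$ a crude position-general argument cannot succeed. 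The correct proof goes through the curve itself: from $0\to\mathcal I_C(d)\to\mathcal I_Z(d)\to\mathcal I_{Z/C}(d)\to 0$ one gets $H^1(\mathcal I_Z(d))\cong H^1(C,\mathcal I_{Z/C}(d))$, which pulls back to the normalization as $H^1$ of a line bundle of degree $d_i^2-2\delta_i>2g_i-2$ on each component, hence vanishes (this is Severi's lemma; Brusotti's original argument is a degeneration). Since you ultimately defer this step to \cite{Br}, your submission is a correct reduction plus a citation rather than a self-contained proof --- which puts it on the same footing as the paper, but you should either import the $h^1$-vanishing argument or drop the misleading line-based heuristic.
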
 

 Thus there are $2^m$ topological types of such small deformations where $m$ is the number of real nodes of the real curve under consideration.  The easiest way to think about different types of small deformations is to fix a real bivariate polynomial $G(x,y)$ of minimal degree defining $\Ga$ as its zero locus and to assign a $\pm$-sign to each real node.  Then the  sign pattern uniquely determines the way to resolve each crunode and acnode as follows. If $v$ is a crunode of $\Ga$, then $G(x,y)$ has alternating signs in four local components of the complement to $\Ga$ near $v$. If we assign the sign $+$ to the node $v$, then under this resolution two  opposite local components in which $G(x,y)$ is positive  should glue together, and if we assign the sign $-$ then the other two opposite components (in which $G(x,y)$ is negative) should glue together.  Analogously, if $G(x,y)$ has a local maximum at the acnode and  we assign $+$, then we create a small oval. Otherwise the acnode disappears. For  a local minimum,  the signs exchange their roles.   The next statement follows from \cite{Br} and can be also found in \cite[p.~13]{Gu} and \cite{Gu2}.

\begin{proposition} \label{prop:signs} For each plane real-algebraic curve $\Ga$ of a given degree $d$ with only nodes as singularities and a given sign pattern at its crunodes and acnodes,  there exists a real bivariate polynomial of degree at most $d$ having the chosen sign at each crunode/acnode.  
\end{proposition}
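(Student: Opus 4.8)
The plan is to read the proposition off Brusotti's theorem (Theorem~\ref{th:brusotti}) together with the local description of nodal smoothings recalled in the paragraph just before the statement. Fix a polynomial $G(x,y)$ of minimal degree, so $\deg G=d$ and $\Ga=\{G=0\}$, and let $v_1,\dots,v_m$ be the real nodes of $\Ga$ with the prescribed signs $\sigma_1,\dots,\sigma_m\in\{+,-\}$. First I would apply Theorem~\ref{th:brusotti} to produce a real deformation $\Ga_t=\{G_t=0\}$ of the same degree $d$, with $G_0=G$, which realises at every real node the resolution encoded by the chosen sign --- this is exactly what Brusotti's theorem permits, namely independent prescription of the resolution types. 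Since $\deg G_t=d\le d$, the candidate is $F:=G_t$ for one small $t>0$: by the sign-to-resolution correspondence recalled above, the way $\{F=0\}$ resolves $\Ga$ near each $v_i$ is precisely the one attached to $\sigma_i$. Thus, in essence, the proposition is Brusotti's theorem rephrased through the sign-encoding, with the extra observation that Brusotti's deformation keeps the degree bounded by $d$.

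If one insists on reading ``$F$ has the chosen sign at $v_i$'' as the literal numerical condition $\operatorname{sign}F(v_i)=\sigma_i$, a short extra step is needed: choose the Brusotti deformation generically so that $\left.\frac{d}{dt}\right|_{t=0}G_t(v_i)\ne 0$ for every $i$, whence $G_t(v_i)\ne 0$ for $0<t\ll 1$. In local coordinates centred at a crunode with $G=xy+(\text{h.o.t.})$ one has, near $v_i$, $G_t=xy+G_t(v_i)+(\text{small or higher-order corrections})$, and the arc $\{G_t=0\}$ reconnects the two local components on which $G$ is positive precisely when $G_t(v_i)>0$; at an acnode, where $G=\pm(x^2+y^2)+(\text{h.o.t.})$ locally, the set $\{G_t=0\}$ is a small oval or empty according to the sign of $G_t(v_i)$. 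In either case the sign of $G_t(v_i)$ is exactly the one the convention attaches to the realised resolution, i.e.\ $\operatorname{sign}F(v_i)=\sigma_i$. (Equivalently one may record $H:=\left.\frac{d}{dt}\right|_{t=0}G_t$, of degree $\le d$, with $\operatorname{sign}H(v_i)=\sigma_i$, so that $G+\eps H$ is an explicit small smoothing of $\Ga$ realising the pattern.)

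The only substantive input above is Brusotti's theorem, so on this route I do not expect a real obstacle; the remaining work is the routine Morse-type local analysis of the previous paragraph. It is worth recording the equivalent formulation this suggests: since a linear subspace of $\R^m$ meets every open orthant if and only if it is all of $\R^m$, producing a degree-$\le d$ polynomial with the prescribed signs at $v_1,\dots,v_m$ amounts exactly to the $v_i$ imposing independent conditions on the space of real polynomials of degree $\le d$. Pursued directly, that is where the difficulty would concentrate: for the line arrangements actually used in this paper it is immediate --- the product of the lines of the arrangement avoiding a given node (a curve of degree $\le d-2$) passes through all the other nodes and not through that one --- but for an irreducible nodal curve of large degree such a ``product of lines'' has degree far exceeding $d$, and one would instead have to invoke residuation / adjoint curves (Cayley--Bacharach), which is precisely the detour that appealing to Brusotti's theorem allows us to avoid.
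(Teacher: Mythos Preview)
The paper does not give its own proof of this proposition; it simply records that the statement ``follows from \cite{Br} and can also be found in \cite[p.~13]{Gu} and \cite{Gu2}.'' So there is no argument to compare against beyond the bare citation. Your instinct that the proposition is essentially a restatement of Brusotti's theorem is therefore exactly in line with how the paper treats it.

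That said, the explicit derivation you sketch in the second paragraph has a genuine gap. Brusotti's theorem, as stated, hands you a single nearby polynomial $G_1$ realising the prescribed smoothing types $\sigma_i$; it does not hand you a one-parameter family $G_t$ that stays in the $\sigma$-chamber for all small $t$. If you form $G_t=G+t(G_1-G)$, the smoothing type of $G_t$ at $v_i$ for \emph{small} $t$ is governed by $\operatorname{sign}(G_1(v_i))$, as your Morse analysis shows --- but there is no a priori reason this equals $\sigma_i$, which is the smoothing type at $t=1$. The segment from $G$ to $G_1$ can cross the discriminant. Equivalently: the smoothing type is read off the critical \emph{value} $G_1(v_i')$, where $v_i'$ is the critical point of $G_1$ near $v_i$, and $G_1(v_i')$ and $G_1(v_i)$ differ by a second-order term that can flip the sign when $G_1(v_i)$ is itself small. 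Your ``choose the deformation generically'' does not resolve this, because what you would need is a direction $H$ at $G$ pointing into the $\sigma$-chamber with $H(v_i)\neq 0$ --- and the existence of such an $H$ is exactly the statement you are trying to prove.

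The underlying point is that the logical order is the reverse of what your last paragraph suggests. In Brusotti's paper and in Gudkov's survey, one \emph{first} proves that the nodes of a nodal curve of degree $d$ impose independent conditions on polynomials of degree $\le d$ (this is the substance of Proposition~\ref{prop:signs}), and \emph{then} deduces Theorem~\ref{th:brusotti} via Corollary~\ref{cor:signs}. So appealing to Brusotti does not let you avoid the independence-of-nodes argument; it presupposes it. Your observation that for line arrangements the independence is immediate (via products of subsets of the lines) is correct and is in fact all the paper needs for its applications.
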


\begin{corollary}\label{cor:signs}In the above notation, if $\Ga$ is the zero locus of a  real polynomial $G(x,y)$ and $H(x,y)$ is a polynomial realizing a chosen sign pattern for the crunodes/acnodes of $\Ga$ and such that $\Ga$ and the curve $H(x,y)=0$ have no common singularities (including the line at infinity), then there exists $\epsilon_{G,H}>0$ such that for any $0<\epsilon\le \epsilon (G,H)$, the curve given by $G(x,y)+\epsilon H(x,y)=0$ is non-singular and realizes the prescribed smoothening type of all crunodes/acnodes of $\Ga$. 
\end{corollary}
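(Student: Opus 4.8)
The plan is to reduce the statement to a local normal-form computation at each node combined with a global compactness argument; this is really a concrete, one-parameter version of Brusotti's theorem (Theorem~\ref{th:brusotti}) for the particular family $\{G+\eps H=0\}$.

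\emph{Step 1 (local analysis at the nodes).} First I would fix a real node $v=(a,b)$ of $\Ga$ and set $c:=H(a,b)$. Since $\Ga$ and the curve $\{H=0\}$ have no common point, $c\ne0$, so $v$ lies in the open locus where $G+\eps H$ differs from $G$ by a nonzero constant to first order. If $v$ is a crunode, the real Morse lemma gives local coordinates centered at $v$ in which $G$ becomes $xy$; then for small $\eps>0$ the curve $\{G+\eps H=0\}$ is a smooth branch of a hyperbola $xy=-\eps c+o(\eps)$, and which of the two opposite pairs of local quadrants of the complement of $\Ga$ it joins is dictated by $\operatorname{sign}(c)$, i.e.\ by the sign assigned to $v$ — exactly the rule recalled before Proposition~\ref{prop:signs}. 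If $v$ is an acnode, the analogous normal form $\pm(x^2+y^2)$ shows that $\{G+\eps H=0\}$ is either a small oval or empty near $v$, again according to $\operatorname{sign}(c)$ and in agreement with the stated convention. In each case this produces a neighbourhood $U_v$ of $v$ and a threshold $\eps_v>0$ such that for $0<\eps\le\eps_v$ the deformed curve is smooth in $U_v$ and realizes the prescribed smoothing there.

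\emph{Step 2 (global smoothness).} Next I would homogenize and complexify, and consider the pencil $C_\eps:=\{\tilde G^\bC+\eps\tilde H^\bC=0\}\subset\bC P^2$, so that $C_0=\tilde\Ga^\bC$. The only singular points of $C_0$ are the finitely many nodes, which are handled by Step 1. At every other point $p\in\tilde\Ga^\bC$ the curve is smooth, so $\nabla\tilde G^\bC(p)\ne0$, and there is a neighbourhood $U_p$ and a threshold $\eps_p>0$ with $\nabla(\tilde G^\bC+\eps\tilde H^\bC)\ne0$ on $U_p$ for $|\eps|\le\eps_p$; here the hypothesis that $\Ga$ and $\{H=0\}$ share no singular point \emph{including along the line at infinity} is exactly what prevents the members of the pencil from acquiring a singularity there. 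Since $\tilde\Ga^\bC$ is compact, finitely many of the $U_v$ and $U_p$ cover it; let $W$ be their union and $\eps_1$ the minimum of the corresponding thresholds, so $C_\eps$ is smooth in $W$ for $0<\eps\le\eps_1$. Finally, because $\tilde\Ga^\bC=\{\tilde G^\bC=0\}\subset W$ and $\bC P^2\setminus W$ is compact, $C_\eps\subset W$ once $\eps$ is small enough, say $0<\eps\le\eps_2$.

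\emph{Conclusion and main obstacle.} Setting $\eps(G,H):=\min\bigl(\eps_1,\eps_2,\min_v\eps_v\bigr)$, for $0<\eps\le\eps(G,H)$ the curve $\{G+\eps H=0\}$ lies in $W$, is nonsingular (also at infinity), and realizes the prescribed smoothing at each node. The step I expect to be the genuine obstacle is the global one in Step 2: arranging that a \emph{single} $\eps$ works over the entire projective complexified curve and that no new singularity of $\{G+\eps H=0\}$ escapes to the line at infinity as $\eps\to0$. Both are controlled by compactness of $\tilde\Ga^\bC$ in $\bC P^2$ together with the no-common-singularity hypothesis, and this is precisely the place where that hypothesis is indispensable; the local node analysis, by contrast, is a routine application of the Morse lemma once the sign conventions of the discussion preceding Proposition~\ref{prop:signs} are invoked.
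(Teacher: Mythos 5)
The paper states this corollary without proof, treating it as immediate from Brusotti's theorem and Proposition~\ref{prop:signs}, so there is no written argument to compare against; what you give is the standard local-normal-form-plus-compactness proof that the authors evidently have in mind, and it is essentially correct. Two points deserve attention. First, your claim that $c=H(v)\neq 0$ ``since $\Ga$ and $\{H=0\}$ have no common point'' misquotes the hypothesis, which only excludes common \emph{singularities}; the nonvanishing of $H$ at each real node in fact comes for free from the assumption that $H$ realizes a sign pattern at the crunodes/acnodes, since a sign is by definition nonzero. Second, in Step 2 you assert that the only singular points of $C_0=\tilde\Ga^\bC$ are ``the finitely many nodes, which are handled by Step 1,'' but Step 1 treats only the \emph{real affine} nodes; in the generality of the discussion preceding Theorem~\ref{th:brusotti}, $\Ga^\bC$ may also carry conjugate pairs of non-real nodes, and $\tilde\Ga^\bC$ may be singular on the line at infinity. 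These must be smoothed as well for the conclusion ``non-singular'' to hold, and the same local computation ($xy=-\eps c+o(\eps)$ with $c\neq 0$, now over $\bC$) does the job precisely because the hypothesis guarantees that $\tilde H$ does not vanish at any singular point of $\tilde\Ga^\bC$ --- this, rather than the behaviour near smooth points, is where that hypothesis is genuinely indispensable. With these two repairs your argument is complete.
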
 

We will mainly be applying Brusotti's theorem to \emph{generic} real line arrangements in $\bR^2$, where ``generic" means that no two lines are parallell and no  three lines intersect at one point.  In this case, one has only crunodes among the real nodes. The following claim will be useful in our considerations.

\begin {proposition}\label{lm:smallcomp} Given a line arrangement $\cA\subset \bR^2$ of degree $d$, a vertex (crunode) $v$ and a real polynomial $H(x,y)$ of degree at most $d$  which does not vanish at $v$, consider a sufficiently small disk $\mathbb D\subset \bR^2$ centered at $v$ which neither intersects the real zero locus of $H(x,y)$ nor any of the lines of $\cA$ except for those two whose intersection point is $v$. Fix additionally a product $G(x,y)$ of linear forms whose zero locus is $\cA$. Then there exists $\beta (G,H,v)>0$ such that for any $0<\epsilon^2 \le \beta(G,H,v)$, the curve given by $G(x,y)+\epsilon^2 H(x,y)=0$ restricted to $\mathbb D$ has the following properties: 
\begin{itemize}
\item[\rm (i)] it consists of two smooth real branches without inflection points;
\item[\rm (ii)] each branch contains a unique vertex, i.e., a critical point of the curvature. At this point the curvature attains its maximum.  
\end{itemize}
\end{proposition}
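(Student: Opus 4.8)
The plan is to reduce the statement to a one-variable calculation in Euclidean coordinates adapted to $v$. I would first apply a rigid motion of $\bR^2$ carrying $v$ to the origin (rigid motions preserve lines, disks and curvature). Since $\cA$ is generic and $v$ is a crunode, exactly two linear factors of $G$, say $\ell_1$ and $\ell_2$, vanish at $v$, and $G=\ell_1\ell_2\,g$ with $g$ a product of linear forms not vanishing on $\mathbb D$; also $H$ does not vanish on $\mathbb D$ by hypothesis. After a further rotation the lines $\ell_1=0$ and $\ell_2=0$ become symmetric about the $x$-axis, so $\ell_1\ell_2=\gamma\,(y^2-\mu^2x^2)$ with $\gamma\neq 0$ and $\mu=\tan\phi>0$, where $2\phi$ is the angle between them. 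Setting $c:=-H/(\gamma g)$, a smooth function on $\mathbb D$ with $c(v)\neq 0$, the curve $G+\epsilon^2H=0$ is, on $\mathbb D$, defined by
\begin{equation}\label{eq:locmod}
y^2 \;=\; \mu^2x^2+\epsilon^2\,c(x,y).
\end{equation}
The cases $c(v)>0$ and $c(v)<0$ are interchanged by exchanging the roles of $x$ and $y$ (this being the choice of smoothing type), so I would assume $c>0$ on $\mathbb D$, shrinking $\mathbb D$ if necessary.

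Next I would solve \eqref{eq:locmod} for $y$. Since $c>0$, its right side is $\geq\epsilon^2\min_{\mathbb D}c>0$, and for $\epsilon$ small the map $y\mapsto\pm\sqrt{\mu^2x^2+\epsilon^2c(x,y)}$ is a contraction in $y$ (its $y$-derivative is of order $\epsilon$, as the square root is $\geq\epsilon\sqrt{\min_{\mathbb D}c}$). Hence for each $x$ for which the arc stays in $\mathbb D$ there are exactly two solutions $y=\psi_+(x)>0$ and $y=\psi_-(x)<0$; one checks that $\partial_y(G+\epsilon^2H)$ is nonzero along these arcs (it is of order $\epsilon$, as $|\psi_\pm|$ is), so $\psi_\pm$ are smooth by the implicit function theorem, and these are the two branches required in (i). Rescaling by $x=\epsilon X$, $y=\epsilon Y$ gives $\psi_\pm(x)=\epsilon\,\Psi_\pm(X)$ with $\Psi_\pm^2=\mu^2X^2+c(\epsilon X,\epsilon\Psi_\pm)$, and on every fixed compact interval $\Psi_\pm\to\Psi_\pm^0(X):=\pm\sqrt{c_0+\mu^2X^2}$ in $C^\infty$ as $\epsilon\to 0$, where $c_0:=c(v)$: thus near $v$ the branches look like the two halves of the model hyperbola $Y^2-\mu^2X^2=c_0$, while away from $v$ they are $C^\infty$-close to the segments $\ell_i\cap\mathbb D$.

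To finish (i) and prove (ii) I would differentiate \eqref{eq:locmod}. Writing $w:=\psi_\pm^2$ one gets $\psi_\pm''=(2w''w-(w')^2)/(4\psi_\pm^3)$, and after substituting $w=\mu^2x^2+\epsilon^2c$ and absorbing the $\psi_\pm''$ occurring implicitly in $w''$ (legitimate since $|\psi_\pm|^3$ is of order at least $\epsilon^3$),
\[
2w''w-(w')^2 \;=\; \epsilon^2\bigl(4\mu^2\,c(x,\psi_\pm)+R\bigr),
\]
where, using the uniform-in-$\epsilon$ bound $|\psi_\pm'|\le C(\mu)$ (valid since $|\psi_\pm|\ge\mu|x|$), the remainder $R$ is bounded by a constant depending only on $\mu$ and on bounds for $c$ and its first two derivatives near $v$, times the radius of $\mathbb D$, plus a term of order $\epsilon^2$. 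Since $c(v)=c_0>0$, for $\mathbb D$ small the term $4\mu^2c$ dominates, so $2w''w-(w')^2>0$: hence $\psi_+''>0$ and $\psi_-''<0$ throughout, and in particular the curvature $\kappa_\pm=\psi_\pm''/(1+(\psi_\pm')^2)^{3/2}$ never vanishes, giving the ``no inflection points'' part of (i). For (ii), $\kappa_\pm(x)=\epsilon^{-1}K_\pm(X)$ with $K_\pm=\Psi_\pm''/(1+(\Psi_\pm')^2)^{3/2}$; a direct computation gives the model curvature $K_\pm^0(X)=\pm\,\mu^2c_0\bigl(c_0+\mu^2(1+\mu^2)X^2\bigr)^{-3/2}$, which is strictly monotone in $|X|$ with a single, nondegenerate critical point at $X=0$, where $|K_\pm^0|$ is maximal. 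By the $C^\infty$-convergence $\Psi_\pm\to\Psi_\pm^0$ this persists on $|X|\le R$, for any fixed $R$, once $\epsilon$ is small; and for $|X|\ge R$ with $R$ a large fixed constant, where $\mu^2x^2$ dominates $\epsilon^2c$ in \eqref{eq:locmod}, a direct estimate of $\psi_\pm''$ and $\psi_\pm'''$ shows that $\kappa_\pm'$ has a fixed sign (its leading term being of order $\epsilon^2/|x|^4$, which dominates the competing term of order $\epsilon^4/|x|^6$ once $R$ is large), while $|\kappa_\pm|$ is only of order $\epsilon^2/|x|^3$ there, hence smaller than its value, of order $1/\epsilon$, at the vertex. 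Combining the two regions, each branch carries exactly one vertex, where the curvature is maximal in absolute value; undoing the rigid motion proves (ii).

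The delicate point is this last one: controlling the branches not just near $v$, where the rescaling to the model hyperbola makes everything transparent, but on all of $\mathbb D$, so as to rule out spurious inflection points and spurious critical points of the curvature in the transition region between the hyperbola-like part near $v$ and the nearly straight part near $\partial\mathbb D$. This is exactly where the smallness of $\mathbb D$ enters: it forces the leading term $4\mu^2c$ in the estimate for $2w''w-(w')^2$ — bounded below by $2\mu^2c_0>0$ — and likewise the leading term in the companion sign estimate for $\kappa_\pm'$, to dominate the remainder, whose size is controlled by the radius of $\mathbb D$. Everything else is routine: the contraction argument, the implicit function theorem, and the comparison with an explicit hyperbola.
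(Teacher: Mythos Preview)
Your argument is correct and follows the same outline as the paper: both pass to coordinates centered at $v$ in which the two lines are $y=\pm\mu x$, rewrite the perturbed curve as $y^2=\mu^2x^2+\epsilon^2 c(x,y)$ with $c(v)>0$, solve for $y=\psi_\pm(x)$, and compare with the model hyperbola $y^2=\mu^2x^2+\epsilon^2 c_0$. The difference is in packaging. The paper isolates a lemma showing that on a fixed interval $[-A,A]$ the ratio $K_\epsilon/k_\epsilon$ of the true curvature to the hyperbola's curvature converges \emph{uniformly} to an explicit analytic function $\Psi$ with $\Psi(0)=1$; positivity of $K_\epsilon$ (no inflections) follows at once, and the unique maximum is argued, somewhat informally, from the delta-like concentration of $k_\epsilon$. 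You instead compute $2w''w-(w')^2=\epsilon^2(4\mu^2c+R)$ directly for $w=\psi_\pm^2$, which gives the no-inflection claim on all of $\mathbb D$ in one stroke, and you handle the unique vertex by an inner/outer matching ($|X|\le R$ via $C^\infty$-convergence to the model and nondegeneracy of its critical point, $|X|\ge R$ via a sign estimate on $\kappa'$). Your treatment of (ii) is in fact more explicit than the paper's. One small point worth tightening: when you ``absorb'' the implicit $\psi_\pm''$ in $w''$, the actual coefficient you are inverting is $4\psi^3-2\epsilon^2\psi^2 c_y=2\psi^2(2\psi-\epsilon^2 c_y)$, and the reason this is harmless is that $|2\psi|\ge 2\epsilon\sqrt{\min_{\mathbb D}c}$ dominates $\epsilon^2|c_y|$ for small $\epsilon$, not merely that $|\psi|^3\gtrsim\epsilon^3$.
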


\begin{figure}
\begin{center}
\includegraphics[scale=0.2]{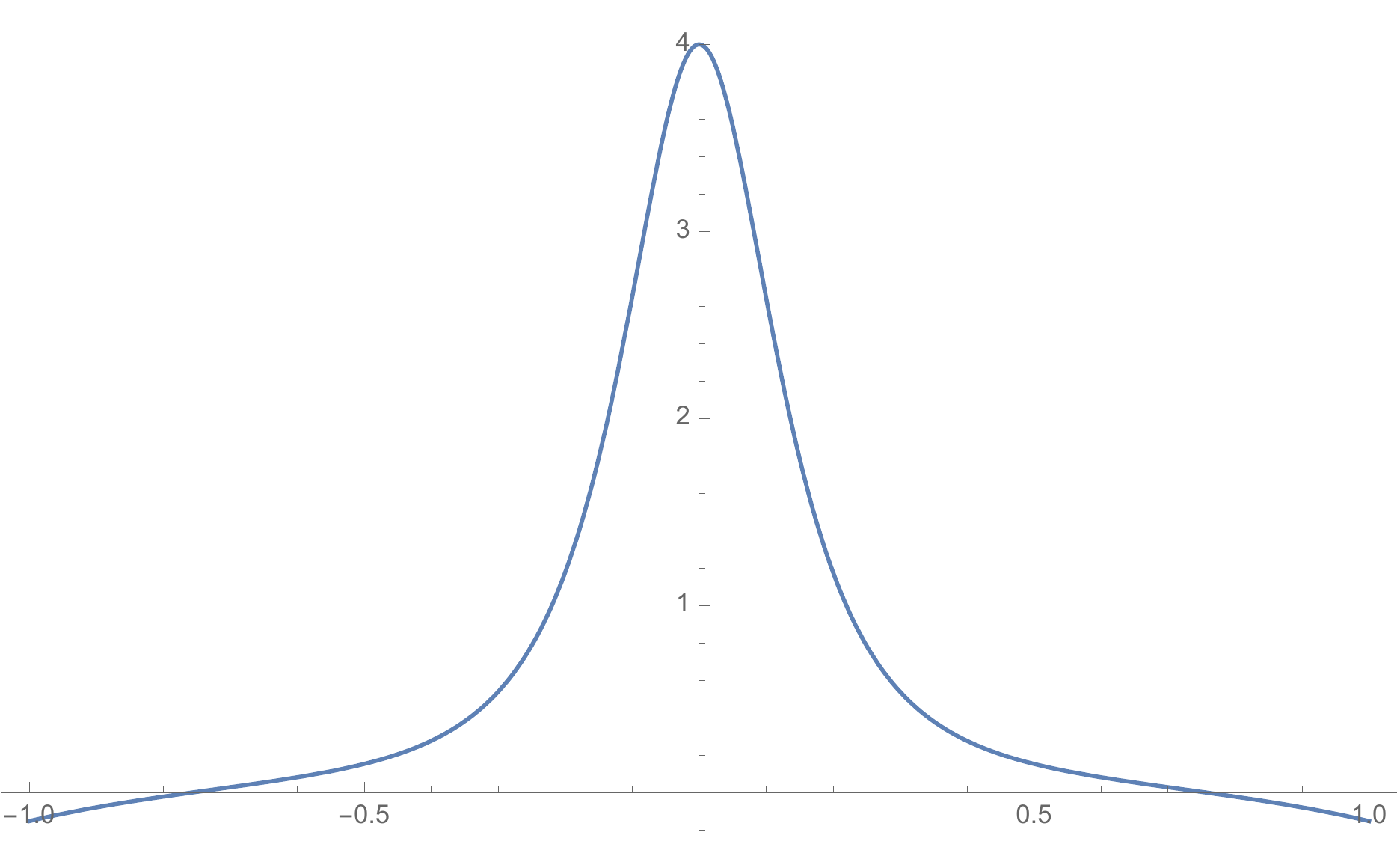}\hskip0.5cm
\includegraphics[scale=0.2]{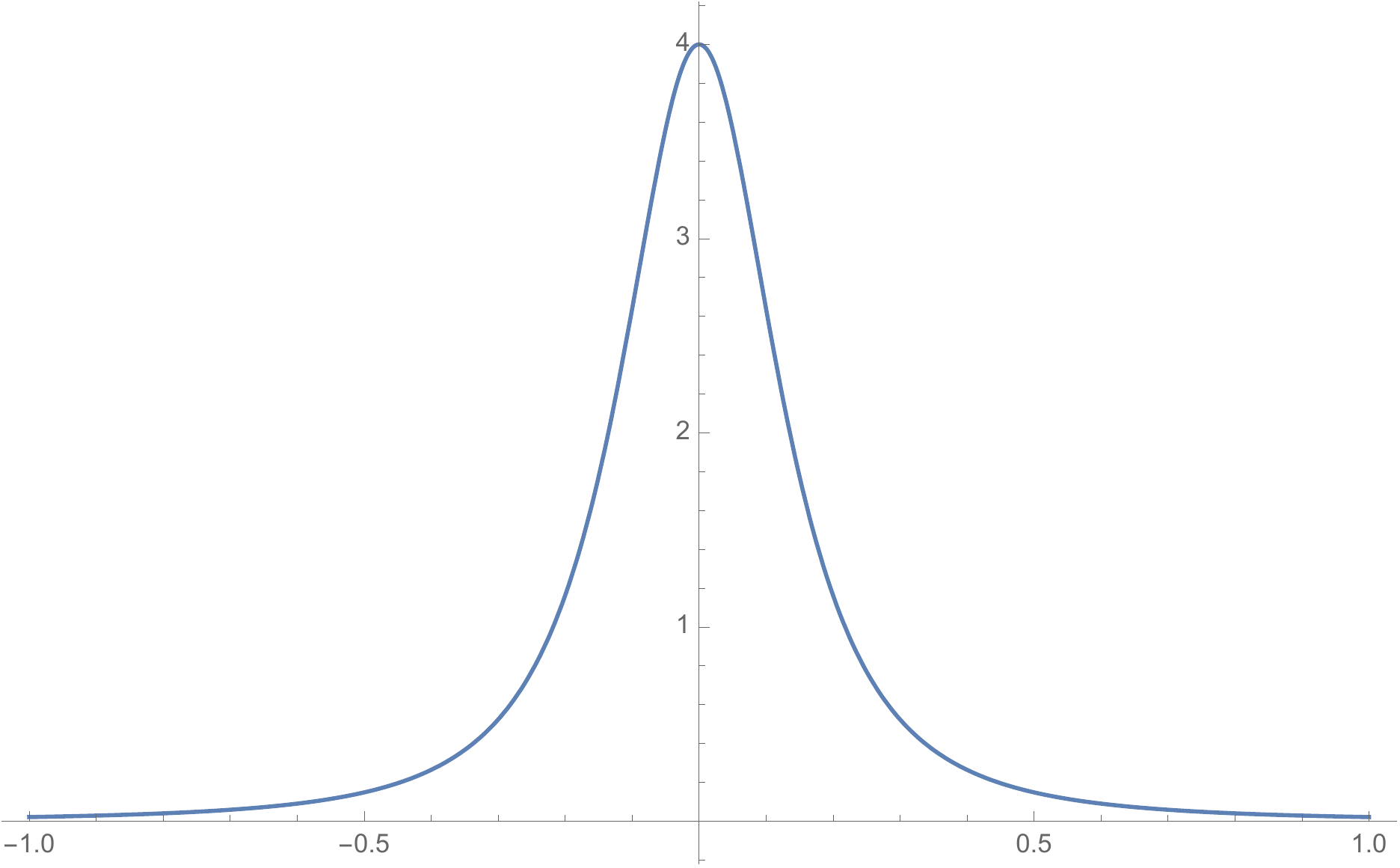}\hskip0.5cm
\includegraphics[scale=0.2]{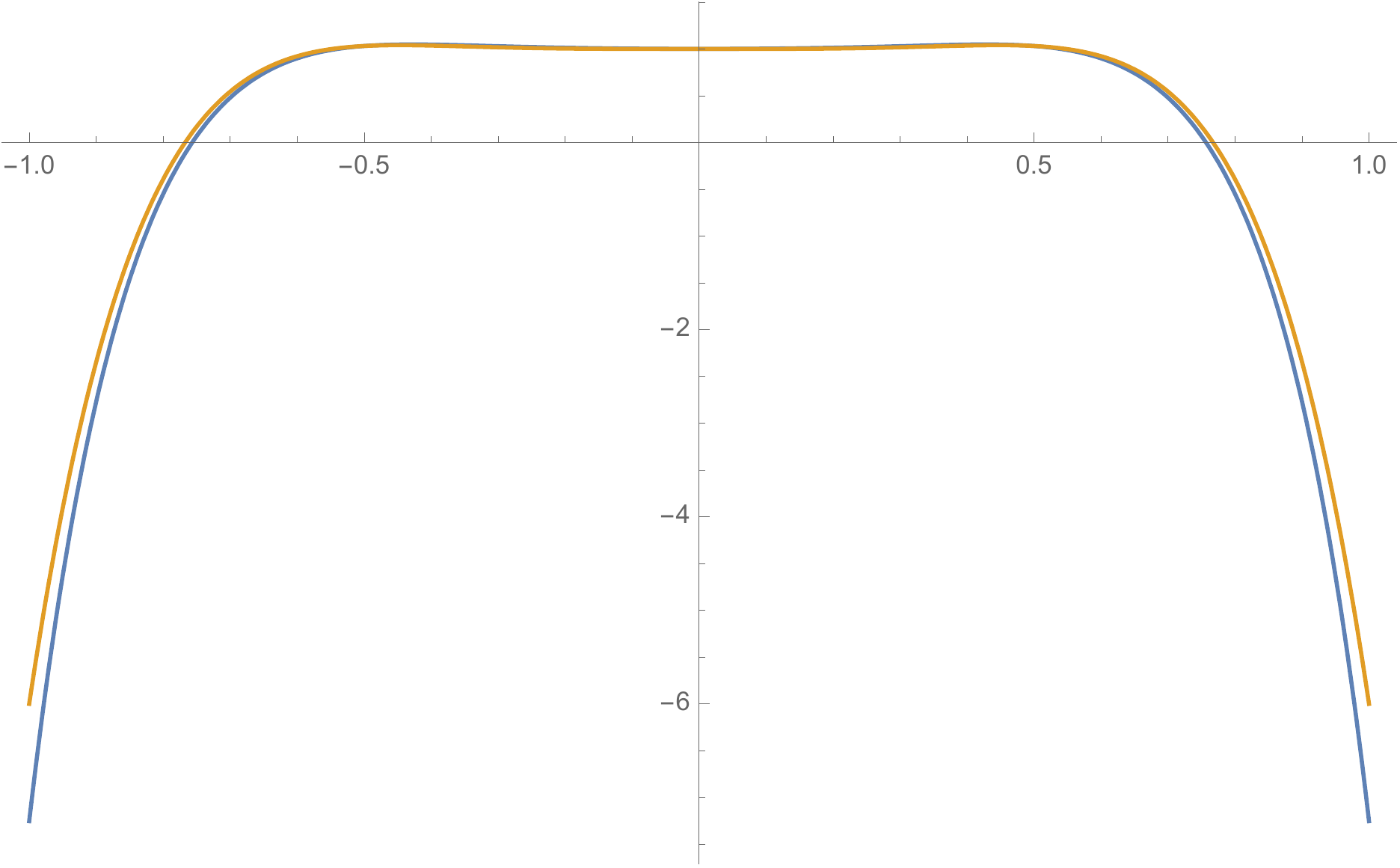}
\end{center}

\caption{The leftmost plot shows the curvature of $y_\eps=\sqrt{x^2+\eps^2(1+x^4-x^6)}$ for $\eps=1/4$, the central plot shows the standard curvature for the same $\eps$ and the rightmost plot shows their quotient together with the theoretical asymptotic quotient $\Psi(x)$, see Lemma~\ref{lm:curvature}.} \label{fig:Curvature}
\end{figure}

The proof of Proposition~\ref{lm:smallcomp}  is based on the next analytic lemma. Consider a family of functions $y_\eps(x)=\sqrt{x^2+\eps^2(1+O(x,\eps))}$, where $O(x,\eps)$ is a function vanishing at the origin and real-analytic in the variables $(x,\eps)$ in some open neighborhood of it.  Therefore $1+O(x,\eps)$ is positive in some sufficiently small fixed rectangle  $x\in [-A,A],\; \eps\in [-e,e]$. Here $\eps$ is  a small real parameter and for each fixed $\eps$, we think of $y_\eps(x)$ as a function of $x\in [-A,A]$. 

Let $K_\eps(x)=\frac{y^{\prime\prime}}{(1+(y^\prime_\eps(x))^2)^{3/2}}$ be the signed curvature of the function $y_\eps(x)$ and  let 
$k_\eps(x)=\frac{\eps^2}{(\eps^2+2x^2)^{3/2}}$ be the signed curvature of the (upper branch of) hyperbola $y= \sqrt{x^2+\eps^2}$, $x\in \bR$. (Notice that 
$k_\eps(x)>0$ for all $x\in \bR$ and that $\int_{-\infty}^\infty k_\eps(x)dx=\sqrt{2}$ for all  $\eps\in \bR$ with $\eps\neq 0$.)

\begin{lemma}\label{lm:curvature} In the above notation, when $\eps\to 0$,  then on the interval $[-A,A]$ the quotient $\frac{K_\eps(x)}{k_\eps(x)}$ uniformly converges   to the function 
\[\textstyle \Psi(x)=1+O(x,0)-xO^\prime(x,0)+\frac {1}2 x^2 O^{\prime\prime}(x,0).\]
\end{lemma}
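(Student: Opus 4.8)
The plan is to compute $K_\eps(x)$ explicitly in terms of $y_\eps$ and its derivatives, substitute $y_\eps(x)=\sqrt{x^2+\eps^2 g(x,\eps)}$ with $g(x,\eps)=1+O(x,\eps)$, and then track the $\eps\to0$ asymptotics of numerator and denominator separately, dividing by the corresponding quantities coming from $k_\eps(x)$. First I would write $y_\eps^2 = x^2 + \eps^2 g$, so that $2y_\eps y_\eps' = 2x + \eps^2 g_x$ and differentiating again $2(y_\eps')^2 + 2 y_\eps y_\eps'' = 2 + \eps^2 g_{xx}$. From these, $y_\eps' = (2x+\eps^2 g_x)/(2y_\eps)$ and $y_\eps'' = \bigl(2 + \eps^2 g_{xx} - 2(y_\eps')^2\bigr)/(2y_\eps)$. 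The key observation is the scaling: on the interval $[-A,A]$ one has $y_\eps(x)\sim\sqrt{x^2+\eps^2}$ up to bounded factors, so it is natural to introduce the rescaled variable near $0$, but in fact the cleanest route is to multiply through by suitable powers of $y_\eps$ and compare directly with $k_\eps$.

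The main computational step is to show
\[
\frac{K_\eps(x)}{k_\eps(x)} = \frac{y_\eps''}{(1+(y_\eps')^2)^{3/2}}\cdot\frac{(\eps^2+2x^2)^{3/2}}{\eps^2}.
\]
Here I would first handle the ratio of the $3/2$-power factors. Since $1+(y_\eps')^2 = 1 + (2x+\eps^2 g_x)^2/(4y_\eps^2) = \bigl(4y_\eps^2 + (2x+\eps^2 g_x)^2\bigr)/(4y_\eps^2)$ and $4y_\eps^2 = 4x^2+4\eps^2 g$, the numerator inside is $4x^2+4\eps^2 g + 4x^2 + 4x\eps^2 g_x + \eps^4 g_x^2 = 8x^2 + 4\eps^2(g + x g_x) + O(\eps^4)$. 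Comparing with $\eps^2+2x^2$, which rescales to $2(4x^2+2\eps^2)\cdot\frac14$... the point is that $4y_\eps^2+(2x+\eps^2 g_x)^2 = 2(\eps^2+2x^2)\cdot 2 + 4\eps^2(g-1+xg_x) + O(\eps^4)$, wait — more carefully $8x^2 + 4\eps^2 = 4(2x^2+\eps^2)$, so the leading term of $4y_\eps^2+(2x+\eps^2g_x)^2$ is exactly $4(2x^2+\eps^2)$ and the correction is $4\eps^2(g-1 + xg_x) = 4\eps^2\bigl(O(x,\eps) + x O'(x,\eps)\bigr)$. Dividing, $1+(y_\eps')^2 = \frac{4(2x^2+\eps^2)}{4y_\eps^2}\bigl(1 + \eps^2\frac{O+xO'}{2x^2+\eps^2} + \cdots\bigr)$, and hence $\bigl((\eps^2+2x^2)/(1+(y_\eps')^2)\bigr)^{3/2} = y_\eps^3\bigl(1 + O(\eps^2/(2x^2+\eps^2))\bigr)$. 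Similarly $y_\eps'' = (2 - 2(y_\eps')^2 + \eps^2 g_{xx})/(2y_\eps)$, and $2 - 2(y_\eps')^2 = 2 - \bigl(2 - 4\eps^2(g+xg_x) + \cdots\bigr)/\bigl(\text{stuff}\bigr)$ — after the algebra $y_\eps'' \cdot \frac{(\eps^2+2x^2)^{3/2}}{\eps^2}\cdot\frac{1}{(1+(y_\eps')^2)^{3/2}}$ should collapse, using $2y_\eps^2 = 2x^2+2\eps^2 g$, to $1 + O(x,0) - xO'(x,0) + \frac12 x^2 O''(x,0) + (\text{terms} \to 0)$.

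The hard part — and the reason this needs to be done with care rather than waved through — is the uniformity of the convergence on the \emph{whole} interval $[-A,A]$, including behavior near $x=0$ where $k_\eps(x)$ is of order $1/\eps$ and $y_\eps$ is of order $\eps$, so that the $3/2$-powers and the $1/\eps^2$ factor are all individually blowing up or vanishing and only the combination is well-behaved. I would organize this by writing everything as a function of $x$ and $\eps$ that is real-analytic (or at least continuous) on the closed rectangle $[-A,A]\times[-e,e]$ after clearing the apparent singularities: concretely, I would show $\frac{K_\eps(x)}{k_\eps(x)} = \Phi(x,\eps)$ where $\Phi$ extends continuously to $\eps=0$ with $\Phi(x,0)=\Psi(x)$, and then uniform convergence is just continuity of $\Phi$ on a compact set. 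The bookkeeping obstacle is tracking the $O(\eps^2)$, $O(\eps^4)$ error terms through the $3/2$-power expansions while making sure the implied constants are uniform in $x\in[-A,A]$; expanding $(1+h)^{3/2} = 1 + \tfrac32 h + O(h^2)$ is legitimate here because the relevant $h = \eps^2\frac{O+xO'+\cdots}{2x^2+\eps^2}$ stays bounded (indeed small) uniformly once $e$ is chosen small, since $|O(x,\eps) + xO'(x,\eps)|\le C(|x|+|\eps|)$ near the origin kills the potential $1/x^2$ blow-up. Once that is pinned down, collecting the $\eps^0$ terms gives exactly $\Psi(x)=1+O(x,0)-xO'(x,0)+\frac12 x^2 O''(x,0)$, completing the proof.
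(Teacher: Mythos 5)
Your proposal is correct and follows essentially the same route as the paper: both compute $K_\eps$ by implicit differentiation of $\Phi=y_\eps^2=x^2+\eps^2(1+O)$, form the quotient with $k_\eps$ so that the factor $(\eps^2+2x^2)^{3/2}$ cancels the apparent singularity at the origin, and deduce uniform convergence from the fact that the resulting expression extends continuously (indeed analytically up to the $3/2$-power) to the compact rectangle, using exactly the bound $\eps^2(O+xO')/(2x^2+\eps^2)\to 0$ that you identify. Your algebra checks out against the paper's formula \eqref{eq:eqKeps}, so this is the same proof in slightly different notation.
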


The statement of Lemma~\ref{lm:curvature}  is illustrated in Fig.~\ref{fig:Curvature}. 

\begin{proof} Set $\Phi(x):=x^2+\eps^2(1+O(x,\eps))$. With this notation we get
\[y^\prime_\eps=\frac{\Phi^\prime}{2\Phi^{1/2}},\quad y^{\prime\prime}_\eps=\frac{2\Phi^{\prime\prime}\Phi-(\Phi^\prime)^2}{4\Phi^{3/2}},\quad 
K_\eps=\frac{y^{\prime\prime}}{(1+(y^\prime_\eps(x))^2)^{3/2}}=2\frac {2\Phi^{\prime\prime}\Phi-(\Phi^\prime)^2}  {(4\Phi+(\Phi^\prime)^2)^{3/2}}.
\]
(Recall that all derivatives are taken with respect to the variable $x$.) 
Substituting the expressions for $\Phi, \Phi^\prime$ and $\Phi^{\prime\prime}$ in the third formula, we obtain
\[K_\eps(x)=2\eps^2 \frac {4+4O+2x^2O^{\prime\prime}+2\eps^2O^{\prime\prime}+2\eps^2 O O^{\prime\prime}-4x O^\prime-\eps^2(O^\prime)^2}{(4\eps^2+8x^2+4\eps^2 O+4\eps^2x O^\prime+\eps^4(O^\prime)^2)^{3/2}}.
\]
Dividing by $k_\eps(x)$ we get
\begin{equation}\label{eq:eqKeps}
\frac{K_\eps(x)}{k_\eps(x)}=2\frac {(4+4O+2x^2O^{\prime\prime}+2\eps^2O^{\prime\prime}+2\eps^2 O O^{\prime\prime}-4x O^\prime-\eps^2(O^\prime)^2 )(\eps^2+2x^2)^{3/2}}{(4\eps^2+8x^2+4\eps^2 O+4\eps^2x O^\prime+\eps^4(O^\prime)^2)^{3/2}}.
\end{equation}
To prove the pointwise convergence, assume that $x\neq 0$ and let $\eps\to 0$. Then using real analyticity of $O(x,\eps)$ and $\Phi(x,\eps)$, one gets
\begin{align*}
\lim_{\eps\to 0} \frac{K_\eps(x)}{k_\eps(x)}=&2 \frac {(4+4O(x,0)-4xO^\prime(x,0)+2x^2O^{\prime\prime}(x,0)) (2x^2)^{3/2}} {(8x^2)^{3/2}}\\
=&1+O(x,0)-xO^\prime(x,0)+\frac{x^2O^{\prime\prime}(x,0)}{2}=\Psi(x).
\end{align*}
For $x=0$, the function $\Psi(x)$ continues by analyticity implying that $\Psi(0)=1$ since $O(0,0)=0$.  

\smallskip
To prove the uniform convergence, consider again \eqref{eq:eqKeps}. The factor \[(4+4O+2x^2O^{\prime\prime}+2\eps^2O^{\prime\prime}+2\eps^2 O O^{\prime\prime}-4x O^\prime-\eps^2(O^\prime)^2 )\]
in the right-hand side is real analytic and non-vanishing at the origin. Thus  if we can prove that the remaining  factor 
\[\frac{(\eps^2+2x^2)^{3/2}}{(4\eps^2+8x^2+4\eps^2 O+4\eps^2x O^\prime+\eps^4(O^\prime)^2)^{3/2}}\]
 in the right-hand side of \eqref{eq:eqKeps} is real analytic and non-vanishing at the origin, the uniform convergence will follow. Let us consider the inverse 
$\left(\frac{4\eps^2+8x^2+4\eps^2 O+4\eps^2x O^\prime+\eps^4(O^\prime)^2}{\eps^2+2x^2}\right)^{3/2}$ of the latter expression. To this end, it suffices to show that 
\[\kappa(x,\eps)=\frac{4\eps^2+8x^2+4\eps^2 O+4\eps^2x O^\prime+\eps^4(O^\prime)^2}{\eps^2+2x^2}=4\frac{\eps^2+2x^2+\eps^2 O+\eps^2x O^\prime+\eps^4(O^\prime)^2/4}{\eps^2+2x^2}\]
has a non-vanishing limit at the origin.  To prove this claim, consider 
\begin{multline*}
\lim_{(x,\eps)\to (0,0)}\frac{\eps^2+2x^2+\eps^2 O+\eps^2x O^\prime+\eps^4(O^\prime)^2/4}{\eps^2+2x^2}\\
=\lim_{\sqrt{\eps^2+2x^2}\to 0}\left(1+\frac{\eps^2 O}{\eps^2+2x^2}+\frac{\eps^2x O^\prime}{\eps^2+2x^2}+\frac{\eps^4(O^\prime)^2/4}{\eps^2+2x^2}\right).
\end{multline*}
Since $O(x,\eps)$ and $x$ vanish at the origin, the limits at the origin of the second and third terms in the right-hand side of the latter expression  are $0$. The last term also has limit $0$, since it contains $\eps^4$ in the numerator. Thus the limit of  $\kappa(x,\eps)$ at the origin exists and equals $4$. The result follows.  
\end{proof} 

\begin{remark} \label{rm:rm2}{\rm A  rather simple rescaling shows that a statement similar to Lemma~\ref{lm:curvature} holds  in a more general family of curves $y_\eps(x)=a\sqrt{x^2+\eps^2(1+O(x,\eps))}$
for the quotient of the curvature $K_\eps(x)$  and 
the standard curvature  $k_\eps(x)= \frac{a\eps^2} {(\eps^2+(1+a^2)x^2)^{3/2}}$ of (the branch of) the hyperbola $y=a\sqrt{x^2+\eps^2}$, where $a$ is a fixed positive constant. The original case corresponds to $a=1$. } 
\end{remark}  

\begin{proof}[Proof of Proposition~\ref{lm:smallcomp}]   In the above notation,  consider the family of real curves $\Upsilon_\eps: \{ G(x,y)+\eps^2 H(x,y)=0 \}$.  In a small neighborhood $\mathbb D$ of the chosen vertex $v$ and for sufficiently small $\eps$, the restriction of  $\Upsilon_\eps$ to $\mathbb D$ is a desingularization of the crunode at $v$ of a chosen type, since $\eps>0$ and $H(x,y)$ does not vanish av $v$. Let us choose affine  coordinates $(\tilde x, \tilde y)$ centered at $v$ which are  obtained from the initial coordinates $(x,y)$ by a translation and rotation such that the pair of lines belonging to $\cA$  and intersecting at $v$ will be given by $\tilde y=\pm a\tilde x$ for some positive $a$. These coordinates  will be uniquely defined if we additionally require that  in a small neighborhood of $v$ the family $\Upsilon_\eps$ will be close to the family of hyperbolas $\tilde y=\pm a\sqrt {\tilde x^2+\eps^2}$.  Now in these coordinates $(\tilde x, \tilde y)$, the curve $\Upsilon_\eps$ will be given by 
\[(\tilde y^2-a^2\tilde x^2)L(\tilde x, \tilde y)+\eps^2 H(\tilde x, \tilde y)=0.\] 
Here (after possible rescaling of the equation and parameter $\eps$) we can assume that 
$L(\tilde x,\tilde y)=1+ \dots$ and $H(\tilde x, \tilde y)=1 + \dots$, where $\dots$ stand for higher order terms. Notice that $L(\tilde x,\tilde y)$ is the product of linear forms defining the 
 lines of $\cA$ other than the two given by $(\tilde y^2-a^2\tilde x^2)$. From the latter equation we obtain 
\begin{equation}\label{eq:extra}
\tilde y^2=a^2\tilde x^2+\eps^2\frac {1 + C(\tilde x, \tilde y)}{1 + D(\tilde x, \tilde y)}, 
\end{equation}
where $C(\tilde x, \tilde y)$ and $D(\tilde x, \tilde y)$ are real analytic functions vanishing at the origin. 
Expanding the functional coefficient at $\eps^2$  in \eqref{eq:extra} near the origin in coordinates $(\tilde x, \tilde y)$ and using the implicit function theorem, we get that \eqref{eq:extra} determines the family of curves $\tilde y_\eps = \pm a \sqrt{ \tilde x^2+\eps^2(1+O(\tilde x,\eps))}$. Here $O(\tilde x,\eps)$ is a well-defined  function which  vanishes at the origin and is real analytic in some neighborhood of it.  Dropping tildas we see that the case $y_\eps =  \sqrt{x^2+\eps^2(1+O(x,\eps))}$ is exactly the one treated in Lemma~\ref{lm:curvature} and the case $a>0$ is mentioned in Remark~\ref{rm:rm2}. The case  $a<0$ is obtained from the case $a>0$ by a trivial variable change. 

Furthermore, by Lemma~\ref{lm:curvature}, the curvature $K_\eps(x)$ of $\Upsilon_\eps$ is strictly positive for all sufficiently small $\eps$ and $x$ lying in an a priori  fixed small neighborhood of $v$. Thus $\Upsilon_\eps$  has no inflection points in this neighborhood. Finally, near the origin and for small $\eps$,  $K_\eps(x)$ behaves very much like the standard curvature $k_\eps(x)$ since (i) their quotient tends to a positive constant at $0$;  (ii) for $\eps\to 0$, the standard curvature $k_\eps(x)$ tends to $\sqrt{2} \delta (0)$ ,  where $\delta(0)$ is Dirac's delta function supported at the origin. Therefore,  for all sufficiently small $\eps$,  $K_\eps(x)$ has a unique maximum near the origin.  Finally, for $a>0$, the situation is completely parallel. 
 \end{proof}

To move further, we need more definitions.  By an \emph{edge}  of a real line arrangement $\cA\subset \bR^2$ we mean a connected component of $\cA\setminus V(\cA)$ where $V(\cA)$ is the set of its vertices (nodes). An edge is called \emph{bounded} if both its endpoints are vertices, and \emph{unbounded} otherwise.  Given a small resolution $\RR$ of $\cA$ and  a bounded edge $e\in \cA$, we denote by $\RR_e\subset \RR$ the restriction of $\RR$ onto a sufficiently small neighborhood $U_e\subset \bR^2$ of $e$. Obviously, for any small resolution $\RR$ of $\cA$, $\RR_e$ consists of three connected components, see  Fig.~\ref{pic:edge}. We say that $\RR$ \emph{respects} $e$ if each of the three connected components of $\RR_e$   is close to the union of edges bounding a single connected domain of $U_e\setminus \cA$, see Fig.~\ref{pic:edge}\,(left); otherwise  we say that $\RR$ \emph{twists} the edge $e$, see Fig.~\ref{pic:edge}\,(right). In the first case,  the edge $e$ is called \emph{respected} by 
 $\RR$ and in the second case \emph{twisted} by  $\RR$.  The  three connected components of $\RR_e$ are divided into two \emph{short} and one \emph{long} where the long one is stretched along $e$ and each of the two short ones is completely located near the respective vertex of $e$. 
\smallskip 
Proposition~\ref{lm:smallcomp} has the following consequence.

\begin{figure}[t]
     \centering
     \begin{subfigure}[t]{0.49\textwidth}
         \centering
      \includegraphics[scale=0.35]{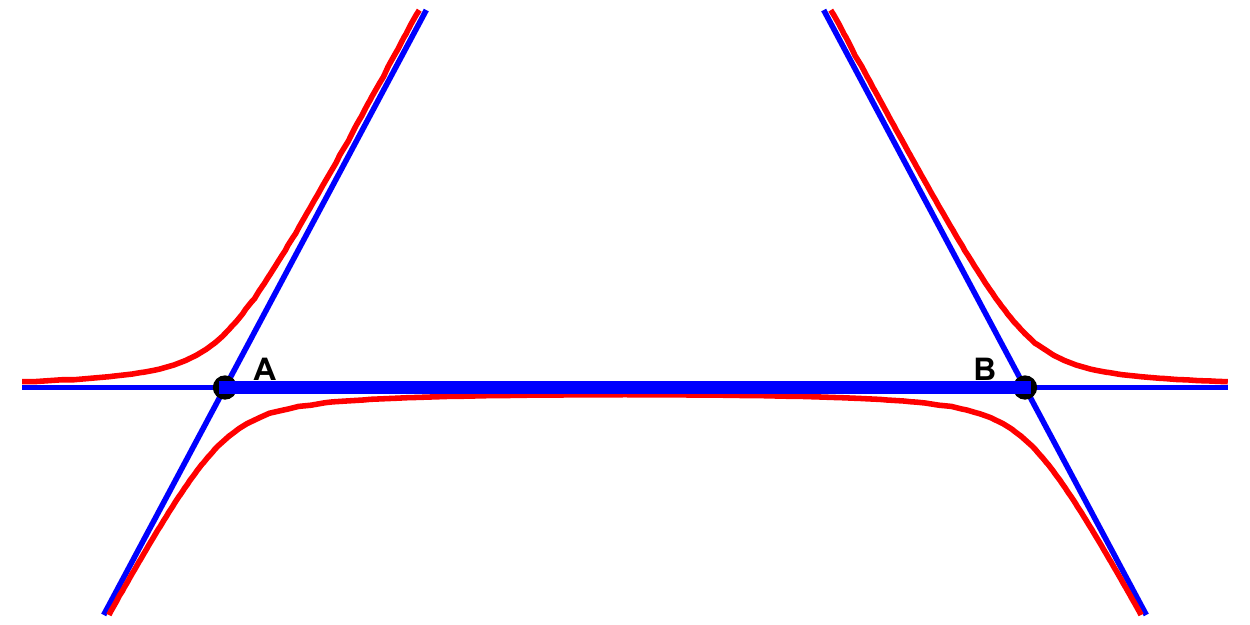} 
       \caption{The resolution respects the the bounded edge.}\label{pic:edgeleft}
        \label{fig:conic1a}
     \end{subfigure}
     \hfill
     \begin{subfigure}[t]{0.49999\textwidth}
         \centering
        \includegraphics[scale=0.35]{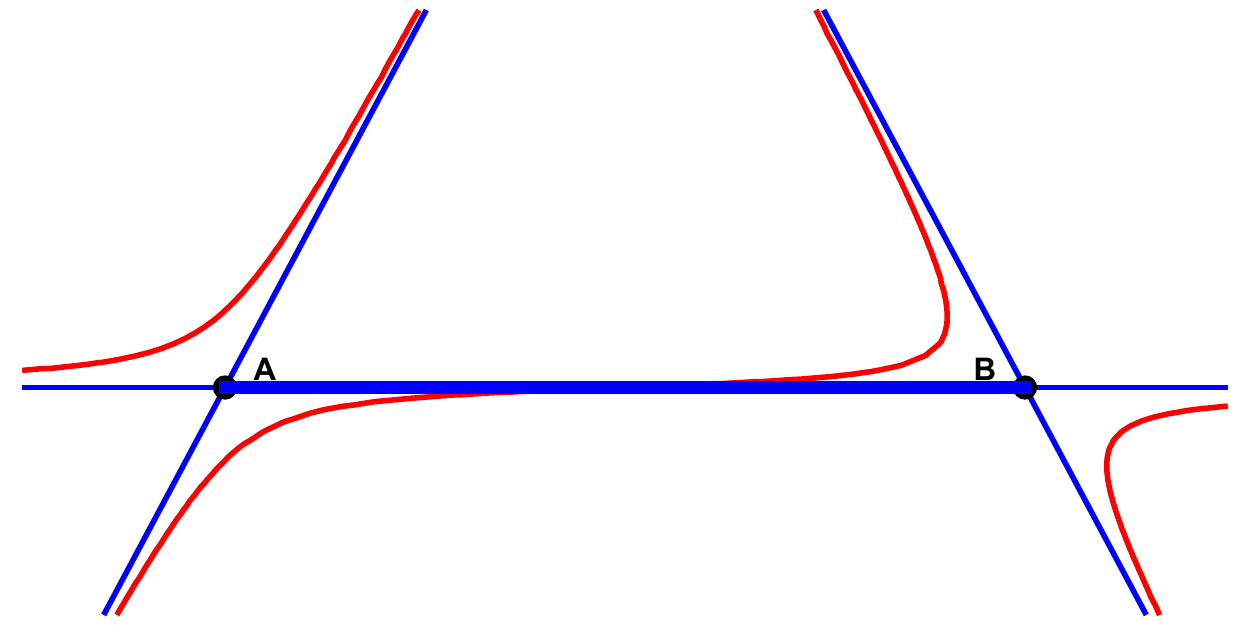}
        \caption{The resolution twists the the bounded edge.}\label{pic:edgeright}   
     \end{subfigure} \hfill
     \caption{Small resolutions of a line arrangement around the bounded edge $AB$.} \label{pic:edge}
\end{figure}

\begin{corollary}\label{lm:edges} Given a generic line arrangement $\cA\subset \bR^2$ and its sufficiently small real deformation $\RR$,  the following facts hold: 
\begin{itemize}
\item[\rm (i)] If $\RR$  respects the edge $e$, then the short components of $\RR_e$ have no inflection points while the long component has an even number inflection points (possibly none), see  Fig.~\ref{pic:edge} (left);
\item[\rm (ii)]  If $\RR$  twists $e$, then the short  components of $\RR_e$ have no inflection points while the long component has an odd number of inflection points, see Fig.~\ref{pic:edge} (right); 
\item[\rm (iii)]  If $\RR$  respects the edge $e$, then there are at least five extrema of curvature on the three components of $\RR_e$; namely  one maximum on each of the short components and  two maxima close to the vertices,  plus an odd number of additional extrema on the long component; 
\item[\rm (iv)]  If $\RR$  twists the edge $e$, then there are at least four extrema of curvature on the three components of $\RR_e$; namely,  one maximum on each short component and  two maxima (of the absolute value of the curvature) on the long component close to the vertices. 
\end{itemize}
\end{corollary}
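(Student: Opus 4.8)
The plan is to reduce the statement to the local description of $\RR$ near the two endpoints $A,B$ of $e$ supplied by Proposition~\ref{lm:smallcomp}, and then to run an elementary parity count on the middle of the long component of $\RR_e$.

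First I would fix disks $\mathbb D_A,\mathbb D_B$ around $A,B$ as in Proposition~\ref{lm:smallcomp}, and then shrink the neighbourhood $U_e$ of $e$ so that the parts of $U_e$ lying near $A$ and near $B$ are contained in $\mathbb D_A$ and $\mathbb D_B$ respectively. Inside $\mathbb D_A$ (resp.\ $\mathbb D_B$) the curve $\RR$ is a desingularization of a two--line crossing, so by Proposition~\ref{lm:smallcomp}(i) each of its two branches is convex with no inflection point, and by (ii) each branch carries exactly one critical point of curvature, at which $|K|$ is maximal. Now each short component of $\RR_e$ is one of the two branches of the desingularization at its vertex, whereas the part of the long component lying near $A$ (resp.\ near $B$) is the \emph{other} branch there --- the one that contains the germ of $e$. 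Consequently: the short components contain no inflection point and each carries exactly one curvature extremum, a maximum of $|K|$; and the long component contains no inflection point inside $\mathbb D_A\cup\mathbb D_B$ and carries at least one curvature extremum close to $A$ and one close to $B$, again maxima of $|K|$. In particular every inflection point of $\RR_e$, and every curvature extremum other than these four, lies on the middle stretch of the long component, outside $\mathbb D_A\cup\mathbb D_B$.

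Next I would examine the long component $\ga$, oriented from $A$ to $B$. Its signed curvature $K$ is nonzero at both ends, as those ends lie in $\mathbb D_A$ and $\mathbb D_B$ where $\RR$ has no inflection point; hence, for a generic deformation (so that all inflections are simple), the number of inflection points of $\ga$ has parity $0$ if $K$ has equal signs at its two ends and parity $1$ if the signs differ. The crucial point is that the sign of $K$ at the $A$--end of $\ga$ is governed by which side of the line through $e$ the arc $\ga$ occupies near $A$: once one side of that line is declared positive, a direct local computation with the model curves $\tilde y=\pm a\sqrt{\tilde x^2+\eps^2(1+O)}$ from the proof of Proposition~\ref{lm:smallcomp} shows that $\ga$ near $A$ is a convex hyperbola--like branch joining the germ of $e$ to a germ of the second line through $A$, and that traversing it from $A$ towards $B$ the tangent turns monotonically through an angle in $(0,\pi)$, with sign $+$ when $\ga$ lies on the positive side near $A$ and sign $-$ otherwise; the same analysis applies at the $B$--end. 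Since $\RR$ respects $e$ exactly when $\ga$ stays on a single side of the line through $e$ --- hence lies on the same side at both ends --- and twists $e$ exactly when $\ga$ lies on opposite sides at its two ends (compare Fig.~\ref{pic:edge}), it follows that $K$ has equal signs at the two ends when $e$ is respected and opposite signs when $e$ is twisted. Together with the first step, this establishes (i) and (ii).

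Finally, for the curvature extrema I would feed the same sign information into a one--variable argument. In all cases we already have the four maxima of $|K|$ found above: one on each short component, and two on the long component close to the vertices. If $\RR$ respects $e$, the two near--vertex curvature extrema of $\ga$ have the same sign of $K$, hence are both local maxima of the signed curvature (or, after reversing the orientation, both local minima); since a smooth function has at least one --- and, generically, an odd number of --- critical points strictly between two critical points of the same kind, $\ga$ carries at least three curvature extrema and $\RR_e$ at least five, which is (iii). If $\RR$ twists $e$, the two near--vertex extrema are of opposite sign, so one is a local maximum and the other a local minimum of $K$; since a smooth function can run monotonically from one to the other, nothing more is forced, and we obtain only the bound of four, which is (iv). I expect the main obstacle to be exactly the crucial point of the third step --- extracting from the hyperbolic model the sign of the signed curvature of $\ga$ at each of its ends in terms of which side of $e$ the arc $\ga$ occupies there; everything else is Proposition~\ref{lm:smallcomp} together with elementary one--variable calculus, modulo the mild genericity of $\RR$ needed for the parity assertions in (i)--(iii).
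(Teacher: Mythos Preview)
Your argument is correct and follows essentially the same route as the paper's proof: invoke Proposition~\ref{lm:smallcomp} to control $\RR$ near $A$ and $B$ (no inflections, one curvature maximum on each local branch), then decide the parity of inflections on the long component by comparing the signs of the signed curvature at its two ends, and finally use the elementary fact that between two local maxima of a smooth function there is an odd number of interior critical points. The only noticeable difference is that you make explicit the mechanism behind the sign comparison---tying the sign of $K$ at each end of the long component to which side of the line through $e$ that end lies on, via the hyperbola model---whereas the paper simply asserts that the signs agree in the ``respects'' case and differ in the ``twists'' case.
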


\begin{proof} To settle (i) and (ii), we notice that by Proposition~\ref{lm:smallcomp} neither the short  nor the long components of $\RR_e$  have  inflection points near  the vertices of $e$.  Furthermore, if we parameterize the long component as $(x(t),y(t))$ with $(x^\prime)^2(t) +(y^\prime)^2(t)$ non-vanishing for all $t$, then the signed curvature  
\[k(t)=\frac{ x^\prime(t) y^{\prime\prime}(t)- y^\prime(t) x^{\prime\prime}(t)} {((x^\prime(t))^2+(y^\prime(t))^2)^{3/2}}\]
will have the same sign near the vertices  if $\RR$ preserves $e$ and will change the sign if $\RR$ twists $e$. Therefore the long segment acquires an even number of inflections counting multiplicities in the first case and an odd number of inflections in the  second case.

To settle (iii) and (iv), observe that by Proposition~\ref{lm:smallcomp}  both in the case when $\RR$ preserves $e$ or twists $e$,  $\RR_e$ will have  four maxima of the absolute value of the curvature near the vertices of $e$, 
one on each short component and two on the long component. Additionally, if $\RR$ preserves $e$, then the long segment contains an even number of inflection points. If there are no inflection points at all (as in the case shown in Fig.~ \ref{pic:edge} (left)) then there is at least one more minimum of the curvature on the long component between the two maxima. In other words, between any two consecutive inflection points on the long component there must be at least one maximum of the absolute value of the curvature. 
\end{proof}

\section{On the maximal $\R$-degrees of the evolute and of the curve of normals}\label{sec:rdeg}

Recall that the degree of the evolute of a generic curve of degree $d$ equals $3d(d-1)$, see Proposition~\ref{prop:1}. Already consideration of the first non-trival case of plane conics shows  that the question about the maximal $\bR$-degree of the evolute (the first part of Problem~\ref{R-deg EG-NG})   is non-trivial. Namely, if $\Ga$ is a generic real conic then the usual degree of $E_\Ga$ equals $6$ while the $\bR$-degree of its evolute (which for a generic conic $\Ga$ is an astroid) equals $4$, see Lemma~\ref{lm:astroid} below, Fig.~\ref{fig:one} b) and Fig.~\ref{fig:hypRot}. Our initial result in this direction is as follows. 
  
\begin{proposition}\label{prop:Rdegevol} For any $d\ge 3$, 
the maximal $\bR$-degree for the evolutes of algebraic curves of degree $d$ is not less than $d(d-2)$. 
\end{proposition}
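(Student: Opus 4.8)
\emph{Overall strategy.} It suffices to exhibit, for each $d\ge 3$, a single nonsingular real curve $\Ga_\eps$ of degree $d$ whose evolute $E_{\Ga_\eps}$ meets some real affine line $L\subset\bR^2$ in at least $d(d-2)$ points. I will build $\Ga_\eps$ as a small Brusotti smoothing of a carefully chosen arrangement of $d$ lines, exploiting the fact that such an arrangement has exactly $d(d-2)$ bounded edges: each of its $d$ lines is cut by the other $d-1$ into $d-1$ points, hence into $d-2$ bounded segments, and each of these bounded edges will be made to contribute one intersection point with $L$.

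\emph{The configuration and the smoothing.} Take the ``fan'' $\cA=\{\ell_1,\dots,\ell_d\}$ with $\ell_i=\{y=\delta i\,(x+i)\}$, where $\delta>0$ is small and fixed; then all slopes are small and positive, the vertices are the points $\ell_a\cap\ell_b$ of abscissa $-(a+b)$, and $\cA$ lies in a bounded region whose diameter does not depend on $\eps$. The crucial combinatorial feature is that along each $\ell_a$, read in the direction of increasing $x$, one meets first every $\ell_b$ with $b>a$ and then every $\ell_b$ with $b<a$. Using Brusotti's Theorem~\ref{th:brusotti} (via Corollary~\ref{cor:signs}) I smooth $\cA$ into a nonsingular real curve $\Ga_\eps=\{G+\eps^2H=0\}$ of degree $d$, and at \emph{every} vertex I choose the one of the two smoothing types under which the two local branches of $\Ga_\eps$ are convex in the (nearly) vertical direction, one concave toward $+y$ and one toward $-y$. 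Near a vertex $v=\ell_a\cap\ell_b$ with $\delta a<\delta b$ the concave-toward-$+y$ branch then carries the ray of $\ell_a$ pointing in the $+x$-direction together with the ray of $\ell_b$ pointing in the $-x$-direction. A short check using the displayed order property of the fan then shows that every bounded edge $e$ (say on $\ell_a$) either has its left endpoint of the form $\ell_a\cap\ell_b$ with $b>a$, or its right endpoint of the form $\ell_a\cap\ell_b$ with $b<a$; in either case the long arc of $\Ga_\eps$ running along $e$ is concave toward $+y$ near at least one of its two endpoints.

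\emph{The evolute over a long arc.} Fix a bounded edge $e$ on $\ell_a$ with endpoints $p,q$. By Proposition~\ref{lm:smallcomp}, near each of $p,q$ the long arc over $e$ carries a point of locally maximal curvature, of size $\sim\eps^{-1}$ there, while away from the endpoints the arc is an $O(\eps^2)$-small $C^2$-perturbation of the straight segment $e$, so there its curvature has magnitude $\lesssim\eps^2$. Hence the portion $A_e$ of $E_{\Ga_\eps}$ carried by this arc is an ``arch'': it issues from a cusp $c_p$ within distance $\sim\eps$ of $p$, leaves the arrangement roughly orthogonally to $\ell_a$, reaches out to distance of order $\eps^{-2}$ from $\cA$ (or runs off to the line at infinity, if the curvature vanishes somewhere on the arc --- recall from Section~\ref{sec:init} that an inflection point sends the evolute off to infinity with asymptote the normal at that point), and returns to a cusp $c_q$ within distance $\sim\eps$ of $q$. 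Since $\ell_a$ is nearly horizontal and the long arc is concave toward $+y$ near, say, $p$, the $y$-coordinate along $A_e$ grows from $O(1)$ at $c_p$ up to a value of order $\eps^{-2}$ (or to $+\infty$) before coming back, while $c_p$ and $c_q$ themselves stay within the ($\eps$-independent) diameter of $\cA$ of the configuration.

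\emph{Conclusion and the main obstacle.} Now take $L=\{y=h\}$ with $h$ larger than the diameter of $\cA$ but much smaller than $\eps^{-2}$. For each of the $d(d-2)$ bounded edges $e$ the arch $A_e$ has $y$-coordinate $<h$ at both of its feet $c_p,c_q$ and $>h$ somewhere in between, so by the intermediate value theorem $L$ meets $A_e$. For generic $\delta$, generic $\eps$ and generic $h$ (after, if necessary, a small generic perturbation of $\cA$ preserving its combinatorial type) the resulting $d(d-2)$ intersection points are pairwise distinct and $L$ is transversal to $E_{\Ga_\eps}$; therefore $\R\deg(E_{\Ga_\eps})\ge d(d-2)$, which proves the proposition since $\deg\Ga_\eps=d$. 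The analytically delicate point --- and the step I expect to be the main obstacle --- is the third paragraph: Proposition~\ref{lm:smallcomp} and Corollary~\ref{lm:edges} control $\Ga_\eps$ only in fixed small neighborhoods of the vertices, and one must upgrade this to estimates valid along an \emph{entire} long arc, interpolating between the hyperbola model near the endpoints and the nearly-straight regime in between, so as to guarantee that $A_e$ genuinely reaches out to distance $\gtrsim\eps^{-2}$ (or to the line at infinity) while its two feet remain $O(1)$-close to $\cA$. The combinatorial part --- arranging that all $d(d-2)$ arches point into the half-plane $\{y>0\}$ --- is routine once the fan and the smoothing types have been fixed as above.
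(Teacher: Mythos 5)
Your strategy (one evolute ``arch'' per bounded edge of a smoothed line arrangement, caught by a horizontal line) is genuinely different from the paper's argument, but as written it does not deliver $d(d-2)$ intersection points, and the failure sits precisely in the step you call ``routine''. At a vertex $v=\ell_a\cap\ell_b$ with slopes $\delta a<\delta b$, your chosen smoothing puts one branch in each of the two wide sectors containing the vertical directions; the branch concave toward $+y$ is the cup-shaped one lying in the \emph{upper} sector, and since to the right of $v$ the steeper line $\ell_b$ is the upper envelope while to the left the shallower line $\ell_a$ is, that branch carries the $+x$-ray of $\ell_b$ and the $-x$-ray of $\ell_a$ --- the opposite of what you assert. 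Redoing the bookkeeping with the correct orientation, a bounded edge of $\ell_a$ with left endpoint $\ell_a\cap\ell_b$ and right endpoint $\ell_a\cap\ell_c$ (so $b>c$ are consecutive in the list $d,\dots,a+1,a-1,\dots,1$) is concave toward $+y$ near its left endpoint iff $b<a$, and near its right endpoint iff $c>a$; the edge between $\ell_a\cap\ell_{a+1}$ and $\ell_a\cap\ell_{a-1}$ satisfies neither, so for each $a$ with $2\le a\le d-1$ there is one bounded edge whose long arc is concave toward $-y$ at both ends. Its arch points into $\{y<0\}$ (and if it acquires an even number of inflections it still cannot be forced to reach $\{y=h\}$), so the construction only guarantees $(d-1)(d-2)=d(d-2)-(d-2)$ transversal intersections with $L=\{y=h\}$. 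On top of this there is the gap you flag yourself: Proposition~\ref{lm:smallcomp} and Corollary~\ref{lm:edges} control the curve only in fixed neighbourhoods of the vertices, and the claim that each arch climbs to height $\gtrsim\eps^{-2}$ needs the (true, but unproved here) fact that the smoothed curve converges in $C^2$ to the line on compact subsets of the open edge, so that the radius of curvature at the midpoint tends to infinity; one then applies the intermediate value theorem to the $y$-coordinate of the centre of curvature, treating separately the case where the signed curvature vanishes before the midpoint, in which case the evolute escapes to $y=+\infty$ along a vertical asymptote.

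For comparison, the paper's proof sidesteps all of this: by Klein's formula a smooth real curve of degree $d$ can have $d(d-2)$ of its $3d(d-2)$ complex inflection points real; at each real inflection the evolute crosses the line at infinity transversally along the corresponding normal; hence a real line sufficiently close to the line at infinity already meets the evolute in $d(d-2)$ points. Your arches are in effect a hands-on affine picture of the same phenomenon, but the inflection-point count reaches the bound without any curvature estimates or combinatorial case analysis.
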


\begin{proof} Recall that at each real inflection point of a real-algebraic curve $\Ga$, its evolute $E_\Ga$  goes to infinity and its asymptote at infinity is the line normal to $\Ga$ and passing through the respective inflection point. Notice that  Klein's formula combined with the usual Pl\"ucker formula for non-singular plane curves   imply that a real-algebraic curve of degree $d$ has at most one third of its total number of inflection points being real and this bound is sharp, see \cite{Kl}.  (The sharpness can be obtained by considering  small deformations of real line arrangements). The number of complex inflection points of a generic smooth plane curve of degree $d$ equals $3d(d-2)$. Thus there exists a smooth real-algebraic curve of degree $d$ with $d(d-2)$ real inflection points. The evolute of such curve hits the line at infinity (transversally) at $d(d-2)$ real points. Therefore the evolute intersects any affine line sufficiently close to the line at infinity $d(d-2)$ times as well. Thus the maximal $\bR$-degree among the evolutes of  plane curve of degree $d$  is at least $d(d-2)$. 
\end{proof} 

\begin{remark}{\rm At least for small $d$, the above lower bound is not sharp. Namely, for $d=2$, the maximal $\bR$-degree equals $4$, while the above bound is not applicable. For $d=3$, taking a usual cubic in the Weierstrass form, one has  an example of the evolute whose $\bR$-degree  is greater than or equal to  $6$, see Fig.~\ref{fig:WeierNC}. (Notice  the number of real inflections of a nonsingular real cubic  always equals $3$.)}
\end{remark}

Our second result solves the second part of Problem~\ref{R-deg EG-NG} about the maximal $\R$-degree of the curve of normals.   

\begin{proposition}\label{lm:Shustin} There exists a real-algebraic curve $\Ga$ of degree $d$ and a point  $z\in \bR^2$ such that all $d^2$ complex normals to $\Ga$ passing through $z$ are, in fact,  real. In other words, there exists $\Ga$ such that the maximal $\R$-degree of $N_\Ga$ equals $d^2$, which coincides with  its complex degree.  
\end{proposition}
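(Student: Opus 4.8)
The plan is to realize $\Ga$ as a small real smoothing of a generic arrangement of $d$ lines and to count the real normals of $\Ga$ through a cleverly chosen point, using the local picture supplied by Proposition~\ref{lm:smallcomp} together with Brusotti's theorem. Recall first that a normal of $\Ga$ through a point $z\in\bR^2$ is the same thing as a critical point of the squared distance to $z$ restricted to $\Ga$, and that for nonsingular $\Ga$ of degree $d$ in general position with respect to the line at infinity one has $\deg N_\Ga=d^2$ by Proposition~\ref{prop:1}. Hence it suffices to produce one pair $(\Ga,z)$ for which $d^2$ of the complex normals of $\Ga$ through $z$ are real and pairwise distinct; then the line $z^\vee\subset(\bR P^2)^\vee$ meets $N_\Ga$ in $d^2$ points, necessarily simple and transverse, so $\R\deg(N_\Ga)=d^2=\deg(N_\Ga)$.

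Start from a generic arrangement $\cA\subset\bR^2$ of $d$ lines (no two parallel, no three concurrent, and in general position with respect to the line at infinity), let $G$ be a product of the corresponding linear forms, and fix $z\in\bR^2$ lying off all lines of $\cA$, off all its nodes, and generic in the sense that the feet of the $d$ perpendiculars dropped from $z$ onto the lines of $\cA$ all lie in the interiors of edges of $\cA$. At each node $v$ of $\cA$ the point $z$ lies in exactly one of the four open sectors cut out by the two lines through $v$; choose the smoothing of $v$ that fills in this sector together with its vertical opposite. By Brusotti's theorem (Theorem~\ref{th:brusotti}) and Corollary~\ref{cor:signs} there is a real polynomial $H$ of degree at most $d$ so that for all sufficiently small $\eps>0$ the curve $\Ga=\Ga_\eps:=\{G+\eps^2H=0\}$ is nonsingular of degree $d$ and realizes exactly the prescribed smoothing at every node; shrinking the range of $\eps$ (and perturbing $\cA$ if necessary) we may also assume $\Ga_\eps$ is in general position with respect to the line at infinity, so $\deg N_{\Ga_\eps}=d^2$.

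It remains to locate $d^2$ real normals of $\Ga_\eps$ through $z$. Away from the nodes $\Ga_\eps$ is $C^2$-close to $\cA$, so near the foot of each of the $d$ perpendiculars from $z$ to the lines of $\cA$ the implicit function theorem yields exactly one real normal of $\Ga_\eps$ through $z$, giving $d$ real normals. Near a node $v$, pass to the adapted coordinates of Proposition~\ref{lm:smallcomp}, in which $\Ga_\eps$ is the curve $\tilde y=\pm a\sqrt{\tilde x^{2}+\eps^{2}(1+O(\tilde x,\eps))}$ while $z$ has fixed (non-small) coordinates; writing down the condition that the normal at a point of this curve passes through $z$ and keeping the leading order in $\eps$, one finds that exactly two of its solutions remain in a fixed neighborhood of $v$ as $\eps\to 0$, and that to leading order their coordinate along the curve satisfies $\tilde x^{2}=c\,\eps^{2}+O(\eps^{3})$ with a constant $c$ that is positive precisely when $z$ lies in the two sectors filled in by the chosen smoothing — which holds by construction. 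So every node contributes two real normals through $z$, i.e. $2\binom{d}{2}$ in all; together with the previous $d$ this gives $d+2\binom{d}{2}=d^{2}$ real normals. For generic $\cA$, $z$ and small $\eps$ these are pairwise distinct, so, there being only $d^2$ complex normals, all of them are real.

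The main obstacle is exactly this local computation at a node: one must verify both that precisely two of the $d^2$ normals concentrate at each node as $\eps\to0$ — so that the identity $d+2\binom{d}{2}=d^2$ is exhaustive and no normals escape to infinity — and that the reality of these two is governed exactly by the ``$z$ lies in the filled sectors'' condition. Both come from an explicit leading-order expansion in the hyperbola model of Proposition~\ref{lm:smallcomp}; in the case $d=2$ this is just the classical statement that a hyperbola has four real normals from a point in the appropriate region, and the global bookkeeping uses only $\deg N_{\Ga_\eps}=d^2$ from Proposition~\ref{prop:1}.
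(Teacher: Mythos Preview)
Your approach is the same as the paper's: smooth a generic line arrangement via Brusotti, count the $d$ perturbed perpendiculars from $z$ to the lines plus two local normals at each of the $\binom d2$ nodes, and conclude from $d+2\binom d2=d^2=\deg N_\Ga$.

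There is, however, a genuine error in your prescription for \emph{which} smoothing to take at a node. In the adapted coordinates of Proposition~\ref{lm:smallcomp} (lines $\tilde y=\pm a\tilde x$, branches $\tilde y=\pm a\sqrt{\tilde x^2+\eps^2}$, and $z=(z_1,z_2)$ fixed), the condition that the normal at $(\tilde x,\tilde y)$ passes through $z$ reads $(1+a^2)\tilde x-z_1=\pm az_2\tilde x/\sqrt{\tilde x^2+\eps^2}$; substituting $\tilde x=\eps\xi$ and letting $\eps\to0$ gives $(a^2z_2^2-z_1^2)\xi^2+z_1^2=0$, so your constant is $c=z_1^2/(a^2z_2^2-z_1^2)$. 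Thus $c>0$ precisely when $|z_2/z_1|>1/a$, i.e., when $z$ lies in the cone at $v$ bounded by the \emph{perpendiculars} to the two lines --- not in the cone bounded by the lines themselves. These two cones coincide only when the lines meet at right angles, so for a generic arrangement your rule ``fill in the sector containing $z$'' picks the wrong smoothing at some nodes (e.g.\ $a=2$, $z=(1,0)$ under one reading of ``fill in'', or $z=(1,1)$ under the other).

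The paper sidesteps this by simply asserting (with reference to Fig.~\ref{pic:brusotti}) that one of the two smoothings is admissible. Your argument is repaired the same way: swapping the smoothing interchanges the roles of $a$ and $1/a$ (equivalently of $\tilde x$ and $\tilde y$), hence flips the sign of $a^2z_2^2-z_1^2$, so exactly one of the two choices gives $c>0$ at each node, and Brusotti lets you make all of these choices simultaneously.
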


\begin{proof}  
 Recall that a crunode (which is a transversal intersection of two smooth real local branches) 
admits two types of real smoothing. Given a crunode $c$ and a point $z$ such that the straight segment $L$ connecting $z$ with $c$ is not tangent to the real local branches at the curve at $c$, there exists a 
smoothing of the curve at $c$  such that 
 one obtains two real normals to this smoothing passing through $z$ and close to $L$, see illustration in Fig.~\ref{pic:brusotti}. 
 Now take an arrangement $\cA\subset \bR^2$ of $d$ real lines in general position and a point $z$ outside these lines.
By Brusotti's theorem, smoothing  
all $d(d-1)/2$ nodes in the admissible way shown in Fig.~\ref{pic:brusotti} we obtain $d(d-1)$ normals close to the
straight segments joining $z$ with the nodes of $\cA$. Additional $d$
normals are obtained by small deformations  of the altitudes connecting $z$ with  each of the $d$ given 
lines. Thus, for a small deformation of $\cA$ that resolves all its nodes in the admissible way with respect to $z$,  one gets $d^2$ real normals to the obtained curve  through the point $z$ which implies that the $\R$-degree of its curve of normals  is a least $d^2$. But the usual complex degree of this curve of normals is (at most) $d^2$. The result follows.  
\end{proof} 

\begin{figure}
  \centering
        \includegraphics[scale=0.44]{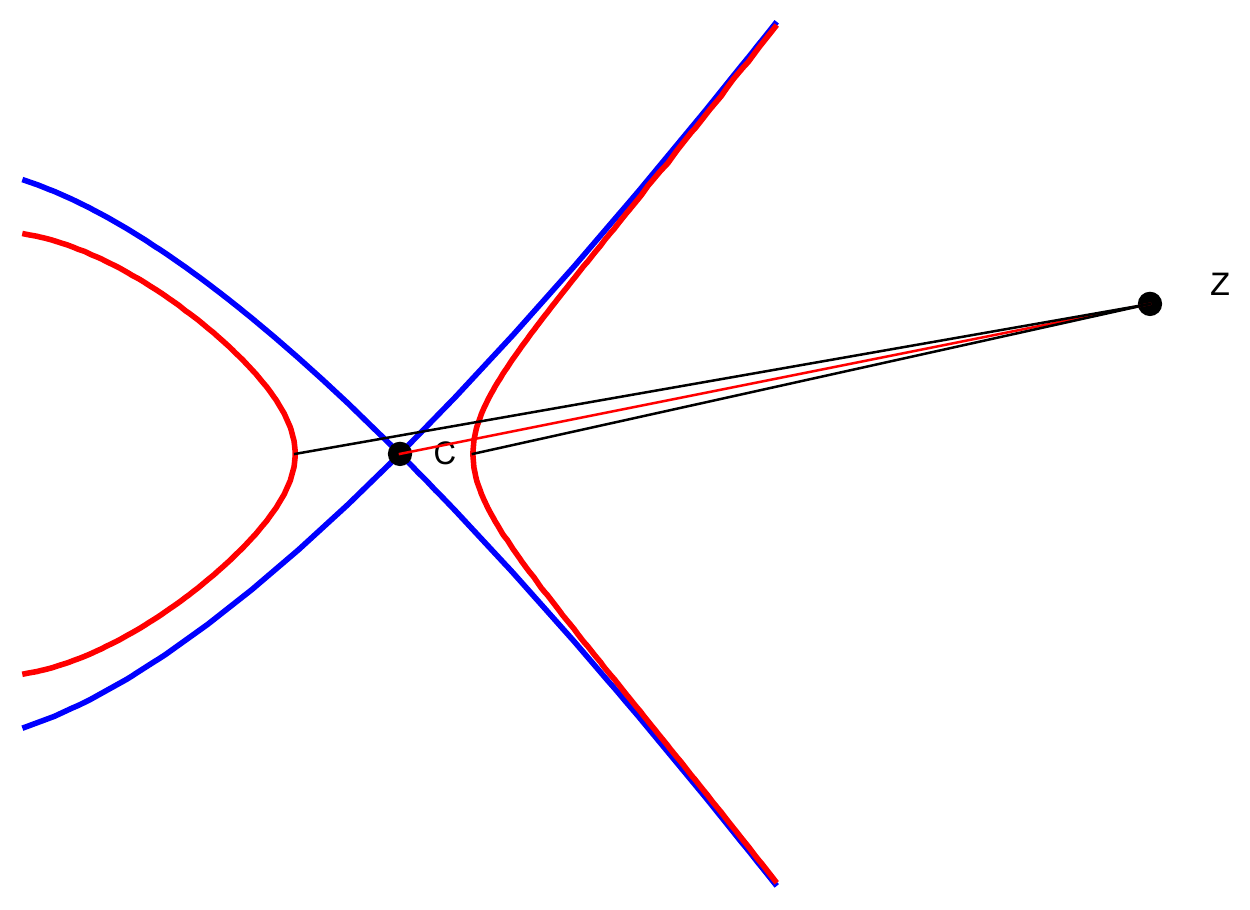}

\caption{An admissible resolution (in red) of a crunode $c$ w.r.t. a point $z$ and two normals (in black) to the two local branches of the resolution.}\label{pic:brusotti}
\end{figure}

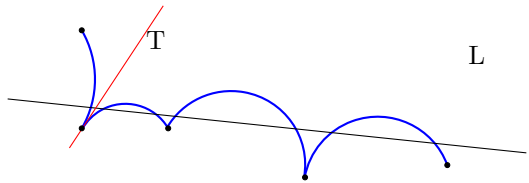
\begin{figure}

\begin{tikzpicture}[scale=0.65]

\draw [thick, blue]  (0,0) arc (-30:30:2) ;

\draw [thick, blue]  (1.75,0) arc (30:150:1);

\draw [thick, blue]  (4.5,-1.) arc (-10:150:1.5) ;

\draw [thick, blue]  (7.4,-0.75) arc (20:170:1.5) ;

\draw[thin, black] (-1.5,0.6) -- (9,-0.5);  

\draw[thin, red] (-.25,-0.4) -- (1.65,2.5);

\node at (8,1.5) {L};
\node at (1.5,1.8) {T};

\draw[fill] (0.0,0) circle [radius=0.05];

\draw[fill] (1.75,0) circle [radius=0.05];

\draw[fill] (0.0,2) circle [radius=0.05];

\draw[fill] (4.52,-1) circle [radius=0.05];

\draw[fill] (7.4,-0.75) circle [radius=0.05];

\end{tikzpicture}

\caption{Illustration of one possible case in the proof of Lemma~\ref{lm:astroid}.}\label{pic:illustr}
\end{figure}

\begin{lemma}\label{lm:astroid} The $\bR$-degree of the evolute of a non-empty generic real conic equals $4$. 
\end{lemma}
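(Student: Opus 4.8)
The plan is to reduce the statement to an explicit computation for the astroid and then to bound, via a rational parametrization, the number of real solutions of a trigonometric equation. First, a generic real conic is smooth and meets the line at infinity transversally, so after a Euclidean motion --- which preserves both evolutes and $\R$-degrees --- it becomes either an ellipse $x^2/a^2+y^2/b^2=1$ with $a>b>0$ or a hyperbola $x^2/a^2-y^2/b^2=1$ (the parabola is excluded by genericity). By Example~\ref{ex:basic}, and an analogous computation in the hyperbolic case, the evolute is then, respectively, the astroid $(aX)^{2/3}+(bY)^{2/3}=(a^2-b^2)^{2/3}$ and the curve $(aX)^{2/3}-(bY)^{2/3}=(a^2+b^2)^{2/3}$; since $\R$-degree is invariant under invertible affine maps, it suffices to treat the normal forms $u^{2/3}+v^{2/3}=1$ and $u^{2/3}-v^{2/3}=1$.

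For the astroid I would parametrize it by $(u,v)=(\cos^3 t,\sin^3 t)$, so that the intersections with a real line $\alpha u+\beta v=\gamma$ are the solutions of $f(t):=\alpha\cos^3 t+\beta\sin^3 t=\gamma$; the symmetries $u\mapsto-u$, $v\mapsto-v$, $u\leftrightarrow v$ let me assume $\alpha\ge\beta\ge0$ (the case $\beta=0$ is trivial and gives two points). From $f'(t)=3\sin t\cos t\,(\beta\sin t-\alpha\cos t)$ one sees $f$ has at most six critical points on the circle, with critical values $\pm\alpha$, $\pm\beta$, $\pm\mu$ where $\mu:=\alpha\beta/\sqrt{\alpha^2+\beta^2}\in[0,\beta]$; read off in cyclic order along the $t$-circle, these values occur in the order $(\alpha,\mu,\beta,-\alpha,-\mu,-\beta)$. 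On each arc between consecutive critical points $f$ is monotone, so $f=\gamma$ has one root there exactly when $\gamma$ lies strictly between the endpoint values; hence the number of intersections equals the number of the six ``gap intervals'' $(\mu,\alpha)$, $(\mu,\beta)$, $(-\alpha,\beta)$, $(-\alpha,-\mu)$, $(-\beta,-\mu)$, $(-\beta,\alpha)$ that contain $\gamma$. Two of these lie in $(0,\infty)$ and two in $(-\infty,0)$, so every real $\gamma$ misses at least two of them; thus every real line meets the astroid in at most four real points and $\R\deg\le4$. Conversely, taking $\alpha=\beta=1$ (so $\mu=1/\sqrt2$), every $\gamma\in(1/\sqrt2,1)$ lies in four of the gap intervals, so $u+v=\gamma$ meets the astroid in exactly four points, and this intersection is transversal; hence $\R\deg=4$.

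For the hyperbolic normal form I would run the same argument on the two unbounded arcs $(u,v)=(\pm\cosh^3 t,-\sinh^3 t)$: on each arc the relevant function has at most two critical points, hence at most three roots at a given level $\gamma$, and (after the same symmetry reductions) the set of levels for which one arc produces three roots lies in $(0,\infty)$ while that for the other arc lies in $(-\infty,0)$; so the two arcs together give at most four points, lines parallel to an asymptote giving at most two, and a transversal line with four real intersections is exhibited as before.

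The step I expect to be the main obstacle is the upper bound $\R\deg\le4$: the obvious estimate --- whether from counting the critical points of $f$, or from regarding $f-\gamma$ as a trigonometric polynomial of degree three --- only gives $6$, and to reach the sharp value one genuinely has to exploit the special relation between the linear and cubic harmonics of $f$, which is precisely what the cyclic positions of the six critical values $\pm\alpha,\pm\beta,\pm\mu$ record; in the hyperbolic case the extra difficulty is organizing the contributions of two noncompact branches together with the intersection points escaping to infinity.
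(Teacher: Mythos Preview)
Your argument for the astroid is correct and quite clean: the cyclic order $(\alpha,\mu,\beta,-\alpha,-\mu,-\beta)$ of the six critical values of $f(t)=\alpha\cos^3 t+\beta\sin^3 t$ forces two of the six monotone ``gap intervals'' to lie in $(0,\infty)$ and two in $(-\infty,0)$, so any level $\gamma$ misses at least two of them; and the explicit line $u+v=\gamma$ with $\gamma\in(1/\sqrt 2,1)$ produces four transversal intersections.

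For the hyperbola there is a genuine gap. From ``the levels at which the $+$~arc gives three roots lie in $(0,\infty)$ and those for the $-$~arc in $(-\infty,0)$'' you can only conclude that the two arcs do not \emph{simultaneously} give three roots; this still allows $3+2=5$. What you need, and what is in fact true, is sharper: after reducing to $\alpha,\beta\ge 0$ (note that $u\leftrightarrow v$ is \emph{not} a symmetry of $u^{2/3}-v^{2/3}=1$, so you cannot also assume $\alpha\ge\beta$), one checks that if $\alpha\ge\beta$ each of $f_\pm$ has a single critical point and hence at most two roots, while if $0<\alpha<\beta$ the function $f_+$ has three roots only for $\gamma\in(\alpha,f_+(t_0))$, and for any such $\gamma>\alpha>-\alpha$ the function $f_-$ (whose local maximum is $-\alpha$) has exactly \emph{one} root. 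Once this refinement is inserted, your analytic argument goes through.

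The paper takes a different, geometric route: it chooses a projective chart in which the evolute of either conic becomes a closed curvilinear $4$-gon with concave sides, observes that each side (being a convex graph) meets any line at most twice, and then rules out six or more intersections by a short combinatorial case analysis on the intersection pattern with the four sides. Your approach is more computational but uniform in the parametrization; the paper's avoids the two-branch bookkeeping in the hyperbolic case at the price of a somewhat delicate change of affine chart and a geometric case check.
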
 

\begin{proof}  Any ellipse in $\bR^2$ can be reduced by a translation and rotation to the standard ellipse
 $\frac{x^2}{a^2} + \frac{y^2}{b^2} = 1$ whose evolute is parameterized as $\{x = \frac{a^2-b^2}{a} \cos^3 t,\; y =\frac{b^2-a^2}{b} \sin^3 t\}$ and is given 
by the equation $(ax)^{2/3} + (by)^{2/3} = (a^2-b^2)^{2/3}$. Analogously, any hyperbola can be reduced by the same operations to the standard hyperbola $\frac{x^2}{a^2}-\frac{y^2}{b^2} = 1$ whose evolute is parameterized as $\{x = \frac{a^2+b^2}{a} \cosh^3 t, y =\frac{a^2+b^2}{b} \sinh^3 t\}$ and is given by the equation $(ax)^{2/3}- (by)^{2/3} = (a^2 + b^2)^{2/3}$. 

In both cases one can
find a line in $\bR P^2$ such that in the affine chart obtained as the complement to this line, the evolute $E$ of the conic becomes a plane closed $4$-gon $\mathcal F$ with smooth and concave sides, see Fig.~\ref{fig:one} and Fig.~\ref{fig:hypRot}. An additional property which is essential for our argument, is that no line can intersect each of the $4$ sides of $\mathcal F$  more than twice. To prove this in the case of a standard ellipse, observe that each side of the evolute can be considered as a convex/concave graph of a function $y(x)$ and therefore cannot intersect a real line more than two times, since otherwise there will necessarily exist an inflection point. As a consequence, we get that such a graph lies on one side with respect to the tangent line at any of its points. The case of a hyperbola is similar.

Let us show that no real line can intersect a concave closed $4$-gon $\mathcal F$ with smooth sides more than four times if any of its sides can not intersect a line more than twice. (Recall that we count intersection points without multiplicities.) Indeed, assume that there is a real line $L$ intersecting $\mathcal F$ at least six times transversally, see Fig. 5. (Notice that a line intersecting a closed simple curve transversally must intersect it an even number of times.) Then $L$ intersects $\mathcal F$ either six or eight times since it can not intersect any side of $\mathcal F$ more than twice. 

Firstly, notice that by concavity of $\mathcal F$, the line $L$ can not intersect it  $8$ times since in this case the $4$-tuple of concave sides will not be able to close up forming  a $4$-gon. A similar situation occurs in all cases of the intersection multiplicities of the line $L$ with the $4$  consecutive cyclicly ordered sides of $\mathcal F$ are  $2-2-1-1$. The final possibility is the intersection multiplicities $2-2-2$ with the three consecutive sides of  $\mathcal F$ which is illustrated in Fig.~\ref{pic:illustr}. Let us provide more details in this special case. Consider the leftmost of these three sides and draw a tangent line $T$ at its leftmost vertex, see Fig.~\ref{pic:illustr}. Then the last remaining side must lie in one halfplane of the complement to $T$ while the rightmost vertex of the rightmost side among the three interesting $L$ twice must lie in the other halfplane in the complement to $T$. Thus again the $4$ sides can not close up to form a $4$-gon.
\end{proof}

\section{On the maximal number of real vertices of a plane real-algebraic curve}\label{sec:vert}
In this section we discuss Problem 2, providing a lower bound for the maximal number $\bR\Ve(d)$ of real vertices of a real-algebraic curve of degree $d$. Recall that by Proposition~\ref{prop:cusps+nodes}, the number $\bC\Ve(d)$ of complex vertices of a generic curve $\Ga$ of degree $d$ (i.e., the number of cusps of its evolute $\tilde E_\Ga^\bC$) equals  $3d(2d-3)$.

Below we obtain a lower bound of $\bR\Ve(d)$ via small deformations of real line arrangements.  

\begin{proposition}\label{prop:realcusps}
\rm{(i)} The number of real cusps for the evolute of an arbitrary small deformation $\RR$ of any generic line arrangement $\cA\subset \bR^2$ of degree $d$ is at least $d(d-1)$ plus the number of bounded edges  of $\cA$ respected by $\RR$. 

\noindent
\rm{(ii)}  If the line arrangement $\cA$ is given by the equation $\prod_{i=1}^d L_i=0$, where the $L_i$'s  are linear equations describing the lines of $\cA$, then for any sufficiently small real number $\epsilon\neq 0$,  the deformation   $\RR$ given by $\prod_{i=1}^d L_i=\epsilon$ respects all the bounded edges of $\cA$. In this case the total number of real cusps on its evolute is greater than or equal to $d(2d-3)$.    
 \end{proposition}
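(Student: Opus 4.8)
The plan is to count real cusps of the evolute $E_{\RR}$ by counting real vertices (critical points of the curvature) of the deformed curve $\RR$, using the local analysis provided by Proposition~\ref{lm:smallcomp} and Corollary~\ref{lm:edges}. Near each of the $\binom{d}{2}$ vertices of $\cA$, the curve $\RR$ looks like a small perturbation of the standard hyperbolic pair $\tilde y = \pm a\sqrt{\tilde x^2+\eps^2}$, and by Proposition~\ref{lm:smallcomp}(ii) each of the two local smooth branches carries exactly one vertex (a curvature maximum). This already produces $2\binom{d}{2} = d(d-1)$ guaranteed real vertices of $\RR$, hence $d(d-1)$ real cusps of $E_{\RR}$, and these are visibly distinct since they are localized in disjoint disks around distinct vertices of $\cA$. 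For part (i), one then adds the contribution of the bounded edges: if a bounded edge $e$ is respected by $\RR$, Corollary~\ref{lm:edges}(iii) guarantees at least one extra curvature extremum on the long component of $\RR_e$ (the odd number of additional extrema between the two maxima near the endpoints), and this extremum lies in the interior of $U_e$, away from all the vertex-disks and away from all the other edge-neighborhoods. Summing the two disjoint contributions gives the bound $d(d-1)$ plus the number of respected bounded edges.

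For part (ii), I would first check that $\RR:\ \prod_{i=1}^d L_i = \eps$ is, for small $\eps\ne 0$, a legitimate small deformation of $\cA$ with only nodes as singularities (generically it is smooth), so that Corollary~\ref{lm:edges} applies. The key point is to verify that this particular deformation respects \emph{every} bounded edge. This is a sign argument: on the two sides of a bounded edge $e$ of $\cA$ the product $\prod L_i$ takes opposite signs; the region where $\prod L_i$ has the same sign as $\eps$ is the one that ``survives'' near $e$, and one checks directly, using the configuration of the four sectors around each endpoint of $e$, that the surviving region on both sides of $e$ is the \emph{same} connected component of $U_e\setminus\cA$ — i.e. the three components of $\RR_e$ each hug a single face of $U_e\setminus\cA$, which is exactly the definition of $\RR$ respecting $e$ (Figure~\ref{pic:edge}, left). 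Equivalently, $\prod L_i - \eps$ cannot change sign along $e$ in the way that would produce the twisted picture, because the sign of $\prod L_i$ is constant on each face and the surviving face is determined solely by $\operatorname{sign}(\eps)$.

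Once every bounded edge is respected, part (i) gives at least $d(d-1)$ plus (number of bounded edges of $\cA$) real cusps. To finish, I compute the number of bounded edges of a generic arrangement of $d$ lines: there are $\binom{d}{2}$ vertices and $d^2$ total edges (each of the $d$ lines is cut into $d$ pieces by the $d-1$ other lines: $d-2$ bounded and $2$ unbounded, giving $d(d-2)$ bounded edges and $2d$ unbounded edges in all). Hence $\RR$ has at least $d(d-1) + d(d-2) = d(2d-3)$ real cusps on its evolute, as claimed.

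\textbf{Main obstacle.} The routine-looking but genuinely delicate step is the sign/combinatorics verification in part (ii) that $\prod L_i = \eps$ respects every bounded edge — one must argue carefully about the four local sectors at each endpoint of $e$ and rule out the twisted configuration globally along $e$, not just at one endpoint; the rest is bookkeeping (disjointness of the local contributions, which follows because everything is localized in an a priori fixed small neighborhood, and the edge count of a generic line arrangement).
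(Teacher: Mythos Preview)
Your argument for part~(i) is essentially identical to the paper's: Proposition~\ref{lm:smallcomp} gives two curvature maxima per vertex of $\cA$, hence $2\binom{d}{2}=d(d-1)$ real cusps, and Corollary~\ref{lm:edges}(iii) supplies at least one extra extremum on the long component for each respected bounded edge. The disjointness of the contributions is clear from the localization, and the bounded-edge count $d(d-2)$ is also the same as in the paper.

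For part~(ii) your plan is correct but you are making the ``respects every bounded edge'' step much harder than it is. You flag the sign/combinatorics verification as the \emph{main obstacle} and describe a face-by-face consistency check at both endpoints of each edge. The paper bypasses all of this with a one-line topological observation: if a small deformation $\RR$ \emph{twists} a bounded edge $e$, then the long component of $\RR_e$ must actually \emph{cross} $e$ (this is immediate from the picture in Fig.~\ref{pic:edgeright}). But the curves $\{\prod_i L_i=0\}$ and $\{\prod_i L_i=\epsilon\}$ with $\epsilon\neq 0$ have no common points whatsoever, so $\RR$ cannot cross any edge of $\cA$, and therefore cannot twist any bounded edge. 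That is the whole argument --- no sector-by-sector sign analysis is needed. Your sign reasoning would eventually reach the same conclusion (indeed, ``$\RR$ never meets $\cA$'' is exactly what the constancy of $\operatorname{sign}(\prod L_i-\epsilon)$ on each face encodes), but the intersection formulation makes what you call the delicate global step trivial.
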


\begin{remark} {\rm Apparently
 the number $d(2d-3)$ is the maximal possible number of cusps among the evolutes of small deformations of generic line arrangements of degree $d$. It is exactly equal to  one third of the number $\bC\Ve(d)$ of complex cusps. }
\end{remark}

\begin{proof}[Proof of Proposition~\ref{prop:realcusps}]
 Given a generic line arrangement $\cA\subset \bR^2$, consider its complement $\bR^2\setminus \cA$. It consists of $\binom {d-1}{2}$ bounded and  $2d$ unbounded convex polygons. Now take any small deformation $\RR$ of $\cA$ among real curves of degree $d$. By Proposition~\ref{lm:smallcomp}, in a sufficiently small neighborhood of  each vertex $v$ of $\cA$ the smooth curve $\RR$ consists of  two convex branches at each of which the (absolute value of the) curvature attains a local maximum  near $v$. These two local maxima  correspond to two cusps 
on the evolute $E_{\RR}$ of   $\RR$ which gives totally $2\binom {d}{2}=d(d-1)$ cusps corresponding to the local maxima of the absolute value of the curvature. 

Additionally, by Corollary~\ref{lm:edges} (iii), every bounded edge of $\cA$ respected by $\RR$  supplies at least  one additional vertex on $\RR$  which settles item (i). 
To settle (ii), observe that for a sufficiently small deformation $\RR$ of $\cA$ which twists a bounded edge  $e$ of $\cA$,  the long component of $\RR_e$ must intersect  $e$, see  Fig.~\ref{pic:edgeright}. On the other hand, the line arrangement $\cA$ given by $\prod_{i=1}^d L_i=0$  and its deformation $\prod_{i=1}^d L_i=\epsilon$ have no common points. Therefore $\RR$ respects  every bounded edge of $\cA$ and, in particular,  has no inflection points.  

The total number of bounded edges of any generic arrangement $\cA\subset \bR^2$ with $d$ lines equals $d(d-2)$. Thus we get at least $d(d-1)+d(d-2)=d(2d-3)$ extrema of the curvature on the smooth curve $\RR$ given by $\prod_{i=1}^d L_i=\epsilon$, for small real $\epsilon$. 
\end{proof} 

\begin{remark} {\rm Conjecturally, for any  small deformation $\RR$ of a generic line arrangement $\cA$, the number of the minima of the curvature on $\RR$ plus the number of inflection points on $\RR$ equals the number of bounded edges of $\cA$  (which is given by $d(d-2)$).}
\end{remark}

\section{On the maximal number of real diameters of a plane real-algebraic curve}\label{sec:diam}

In this section we provide some information about Problem~\ref{nodesNGa}. Let us denote by $\R\Diam(d)$ the maximal number of real diameters for real-algebraic curves of degree $d$ having no circles as irreducible components  and   by $\bC\Diam(d)$ the number of complex diameters of a generic curve $\Ga$ of degree $d$. By definition, $\bC\Diam(d)$   equals the number $\delta_N$ of complex nodes of $N_\Ga$ not counting the special $d$-uple point at $\infty$, which by Proposition~\ref{prop:cusps+nodes} implies that 
\[\textstyle \bC\Diam(d)=\binom{d}{2}(d^2+d-4)=\frac{1}{2}d^4-\frac{5}{2}d^2+2d.\]   
In particular, $\bC\Diam(2)=2$, and the number of real diameters  of an  ellipse is also $2$.   Further,  $\bC\Diam(3)=24$.  Based on our experiments,  we conjecture that  not all $24$ complex diameters can be made real. In other words,  there is probably no (generic) real cubic $\Ga$ for which all $24$ complex nodes of its curve of normals $N_\Ga$ are crunodes.

     Below we provide a lower bound for $\bR\Diam(d)$ by using small deformations of real line arrangements.  
Assume that we have a generic arrangement $\cA$ of $d$ lines in $\bR^2$, meaning that no two lines are parallel and no three intersect at the same point. 
Again by  Brusotti's theorem, we can find a small real deformation of $\cA$ within real curves of degree $d$ which  resolves each of the $\binom {d}{2}$ crunodes of  $\cA$ in a prescribed way. (For a generic $\cA\subset \bR^2$ of degree $d$, there exist $2^{\binom{d}{2}}$ topological types of its small real resolutions.)  It turns out that under some additional generality assumptions, one can estimate the number of real diameters of any such small resolution $\RR$. To move further, we need to introduce more notions related to line arrangements and their small resolutions.  
\medskip

\noindent{\bf Notation.}  A line arrangement $\cA\subset \bR^2$ is called \emph{strongly generic} if in addition to the conditions that no two lines are parallel and no three lines intersect at the same point, we require that no two lines are \emph{perpendicular}. By an \emph{altitude} of a given line arrangement $\cA\subset \bR^2$, we mean a straight segment connecting a vertex  $v$ of  $\cA$ with a point on a line of $\cA$  not containing $v$ such that this segment is perpendicular to the chosen line, see Fig.~\ref{pic:altitude}. The line of $\cA$ to which an altitude $\al$ is orthogonal, is called a \emph{base line}.  (Notice that if $\cA\subset \bR^2$ is strongly generic, then no altitude of $\cA$  connects its vertices.) Finally, we call a segment of a line belonging to $\cA$ and connecting its two vertices, a \emph{side} of the arrangement $\cA$. (In particular, every bounded edge is a side, but a side can consist of several bounded edges). 

Given two intersecting lines in $\bR^2$, we say that a pair of opposite sectors of its complement form a \emph{cone}. Thus the complement to the union of two lines consists of two disjoint cones. (The closure of a cone will be called a \emph{closed cone}).   We will mainly be interested in cones in a small neighborhood of their respective vertices. 

Assume that we have chosen some  type  of  a small real resolution  $\RR$ of a given strongly generic arrangement $\cA$, which means that at each vertex $v$ of $\cA$ we have (independently of other vertices) chosen which of two  local cones bounded by the lines whose intersection gives $v$, will merge, i.e., whose sectors will glue together after the deformation. (Two sectors of the other cone will stay disjoint under a local deformation.) We will call the first cone  \emph{merging}  and the second one \emph{persisting}. 

For a given line $\ell\subset \bR^2$ and point $p\in \ell$, denote by $\ell^\perp(p)$ the line passing through $p$ and orthogonal to $\ell$.  For a cone bounded by two lines $\ell_1$ and $\ell_2$ intersecting at some vertex  $v$, define its \emph{dual cone} as the union of all lines passing through $v$ and such that every line is orthogonal to some line passing through $v$ and belonging to the initial cone. (The dual cone is bounded by $\ell^\perp_1(v)$ and   $\ell^\perp_2(v)$.)  

Given a generic line arrangement $\cA\subset \bR^2$ of degree $d$,  define its \emph{derived arrangement} $\cD\cA\subset \bR^2$ of degree $d(d-1)$ as follows. For any pair of lines $\ell_1$ and $\ell_2$ from $\cA$, let $v$ denote their intersection point. Then $\cD\cA$ consists of all  lines $\ell_1^\perp(v)$ and $\ell_2^\perp(v)$ where  $\ell_1$ and $\ell_2$ run over all pairs of distinct lines in $\cA$. 
 
If we choose some resolution $\RR$ of $\cA$ then at each vertex $v$ of $\cA$, we get the  persistent cone $\cC_v(\RR)$ and its dual persistent cone $\cC_v^\perp(\RR)$ bounded by two lines of $\cD\cA$ which are perpendicular to the lines of $\cA$ those intersection gives $v$. If $\alpha$ is an altitude starting at $v$ and $\RR$ is some resolution, we say that $\alpha$ is \emph{admissible} with respect to $\RR$ if it lies inside $\cC_v(\RR)$ and \emph{non-admissible}  otherwise.  Finally, for a given strongly generic $\cA$, its two vertices $v_1$ and $v_2$ and any resolution $\RR$, we say that $v_1$ and $v_2$ \emph{see each other with respect to $\RR$} if   $v_2 \in \cC_{v_1}^\perp(\RR)$ and  $v_1\in \cC_{v_2}^\perp(\RR)$.

\begin{figure}

\begin{tikzpicture}[scale=1.5]

\draw[thin, blue] (0,1) -- (1.5,3);  

\draw[thin, blue] (-0.2,1.3) -- (1.7,2.7);

\draw[fill] (0.75,2) circle [radius=0.05];

\node at (0.8,2.4) {v};

\draw[fill] (3.25,2) circle [radius=0.05];

\node at (3.2,2.4) {w};

\draw[thin, blue] (2.5,3) -- (4,1);  

\draw[thin, blue] (4,3) -- (2.5,1);  

\draw[thin, black,dashed] (1.5,1.0) -- (0,3);  

\draw[thin, black,dashed] (1.5,1.4) -- (0,2.6);

\draw[thick, red, dashed] (0.75,2) -- (3.25,2);  

\draw[thin, black,dashed] (4,2.5) -- (2.5,1.5);  

\draw[thin, black,dashed] (4,1.3) -- (2.4,2.7);


 \draw[blue] (1.25,2.35)  arc (40:44:3);
 
  \draw[blue] (0.1,1.56)  arc (220:225:3);
  
   \draw[blue] (3.5,2.35)  arc (85:95:3);
   
      \draw[blue] (3.55,1.6)  arc (-82:-105:1.4);

\end{tikzpicture}

\caption{Two vertices $v$ and $w$ of a line arrangement $\cA$ which see each other  (along the red dashed line) with respect to $\RR$. The lines of $\cA$ are shown in solid blue, while the dashed lines belong to the derived arrangement. The persistent cones are marked by the blue arcs.} \label{fig:Proper}
\end{figure}
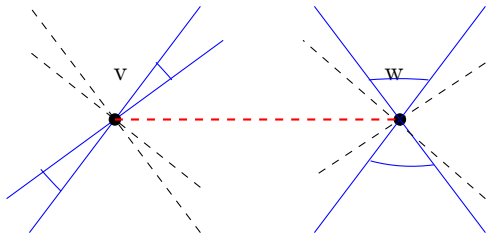

\begin{lemma}\label{lm:local} Given a strongly generic line arrangement $\cA$, the following holds:
\begin{itemize}
\item[(i)] any small resolution $\RR$ of a vertex $v\in \cA$ has (at least) one short diameter  near $v$; 
\item[(ii)] if an altitude $\alpha$ is admissible with respect to a small deformation $\RR$ then $\RR$ has (at least) two diameters close to $\alpha$; 
\item[(iii)] if $v_1$ and $v_2$ see each other with respect to a small deformation $\RR$ then $\RR$ has (at least) four diameters close to the straight segment $(v_1,v_2)$.
\end{itemize}
\end{lemma}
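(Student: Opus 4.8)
The plan is to deduce all three parts from one local construction, using that a diameter of $\RR$ is the same datum as an unordered pair of distinct real points of $\RR$ carrying a common normal line, i.e.\ a real crunode of the curve of normals $N_\RR$. By Proposition~\ref{lm:smallcomp}, in a small disk about any vertex $w$ of $\cA$ the curve $\RR$ is, in suitable Euclidean coordinates centred at $w$, a pair of strictly convex inflection-free arcs which are small perturbations $\tilde y=\pm a\sqrt{\tilde x^2+\eps^2(1+O(\tilde x,\eps))}$ of two hyperbola branches, with curvature — hence tangent and normal direction — strictly monotone on either side of the unique vertex of each arc. Thus each arc contributes a single monotone arc to $N_\RR$, and every common normal I produce is automatically a transversal self-crossing of $N_\RR$, which is all that is needed. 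The three parts are then progressively ``longer'' instances of matching a normal of one local branch to a normal of a second branch (the second being a third line in (ii), or a branch at a second vertex in (iii)) by a degree / implicit-function-theorem argument, compared always with the line-arrangement limit $\eps\to 0$, where the matching configuration is explicit and transversal.

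For part (i), I work at $w=v$ in the coordinates above. For the model hyperbola ($O\equiv0$) the bisector line of the cone $\cC_v(\RR)$ (the axis $\tilde x=0$) is normal to each branch at its vertex, hence a common normal, and the segment between the two feet has length $O(\eps)$ — a short diameter. To pass to the genuine $\RR$, I consider the map $\Theta$ sending a pair $(s_1,s_2)$ of foot-parameters, one on each arc, to the pair of angles by which the chord joining the corresponding feet fails to be normal at its ends; for the model $\Theta$ vanishes at the two vertices, and a direct computation in the normal form shows $d\Theta$ is invertible there (its entries involve the nonzero vertex curvatures). Treating the passage from the model to $\RR$ as a small perturbation — after the rescaling $\tilde x=\eps\xi$ that brings the two vertices and the model common normal to bounded $\xi$-scale — the inverse function theorem yields a zero of $\Theta$ near the two vertices for all small $\eps$; this is the claimed short diameter, and invertibility of $d\Theta$ is exactly transversality of the corresponding crossing of $N_\RR$. (Should $d\Theta$ degenerate for special $a$, a mod-$2$ degree count gives existence instead.)

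For part (ii), let $\alpha$ join $v$ to a point $p$ on a base line $\ell_3$, with $\alpha\perp\ell_3$. Near $p$ the curve $\RR$ is a single arc $O(\eps)$-close to $\ell_3$, so the lines normal to $\RR$ at points near $p$ form a one-parameter family $O(\eps)$-close to $\ell_3^\perp(p)=\alpha$. Near $v$, $\RR$ is the two arcs $\gamma_1,\gamma_2$ of Proposition~\ref{lm:smallcomp}, and along either arc the normal direction sweeps monotonically over (in the limit $\eps\to0$) the whole interior of the cone $\cC_v(\RR)$; admissibility of $\alpha$, i.e.\ $\alpha\subset\cC_v(\RR)$, is precisely the statement that the direction of $\alpha$ lies in this swept range for each of $\gamma_1$ and $\gamma_2$. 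Fixing $i\in\{1,2\}$, I parametrise the normals to $\gamma_i$ near $v$ by their foot, follow each towards $p$, and record the angle it makes with the tangent of $\RR$ where it meets the arc near $p$; comparing with the limit $\eps\to0$, where $\alpha$ meets $\ell_3$ at a right angle transversally, the implicit function theorem produces a foot on $\gamma_i$ whose normal is also orthogonal to $\RR$ near $p$ — a diameter close to $\alpha$. Running $i=1,2$ gives the two asserted diameters; they lie on distinct arcs near $v$, hence are distinct and yield transversal crossings of $N_\RR$.

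For part (iii), both feet are at vertices. Near $v_1$ (resp.\ $v_2$) the curve $\RR$ is a pair of arcs whose normal directions each sweep monotonically over the interior of the cone $\cC_{v_1}(\RR)$ (resp.\ $\cC_{v_2}(\RR)$); unwinding the definition of ``seeing'' through the dual cones $\cC_{v_k}^\perp(\RR)$ and the derived arrangement $\cD\cA$, the hypothesis that $v_1$ and $v_2$ see each other with respect to $\RR$ is exactly the condition that the direction of the segment $(v_1,v_2)$ lies in this swept range at $v_1$ and likewise at $v_2$. Then for each of the $2\times2$ pairs consisting of an arc near $v_1$ and an arc near $v_2$ there is a normal to the first pointing within $O(\eps)$ along $(v_1,v_2)$ and likewise for the second, and the same matching argument — comparing with the limit where $(v_1,v_2)$ is the common normal of the two crossing lines approximating the two arcs, the two right-angle conditions being cut transversally — yields a genuine common normal of $\RR$ with one foot near $v_1$ and one near $v_2$. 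This produces four diameters close to $(v_1,v_2)$, pairwise distinct (feet on distinct arcs) and transversal as crossings of $N_\RR$. The part I expect to be the real work is uniformity in $\eps$ of these match-up steps together with non-degeneracy of the relevant Jacobians in the degenerate limit where the arcs collapse to crossing lines — concretely, checking that the relevant normal lines pass within $O(\eps)$, not merely $o(1)$, of their target and that the ``angle'' functions whose zeros are sought have nonvanishing derivative there; this is precisely where the explicit normal form and the monotone-curvature / no-inflection conclusions of Proposition~\ref{lm:smallcomp} (and Corollary~\ref{lm:edges}, for inflection counts on the long edges) enter essentially.
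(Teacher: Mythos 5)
Your route is genuinely different from the paper's for parts (i) and (iii). The paper produces diameters variationally: for (i) it takes the minimum of the distance function between the two local branches (which are convex ``towards each other'', so the minimum is attained at interior points); for (iii) it treats each of the four pairs of local branches separately and finds one critical point of the pairwise distance function per pair --- a maximum, a minimum, or a saddle according to whether the two sectors ``look away from each other'', ``look towards each other'', or ``one looks after the other'' --- with persistence under the deformation obtained from separate convexity or concavity of the distance function along the horizontal and vertical segments of the parameter square. Only for (ii) does the paper argue essentially as you do, via the sweep of the Gauss map and continuity. Your replacement of the variational argument by transversal zeros of angle-defect maps, located by the implicit function theorem against the rescaled $\eps\to 0$ limit, is workable (at a common normal of length $L\asymp 1$ with feet at distance $O(\eps)$ from the vertices the relevant Jacobian has determinant of order $L^2\kappa_1\kappa_2$ with $\kappa_i\sim 1/\eps$, so it is far from degenerate) and has the advantage of delivering transversality of the corresponding crossings of $N_{\RR}$ for free; the paper's argument is softer --- no Jacobians, no uniformity in $\eps$ --- and in (iii) it additionally identifies the type (max/min/saddle) of each of the four diameters, which is what most cleanly shows they are distinct.

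One concrete point needs repair in your part (iii): the phrase ``comparing with the limit where $(v_1,v_2)$ is the common normal of the two crossing lines approximating the two arcs'' is not correct as stated. The segment $(v_1,v_2)$ is in general perpendicular to none of the four lines involved, so in the unrescaled limit $\eps\to 0$ there is no limiting common normal for the implicit function theorem to perturb from. What the ``seeing'' hypothesis actually provides is that the direction of $(v_1,v_2)$ lies in the open cone of normal directions swept by each local branch, which forces the feet of the sought common normal to sit at distance $O(\eps)$ from $v_1$ and $v_2$, in the region of curvature $\sim 1/\eps$; you must therefore run the matching in the rescaled coordinates $\tilde x=\eps\xi$ at both ends, exactly as you do in part (i) but do not do explicitly here. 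With that correction, and with the uniformity estimates you defer actually carried out, your argument closes.
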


\begin{proof} 
To settle (i), observe that a small deformation $\RR$ of $\cA$ restricted to  a small neighborhood of a given vertex $v$ consists of two convex connected components which are convex  ``towards each other".  For each point on one of the local branches, the distance function to the other branch increases towards its endpoints.  Therefore there exists a global  minimum of the distance function between these branches, giving a diameter  attained on a pair of points lying inside both branches (and not on their boundary), see Fig.~\ref{pic:altitude}.   

To settle (ii), denote by $\ell_1$ and $\ell_2$ a pair of lines belonging to $\cA$ whose intersection gives the vertex $v$.  Recall that the Gauss map sends a point of a curve to the normal line to the curve passing through this point. Observe that a small resolution $\RR$ creates near the vertex $v$ two short connected components such that the Gauss map  on each of them moves from a line close to $\ell_1^\perp$ to  a line close to $\ell_2^\perp$. The Gauss map of a small deformation of the base line of $\al$ (to which the altitude $\al$ is orthogonal)  is close to the direction of the altitude. Thus by continuity and admissibility we will be able to find (at least) one normal on each of the short components which is also orthogonal to the small deformation of the base line, see Fig.~\ref{pic:altitude}. 

To settle (iii), observe  that short segments are very close to the corresponding hyperbolas (one for each crunode), see Fig.~\ref{pic:seeeach}. For the hyperbolas the statement is true and the four diameters are located close to the connecting segment $vw$.   To prove this, notice that we have four pairs of branches of hyperbolas to consider, lying in four pairs of respective sectors. These pairs of sectors can be of three possible types. We say that a pair of sectors ``look away from each other"  if each cone contains the apex of the other cone; ``look towards each other" if their intersection is empty; ``one looks after the other" if the apex of one cone belongs to the other, but not the other way around.  For a pair whose sectors ``look away from each other",  we get a maximum of the distance between the branches of hyperbolas near their apices, for the pair whose sectors ``look towards each other", we get  a minimum of the distance between the branches of hyperbolas near their apices, and for the pairs where ``one looks after the other", we get a saddle point. Finally,  the same phenomena will be present in a sufficiently small deformation of hyperbolas. To prove this, we consider the pairwise distance as the function on the small square, one side of which gives the position of the point on one considered branch and the other side gives the position of the point on the other side.  Then if the sectors ``look away from each other", the distance function is concave on each horizontal and vertical segment of the square; if the sectors ``look towards each other", the distance function is convex on each horizontal and vertical segment of the square;  if the sectors ``one looks after the other", the distance function is convex on each horizontal and concave on the each vertical segment of the square. The result follows. 
\end{proof} 

\begin{figure}\includegraphics[scale=0.65]{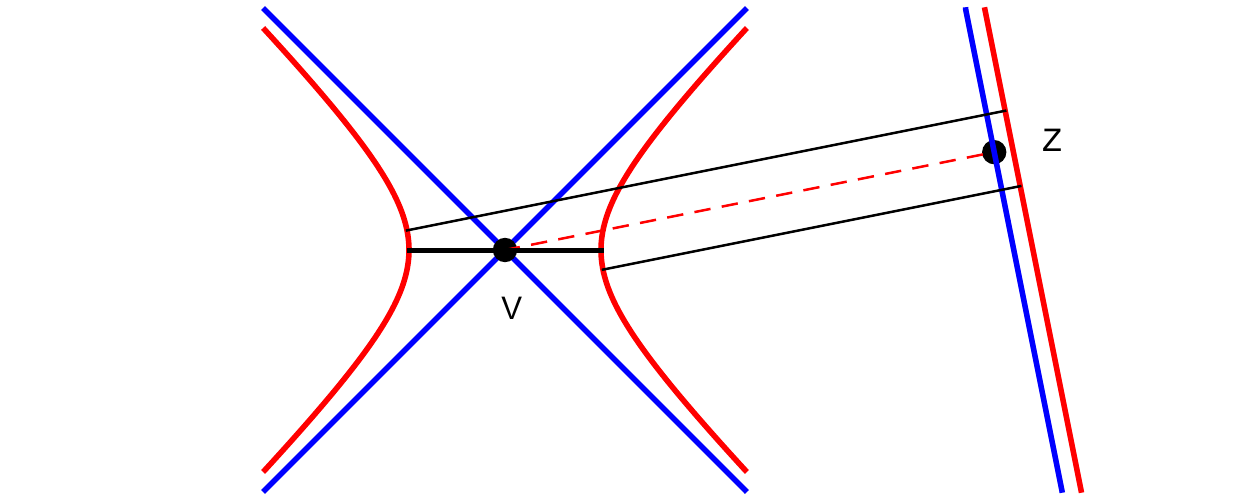}
\caption{A resolution of an admissible altitude (dashed) creating two diameters (black). We also include a short diameter created near the vertex $v$  (in black).}\label{pic:altitude}
\end{figure}

\begin{figure}

\includegraphics[scale=0.55]{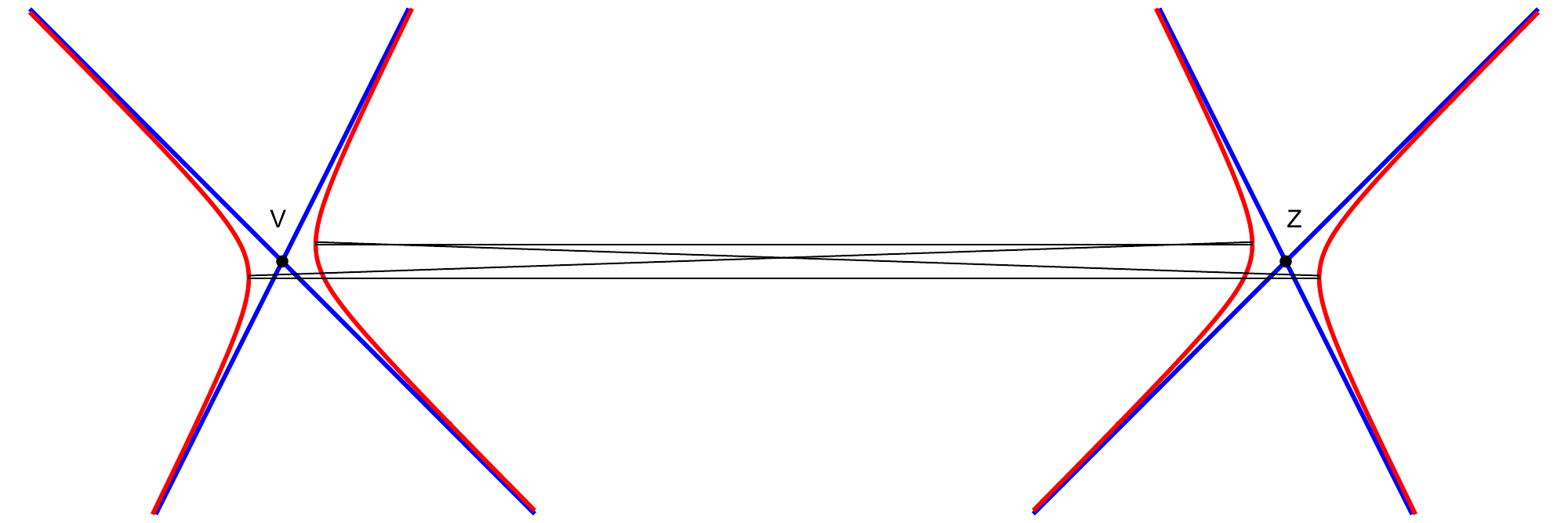}

\caption{A resolution of two vertices which see each other, resulting in four diameters (black).}\label{pic:seeeach}
\end{figure}

Lemma~\ref{lm:local} immediately implies the following claim.

\begin{proposition} \label{prop:Brus} Given any small resolution $\RR$ of a  strongly generic arrangement $\cA$ consisting of $d$ lines, the number of real diameters  of  $\RR$ is greater than or equal to  $\binom{d}{2}$ plus twice the number of admissible altitudes with respect to $\RR$, plus four times the number of pairs of vertices which see each other with respect to $\RR$. 
\end{proposition}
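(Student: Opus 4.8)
The statement to prove, Proposition \ref{prop:Brus}, asserts that for any small resolution $\RR$ of a strongly generic arrangement $\cA$ of $d$ lines, the number of real diameters of $\RR$ is at least $\binom{d}{2}$ plus twice the number of admissible altitudes plus four times the number of pairs of vertices that see each other.

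The plan is to simply assemble the three local contributions supplied by Lemma \ref{lm:local} and to check that these contributions are produced in pairwise disjoint regions of $\bR^2$, so that no diameter is counted more than once. First I would invoke Lemma \ref{lm:local}(i): each of the $\binom{d}{2}$ vertices $v$ of $\cA$ contributes at least one short diameter localized in an arbitrarily small neighborhood $U_v$ of $v$; choosing these neighborhoods disjoint (possible since $\cA$ has finitely many vertices) gives $\binom{d}{2}$ distinct diameters. Next, for each altitude $\alpha$ admissible with respect to $\RR$, Lemma \ref{lm:local}(ii) yields at least two diameters lying in an arbitrarily small tubular neighborhood of the segment $\alpha$; and for each pair of vertices $(v_1,v_2)$ that see each other, Lemma \ref{lm:local}(iii) yields at least four diameters in a small neighborhood of the segment $[v_1,v_2]$. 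Summing gives the claimed bound, \emph{provided} the diameters from different sources are genuinely distinct.

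The main thing to verify — and the step I expect to be the only real obstacle — is that these neighborhoods can be chosen mutually disjoint, i.e. that the loci carrying the three types of diameters do not overlap. The short diameters near distinct vertices are trivially separated. An admissible altitude $\alpha$ emanates from a vertex $v$ but, by strong genericity, lands in the interior of a base line, not at another vertex, and its two associated diameters sit near the \emph{interior} of $\alpha$, away from $v$; since $\cA$ is strongly generic no altitude connects two vertices, so the altitude-diameters avoid all vertex-neighborhoods once those are taken small enough, and two distinct altitudes are disjoint away from their shared endpoint vertex. The four diameters associated to a see-each-other pair $(v_1,v_2)$ lie near the open segment $(v_1,v_2)$, which is a side-or-edge direction of $\cA$ and again, for small enough neighborhoods, is disjoint from the altitude segments (which are generically in different directions) and from the vertex neighborhoods. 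Thus after shrinking all the deformation parameters and all the neighborhoods simultaneously — which is legitimate because there are only finitely many vertices, altitudes, and see-each-other pairs — one obtains pairwise disjoint regions, each contributing its guaranteed number of diameters, and the bound follows by addition. The write-up amounts to little more than this bookkeeping, so I would keep it short: state the decomposition, note disjointness via strong genericity, and add the counts.
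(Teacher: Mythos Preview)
Your proof is correct and follows exactly the paper's approach: the paper simply declares that the proposition ``immediately'' follows from Lemma~\ref{lm:local} and gives no further argument. You supply the disjointness bookkeeping (distinguishing the three types of diameters by the locations of their endpoint pairs, using strong genericity to ensure no altitude lands at a vertex) that the paper leaves implicit.
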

\begin{conjecture} The number of diameters of any real small resolution of any line arrangement of degree $d$ does not exceed the number
\begin{equation}\label{eq:kappa}
\binom{d}{2}+2\binom{d}{2}(d-2)+4\binom{\binom{d}{2}}{2}=\frac{1}{2}d^4-3d^2+\frac{5}{2}d.
\end{equation} 
\end{conjecture}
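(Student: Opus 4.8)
The plan is to establish the sharper statement that, for a strongly generic arrangement $\cA$ and $\eps$ small, the diameters of a small resolution $\RR=\RR_\eps$ of $\cA$ are \emph{exactly} the short ones created near the vertices, the pairs created along admissible altitudes, and the quadruples created between pairs of vertices that see each other; summing these then gives \eqref{eq:kappa} (and, as a bonus, the sharpness of Proposition~\ref{prop:Brus}). We treat the main case, $\cA$ strongly generic; the degenerate cases differ only in the local analysis at the degenerate points of $\cA$, where the bound acquires additional slack. First I would fix a product $G=\prod_{i=1}^{d}L_{i}$ defining $\cA$ and record, via Proposition~\ref{lm:smallcomp} and Lemma~\ref{lm:curvature}, the local shape of $\RR_{\eps}$: after suitable rescalings it is $C^{2}$-close to a fixed pair of hyperbola branches $y=\pm a\sqrt{x^{2}+\eps^{2}}$ (with $a$ depending on the vertex) in a neighborhood of each vertex, and $C^{2}$-close to the underlying line along the interior of every bounded or unbounded edge.

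\textbf{Step 1: localizing every diameter.} The crucial lemma to prove is that there exist $\eps_{0}>0$ and fixed small neighborhoods in $\bR^{2}\times\bR^{2}$ such that for every $0<\eps\le\eps_{0}$, each diameter of $\RR_{\eps}$, recorded as the ordered pair of its endpoints, lies in the neighborhood of one of: (a) a diagonal pair $(v,v)$ with $v$ a vertex of $\cA$; (b) a pair $\{v,F\}$, where $F$ is the foot of the altitude from a vertex $v$ to a line of $\cA$ not through $v$; (c) a pair $(v_{1},v_{2})$ of distinct vertices. I would prove this by compactness and contradiction: along a sequence $\eps_{n}\to 0$ carrying a diameter outside this list, extract convergent endpoint pairs (controlling the behavior near the line at infinity by passing to the projective curves of normals $\tilde N_{\RR_{\eps_{n}}}\subset(\bR P^{2})^{\vee}$, which live in a compact space) and examine the limiting chord. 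As $\RR_{\eps}\to\cA$, each limiting endpoint sits at a vertex or in an open edge, and the forbidden combinations are excluded as follows: a chord orthogonal to two distinct non-parallel lines does not exist; a chord with both endpoints tending into a single open edge must shrink to a point on a single, nearly straight convex arc, which carries no short perpendicular chord; a chord with both endpoints collapsing to one vertex must join the two arcs there, since a single arc at a vertex turns by less than $\pi$ and so has no perpendicular chord; and a chord with one endpoint tending to a vertex $v$ and the other into an open edge on a line $\ell$ is forced, by asymptotic orthogonality of the normal at the edge endpoint to $\ell$, to be asymptotic to the altitude from $v$ to $\ell$, hence its second endpoint tends to $F$. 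This yields precisely cases (a)--(c).

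\textbf{Step 2: exact local counts and summation.} Near $(v,v)$, reducing to $y=\pm a\sqrt{x^{2}+\eps^{2}}$ one checks that the only chord orthogonal to both branches is the vertical segment through the two apices, and that it is non-degenerate, so $\RR_{\eps}$ has exactly one short diameter near each vertex. Near $\{v,F\}$ with $F$ the foot of the altitude from $v$ to $\ell$: the two short arcs at $v$ have non-vanishing curvature (Proposition~\ref{lm:smallcomp}), so on each the tangent direction runs monotonically through the cone between the two lines through $v$; in the model where the other endpoint lies on $\ell$, the diameter equation reduces to the arc at $v$ being tangent to a line parallel to $\ell$, which has exactly one transverse solution on each of the two arcs if the altitude is admissible and none otherwise; hence $2$ diameters per admissible altitude and $0$ per non-admissible one. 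Near $(v_{1},v_{2})$, approximating both arcs by the corresponding hyperbola pairs, the inter-branch distance function is, as in the proof of Lemma~\ref{lm:local}(iii), convex or concave along the coordinate segments of the relevant small square, and the four critical points it produces there are non-degenerate; hence $4$ diameters per pair of vertices that see each other and $0$ otherwise. Summing, for $\eps$ small the number of diameters of $\RR_{\eps}$ equals $\binom{d}{2}+2\,a(\RR)+4\,s(\RR)$, where $a(\RR)$ is the number of admissible altitudes and $s(\RR)$ the number of pairs of vertices that see each other; since $a(\RR)\le\binom{d}{2}(d-2)$ (the total number of altitudes) and $s(\RR)\le\binom{\binom{d}{2}}{2}$, this is at most the quantity in \eqref{eq:kappa}, which equals $\tfrac12 d^{4}-3d^{2}+\tfrac52 d$.

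\textbf{The main obstacle.} The hard part will be Step 1: controlling \emph{all} diameters uniformly in $\eps$. Lemma~\ref{lm:curvature} only describes one branch of $\RR_{\eps}$ near one vertex, and only to leading order in the curvature, whereas one needs $C^{2}$ control of $\RR_{\eps}$ simultaneously at every scale --- near the apices (scale $\eps$), along the edges (scale $1$), and in the transition regions between them --- together with a genuine compactness argument ruling out diameters that escape to infinity or hide in a transition region. A multi-scale (blow-up) analysis of the family $\RR_{\eps}$, which this paper does not develop, seems unavoidable; once it is available, the non-degeneracy checks in Step 2 become routine.
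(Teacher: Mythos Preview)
The paper does not prove this statement --- it is stated as a \emph{conjecture}, with no proof offered anywhere in the text. So your proposal is not competing with an argument in the paper; it is an attempt to settle a question the authors leave open. You are honest about this: what you have written is a strategy with an explicitly flagged obstacle (Step~1), not a proof.

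The outline is the natural one and, for strongly generic $\cA$, Step~2 is largely sound for the \emph{upper} bounds you need (at most $1$, $2$, $4$ per configuration), since the convexity/concavity description of the distance function in the proof of Lemma~\ref{lm:local}\,(iii) gives uniqueness of the critical point on each branch pair. But beyond the multi-scale issue you already name, there are two further gaps you pass over. First, for two vertices $v_1,v_2$ lying on a \emph{common} line $\ell$ of $\cA$, the segment $v_1v_2$ is along $\ell$ and hence on the \emph{boundary} of both persistent cones; the hyperbola model at the two vertices is then coupled through the long component of $\RR_e$ along the edge, so the ``four independent branch pairs'' picture underlying your ``exactly $4$ or $0$'' count no longer applies, and the transversality you invoke degenerates. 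This is exactly a transition-region phenomenon, and it feeds back into Step~1 rather than being dispatched in Step~2. Second, your one-line dismissal of non-strongly-generic arrangements (``the bound acquires additional slack'') is unjustified: the conjecture is stated for \emph{any} line arrangement, and e.g.\ perpendicular pairs or near-parallel configurations produce distance functions on pieces of $\RR$ that are not covered by your three local models. Finally, note that in your compactness argument the curves $N_{\RR_\eps}$ have degree $d^2$ but converge set-theoretically to a degree-$d$ configuration, so most of the curve (and hence most potential crunodes) concentrates or escapes as $\eps\to 0$; tracking this is precisely the multi-scale analysis you say is missing, and it is the heart of the matter.
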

%
\begin{figure}[H]
     \centering
     \begin{subfigure}[t]{0.32\textwidth}
         \centering
      \includegraphics[width=0.95\columnwidth]{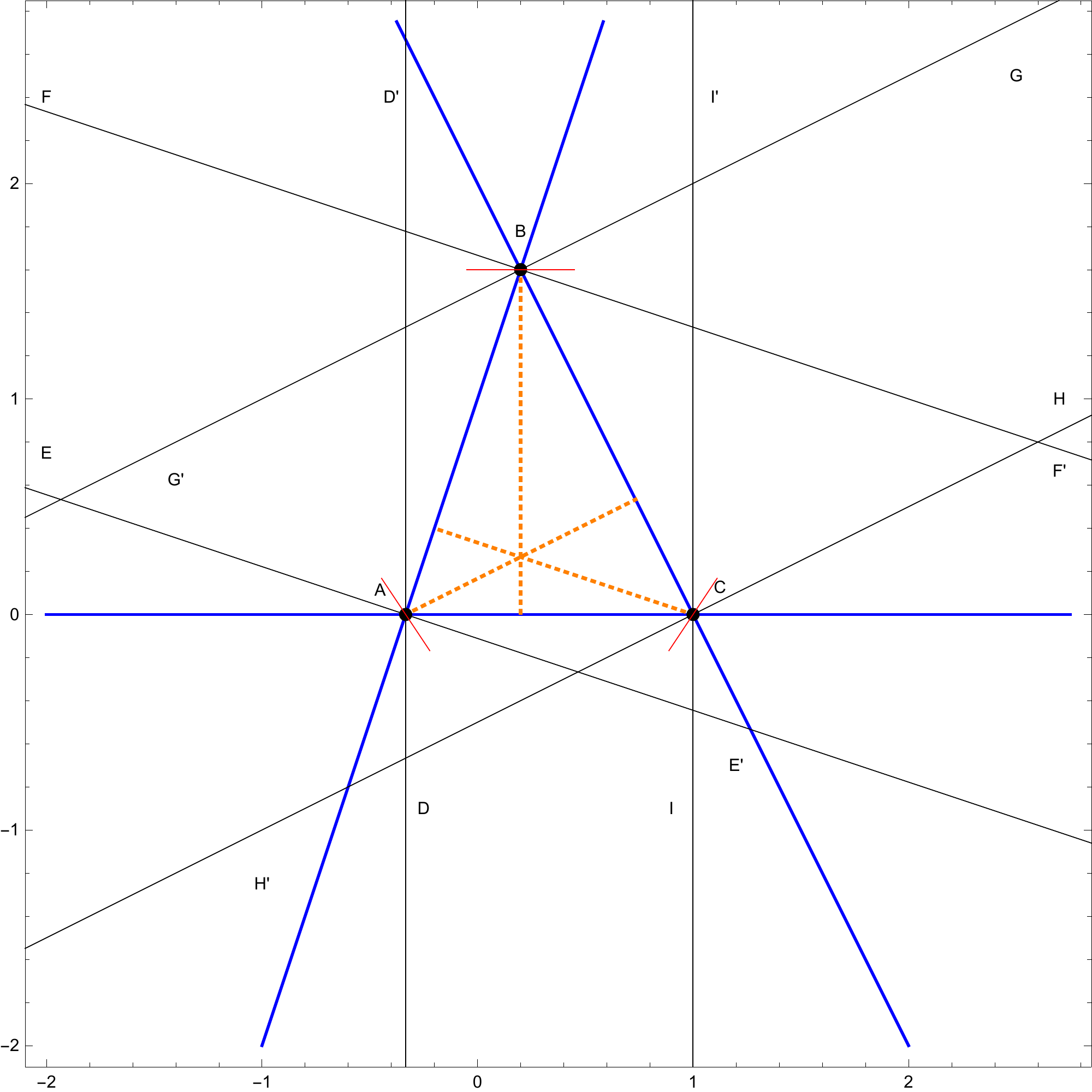} 
       \caption{Three lines (shown in blue), their altitudes (shown dotted in orange),  and their derived arrangement (shown in black). Red lines indicate the resolution.}\label{fig:fig9left}
     \end{subfigure}
     \hfill
     \begin{subfigure}[t]{0.32\textwidth}
         \centering
        \includegraphics[width=0.95\columnwidth]{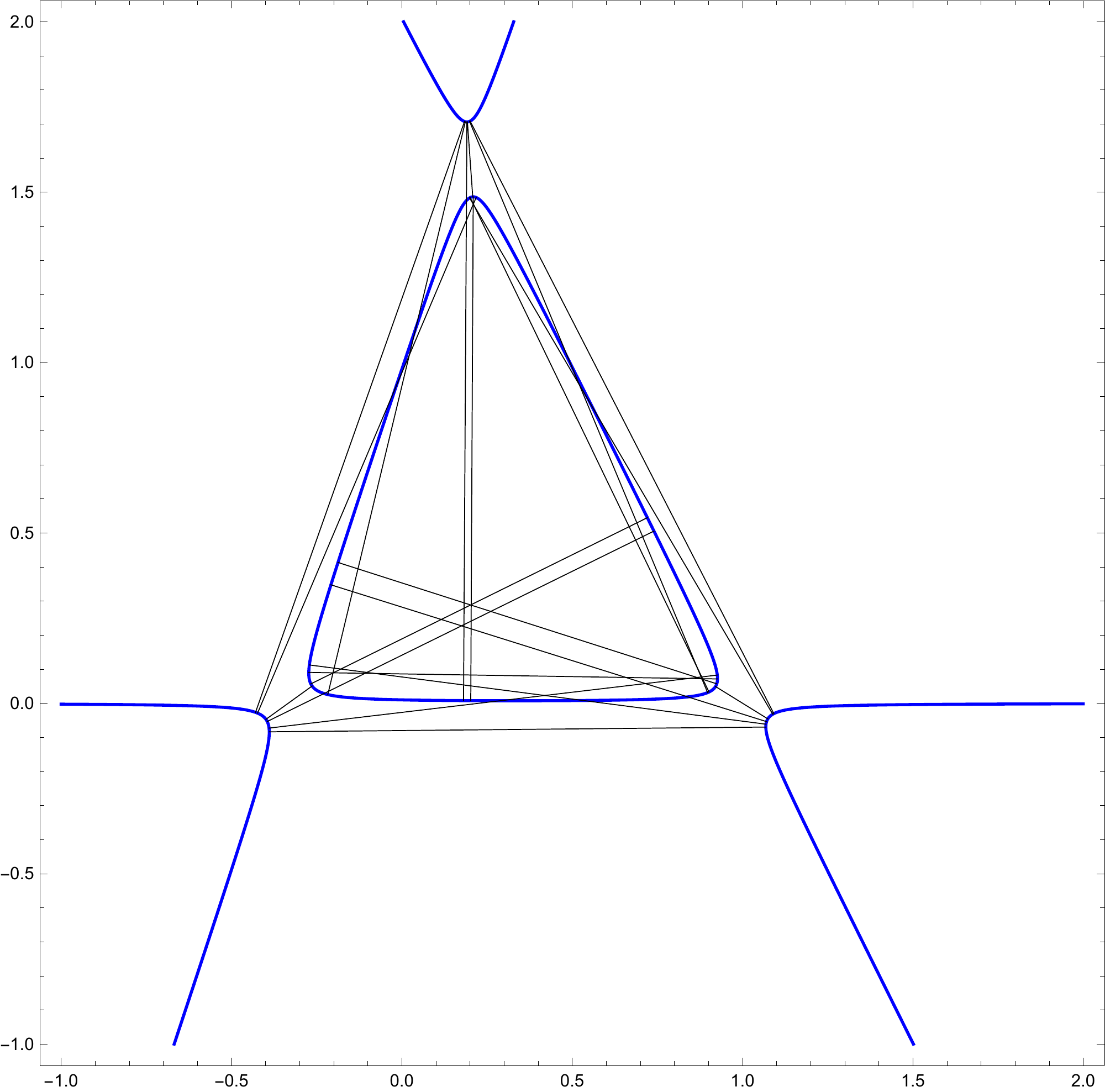}
        \caption{A small resolution  creating a smooth triangle-shaped oval with 3 infinite arcs having 21 diameters, which are drawn in black.}\label{fig:fig9middle}
     \end{subfigure} \hfill
      \begin{subfigure}[t]{0.32\textwidth}
         \centering
            \includegraphics[width=0.95\columnwidth]{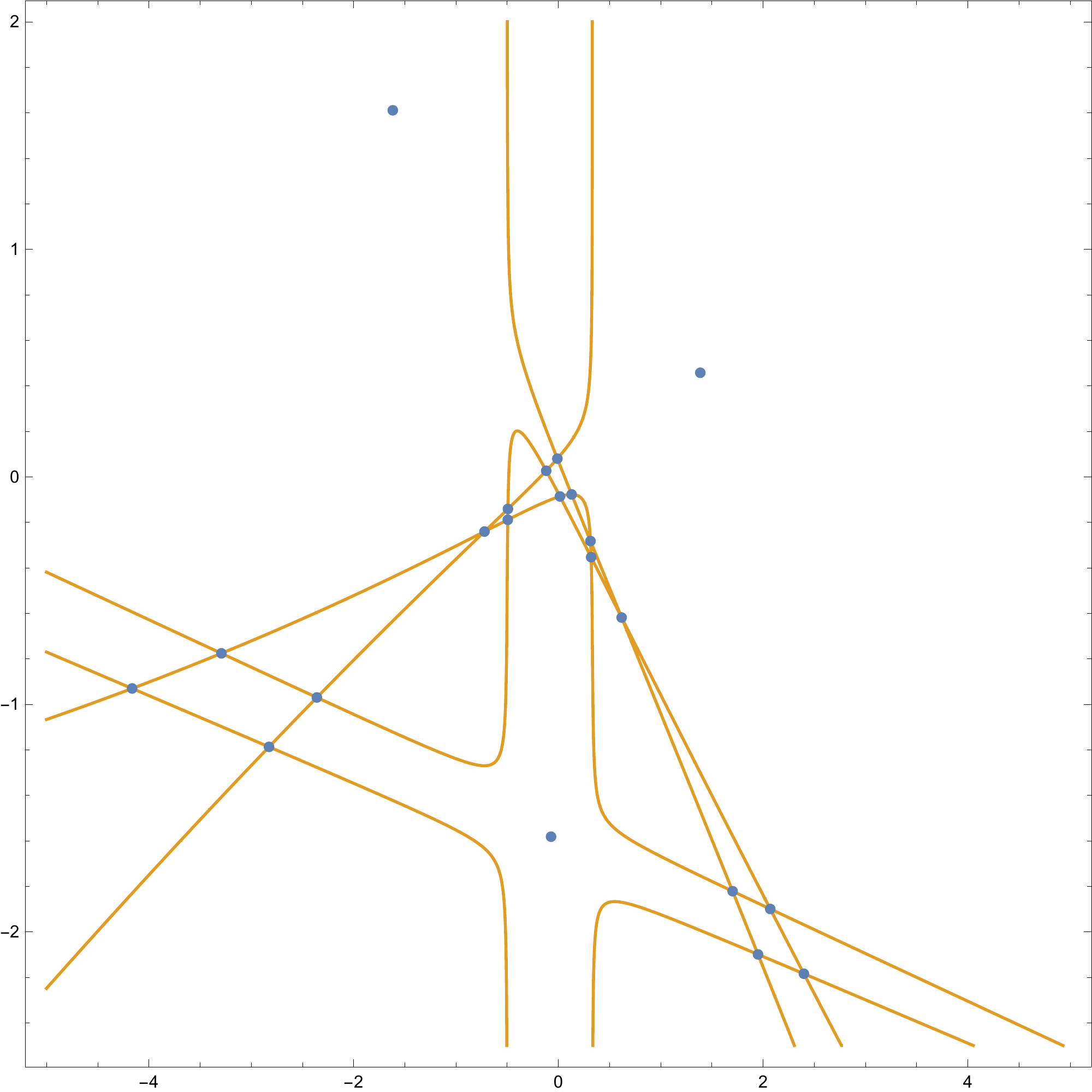}
        \caption{The curve of normals corresponding to the resolution has 24 real singularities in the affine plane. 21 of these are crunodes, of which 18 are visible in this picture -- there are two more  out left and one more to the right.}\label{fig:fig9right}
     \end{subfigure}\hfill
     \caption{An example of a resolution of three lines with 21 diameters.} \label{fig:Triangle}
     \end{figure}

\begin{example}{\rm Fig.~\ref{fig:Triangle} shows a special case of a line arrangement for $d=3$ and its small resolution which results in $21$ diameters. The arrangement itself is shown by $3$ blue lines and its derived arrangement is shown by $6$ black lines, the three altitudes are shown by $3$ red lines. The chosen resolution is shown by $3$ short red segments at the vertices $A, B,C$ which indicate how we cut out arrangement at its vertices. As the result of such a resolution we will obtain a compact triangle-formed oval inside together with $3$ infinite arcs. Three pairs of persistent sectors will include $EAD - D'AE'$, $FBG - F'BG'$, and $HCI - H'CI'$. Every two vertices see each other with respect to $\RR$ for this small resolution, and each altitude is admissible. Therefore we get $3+2\times 3 + 3\times 4=21$ real diameters on this small resolution $\RR$. We conjecture that 21 is the maximal number of real diameters which can be obtained by a small deformation of three real lines. 
}\end{example}

Proposition~\ref{prop:Brus}  implies the following lower bound for $\bR\Diam(d)$. 

\begin{proposition}\label{prop:thin} In the above notation,
\begin{equation}\label{eq:newbound}
\textstyle \bR\Diam(d)\ge \frac{1}{2}d^4-d^3+\frac{1}{2}d.
\end{equation}
\end{proposition}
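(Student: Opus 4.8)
The plan is to reduce everything to Proposition~\ref{prop:Brus}. That proposition bounds the number of real diameters of a small resolution $\RR$ from below by $\binom{d}{2}$, plus twice the number of admissible altitudes, plus four times the number of pairs of vertices that see each other with respect to $\RR$. If I can produce, for each $d$, a strongly generic arrangement $\cA$ of $d$ lines together with a small resolution $\RR$ for which \emph{every} one of the $\binom{\binom{d}{2}}{2}$ pairs of vertices of $\cA$ sees the other with respect to $\RR$, then, discarding the altitude term, Proposition~\ref{prop:Brus} gives $\bR\Diam(d)\ge\binom{d}{2}+4\binom{\binom{d}{2}}{2}$. Since $\binom{d}{2}=\tfrac{d(d-1)}{2}$ and $\binom{\binom{d}{2}}{2}=\tfrac{d(d-1)(d-2)(d+1)}{8}$, an elementary computation gives $\binom{d}{2}+4\binom{\binom{d}{2}}{2}=\tfrac{d(d-1)(d^2-d-1)}{2}=\tfrac12 d^4-d^3+\tfrac12 d$, which is exactly \eqref{eq:newbound}. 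So the whole task is the construction of such a pair $(\cA,\RR)$.

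For the construction I would take $\cA$ to be the arrangement of the $d$ lines $\ell_i:\ y=\delta s_i x+b_i$ $(i=1,\dots,d)$, where $0<s_1<\dots<s_d<1$ are fixed, the intercepts $b_i$ are generic, and $\delta>0$ is a small parameter to be chosen last. For generic $b_i$ and any $\delta>0$ this arrangement is strongly generic: the slopes are distinct, so no two lines are parallel; a generic choice of $b_i$ rules out triple points; and all slopes are small and positive, so no two lines are perpendicular. The vertex $v_{ij}=\ell_i\cap\ell_j$ has $x$-coordinate $c_{ij}/\delta$ with $c_{ij}=(b_j-b_i)/(s_i-s_j)$ of order $1$, and $y$-coordinate also of order $1$; for generic $b_i$ the numbers $c_{ij}$ are pairwise distinct, so there are constants $g,C>0$ independent of $\delta$ with $|x_v-x_w|\ge g/\delta$ for all vertices $v\neq w$ lying on no common line, and $|y_v|\le C$ for every vertex $v$. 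For $\RR$ I would take the small resolution — realizable in degree $d$ by Brusotti's Theorem~\ref{th:brusotti} — whose persistent cone at \emph{every} vertex is the obtuse cone, i.e.\ the one of the two cones there that contains the vertical direction.

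It then remains to check that, for $\delta$ small enough, all pairs of vertices see each other. Fix a vertex $v=\ell_i\cap\ell_j$. The persistent cone at $v$ is the obtuse cone bounded by the two nearly horizontal lines $\ell_i,\ell_j$, of opening $\pi-\theta_v$ with $\theta_v=|\arctan(\delta s_i)-\arctan(\delta s_j)|=O(\delta)$; hence its dual $\cC_v^\perp(\RR)$ is the obtuse cone bounded by the two nearly vertical lines $\ell_i^\perp(v),\ell_j^\perp(v)$, so $\cC_v^\perp(\RR)$ consists of all directions through $v$ except a thin sliver of angular width $\theta_v=O(\delta)$ about the vertical. Now the segment between any two distinct vertices is nearly horizontal: if they share a line $\ell_i$ its slope is $\delta s_i$, and if not its slope is at most $|y_v-y_w|/|x_v-x_w|\le 2C\delta/g$. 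In both cases the slope is $O(\delta)$, so the segment makes an angle $O(\delta)$ with the horizontal and therefore avoids every near-vertical sliver once $\delta$ is chosen small enough, uniformly over the finitely many vertices. Consequently $w\in\cC_v^\perp(\RR)$ and, symmetrically, $v\in\cC_w^\perp(\RR)$, i.e.\ $v$ and $w$ see each other with respect to $\RR$, and the construction is complete.

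I expect the geometric step in the last paragraph to be the real point: one must guarantee that the near-vertical ``blind sliver'' of \emph{every} vertex misses \emph{all} of the other $\binom{d}{2}-1$ vertices simultaneously. This is precisely why the lines are taken nearly parallel (each blind sliver then has width $O(\delta)$) and why the intercepts are fixed generic before letting $\delta\to 0$ (the vertices then spread apart in the $x$-direction like $1/\delta$, much faster than the slivers shrink). Everything else is the arithmetic identity of the first paragraph and the invocations of Brusotti's theorem and Proposition~\ref{prop:Brus}; I would also record the routine remark that a generic such $\RR$ is a smooth real-algebraic curve of degree $d$ with no circle among its irreducible components, so that $\bR\Diam(d)$ is indeed the relevant invariant.
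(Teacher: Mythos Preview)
Your plan is exactly the paper's: take an arrangement of $d$ nearly-parallel (nearly horizontal) lines, choose one resolution, and invoke Proposition~\ref{prop:Brus} with every pair of vertices seeing each other; the arithmetic identity in your first paragraph is correct. The gap is in the choice of persistent cone.

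You take the \emph{obtuse} cone at each vertex to be persistent and then assert that $\cC_v^\perp(\RR)$ is again obtuse. But look at what such a resolution actually produces near $v$: with obtuse $\cC_v$ (the cone containing the vertical), the two local hyperbola-like branches open \emph{vertically}; their tangent directions sweep only the thin $O(\delta)$ arc between the two nearly-horizontal asymptotes, so their \emph{normal} directions sweep only a thin $O(\delta)$ arc about the vertical. For the four diameters of Lemma~\ref{lm:local}\,(iii) one needs a common normal to a branch near $v$ and a branch near $w$, close to the segment $vw$; that forces the direction of $vw$ to lie in the normal range at both vertices. In your construction $vw$ is nearly horizontal while both normal ranges are nearly vertical, so there is no such common normal near $vw$, and the pairs do \emph{not} see each other in the sense that feeds into Proposition~\ref{prop:Brus}. (Equivalently: the dual cone $\cC_v^\perp$ relevant to Lemma~\ref{lm:local}\,(iii) is the range of normal directions of the local branches, and this has opening \emph{complementary} to that of $\cC_v$, not equal to it.)

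The fix is one word: take the \emph{narrow} cone to be persistent at every vertex---this is exactly what the paper does. Then the local branches open along the nearly-horizontal axis, their normals sweep the wide arc (everything except a thin sliver about the vertical), and your own observation that every segment $vw$ is nearly horizontal now shows that all pairs see each other. With that single change your argument coincides with the paper's.
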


To settle Proposition~\ref{prop:thin}, we need to introduce a certain class of real line arrangements. We say that an arrangement $\cA$ is \emph{oblate} if the slopes of all lines in $\cA$ are close to each other. As a particular example, one can take $d$ lines tangent to the graph of $\arctan x$ for $d$ values of the variable $x$ of the form $101, 102, 103, \dots , 100+d$. 

\begin{proof} For the special small resolution  of an oblate arrangement for which  we choose the narrow cone at each vertex to be the persistent,  the following diameters will be present. Every pair of persistent cones  will be in proper position and  contribute $4$ diameters and each vertex will contribute $1$ diameter. On the other hand, all altitudes will be non-admissible. Thus we get at least 
$4\binom{\binom{d}{2}}{2}+\binom{d}{2}=\frac{1}{2}d^4-d^3+\frac{1}{2}d$
diameters for this resolution. 
\end{proof} 

\begin{figure}[H]

\begin{tikzpicture}


\draw[thick, blue] (-0.7,2) -- (5.5,3);  

\draw[thick, blue] (-0.5,1.7) -- (5.5,3.5);  

\draw[thick, blue] (-0.3,2.4) -- (5.9,3);  

\draw[thick, blue] (-0.5,2.6) -- (4.7,2.6);  




\end{tikzpicture}

\caption{Example of an oblate line arrangement.} \label{fig:oblate}
\end{figure}
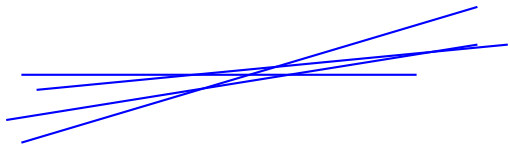

\begin{remark} {\rm It might happen that using some other types of line arrangements and their special resolutions, one can improve the lower bound \eqref{eq:newbound} and get closer to $\frac{1}{2}d^4-3d^2+\frac{5}{2}d$. For $d=3$, Fig. ~\ref{fig:Triangle} contains such a construction. However, for $d\ge 4$ it is not clear whether $\frac{1}{2}d^4-3d^2+\frac{5}{2}d$ is achievable by  small deformations of line arrangements. It seems difficult  to make all pairs of vertices to see each others  while simultaneously keeping all the altitudes admissible. }
\end{remark}

\section{On the maximal number of crunodes of the evolute}\label{sec:Ecru}
In this section we discuss Problem \ref{nodesEGa}.   
Recall that the number of complex nodes of the evolute of a generic curve of degree $d$ is given by 
\[ \textstyle \delta_E=\frac{1}{2}d(3d-5)(3d^2-d-6).\]

Denote by $\delta^{\rm cru}_E(d)$ the maximal number of crunodes for the evolutes of real-algebraic curves of degree $d$.
At the moment we have only a rather weak lower bound for this number of crunodes, which we again obtain by resolving a line arrangement.

\begin{proposition}\label{prop:Ecru}
We have $\delta^{\rm cru}_E(d)\ge(\lfloor \frac{d-2}{2}\rfloor +d-2)^4-\frac{1}{2}$.
\end{proposition}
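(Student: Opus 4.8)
The plan is to produce crunodes on the evolute by exhibiting a plane curve whose evolute has many real self-intersections, and as in all the other lower bounds in this paper, the natural candidate is a small real resolution $\RR$ of a suitably chosen line arrangement $\cA$. The key point is that a crunode of $E_\RR$ occurs precisely when two distinct points of $\RR$ have the same curvature center lying in $\bR^2$ — equivalently, two distinct normals of $\RR$ meet at a point which is simultaneously the curvature center of both branches. So I would first reinterpret the problem: I want many points $z\in\bR^2$ that are curvature centers of at least two real curvature circles of $\RR$. The mechanism already developed in Section~\ref{sec:diam} (Lemma~\ref{lm:local}) is exactly the right tool — a diameter of $\RR$ is a segment normal to $\RR$ at both endpoints, and if in addition the common length equals the curvature radius at \emph{both} endpoints, the midpoint-type data gives a genuine coincidence of curvature centers. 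I would therefore not try to realize diameters but rather count, among the many short branches produced by resolving $\cA$, the pairs on which one can independently prescribe the curvature radius so that the two curvature centers coincide.

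Concretely, first I would set up the local model: near each vertex $v$ of $\cA$, by Proposition~\ref{lm:smallcomp}, the resolution $\RR$ looks like the pair of hyperbola-like branches $\tilde y_\eps = \pm a\sqrt{\tilde x^2+\eps^2(1+O)}$, and along each such branch the curvature (hence the curvature radius $\rho$) varies monotonically away from its unique extremum, so $\rho$ sweeps out an interval of values; the curvature center at a point moves along the corresponding evolute arc. Second, I would argue that when two vertices $v_1,v_2$ "see each other" in the sense of Section~\ref{sec:diam}, there is a short branch near $v_1$ and a short branch near $v_2$ whose normal directions and positions can be matched so that the two evolute arcs cross transversally in $\bR^2$: each evolute arc near a vertex is a small cusp-shaped arc traced as $\rho$ varies, and by choosing the line arrangement so that these cusp arcs are forced to overlap, one gets a crunode of $E_\RR$. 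Third — and this is where the exponent $4$ and the floor term come from — I would count how many such overlapping pairs can be arranged simultaneously. The count $(\lfloor\frac{d-2}{2}\rfloor + d-2)^4 - \frac12$ strongly suggests the following: one selects a sub-configuration of roughly $\lfloor\frac{d-2}{2}\rfloor + d-2$ "active" edges/branches, and then crunodes are produced by \emph{quadruples} of branches (two evolute arcs, each determined by a pair of branches, crossing), giving a fourth power; the $-\frac12$ is the usual artifact of writing a lower bound of the shape $N^4 - \tfrac12 < N^4$ so that it is attained by an integer. I would make this precise by taking an oblate or otherwise special arrangement (as in Proposition~\ref{prop:thin}) in which the relevant $\lfloor\frac{d-2}{2}\rfloor + d-2$ branches all have their curvature-radius intervals overlapping, so every way of choosing four of them yields an evolute self-crossing, and then verify transversality via Lemma~\ref{lm:curvature} (the curvature quotient tends to a positive analytic function, so the evolute arcs are genuine smooth arcs meeting transversally for small $\eps$).

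The main obstacle, I expect, is precisely the transversality and genericity bookkeeping: I must ensure that the many evolute arcs coming from different branches actually cross each other in $\bR^2$ (not merely in the complexification or at infinity), that each crossing is a simple node and not a higher tangency, and that distinct quadruples of branches give distinct crunodes. Controlling the position of an evolute arc requires controlling both the location of a branch of $\RR$ and the range of its curvature radius — and the curvature radius near a vertex is $O(1/\eps^2)$-large by the formula $k_\eps(x)=\eps^2/(\eps^2+2x^2)^{3/2}$ in Lemma~\ref{lm:curvature}, so the curvature centers are pushed far from the branches, in a direction normal to the original line. Thus I would really be arranging that the \emph{normal lines} at these branches are concurrent in large families, which is a condition on the derived arrangement $\cD\cA$ and the altitudes; picking the arrangement so that $\lfloor\frac{d-2}{2}\rfloor + d-2$ of these normal pencils coincide is the combinatorial heart of the construction, and then Brusotti's theorem (Theorem~\ref{th:brusotti}) and Corollary~\ref{cor:signs} guarantee the resolution realizing the desired smoothing types exists. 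Once the arrangement is fixed and transversality is checked for small $\eps$, the count is a direct enumeration and the proposition follows.
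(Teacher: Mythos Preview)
Your approach misses the mechanism the paper actually uses, and the one you propose instead is not developed to the point where it produces a count.

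The paper's argument does not try to match curvature centers directly. Instead it exploits \emph{inflection points}: near an inflection point of $\RR$ the evolute runs off to infinity with an asymptote (the normal at the inflection point), so a suitable arc of the evolute is a \emph{pseudo-line} in $\bR P^2$. The construction is to take a line arrangement with many bounded edges and resolve each bounded edge in a ``zig-zag'' (a twist in the sense of Corollary~\ref{lm:edges}, with the long component forced to have curvature of opposite signs near the two endpoints and the tangent direction swinging through more than $90^\circ$). Each zig-zag produces an inflection point on the long component, and the two cusp-arcs of the evolute on either side of the asymptote link up to form a pseudo-line. Taking two almost-parallel pencils of sizes $\lfloor d/2\rfloor$ and $\lfloor (d+1)/2\rfloor$ gives an arrangement with $(d-2)+\lfloor(d-2)^2/2\rfloor$ bounded edges, hence that many pseudo-lines in the evolute; any two pseudo-lines cross, and the $\binom{N}{2}$ pairwise crossings give the crunode bound.

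Your proposal has several concrete gaps. First, a crunode of $E_\RR$ is a transversal crossing of two evolute arcs, not a ``quadruple'' phenomenon, so your fourth-power heuristic has no clear combinatorial meaning; the exponent in the stated bound reflects $\binom{N}{2}$ with $N$ quadratic in $d$, not a count over $4$-tuples. Second, your plan to force coincident curvature centers by matching curvature radii along diameters would require simultaneous control over positions and radii at many branches, and you give no argument that this can be arranged; the curvature-radius range on a short branch near a vertex runs from $O(\epsilon)$ (at the vertex) upward, not ``$O(1/\epsilon^2)$-large'' as you wrote, so your picture of where the evolute arcs sit is inverted. Third, and most importantly, you never use inflection points. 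Without them your evolute arcs are small bounded pieces near the cusps, and there is no global topological reason (as there is for pseudo-lines) forcing distinct arcs to cross. The paper's entire leverage comes from the fact that pseudo-lines in $\bR P^2$ are forced to intersect pairwise; that is the missing idea.
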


\begin{proof} 
Take a line arrangement $\mathcal{A}$ and assume that in this arrangement we have a line which intersects two other lines in acute angles such that there is a bounded edge $e$ between the two resulting nodes $A$ and $B$. By Brusotti's theorem there exists a  resolution $\mathcal{R}$ in which $A$ and $B$ are resolved  in such a way that the long component of the resulting local branch $\mathcal{R}_e$ has curvatures of different signs near the two nodes, the tangent direction changes by more than 90 degrees near each of the nodes,  and such that it twists $e$. We call such a resolution a ``zig-zag''.  By  Corollary \ref{lm:edges} we know that the long component  of $\mathcal{R}_e$  will have at least one inflection point as well as two maxima of the absolute value of curvature close to the vertices. Considering the part of the evolute corresponding to $\mathcal{R}_e$,  we have  that these two maxima correspond to two cusps of the evolute, which, due to the different signs of curvature, are oriented towards each other. Furthermore, the existence of the inflection point results in a line, perpendicular to the compact line segment, which is an asymptote of the evolute, in such a way that from each of the two cusps one branch of the  evolute approaches this asymptote. Furthermore,  since the resolution can be chosen in such a way to ensure that the curvature radius becomes as large as necessary, the remaining  two branches are guaranteed  to intersect with the other branches (the described situation can also be seen in Fig.~\ref{fig:zigzac}). Now it follows that the two branches which tend to the asymptotical line are connected via the other two branches, and that the part of the evolute corresponding to $\mathcal{R}_e$  therefore contains a pseudo-line, Moreover,  $\mathcal{R}$ can be chosen in such a way that every bounded edge  that can be resolved in a zig-zag, contributes a pseudo-line in the evolute.  For $d\geq 3$, consider  a set of $\lfloor \frac{d}{2}\rfloor$ parallel lines as well as another set of $\lfloor \frac{d+1}{2}\rfloor$ parallel lines which are almost parallel to the first set of lines. The union of these two sets will yield a line arrangement with $(d-2)+\lfloor \frac{(d-2)^2}{2}\rfloor$ many bounded edges, and there exists a resolution of the line arrangement which resolves each of the bounded  edges in a zig-zag. The resulting evolute thus has  $(d-2)+\lfloor \frac{(d-2)^2}{2}\rfloor$ pseudo-lines, each of which will intersect pairwisely, yielding the announced lower bound of crunodes.    
\end{proof}

\begin{figure}
     \centering
     \begin{subfigure}[t]{0.45\textwidth}
         \centering
   \includegraphics[width=0.62\columnwidth]{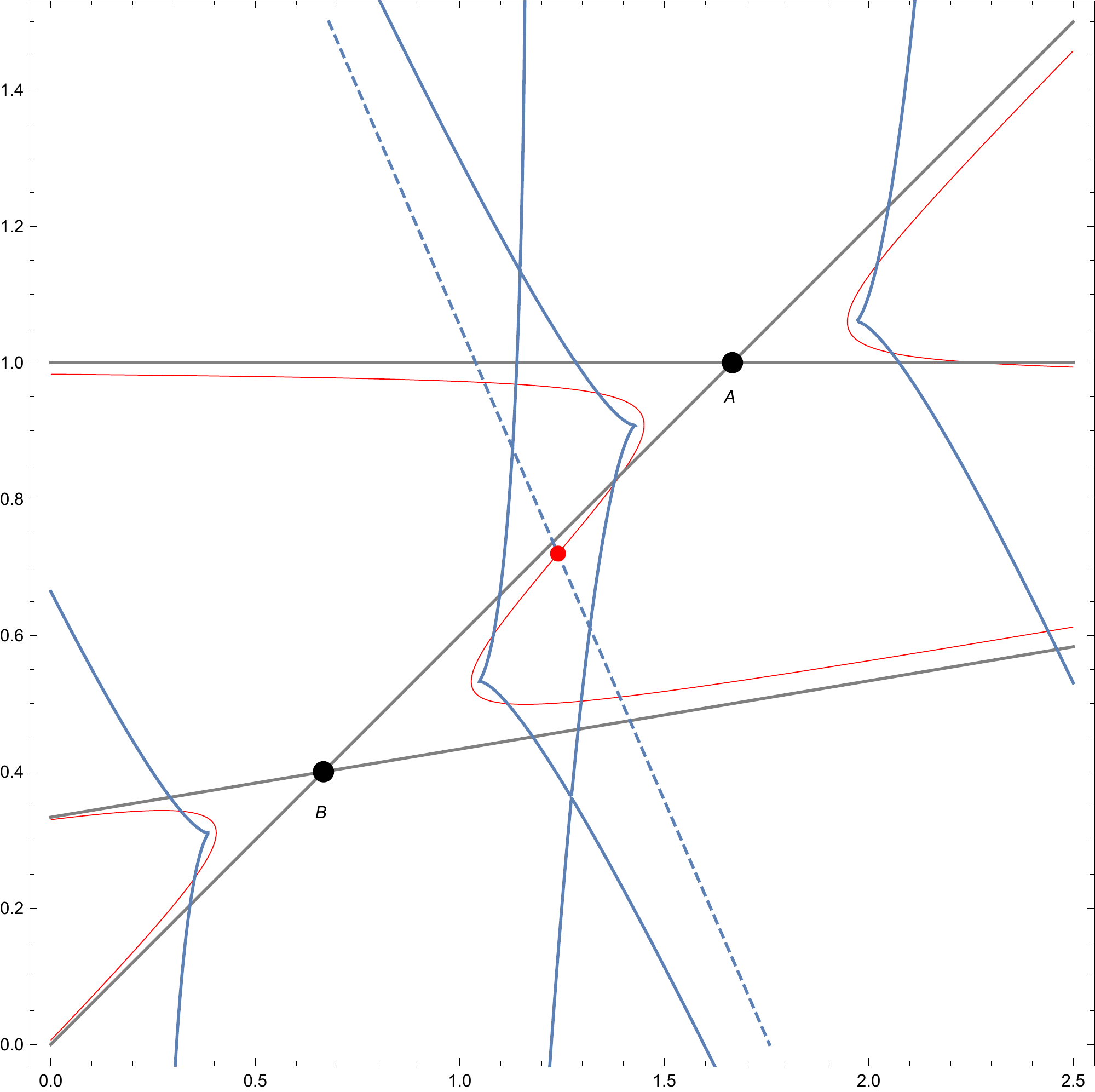}
    \caption{An arrangement of three lines (in gray) together with a zigzag deformation in red, with the inflection point  on the compact segment between {\bf A} and {\bf B}. The resulting evolute (in blue) has an assymptote (dashed in blue).} 
        \label{fig:zigzac}
     \end{subfigure}
     \hfill
     \begin{subfigure}[t]{0.45\textwidth}
         \centering
        \includegraphics[width=0.62\columnwidth]{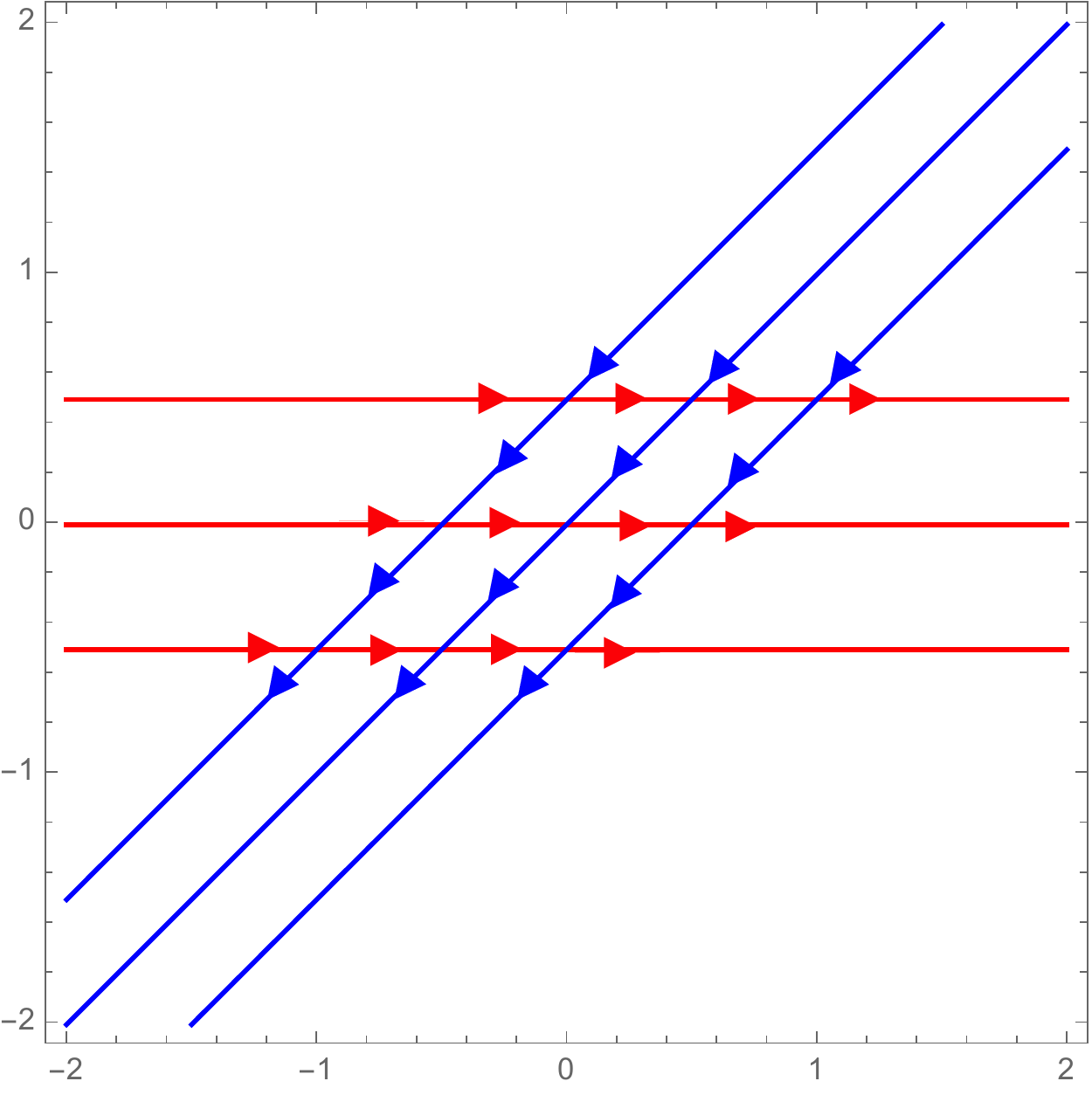}
        \caption{Two set of 3 lines. By resolving the 6 lines with changing the colours at every vertex and following the orientation  indicated by the arrows  one obtains the desired resolution.}
        \label{fig:lines}
     \end{subfigure}
       \caption{Visualization of zig-zag-resolutions.}
\label{fig:zigtac1}
\end{figure}

\begin{remark}{\rm The above lower bound is  obtained in a crude way, and it is surely possible to improve this construction. However, the bound is of the same degree as the complex bound.}
\end{remark}

\section{A florilegium of some real curves, their evolutes, and curves of normals}\label{sec:zoo}
 Here we present some classical examples to showcase the interplay between a real algebraic curve,   its evolute, and its curve of normals. Since many of the equations defining these curves are quite lengthy, we separately collect most of them in Appendix \ref{subsec:formulae}.
Many of our examples are known in the literature, see  e.g. \cite[\S~9]{Sa}, but, to the best of our knowledge, curves of normals have not been explicated earlier. For convenience of the readers, we collected the information about the standard invariants of the complexifications of the curves under consideration in Subsection~\ref{subs:tables}. Recall from Section \ref{sec:diam} that the real singular point (an ordinary $d$-uple point if the curve $\Ga$ is in general position) of the curve of normals corresponding to the line at infinity does not contribute to the count of the diameters of $\Ga$.
\bigskip

\subsection{Conics} \hfill\\

\noindent 
{\bf I.}  As we saw in Example \ref{ex:basic} (2), for the standard ellipse $\Ga$ given by the equation
\[\frac{x^2}{a^2}+\frac{y^2}{b^2}=1,\] 
its evolute $E_\Ga$ is a stretched astroid given
by the equation
\[(ax)^{2/3}+(by)^{2/3}=(a^2-b^2)^{2/3},\]
see Fig.~\ref{fig:conic1b}
\begin{figure}[H]\label{fig:ellRot}
     \centering
     \begin{subfigure}[t]{0.49\textwidth}
         \centering
      \includegraphics[width=0.62\columnwidth]{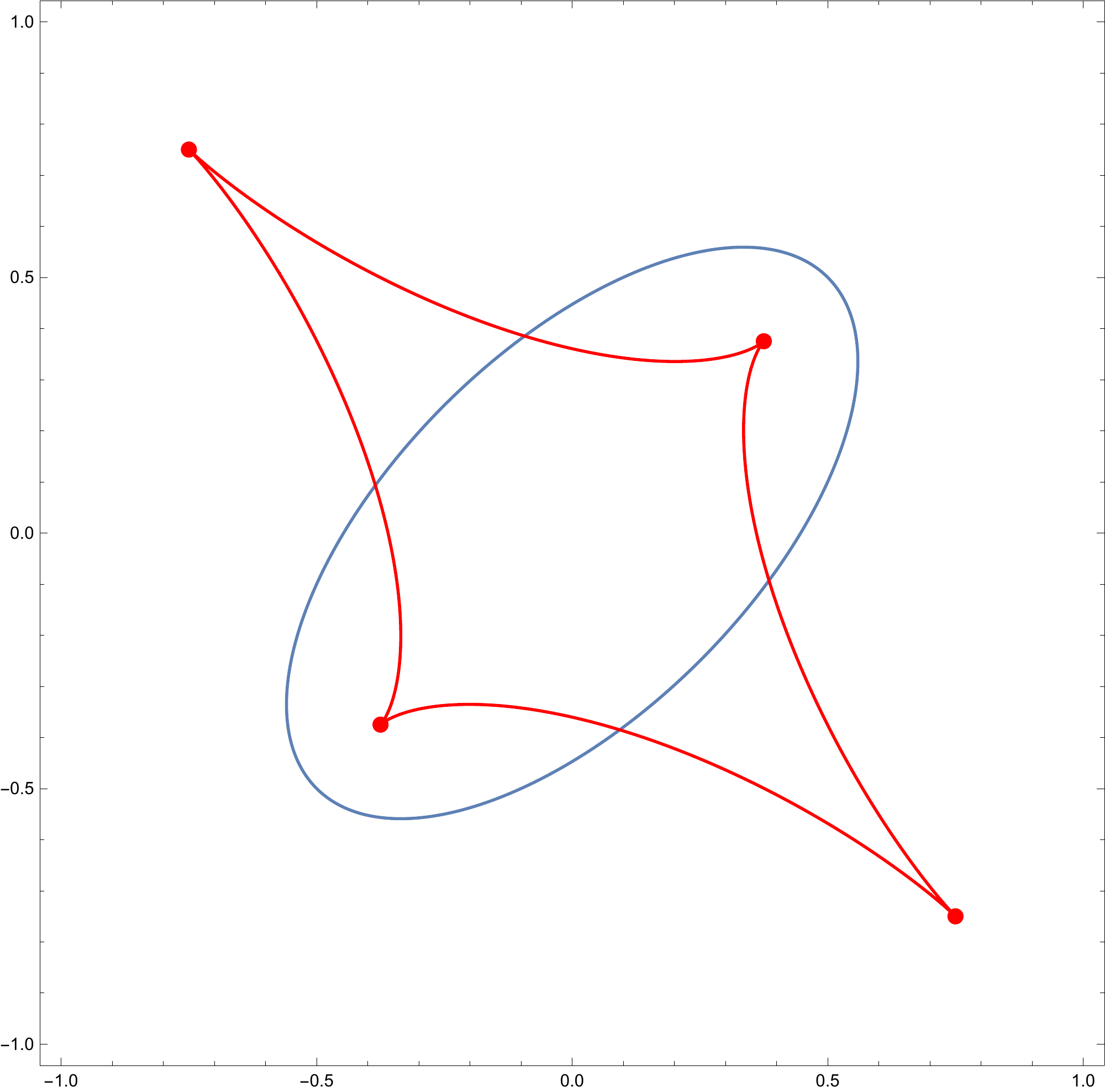} 
       \caption{A rotated ellipse in blue and its evolute in red.}\label{fig:ellRotleft}
        \label{fig:conic1a}
     \end{subfigure}
     \hfill
     \begin{subfigure}[t]{0.49\textwidth}
         \centering
        \includegraphics[width=0.62\columnwidth]{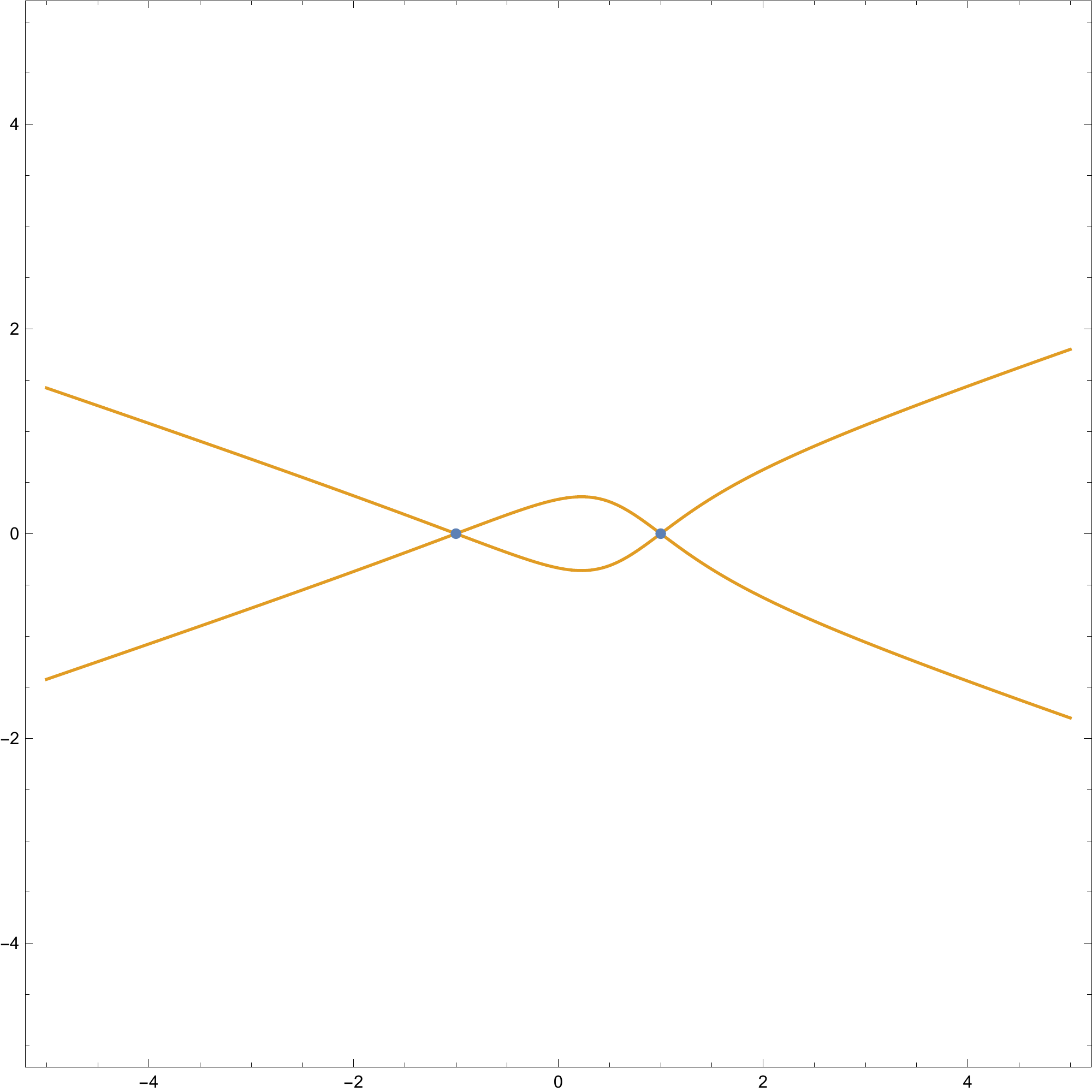}
        \caption{Corresponding curve of normals shown in gold.}\label{fig:ellRotright}
    
     \end{subfigure}
     \caption{A rotated ellipse, its  evolute, and its curve of normals.} \label{fig:ellRot}
        \end{figure}

The  evolute and the curve of normals are better seen in case of the rotated ellipse $(x+y)^2+4(y-x)^2-1=0$, shown in  Fig.~\ref{fig:ellRot}. 

 The evolute  is a sextic with 4 real cusps, 2 complex cusps, and 4 complex nodes. The curve of normals is a quartic with 2 real crunodes (corresponding to the diameters of the ellipse), and its double point corresponding to the line at infinity $z=0$ has no real branches, i.e.,  is an acnode, see  Fig.~\ref{fig:ellRotright}. Klein's formula for the evolute in this case reads as 
$6+2=4+4$, where the second $4$ in the right hand side comes from real cusps of the evolute and 2 in the left hand side comes from the acnode of the curve of normals. 

The ellipse has 4 vertices and 2 diameters.
\medskip

\noindent
{\bf II.}  For the hyperbola $\Ga$ given by the equation
\[\frac{x^2}{a^2}-\frac{y^2}{b^2}=1,\]
its evolute $E_\Ga$ is a version of a Lam\'e curve given 
by the equation
\[(ax)^{2/3}-(by)^{2/3}=(a^2+b^2)^{2/3}.\]

\begin{figure}[H]
     \centering
     \begin{subfigure}[t]{0.32\textwidth}
         \centering
      \includegraphics[width=0.95\columnwidth]{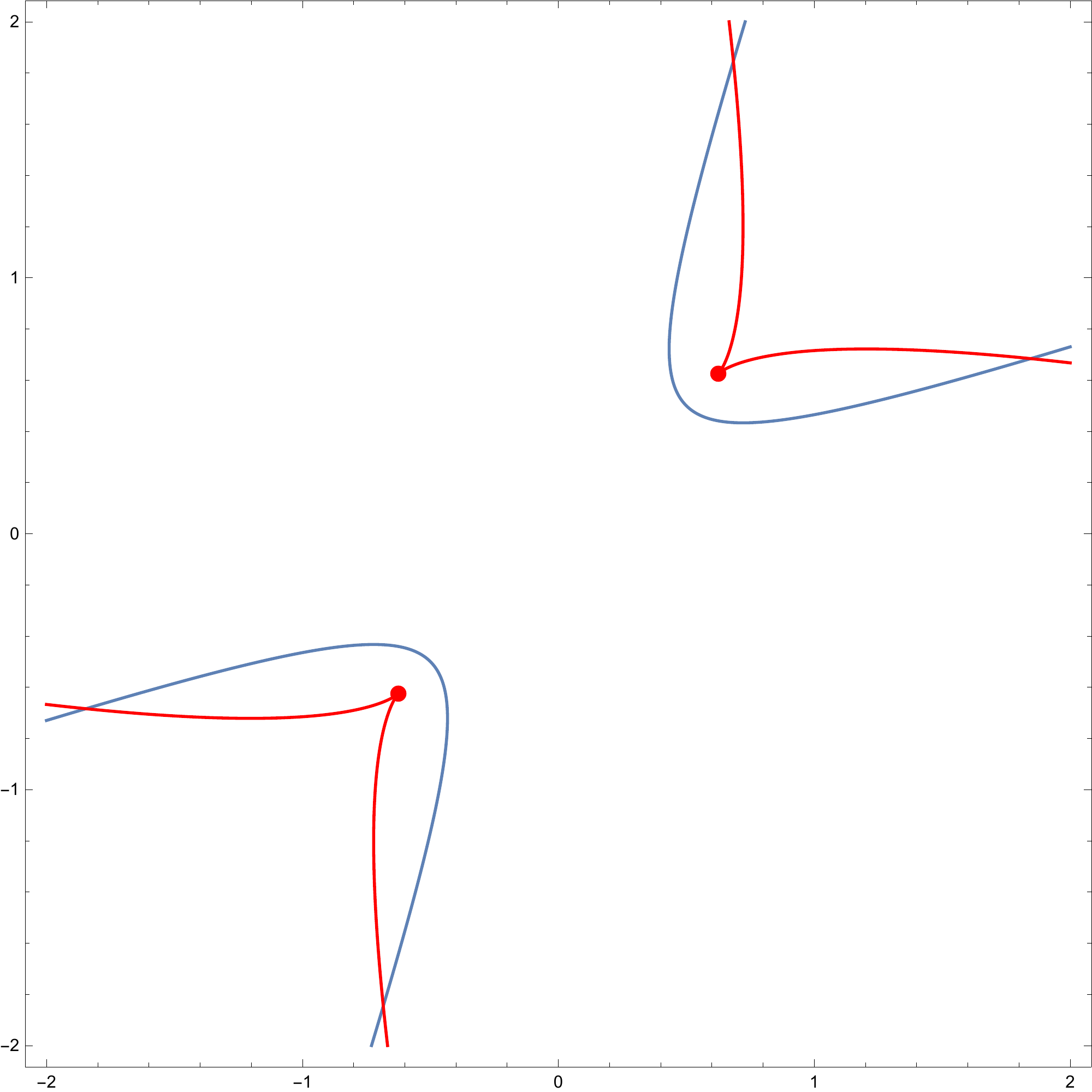} 
       \caption{A rotated hyperbola in blue and its evolute  in red.}\label{fig:hypRotleft}
        \label{fig:conic1a}
     \end{subfigure}
     \hfill
     \begin{subfigure}[t]{0.32\textwidth}
         \centering
        \includegraphics[width=0.95\columnwidth]{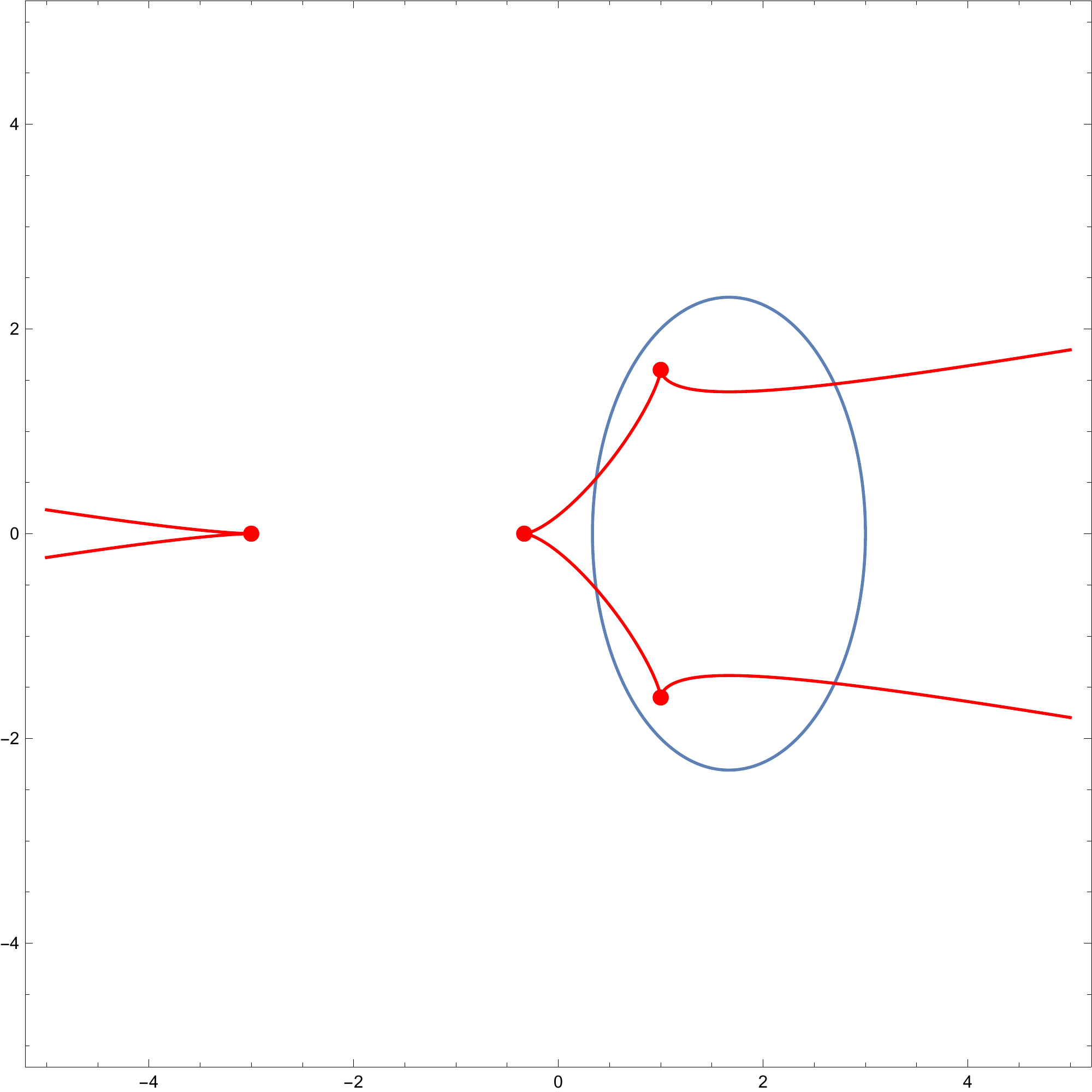}
        \caption{Same situation in a different affine chart.}\label{fig:hypRotmiddle}   
     \end{subfigure} \hfill
     \begin{subfigure}[t]{0.32\textwidth}
         \centering
        \includegraphics[width=0.95\columnwidth]{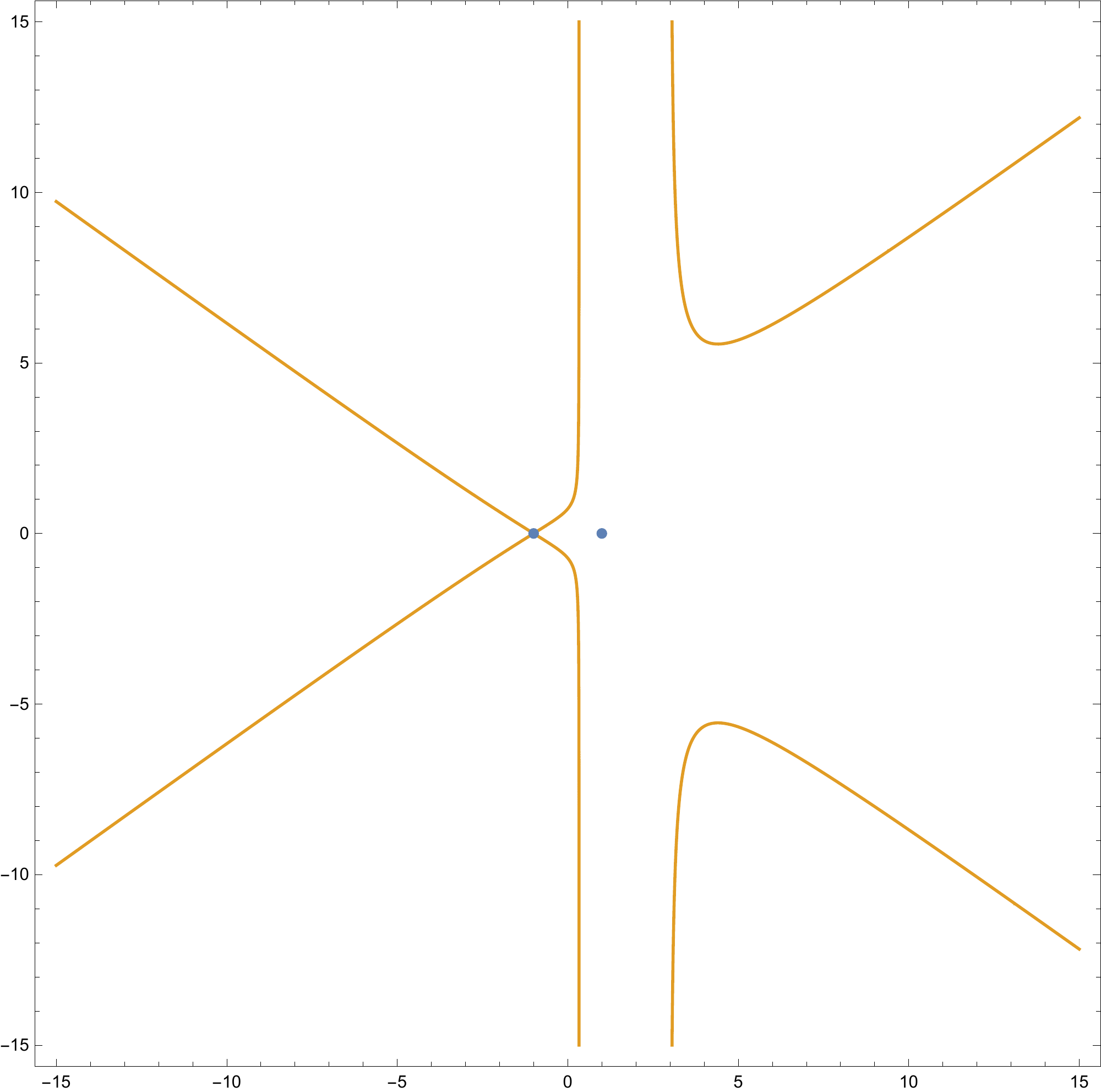}
        \caption{Corresponding curve of normals shown in gold.}\label{fig:hypRotright}
     \end{subfigure}
\caption{Rotated hyperbola, its evolute  and its curve of normals.}   \label{fig:hypRot}
\end{figure}

The evolute and the curve of normals  are better seen in case of the rotated hyperbola $(x+y)^2-4(y-x)^2-1=0$, see Fig.~\ref{fig:hypRot}. 
  As in the case of an ellipse, the evolute of a hyperbola is a sextic with 4 real cusps, 2 complex cusps, and 4 complex nodes. This  is best visible in a different affine chart (see Fig.~\ref{fig:hypRotmiddle}). The curve of normals again is a quartic with 1 crunode and 1  acnode in addition to its double point corresponding to the line at infinity $z=0$, which is a crunode. In Fig.~\ref{fig:hypRotright}  one can see the  (affine) crunode  corresponding to the unique real diameter of the hyperbola; this diameter connects the pair of  points on its two branches minimizing the distance function.  Klein's formula for the hyperbola is exactly the same as for the ellipse.
 
 The hyperbola has 2 vertices and 1 diameter.

\subsection{Cubics} \hfill\\

\noindent
{\bf III.} The cuspidal cubic classically referred to as \emph{cissoid} is given by the equation 
\[ (x^2+y^2)x=ay^2.\]
This rational curve is  \emph{circular}, meaning that it passes through the two circular points $(i:1:0)$ and $(-i:1:0)$ on the line at infinity. Besides the circular points, it intersects the line at infinity in the point $(0:1:0)$, which is an inflection point. The numerical characters of the curve are $d=3$, $\kappa =1$, $d^\vee= 2d-2-\kappa =3$, $\iota=1$.

The evolute has two inflectional tangents passing through the circular points, so $\iota_E=2$. Hence the degree of the curve of normals is $\deg N_\Gamma=d+d^\vee-\iota_E=4$, and the degree of its evolute is $\deg E_\Gamma=3d+\iota-3\iota_E=4$, see column III in Table~\ref{tb:inv}. The evolute is given by the equation $\textstyle 27y^4+288a^2y^2+512 a^3x=0.$ It is apparent from the equation  that the evolute has only one singular point, namely $(1:0:0)$, which has type $E_6$ (with tangent being the line at $\infty$), hence a point of multiplicity 3 and $\delta$-invariant  $3$. 
 
The curve of normals has an acnode $(-\frac{2}3:1:0)$ and two complex ordinary cusps  $(i:2i:1)$ and $(-i:-2i:1)$ (corresponding to the inflection points of $E_\Gamma$). 

For the Klein--Schuh formula applied to the evolute, we have that the degree and the class (equal to the degree of the curve of normals) are equal, and the curve of normals has one real singularity, an acnode, and the evolute has a cusp of multiplicity 3. Hence the formula checks:
$4-4=(2-0)-(3-1)=0$.
Since the curve of normals has no crunodes, and the evolute has no cusps in the affine real plane, the cissoid has no vertices or diameters, see Fig.~\ref{fig:conic}.

\begin{figure}[H]
     \centering
     \begin{subfigure}[t]{0.48\textwidth}
         \centering
      \includegraphics[width=0.52\columnwidth]{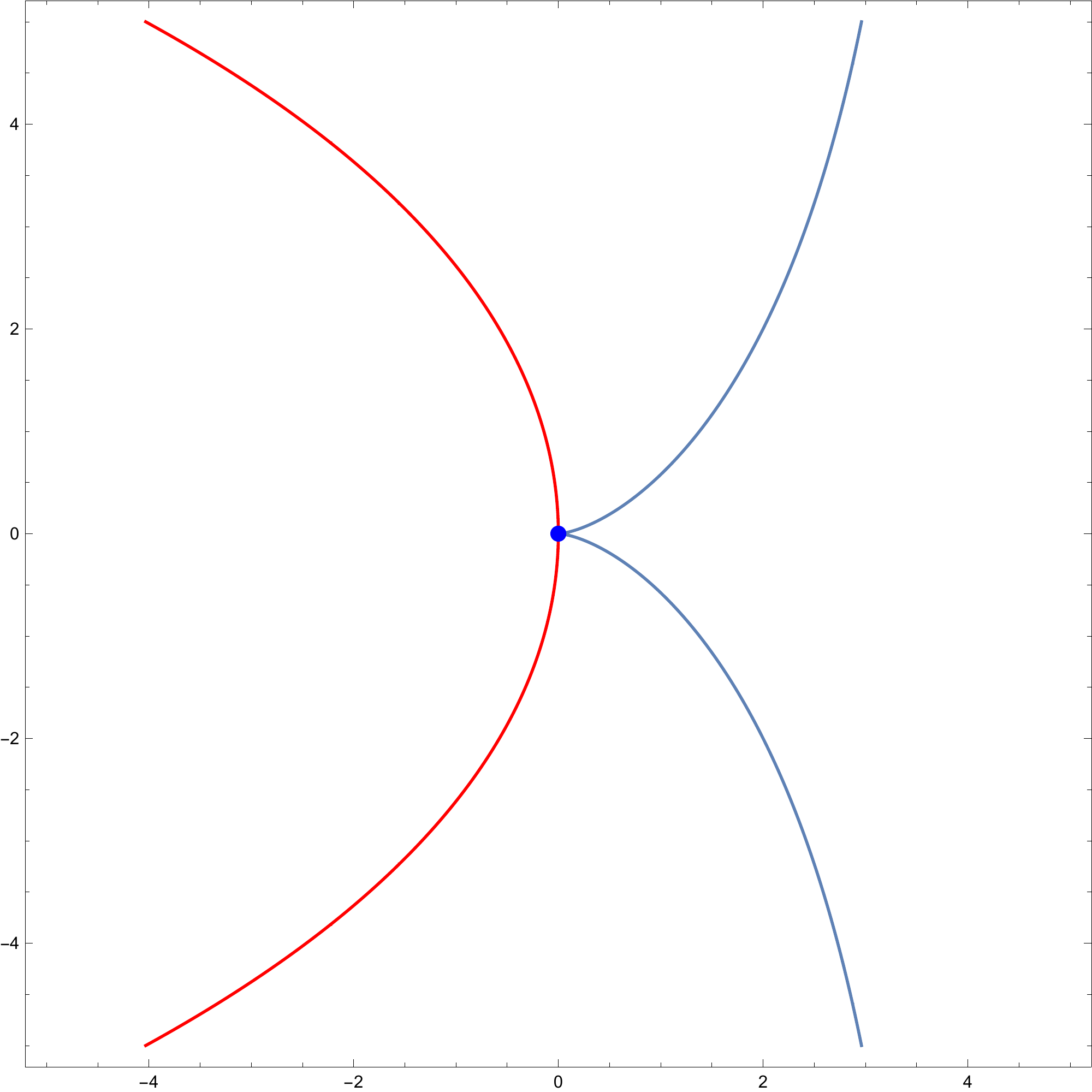} 
       \caption{A cissoid $(x^2+y^2)x=4y^2$ in blue and its evolute given by $27y^4+4608y^2+32768x=0$  in red.}\label{fig:conicleft}
        \label{fig:conic1a}
     \end{subfigure}
     \hfill
     \begin{subfigure}[t]{0.4\textwidth}
         \centering
        \includegraphics[width=0.62\columnwidth]{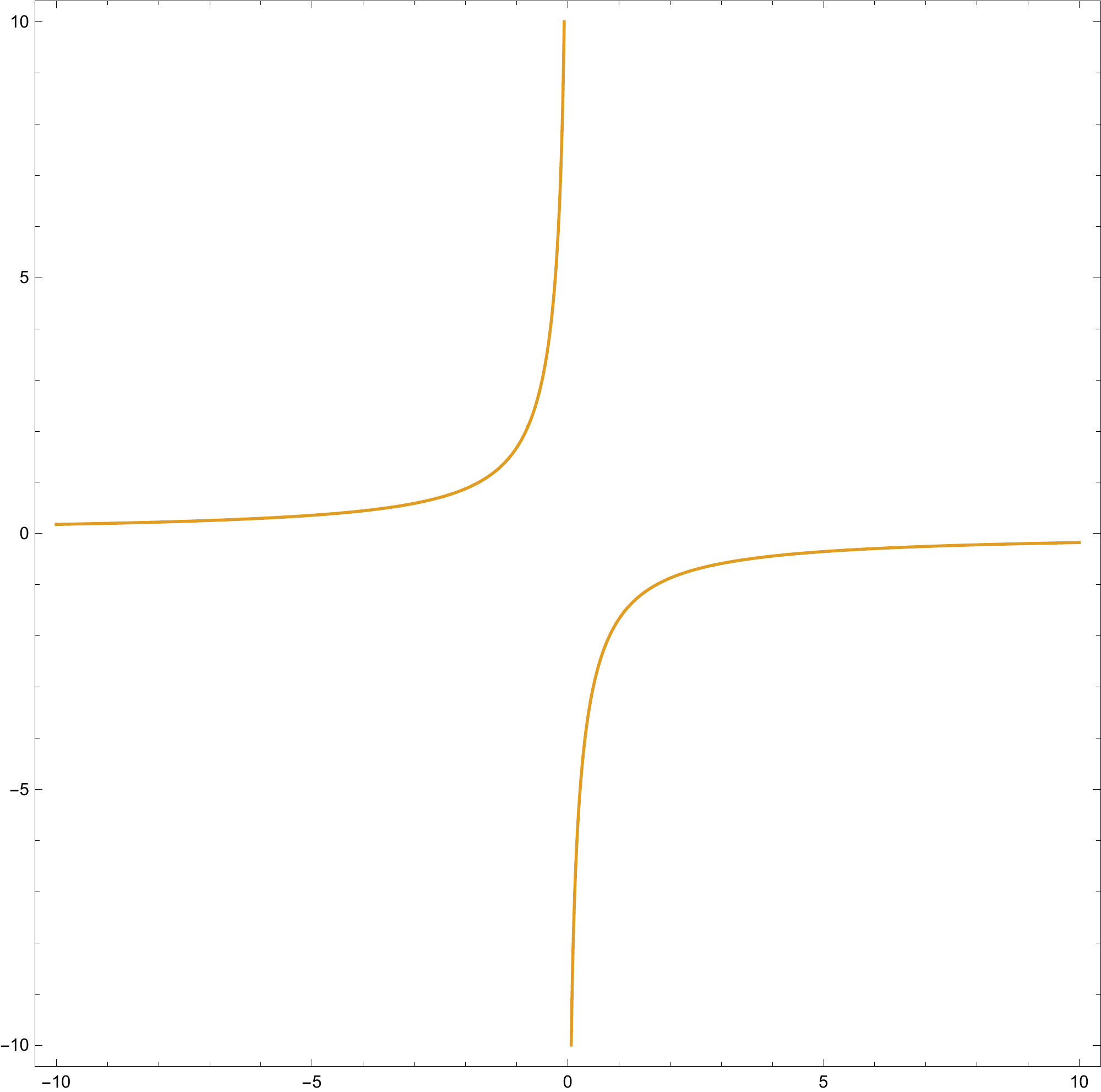}
        \caption{The corresponding curve of normals shown in gold is given by $36u^3v+uv^3+12u^2v^2+64u^2+96uv+128=0$.}\label{fig:conicright}
     \end{subfigure}
\caption{Cissoid, its evolute and its curve of normals.}   \label{fig:conic}
\end{figure}

 \smallskip

\noindent
{\bf IV.} The nodal cubic given by the equation
\[5(x^2-y^2)(x-1)+(x^2+y^2)=0\]
 has a crunode at the origin, and three real branches transversal to the line at $\infty$, hence is in general position with respect to the line at infinity and the circular points, except that the point $(0:1:0)$ is an inflection point. 
 Its numerical characters are $d=3$, $\delta=1$, $\kappa=0$, $\iota =3$, $d^\vee =3\cdot 2-2=4$. 
 
 The degree of its curve of normals is $\deg N_\Gamma=3+4=7$, and the degree of its evolute is $\deg E_\Gamma =3\cdot 3+3=12$, see column IV of Table~\ref{tb:inv}. 
 We find that the evolute has $11$ cusps in the affine plane, of which 5 are real. Furthermore, it has two real cusps and a real $E_6$-singularity at infinity. 
 (The reason the third cusp is an $E_6$-singularity rather than an ordinary cusp is that the curve has an inflection point at $(0:1:0)$, with tangent transversal to the line at infinity. This gives an ``extra'' tangent to the evolute passing through the point $(0:1:0)^\perp =(1:0:0)$, so that the formula for the degree of the evolute can be written $2\cdot 3 +(3+1)+ (\iota -1)=12$.)
 Furthermore $E_\Ga$ has $39$ complex nodes, of which $3$ are real (crunodes). 
 \begin{figure}[H]
     \centering
     \begin{subfigure}[t]{0.32\textwidth}
         \centering
      \includegraphics[width=0.95\columnwidth]{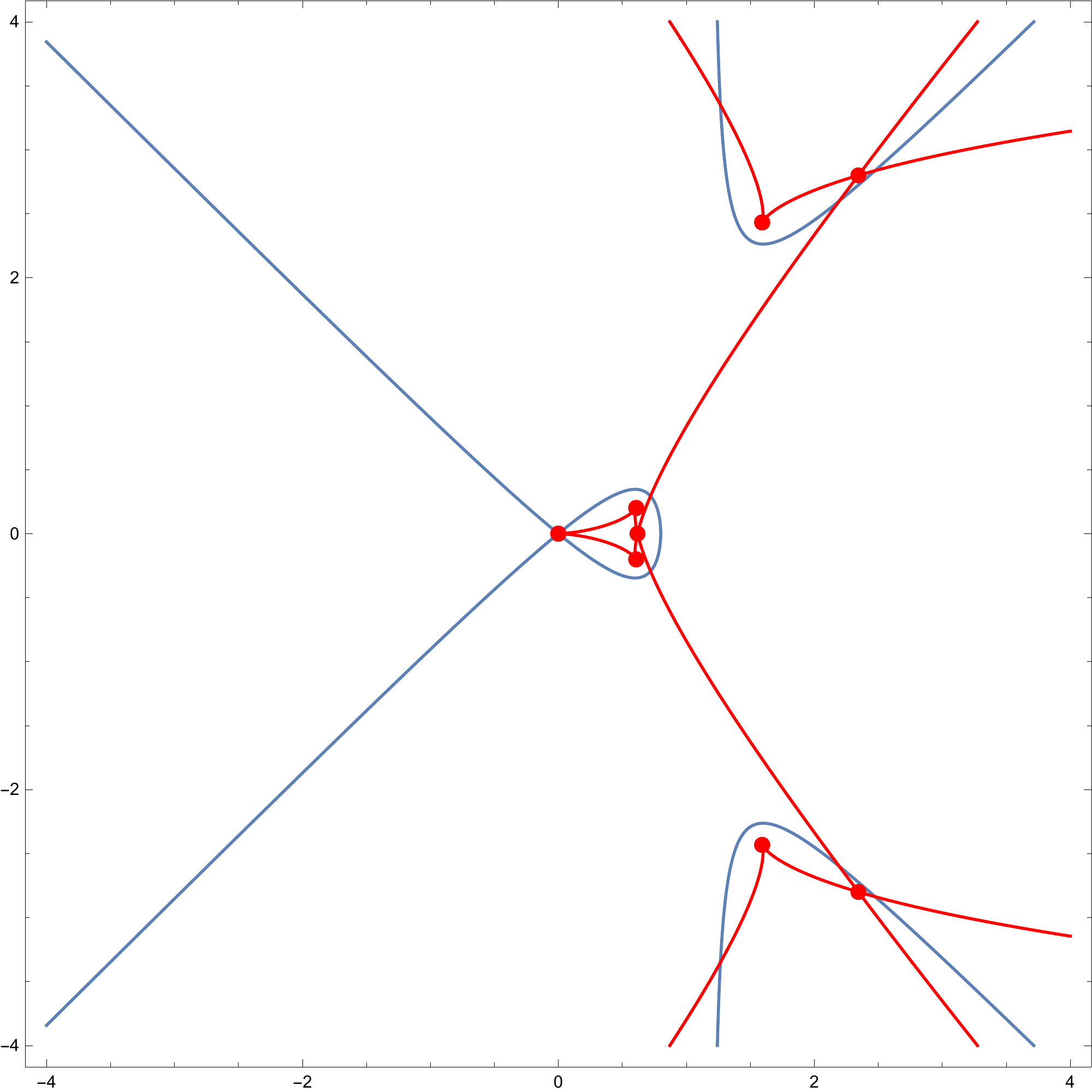} 
       \caption{The nodal cubic  in blue and its evolute  in red with marked singular points.}
       \label{fig:nodalcubic}
     \end{subfigure}
     \hfill
     \begin{subfigure}[t]{0.32\textwidth}
         \centering
        \includegraphics[width=0.95\columnwidth]{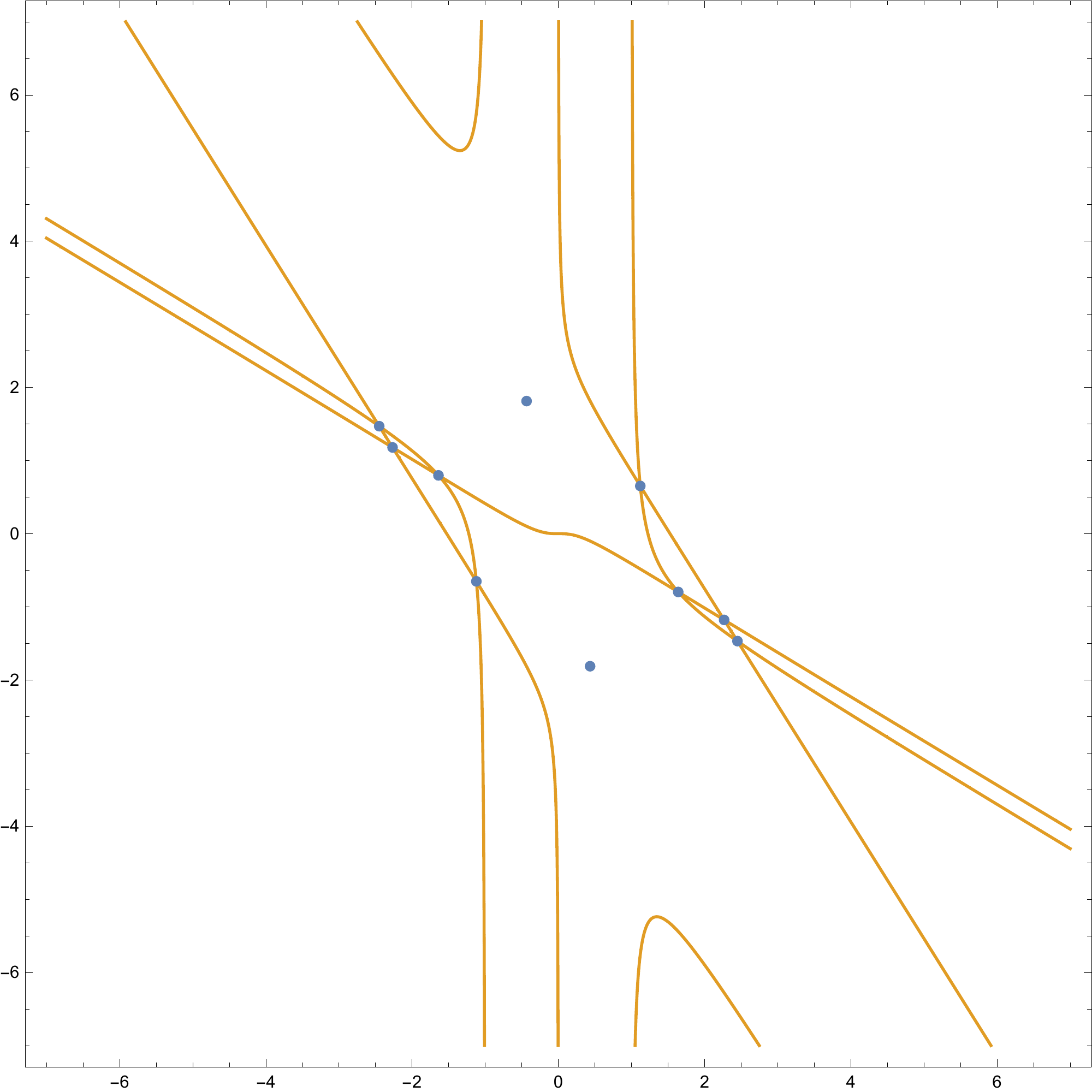}
        \caption{The curve of normals for the latter nodal cubic in the $(u,v)$ chart, with marked singular points.}\label{fig:nodalcubicmiddle}   
     \end{subfigure} \hfill
     \begin{subfigure}[t]{0.32\textwidth}
         \centering
        \includegraphics[width=0.95\columnwidth]{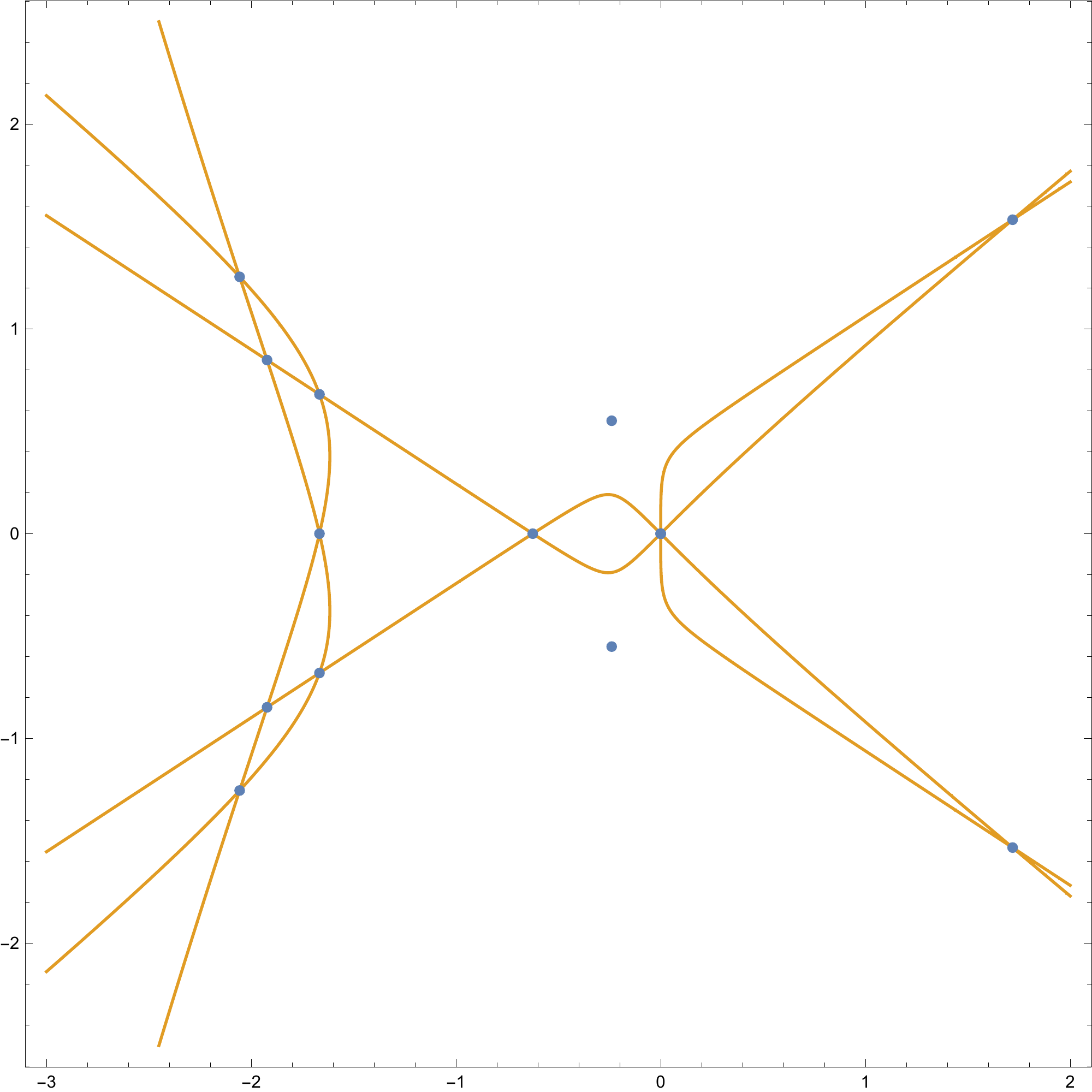}
        \caption{The curve of normals in the $(u,w)$  chart, with marked singular points.}\label{fig:nodalcubicright}
     \end{subfigure}
\caption{The nodal cubic $5(x^2-y^2)(x-1)+(x^2+y^2)=0$, its evolute, and its curve of normals.}   \label{fig:nodalcubic}
\end{figure}

Using Maple we determine that $N_\Ga$ has 13 singular points, all of which are real. 10 of these singular points can bee seen in Fig.~\ref{fig:nodalcubic}.
Furthermore, $N_\Ga$ has a triple points  at $(0:1:0)$ (which does not contribute to the count of diameters) and two double points at $(- \frac{5}{8}:1:0)$ and $(-\frac{5}{3}:1:0)$. All singular points can be seen in the different affine chart shown in  Fig.~\ref{fig:nodalcubicright}, and we see 10 crunodes and two acnodes.
Hence this nodal cubic has 5 vertices and 10 diameters.

\smallskip
\noindent
{\bf V.} Our next  example is a generic cubic in the Weierstrass form given by the equation 
\[y^2+x(x-2)(x+1)=0.\]
 The line at infinity is an inflectional tangent, at the point $(0:1:0)$. Hence the curve is not in general position. 
Its numerical characters are $d=3$, $\iota=9$, $d^\vee=6$. 

According to \cite[Thm.~8]{JoPe2}, the degree of its curve of normals is $\deg N_\Gamma=d+d^\vee - \iota_E$, with $\iota_E=3-1=2$, hence $\deg N_\Gamma=7$, which checks with the computation of the equation for $N_\Gamma$. The degree of its evolute is $\deg E_\Gamma=3d+\iota-3\iota_E=12$, which also checks with the computation of the equation for the evolute. According to Proposition~\ref{pr:7},
\[\kappa(E)=6d-3d^\vee+3\iota-5\iota_E=18-18+27-10=17,\]
see column V in Table~\ref{tb:inv}.  The evolute has an inflection point at $(0:1:0)^\perp =(1:0:0)$, with tangent the line at infinity that intersects the evolute with multiplicity 4. 

Indeed, the evolute has 17 cusps, of which 9 are real (we only see 7 of these in Fig.~\ref{fig:WeierNCleft}),
and $\binom{11}{2}-1-17=37$  nodes, of which 3 are real crunodes, see Fig.~\ref{fig:WeierNCleft}.

\begin{figure}[H]
     \centering
     \begin{subfigure}[t]{0.32\textwidth}
         \centering
      \includegraphics[width=0.95\columnwidth]{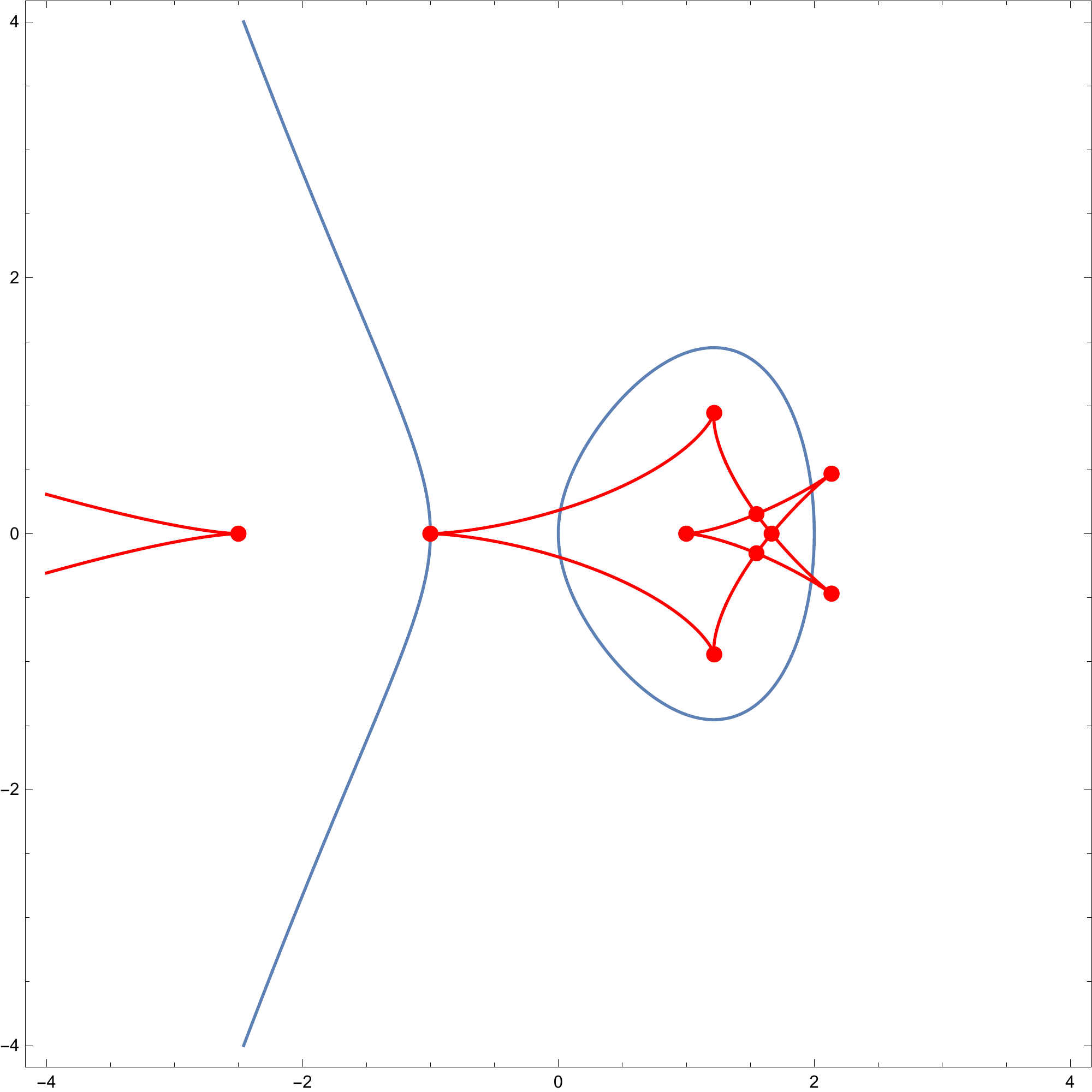} 
       \caption{The Weierstrass cubic  in blue and its evolute  in red.}\label{fig:WeierNCleft}
     \end{subfigure}
     \hfill
     \begin{subfigure}[t]{0.32\textwidth}
         \centering
        \includegraphics[width=0.95\columnwidth]{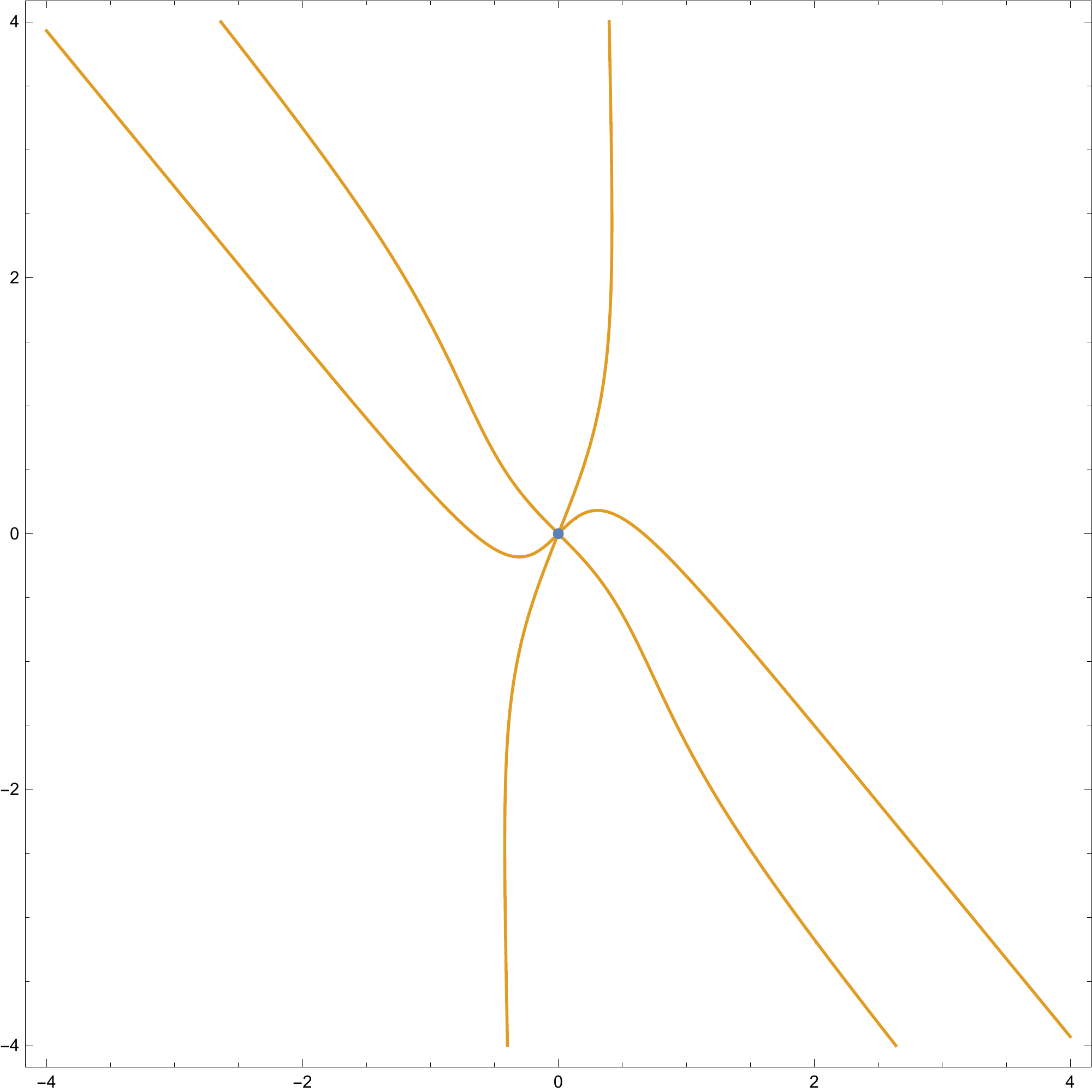}
        \caption{The curve of normals  in the $(u,v)$ chart.}\label{fig:WeierNCmiddle}   
     \end{subfigure} \hfill
     \begin{subfigure}[t]{0.32\textwidth}
         \centering
        \includegraphics[width=0.95\columnwidth]{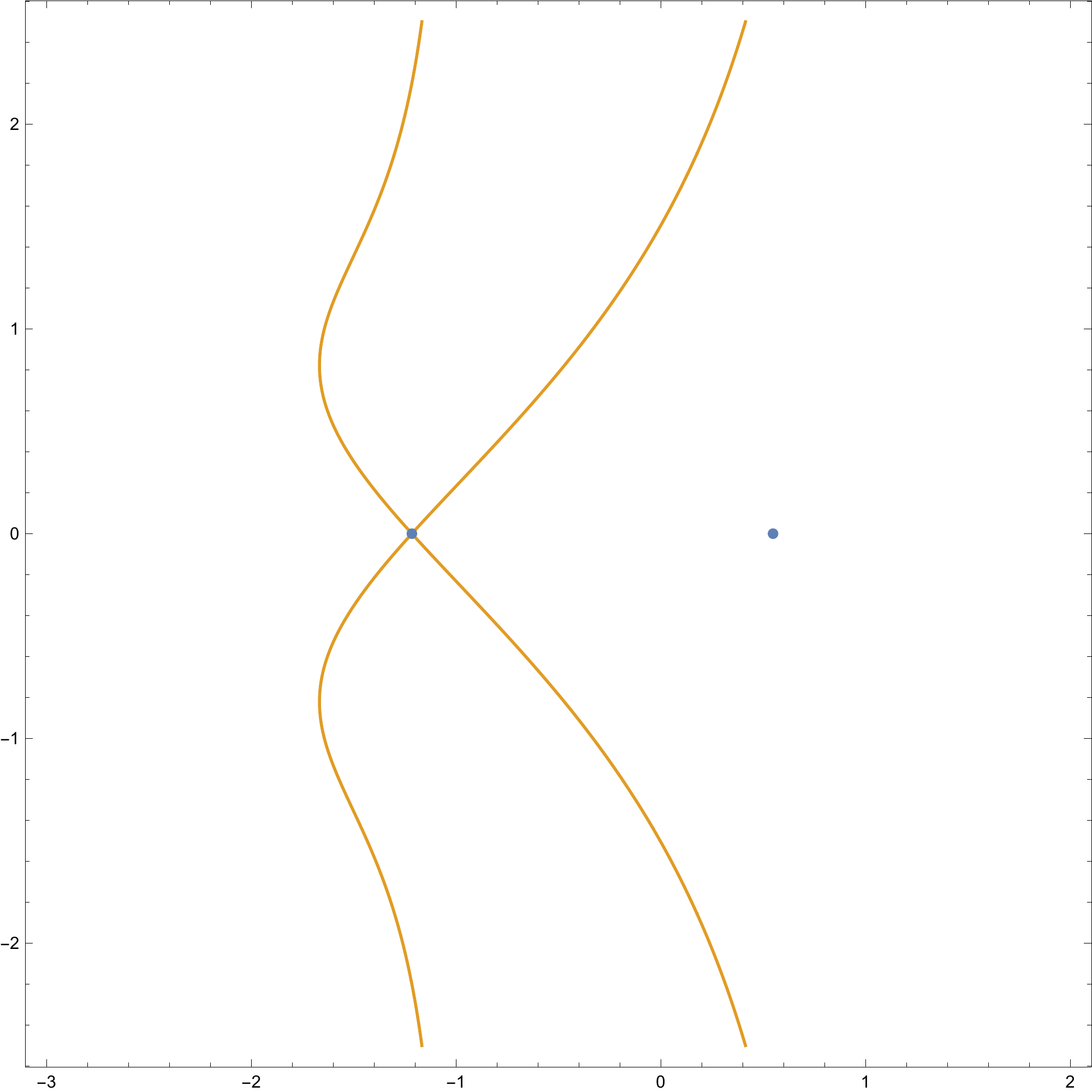}
        \caption{The curve of normals in the $(v,w)$ chart.}\label{fig:WeierNCright}
     \end{subfigure}
\caption{The Weierstrass cubic $y^2+x(x-2)(x+1)=0$, its evolute, and its curve of normals in different charts.}   \label{fig:WeierNC}
\end{figure}
  We see from Fig.~\ref{fig:WeierNCleft}  that $N_\Ga$ has a triple point at $u=0,v=0$. This comes from the fact that the line $y=0$ is perpendicular to $\Ga$ at the three distinct points $(-1: 0:1)$, $(0:0:1)$, $(2:0:1)$, and gives 3 diameters. Additionally, $N_\Ga$ has an $E_6$-singularity (corresponding to the inflection point of $E_\Ga$), with $\delta$-invariant 3. 
The remaining singular points of $N_\Ga$ are
$\binom{\deg N_\Ga -1}{2}-g-\binom{3}{2}-3=15-1-3-3=8$ nodes, of which 2 are real: 1 is an acnode and 1 is a crunode.

The Weierstrass cubic has 9 vertices and $4$ diameters.
\medskip

\noindent
{\bf VI.}  Our final example among  cubics is a nonsingular  cubic given by the equation
\[(x^2 - y^2) (x - 1)  + 1/64=0\]
 with three real branches transversal to the line at infinity, see Fig.~\ref{fig:GenCubicNC}. This is a curve in general position, but the three intersection points with the line at infinity are  inflection points. By Proposition~\ref{prop:1}, its curve of normals has degree $\deg N_\Gamma =3+6=9$ and its evolute has degree $\deg E_\Gamma=3d+\iota=18$.
 
Using Maple, we find that the evolute $E_\Ga$ has 21 ordinary complex cusps and 105 complex nodes, see column VI of Table~\ref{tb:inv}. The three cusps on the line at infinity are $E_6$-singularities, for the same reason as for the nodal cubic IV.
We find  18 real singular points in the $(x,y)$-plane, of which 9 are real cusps and 9 are crunodes 
(not all of them can be seen in Fig.~\ref{fig:GenCubicleft}). 

Using Maple, we find that the curve of normals  $N_\Ga$ has 2 real triple points, one at $(0:0:1)$ and one at $(0:1:0)$. The first contributes 3 diameters of the curve, whereas the second corresponds to the line $z=0$ and does not contribute to the diameters. Furthermore, we find  21 complex nodes, of which 
three are on the line $w=0$, namely at the real points $(\alpha:1:0)$, where $\alpha$ is any of the three real roots of $\phi(t)=t^3 + 128t^2 + 256t + 128$.  Of the remaining 18 nodes, 10 are real. The real affine situation in the $(u,v)$-chart is visible in Fig.~\ref{fig:GenCubicmiddle}, where we see two acnodes among the 10 visible real affine nodes. 
In Fig.~\ref{fig:GenCubicright}  we can see the triple point $(0:1:0)$ and two of the points $(\alpha:1:0)$ -- the third of these, an acnode, is further out and not shown to allow for a detailed vision around the origin. We thus have that 3 of the  13 real nodes  are acnodes. 

The cubic has 9 vertices and 13 diameters.

\begin{figure}[H]
     \centering
     \begin{subfigure}[t]{0.32\textwidth}
         \centering
      \includegraphics[width=0.95\columnwidth]{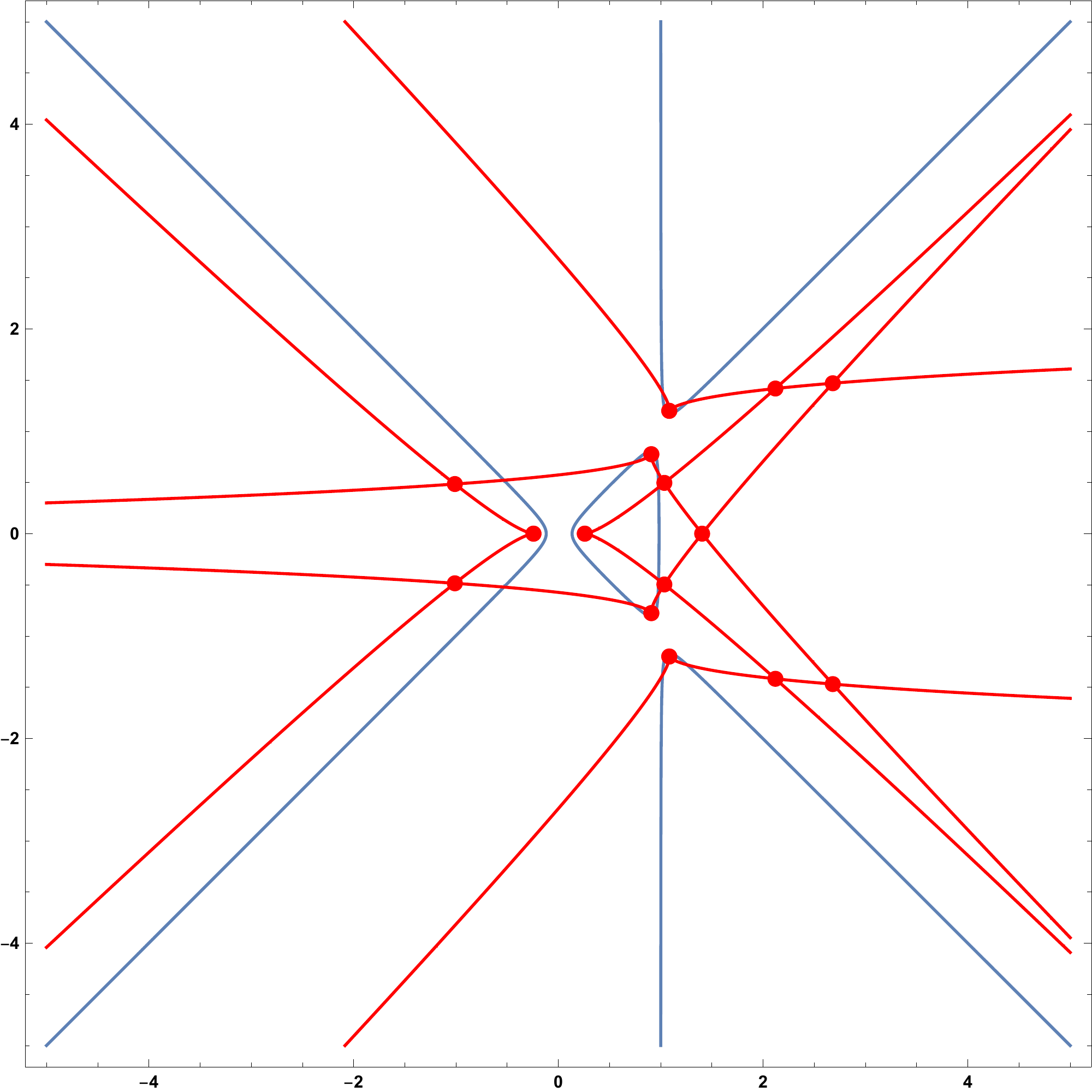} 
       \caption{The cubic  in blue and its evolute  in red.}\label{fig:GenCubicleft}
     \end{subfigure}
     \hfill
     \begin{subfigure}[t]{0.32\textwidth}
         \centering
        \includegraphics[width=0.95\columnwidth]{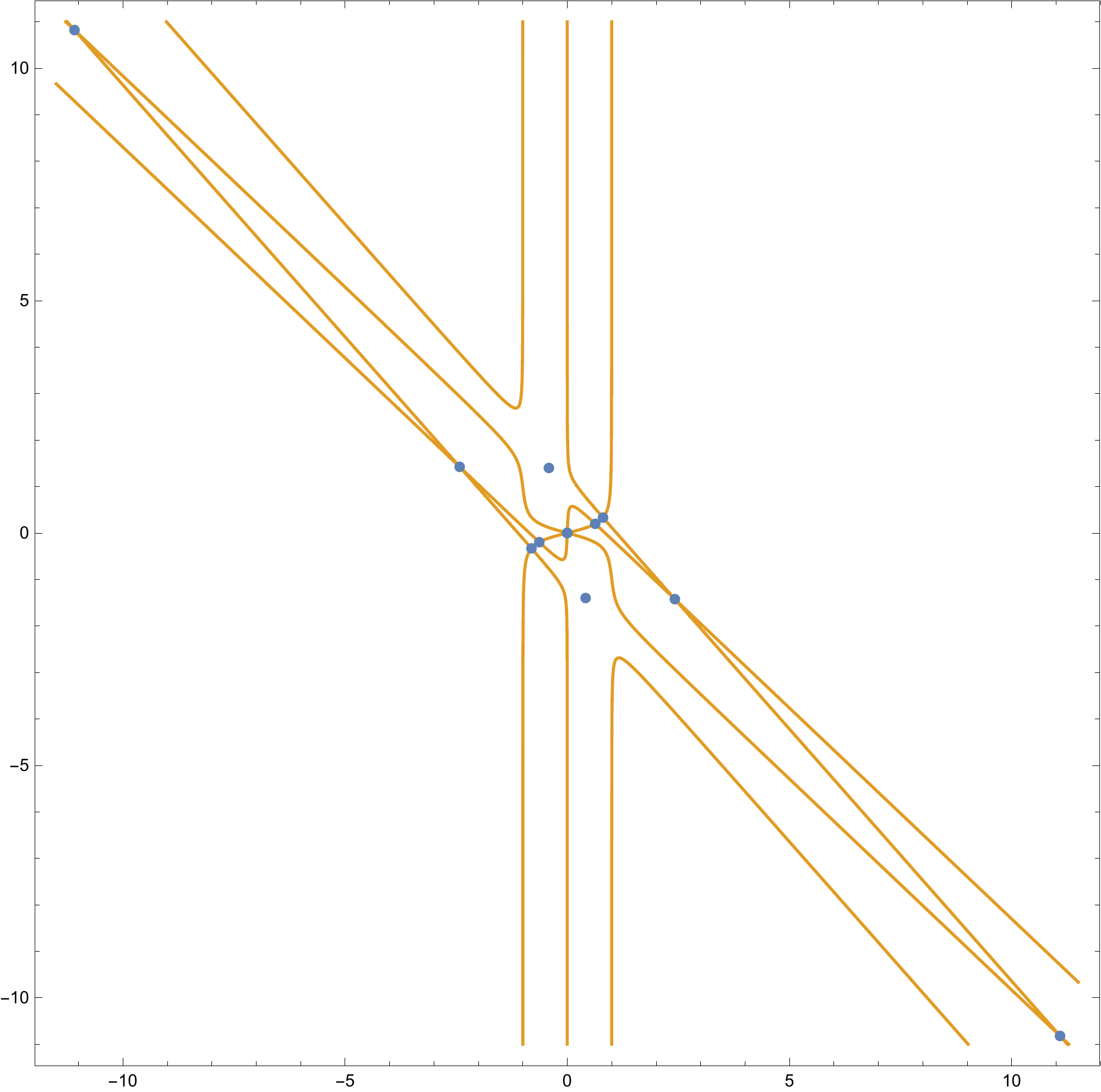}
        \caption{The curve of normals in the $(u,v)$ chart, with marked singular points.}\label{fig:GenCubicmiddle}   
     \end{subfigure} \hfill
     \begin{subfigure}[t]{0.32\textwidth}
         \centering
        \includegraphics[width=0.95\columnwidth]{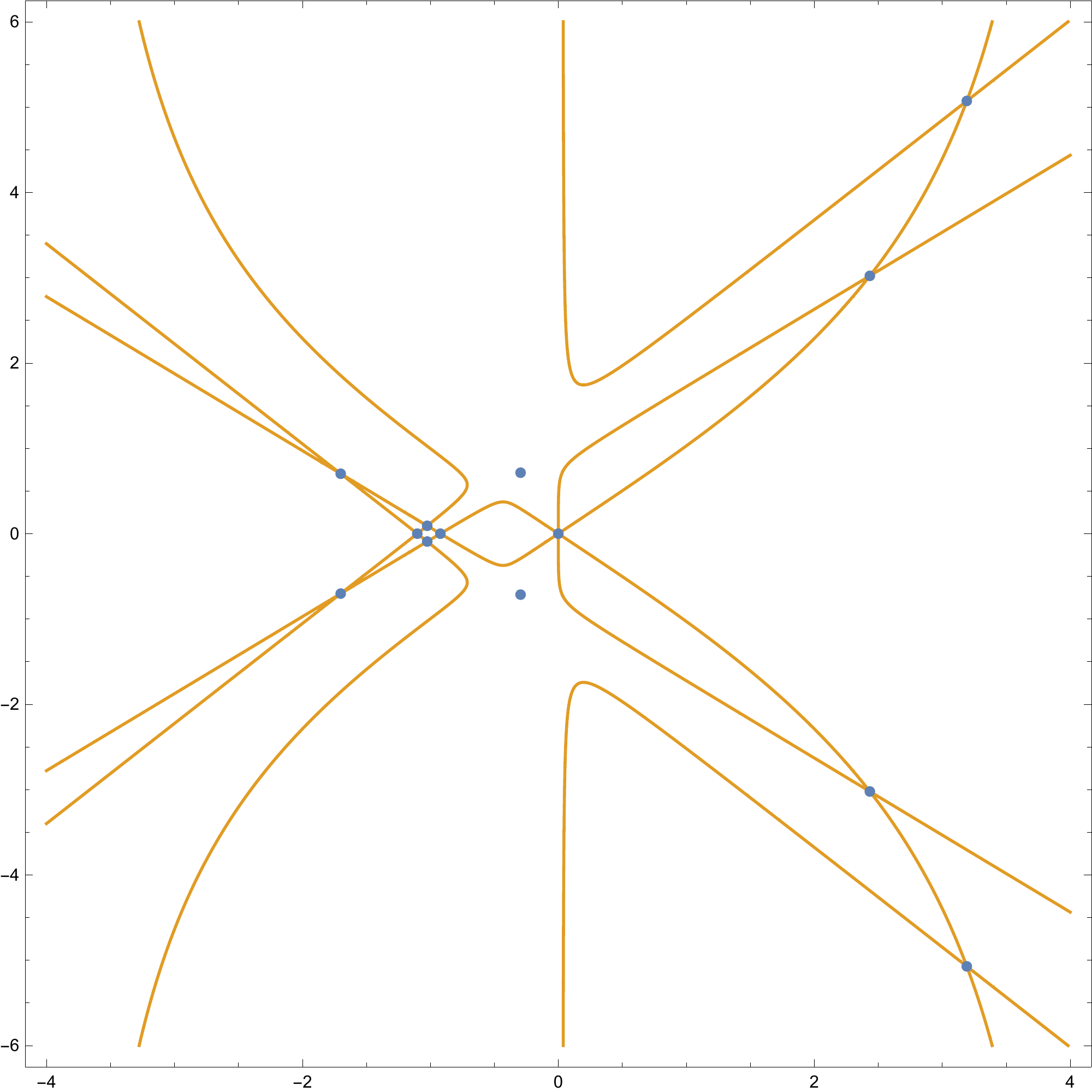}
        \caption{The curve of normals in the $(u,w)$ chart, with marked singular points.}\label{fig:GenCubicright}
     \end{subfigure}
\caption{A nonsingular cubic with three real branches transversally intersecting the line at infinity, its evolute, and its curve of normals in two charts.}   \label{fig:GenCubicNC}
\end{figure}

\subsection{Rational curves of higher degree}\label{subs:higher} \hfill\\

\noindent
{\bf VII.}  The ampersand curve is a rational curve given by the equation 
\[\textstyle (-1 + x) (-3 + 2 x) (-x^2 + \frac{1}{2}y^2) - 4 (-2 x + x^2 + \frac{1}{2}y^2)^2=0.\]   
It intersects the line at infinity transversally in four (non-circular) points, hence is in general position.
Its numerical characters are
$d=4, d^\vee =6, \delta=3, \kappa=0, \iota=6, \tau =4$. 
We see 2 real inflection points on the curve, so by Klein's formula $4+2\tau'+\iota'=6+2\delta'+\kappa'$, we get
$4+2\tau'+2= 6+0+0$ (since all nodes are crunodes, $\delta'=0$),
hence $\tau'=0$ (i.e., there are no conjugate double tangents).

The degree of its curve of normals is
$\deg N_\Gamma=4+6=10$ and that of its evolute is $\deg E_\Gamma=12 +6= 18$.
We get
$ \kappa(N)=0$, $\delta_N=\delta(N)-\binom{6}2=36-6=30$ and
$\kappa(E)=24$, $\delta_E=\delta(E)-\kappa(E)=136-24=112$, see column VII of Table~\ref{tb:inv}. 

The evolute has 24 ordinary cusps, of these 6 are real and in the $(x,y)$-plane. It has 112 nodes, of these 4 are crunodes and 4 are acnodes (except for 2 acnodes, these can all be seen in Fig.~\ref{fig:ambersandleft}). 

The curve of normals has a quadruple point at $(0:1:0)$.  Using Maple we find that $N_\Ga$ has additional 30 singular points, all nodes. Of these, 14 are real -- 11 of these points can bee seen in Fig.~\ref{fig:ambersandright}, and
the nodes at $(-2:1:0)$ and $(-159/209 \pm \sqrt\frac{201}{209}: 1:0)$ can be seen in Fig.~\ref{fig:ambersvw}. Hence we can see all the real singular points 
by considering the 
two affine charts shown in  Fig.~\ref{fig:ambersand}. The point $(0:0:1)$ is an acnode. Note that the ``acnode''  $(0:1:0)$ we see in 
Fig.~\ref{fig:ambersvw} is the ordinary quadruple point corresponding to the line $z=0$ -- this point has four complex branches.
Thus we see that $N_\Ga$ has 13 crunodes.

The ampersand curve has 6 vertices and 13 diameters.

\begin{figure}[H]
     \centering
     \begin{subfigure}[t]{0.32\textwidth}
         \centering
      \includegraphics[width=0.95\columnwidth]{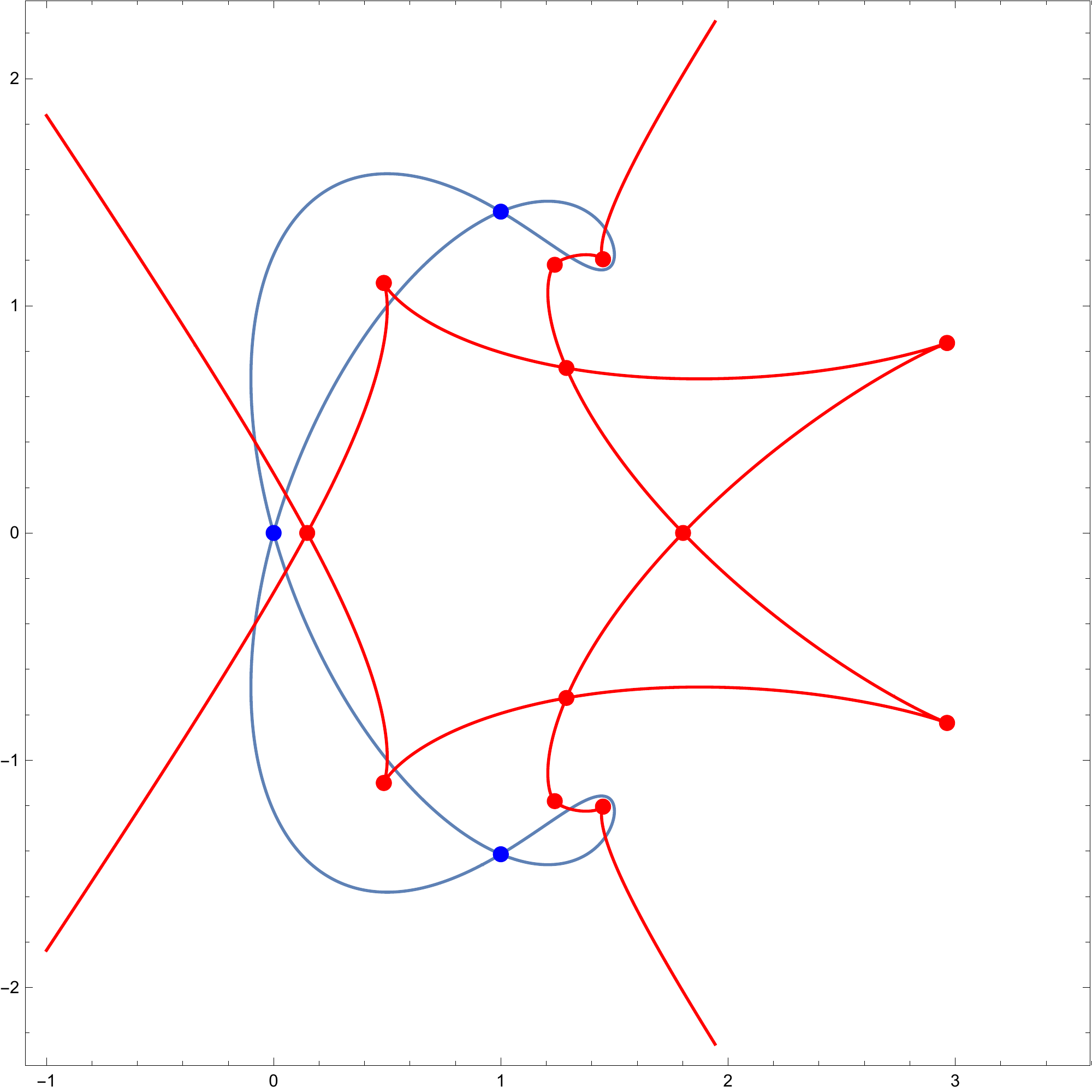} 
       \caption{The ampersand curve in blue with its evolute in red.}\label{fig:ambersandleft}
     \end{subfigure}
     \hfill
     \begin{subfigure}[t]{0.32\textwidth}
         \centering
        \includegraphics[width=0.95\columnwidth]{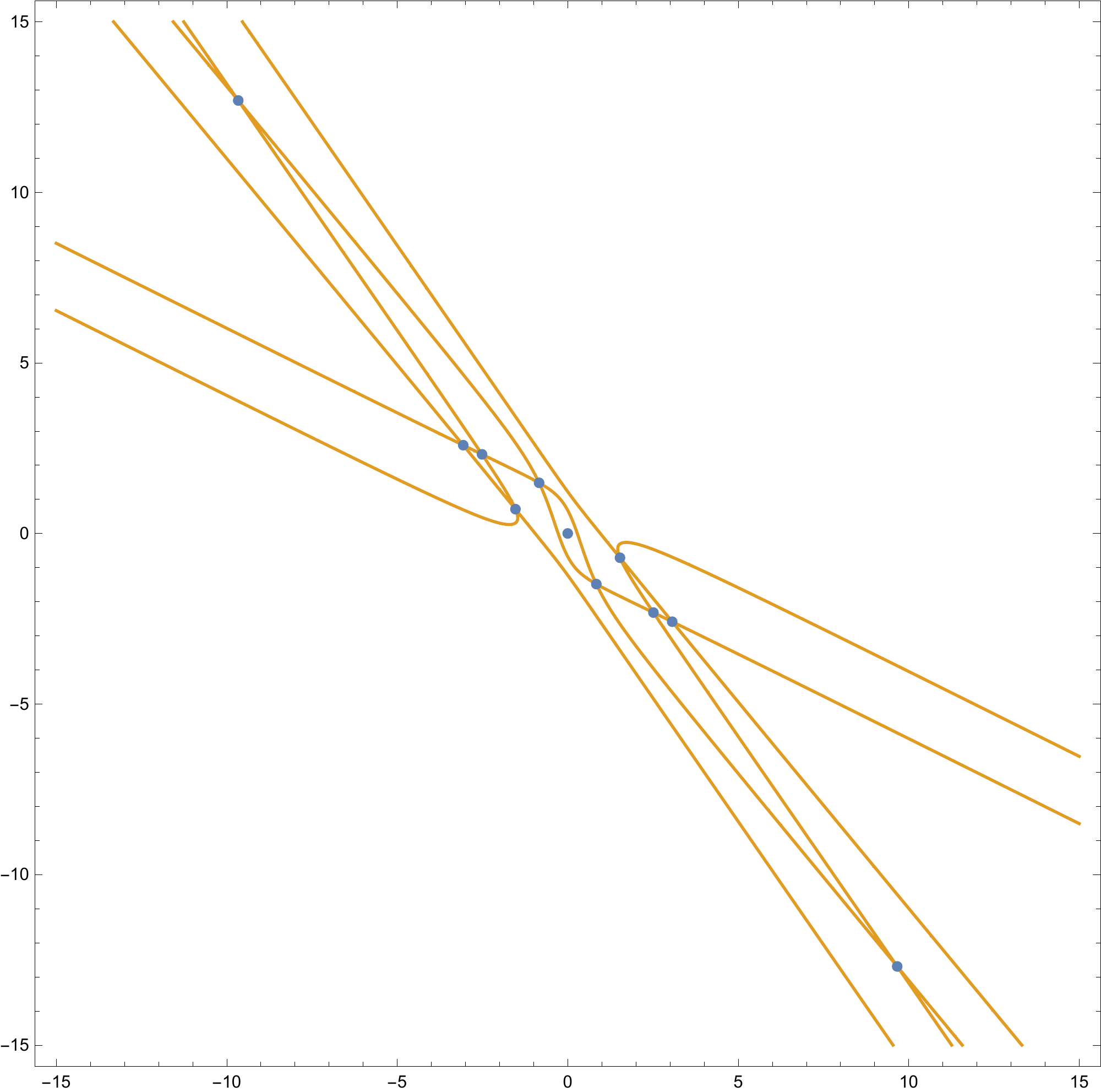}
        \caption{The curve of normals in the $(u,v)$ chart.}\label{fig:ambersandright}
     \end{subfigure} \hfill
      \begin{subfigure}[t]{0.32\textwidth}
         \centering
        \includegraphics[width=0.95\columnwidth]{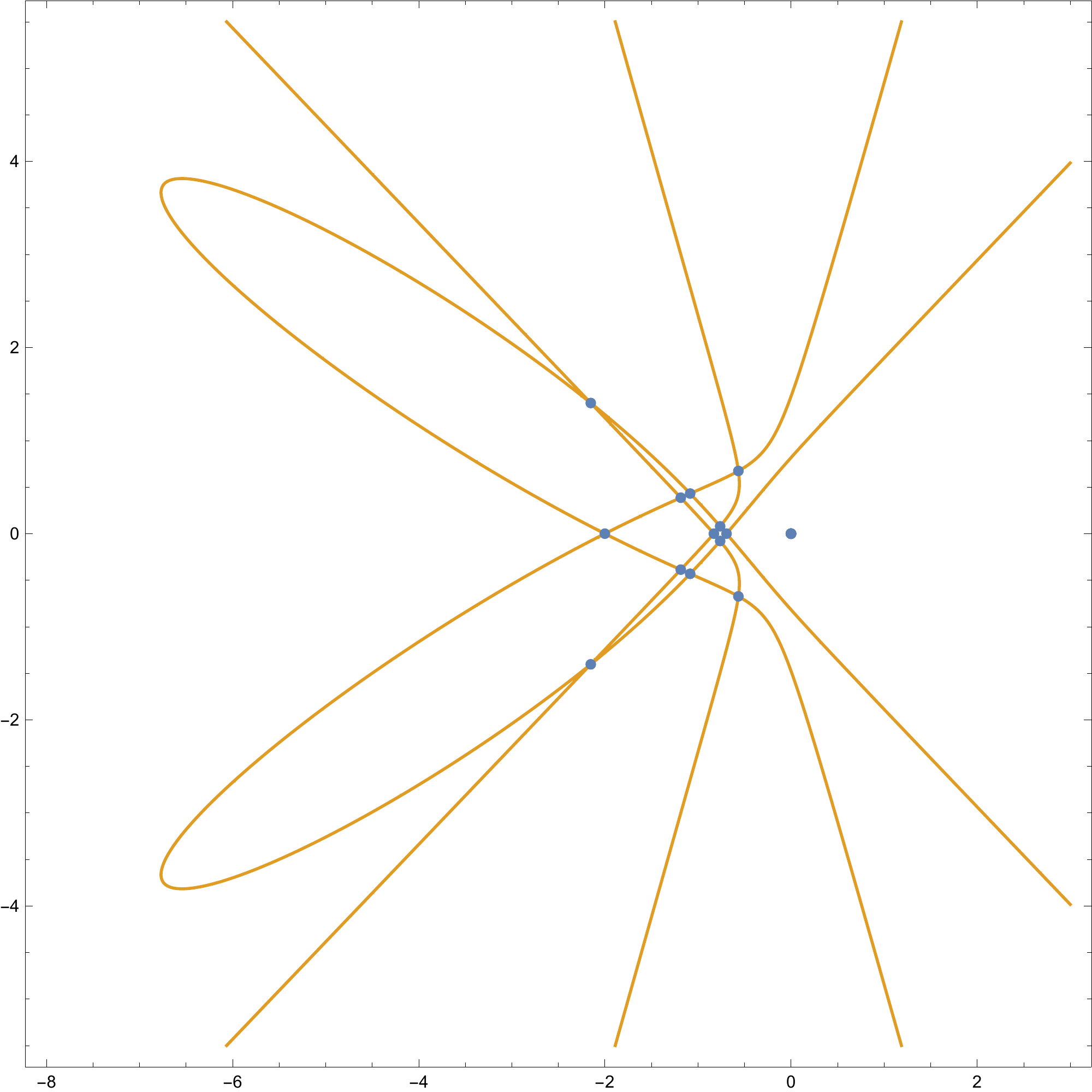}
        \caption{The curve of normals in the $(u,w)$ chart, with marked singular points.}\label{fig:ambersvw}
     \end{subfigure}\hfill
  \caption{The ampersand curve, its evolute, and its curve of normals.}   \label{fig:ambersand}
\end{figure}
\medskip

\smallskip

\noindent
{\bf VIII.} The cross curve is a rational curve given by the equation
\[x^2y^2-4x^2-y^2=0.\]

The curve is not in general position: it intersects the line at infinity in two points that are crunodes of the curve, with tangents transversal to the line at infinity, and with each branch of the crunodes having an inflection point at the crunode. The curve has an acnode at the origin $(0:0:1)$.
Its numerical characters are
$d=4$, $d^\vee=6$, $\delta=3$, $\kappa =0$, $\iota=6$, $\tau=4$. 
Klein's formula gives $4+2\tau' +\iota' =6+2\delta'+\kappa'$, hence $4+2\tau'=6+2$ (since one node is an acnode, $\delta'=1$), hence $\tau'=2$. 

We get
$\deg N_\Ga=d+d^\vee=10$, $\kappa(N)=0$, $\delta_N=\delta(N)-\binom{4}2=36-6=30$, and
$\deg E_\Ga=3d+\iota=18$.

The evolute has 16 ordinary cusps, of which 4 are real, and two real singularities on the line at infinity, each with multiplicity 6, $\delta$-invariant 18, and 2 branches. (Each of these singularities is the coming-together of two $E_6$-singularities.) The evolute has $\kappa(E)=24$, and $\delta_E=\delta(E)-16-2\cdot 18=84$ nodes, of these two are real and are acnodes. 

The curve of normals has a quadruple point at $(0:1:0)$ (which does not contribute to the number of diameters) with four real branches and $\delta$-invariant 12 -- it can be seen in Fig. \ref{fig:crossvw}. Furthermore, using Maple, we find that the curve of normals  has 24 double points in the affine $(u,v)$-plane, namely $(\pm \sqrt{2}:0:1)$, $(\pm \sqrt{-2}:0:1)$, and  $(\pm \sqrt{\alpha}: \pm\sqrt{\gamma(\alpha)}:1)$, where $\alpha$ is any of the five roots of $\phi(t)= t^5 - 6t^4 + 44t^3 - 56t^2 + 24t - 4$  and $\gamma(\alpha)=\frac{1}{3}(84+6\alpha^4-33\alpha^3+252\alpha^2-219\alpha$.  Thus in total we have 6 real affine nodes in the $(u,v)$-chart,  4 of which are acnodes, as can be seen in Fig. \ref{fig:crossright}.

The cross curve has 4 vertices and 2 diameters.

\begin{figure}[H]
     \centering
     \begin{subfigure}[t]{0.32\textwidth}
         \centering
      \includegraphics[width=0.95\columnwidth]{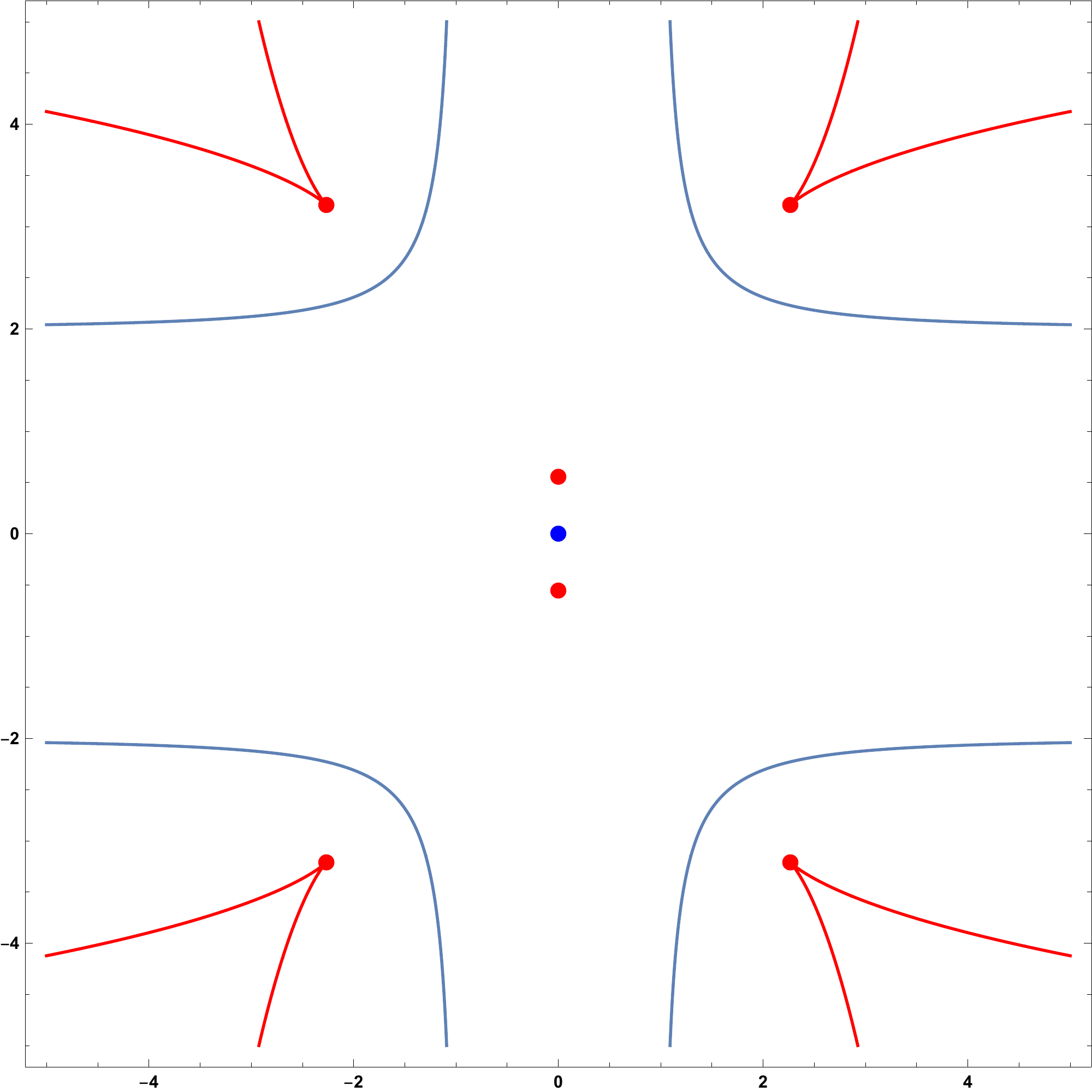} 
       \caption{The cross curve in blue with its evolute in red with marked singular points.}\label{fig:crossleft}
     \end{subfigure}
     \hfill
     \begin{subfigure}[t]{0.32\textwidth}
         \centering
        \includegraphics[width=0.95\columnwidth]{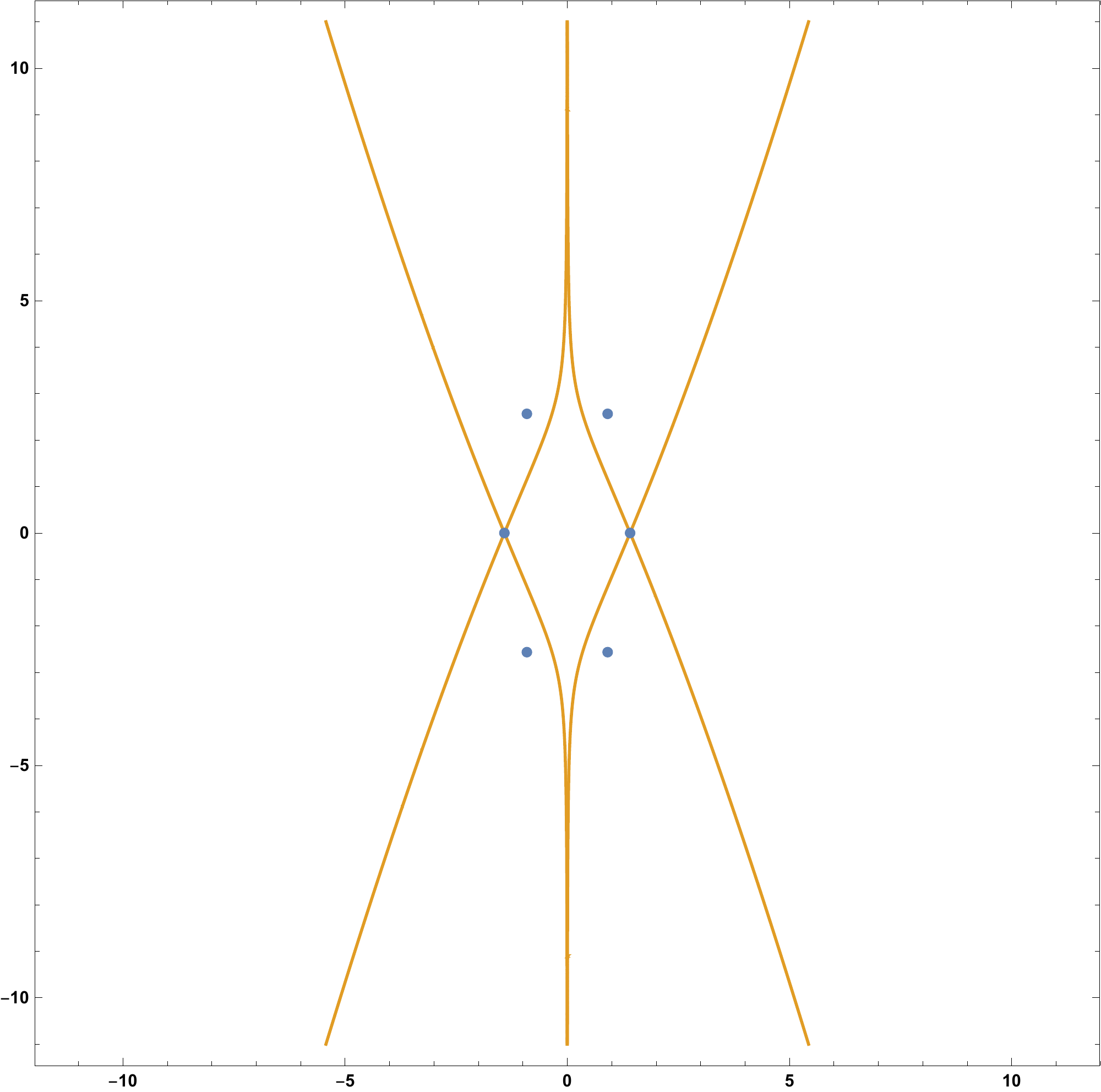}
        \caption{The curve of normals in $(u,v)$ charts with marked singular points.}\label{fig:crossright}
     \end{subfigure} \hfill
       \begin{subfigure}[t]{0.32\textwidth}
         \centering
        \includegraphics[width=0.95\columnwidth]{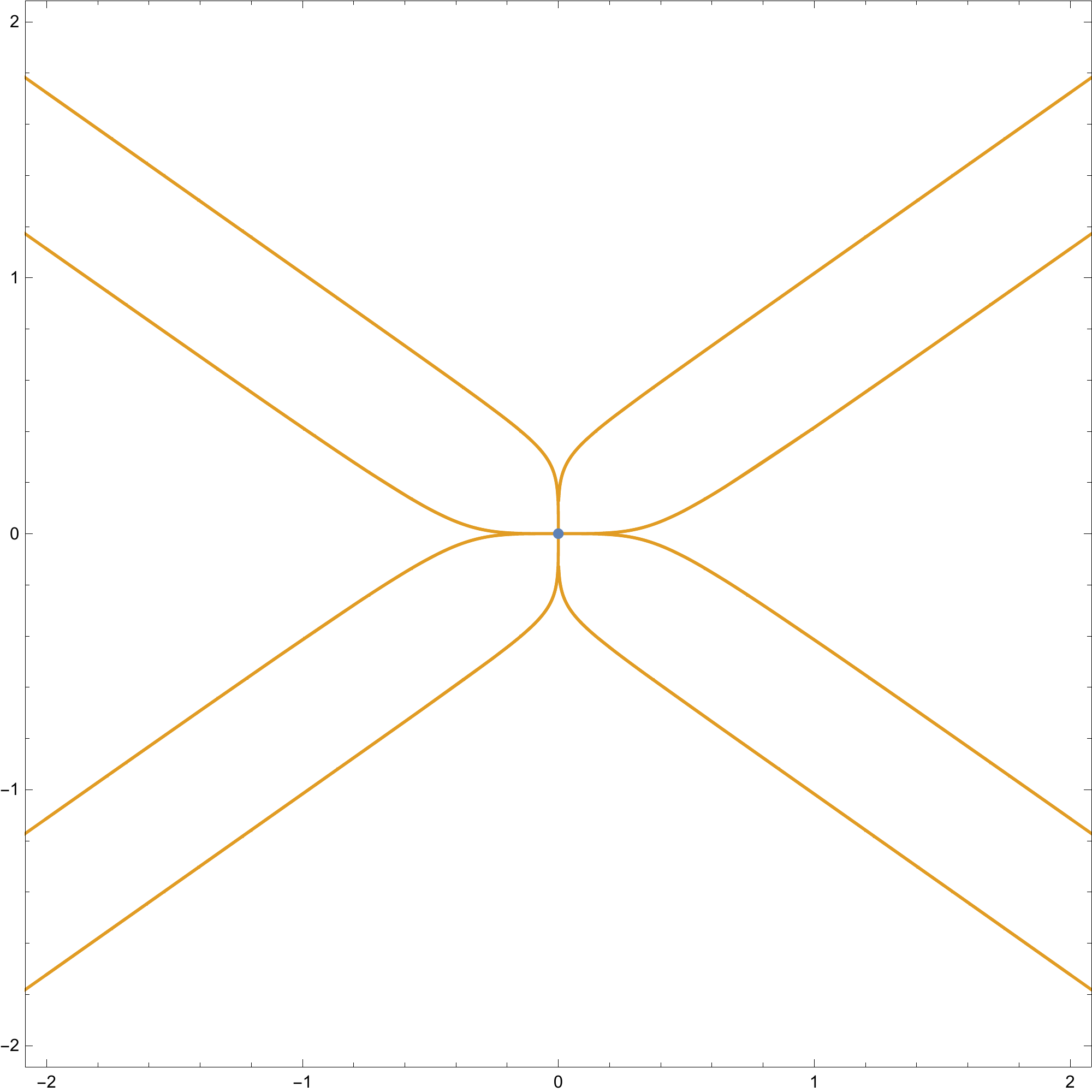}
        \caption{The curve of normals in the $(u,w)$ chart with marked singular points.}\label{fig:crossvw}
     \end{subfigure}\hfill
\caption{The cross curve, its evolute, and its curve of normals.}\label{fig:cross}
\end{figure}
\medskip

\smallskip
\noindent
{\bf IX.} The bean curve  is a rational curve given by the equation
\[x^4+x^2y^2+y^4-x(x^2+y^2)=0.\]
The curve intersects the line at infinity in four distinct (non-circular) points, so it is in general position. It has an
ordinary triple point at the origin, with one real branch. The point $(1:0:1)$ is an inflection point with tangent line $x=1$. The intersection number of the curve with its tangent at this point is 4. The corresponding point on its dual curve is an $E_6$-singularity (a cusp of multiplicity 3), hence $\iota' =2$.  The numerical characters of the curve are
$d=4$, $d^\vee=6$, $\delta=3$, $\kappa =0$, $\iota=6$, $\tau=4$. Klein--Schuh's formula gives $4-6=(3-1) -\iota'-2\tau'=2-2-2\tau'$, hence $\tau'=1$.

 Its curve of normals
has 
$\deg N_\Ga=10$, $\kappa(N)=0$, and $\delta(N)=36$ nodes, and its evolute has
$\deg E_\Ga=18$, $\kappa(E)=24$, $\delta(E)=136$, see column IX  of Table~\ref{tb:inv}. 

The evolute has 124 nodes and 24 cusps. There are 5 real cusps in the $(x,y)$-plane and one at $(0:1:0)$. This last one corresponds to the higher order inflection point $(1:0:1)$ of the curve, which gives a critical point of the curvature radius function and hence this cusp is a vertex of $\Ga$. We see 3 crunodes and 5 acnodes in Fig. \ref{fig:beanleft}.

The curve of normals has an ordinary quadruple point at $(0:1:0)$, with all branches complex. In addition, $N_\Ga$ has a crunode visible in Fig.~\ref{fig:beanright},  and singularities at  $(-\frac{3}{2}:1:0)$, $(1 - \sqrt{5}: 1:0)$, $(1 + \sqrt{5}:1:0)$, which can be seen in the $(u,w)$-chart shown in Fig.~\ref{fig:beanvw}, which gives two acnodes and an additional crunode. 

The bean curve has 6 vertices and 2 diameters.

\begin{figure}[H]
     \centering
     \begin{subfigure}[t]{0.32\textwidth}
         \centering
      \includegraphics[width=0.95\columnwidth]{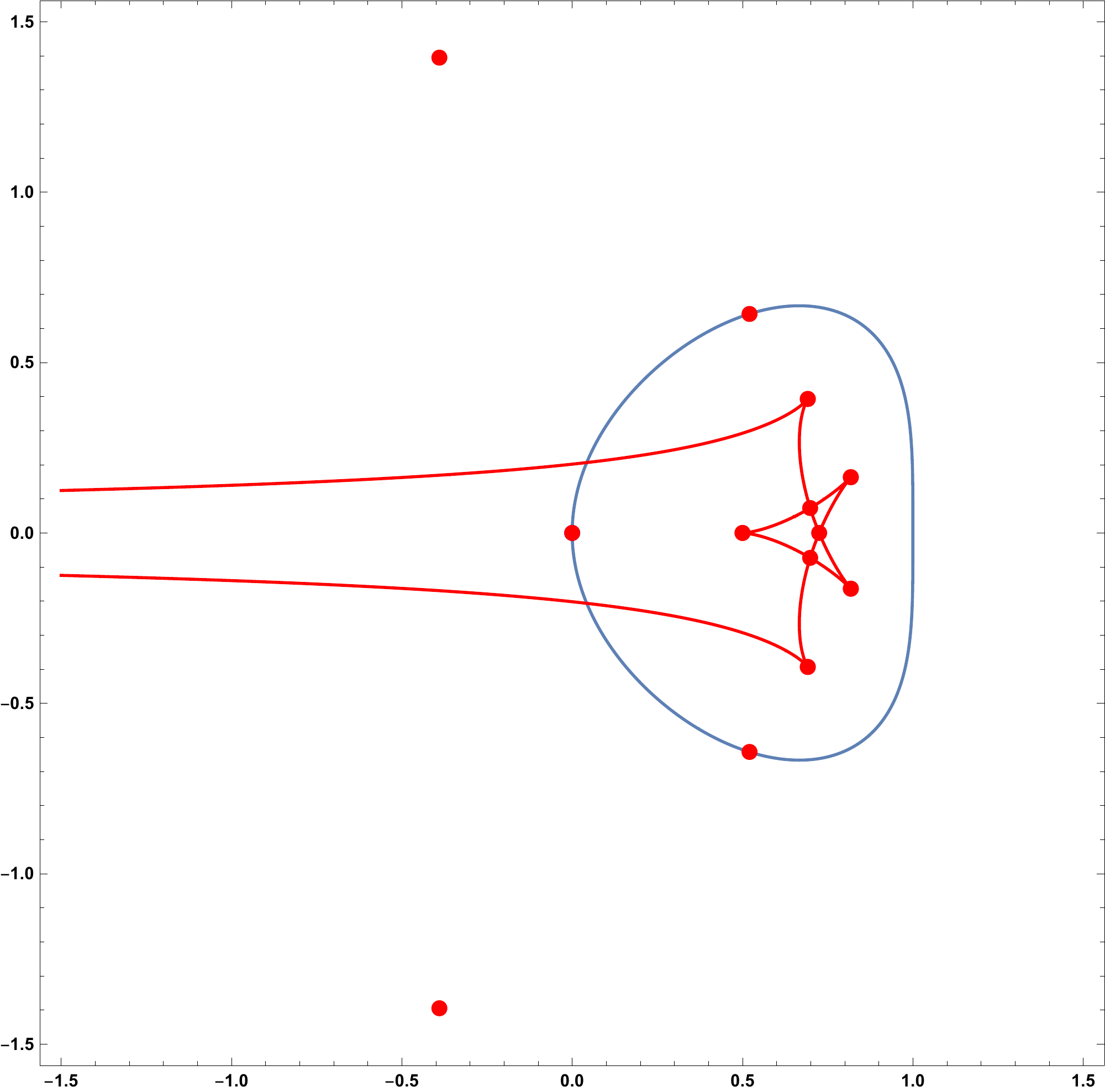} 
       \caption{The bean curve in blue with its evolute in red.}\label{fig:beanleft}
     \end{subfigure}
     \hfill
     \begin{subfigure}[t]{0.32\textwidth}
         \centering
        \includegraphics[width=0.95\columnwidth]{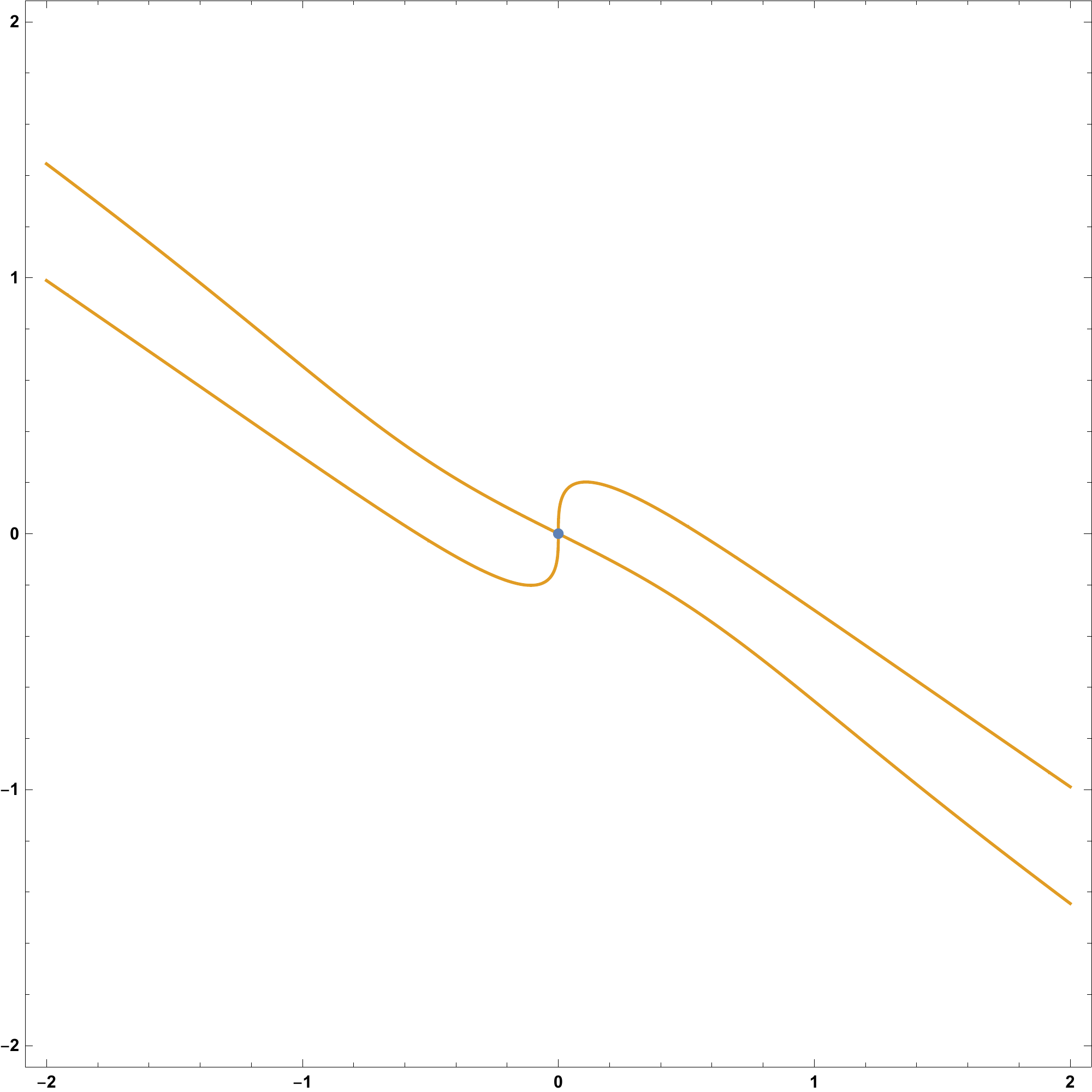}
        \caption{The curve of normals in the $(u,v)$ chart.}\label{fig:beanright}
     \end{subfigure} \hfill
      \begin{subfigure}[t]{0.32\textwidth}
         \centering
            \includegraphics[width=0.95\columnwidth]{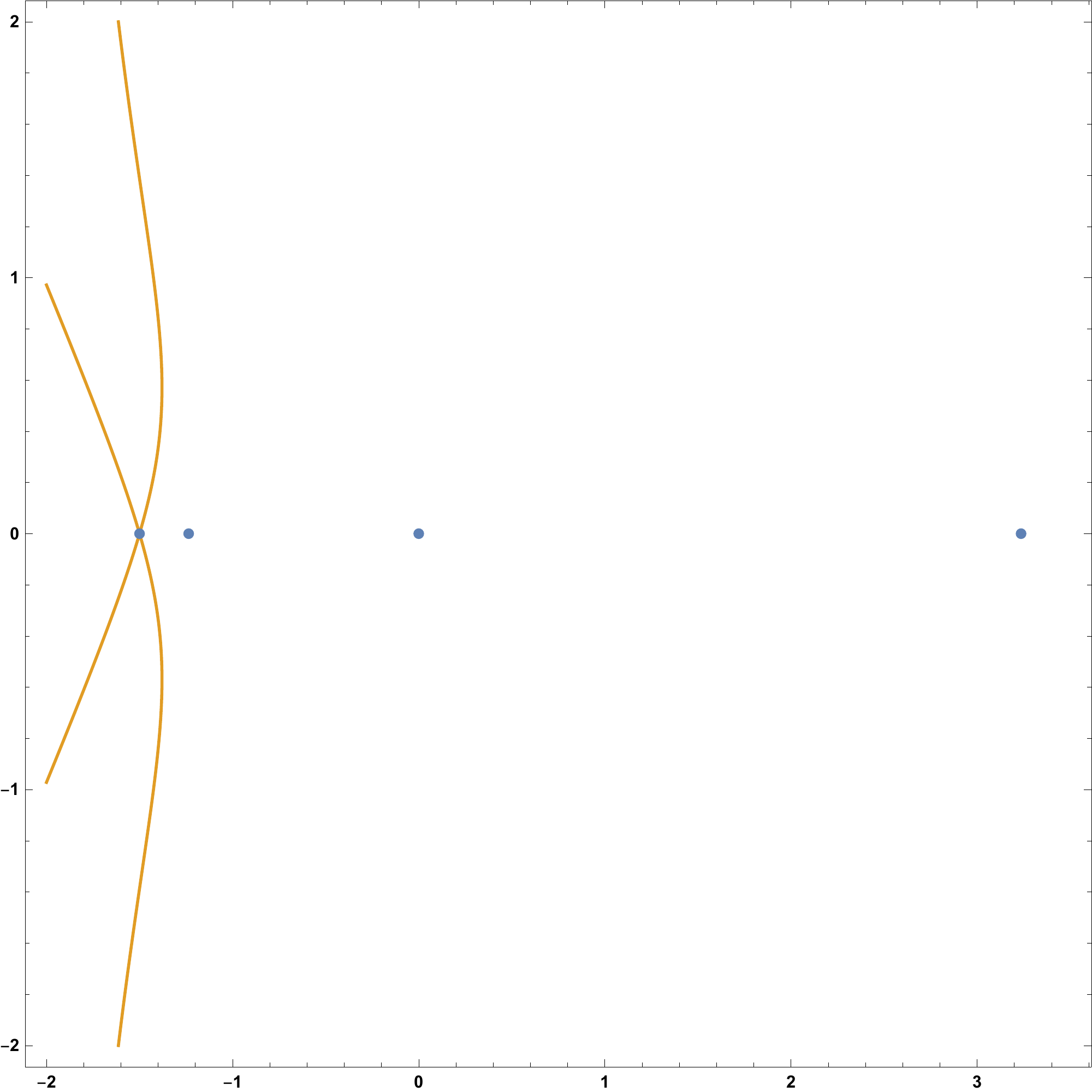}
        \caption{The curve of normals in the $(v,w)$ chart with marked singular points.}\label{fig:beanvw}
     \end{subfigure}\hfill

\caption{The bean curve, its evolute, and its curve of normals.}
\label{fig:bean}
\end{figure}

\smallskip
\noindent
{\bf X.} The trifolium is a rational curve given by the equation  
\[(x^2+y^2)^2-x^3+3xy^2=0.\]

 The trifolium is a \emph{2-circular} curve: the highest degree homogeneous part of its equation is divisible by $(x^2+y^2)^2$. The curve touches the line at infinity at the two circular points. It has one singular point, an ordinary triple point at the origin $(0:0:1)$. 
 Its numerical characters are
$d=4$, $d^\vee=6$, $\delta=3$, $\kappa =0$, $\iota=6$, $\tau=4$.  
Klein--Schuh's formula gives $4-6=-\iota'-2\tau'=-2\tau'$, since there are no real inflection points. Hence $\Ga$ has one pair of conjugate tangents.

The evolute has totally 24 nodes and 12 cusps of which no nodes are real, i.e., it has neither acnodes nor crunodes. Of totally 12 cusps $6$ are real and $6$ are non-real. All real cusps can be seen in Figure~\ref{fig:trifolium} (a). 

For the curve of normals we get from Equation~(\ref{degnorm})
$\deg N_\Ga=4+6-2(2-1+1)=6$, and for the evolute we get from Equation~(\ref{degev})
$\deg E_\Ga=2\deg N_\Ga +d^\vee-2d+\kappa-\iota(E)=12+6-8+0-0=10$, see column X of   Table~\ref{tb:inv}. The curve of normals has 7 double points in the affine $(u,v)$-plane, all of which are crunodes, as can be seen in Fig~\ref{fig:trifoliumright}. Furthermore, it has  three real nodes at the line at infinity $w=0$, two crunodes at   $(\frac{1}{3} + \frac{\sqrt{33}}{3}:1: 0)$ and $(\frac{1}{3} - \frac{\sqrt{33}}{3}:1: 0)$, and an acnode at $(0:1:0)$.
Its evolute is a hypocycloid, with 6 real cusps in the affine plane, and is again a 2-circular curve. 

The trifolium has 6 vertices and 9 diameters.
\begin{figure}[H]

     \centering
     \begin{subfigure}[t]{0.49\textwidth}
         \centering
      \includegraphics[width=0.62\columnwidth]{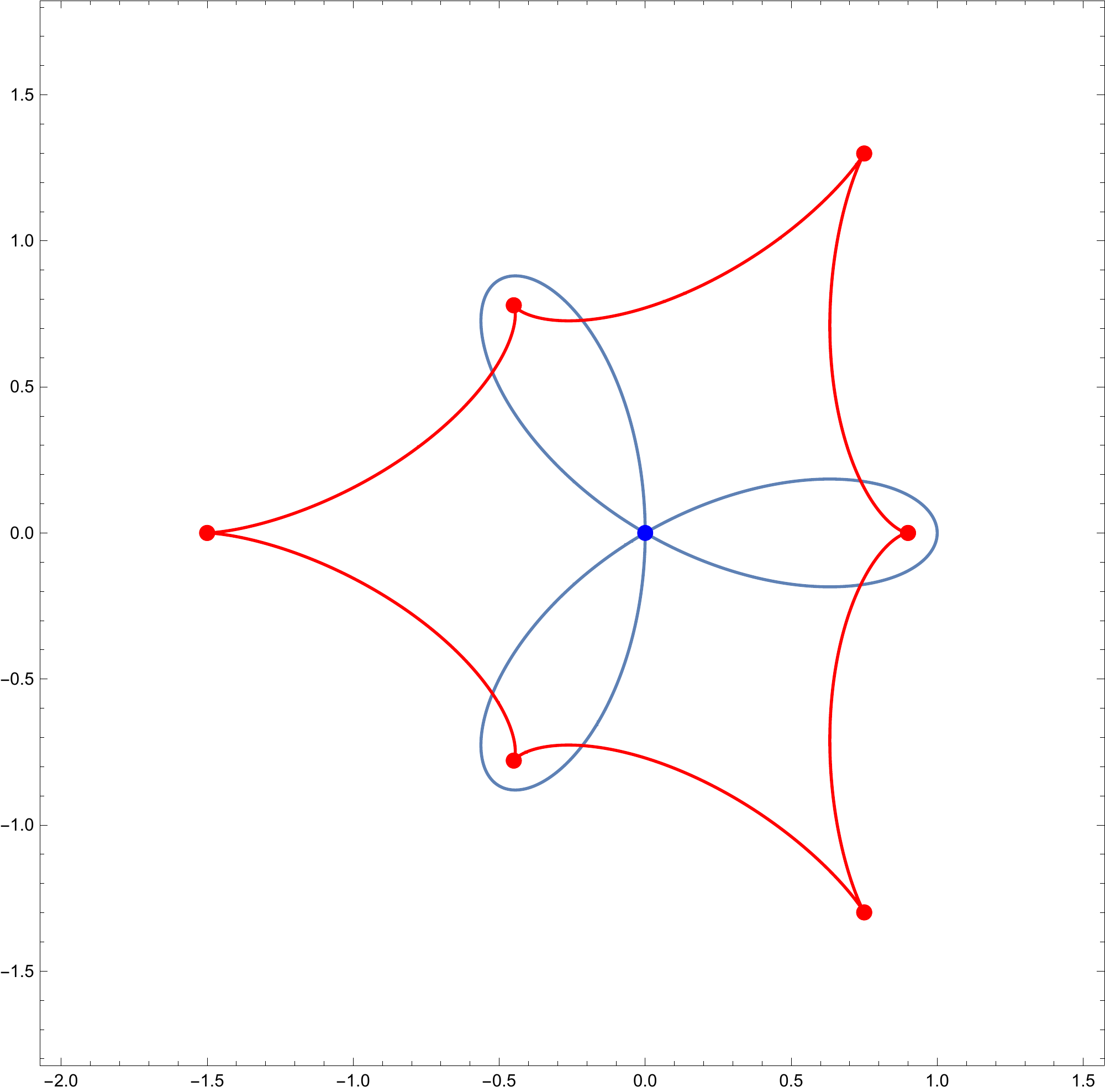} 
       \caption{The trifolium in blue with its evolute in red.}\label{fig:trifoliumleft}
     \end{subfigure}
     \hfill
     \begin{subfigure}[t]{0.49\textwidth}
         \centering
        \includegraphics[width=0.62\columnwidth]{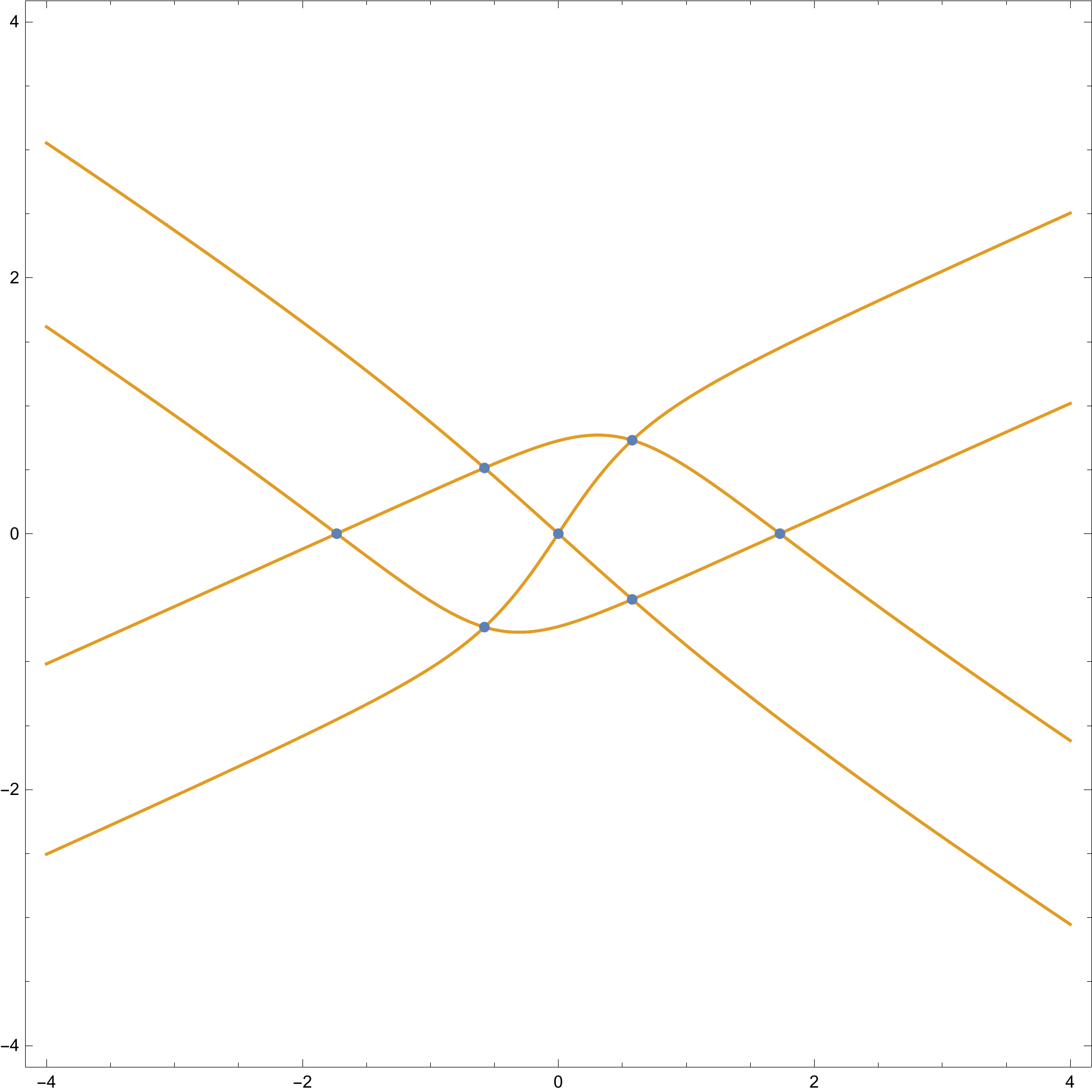}
        \caption{The curve of normals in the $(u,v)$ chart.}\label{fig:trifoliumright}
     \end{subfigure} \hfill

\caption{The trifolium, its evolute, and its curve of normals.}
\label{fig:trifolium}
\end{figure}

\smallskip
  \noindent
{\bf XI.} The quadrifolium is a rational curve given by the equation 
\[(x^2 + y^2)^3 - 4x^2y^2.\]

The quadrifolium is a 3-circular curve. It has cusps (with tangent the line at infinity) at the two circular points. Its quadruple point at the origin has $\delta$-invariant 8.
Its numerical characters are
$d=6$, $d^\vee=8$, $\delta=10$, $\kappa =2$, $\iota=8$, $\tau=21$. 

The degree of the curve of normals is  
$\deg N_\Ga=6+8-2(3-2+2)=8$, by Equation~(\ref{degnorm}). It has 17 nodes in the affine $(u,v)$-plane, all real and visible in Fig.~\ref{fig:quadfoliumright}, and four real nodes $(0:1:0)$, $(1: 0:0)$, $(\frac{3}{4}\sqrt{6}: 1: 0)$, and $(-\frac{3}{4}\sqrt{6}: 1: 0)$ on the line $w=0$. All of these nodes are crunodes, except for  $(0:1:0)$ which is an acnode. 

The evolute is a hypocycloid of degree $\deg E_\Ga=2\deg N_\Ga +d^\vee-2d+\kappa-\iota_E=16+8-12+2-0=14$, by (\ref{degev}).  It has  8 real cusps in the affine plane and no other real singularities.

The quadrifolium has 8 vertices and 20 diameters.
\begin{figure}[H]

     \centering
     \begin{subfigure}[t]{0.49\textwidth}
         \centering
      \includegraphics[width=0.62\columnwidth]{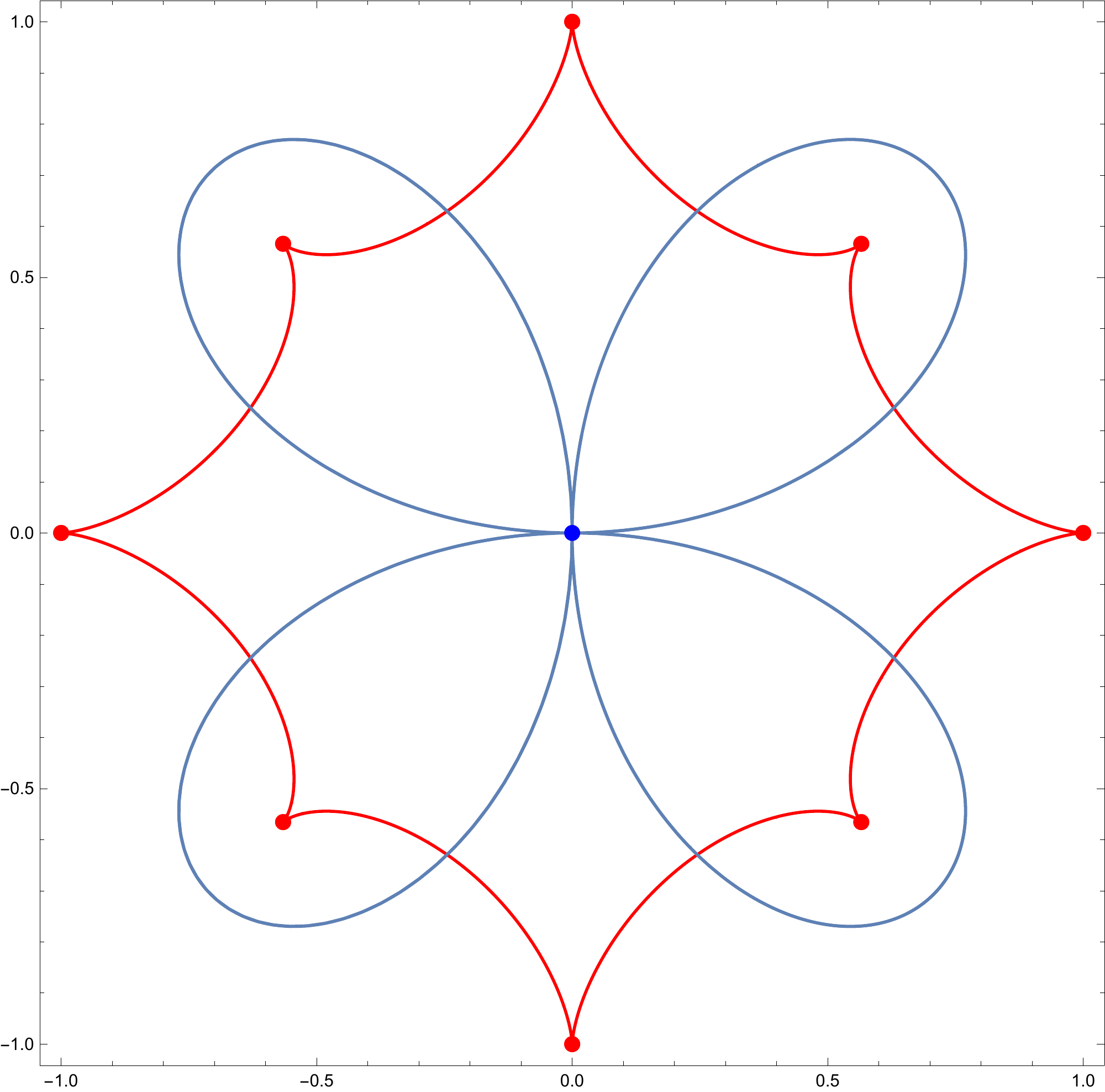} 
       \caption{The quadrifolium in blue with its evolute in red.}\label{fig:quadfoliumleft}
     \end{subfigure}
     \hfill
     \begin{subfigure}[t]{0.49\textwidth}
         \centering
        \includegraphics[width=0.62\columnwidth]{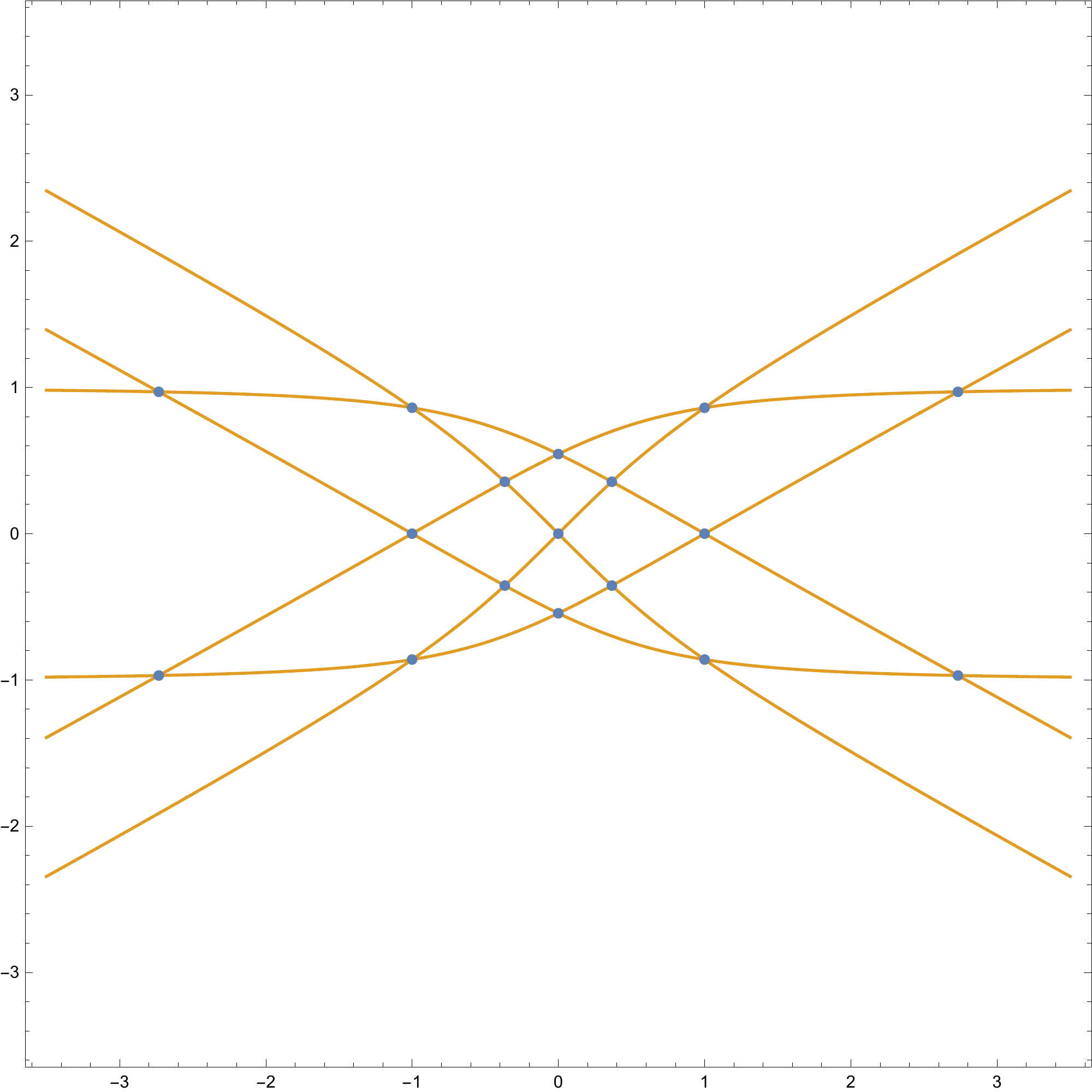}
        \caption{The curve of normals in the $(u,v)$ chart.}\label{fig:quadfoliumright}
     \end{subfigure} \hfil
\caption{The quadrifolium, its evolute, and its curve of normals.}
\label{fig:quadfolium}
\end{figure}
\medskip

\smallskip
\noindent
{\bf XII.} The D\"urer folium is a rational curve given by the equation 
\[(x^2 + y^2) (2(x^2 + y^2) - 1)^2 - x^2=0\]
is a 3-circular curve. It has $E_6$-singularities at the circular points, and two crunodes and one $A_3$-singularity in the affine plane.
Its numerical characters are
$d=6$, $d^\vee=6$, $\delta=10$, $\kappa =4$, $\iota=4$, $\tau=10$, see row XII of  Table~\ref{tb:inv}.

Its evolute is a 3-circular epicycloid of degree $\deg E_\Ga=10$, with 4 real cusps in the affine plane, and no other real singularities.

Its curve of normals has degree
$\deg N_\Ga=6+6-2(3-3+3)=6$    
 and 10 double points, all real. Of these, 7 are in the affine $(u,v)$-plane  and are crunodes, as  can be seen in Fig.~\ref{fig:duererright}. Additionally the curve of normals has three crunodes at the line at infinity $w=0$. These are $(1:0:0)$, $(\frac{3}{2}\sqrt{6}: 1: 0)$ and $(-\frac{3}{2}\sqrt{6}: 1: 0)$. 
 
 The D\"urer folium has 4 vertices and 10 diameters.
\begin{figure}[H]

     \centering
     \begin{subfigure}[t]{0.49\textwidth}
         \centering
      \includegraphics[width=0.62\columnwidth]{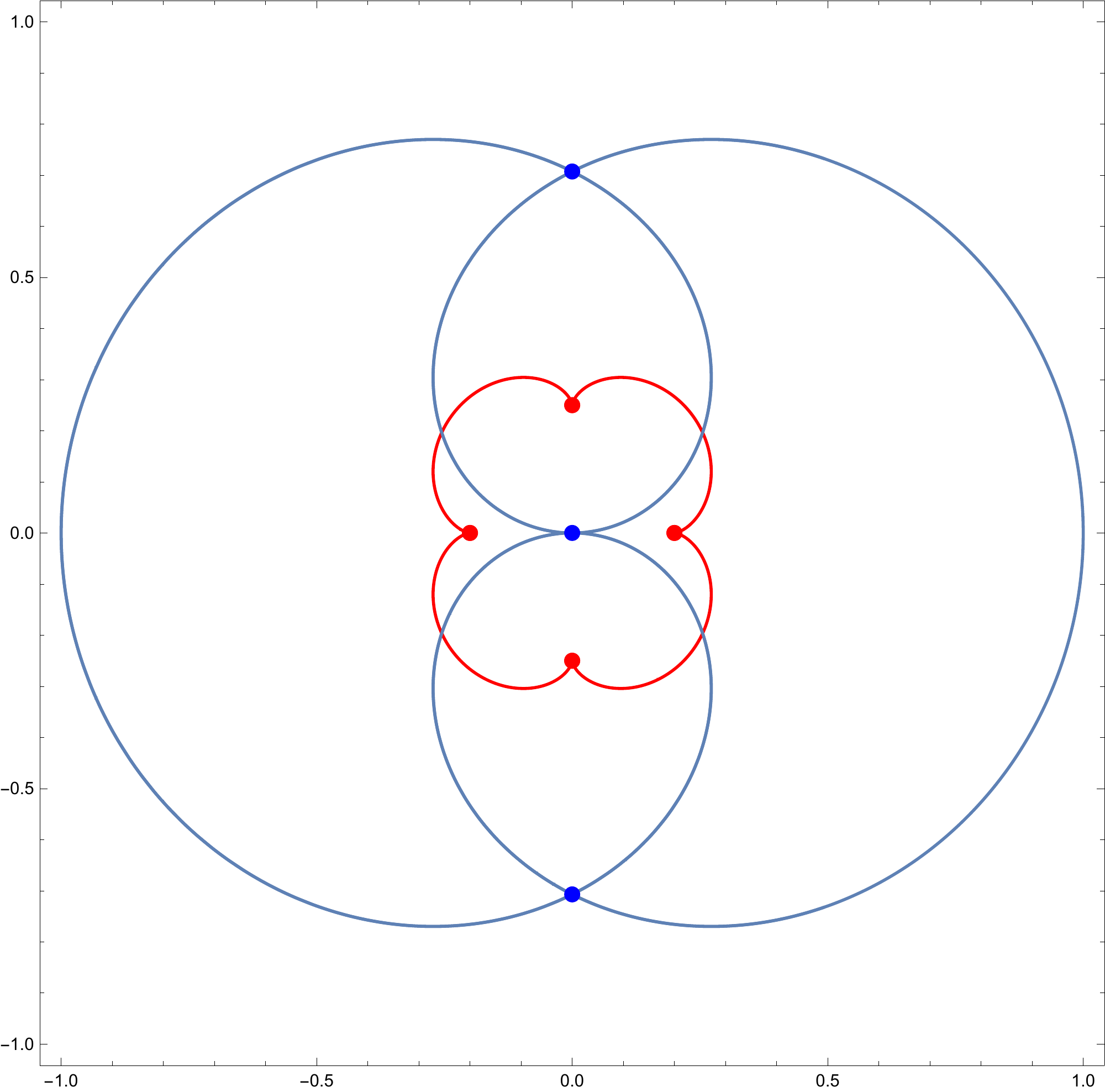} 
       \caption{The D\"urer folium in blue with its evolute in red.}\label{fig:duererleft}
     \end{subfigure}
     \hfill
     \begin{subfigure}[t]{0.49\textwidth}
         \centering
        \includegraphics[width=0.62\columnwidth]{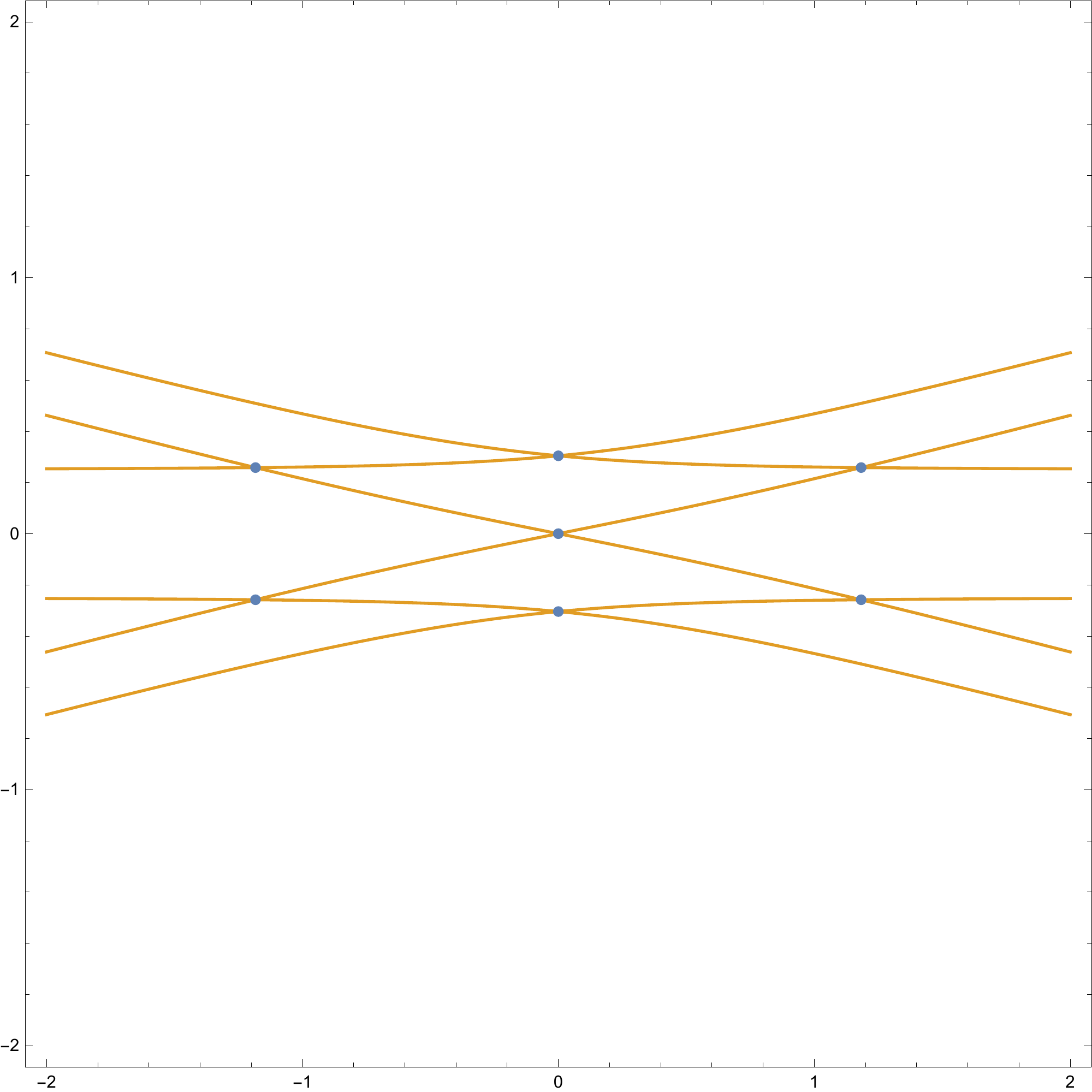}
        \caption{The curve of normals in the $(u,v)$ chart.}\label{fig:duererright}
     \end{subfigure} \hfil
\caption{The D\"urer folium, its evolute, and its curve of normals.}
\label{fig:duerer}
\end{figure}

\smallskip
\noindent
{\bf XIII.} The nephroid is a rational curve given by the equation 
\[4(x^2 + y^2 - 1)^3 - 27y^2=0.\]
It is a 3-circular curve, with an $E_6$ singularity at each circular point. 
It has two real ordinary cusps, two complex nodes, and no inflection points. Its numerical characters are
$d=6$, $d^\vee=4$, $\delta=10$, $\kappa =6$, $\iota=0$, $\tau=3$. 
Klein--Schuh's formula checks:
$6-4=2(2-1)-0=2$.

The nephroid is an epicycloid, hence its evolute $E_\Ga$ is again a nephroid (turned by 90 degrees), hence also 3-circular. It has equation 
\[(4(x^2+y^2)-1)^3-27x^2=0.\]

$\deg E_\Ga=6$, $\kappa(E)=6$, $\iota(E)=0$, and $\delta_E =10-2\cdot 1-2\cdot 3=2$.

Since the curve of normals is the dual of the evolute, we have
$\deg N_\Ga=d^\vee =4$, $\kappa(N)=\iota(E)=0$, and $\delta_N=\delta(N)=3$, see column XIII of   Table~\ref{tb:inv}. 
The curve of normals has three crunodes (we see two of them in Fig.~\ref{fig:neph}, the third correspond to the line $x=0$).  

The nephroid  has 2 vertices and 3 diameters.
     
     \begin{figure}[H]
  \centering
     \begin{subfigure}[t]{0.49\textwidth}
         \centering
      \includegraphics[width=0.62\columnwidth]{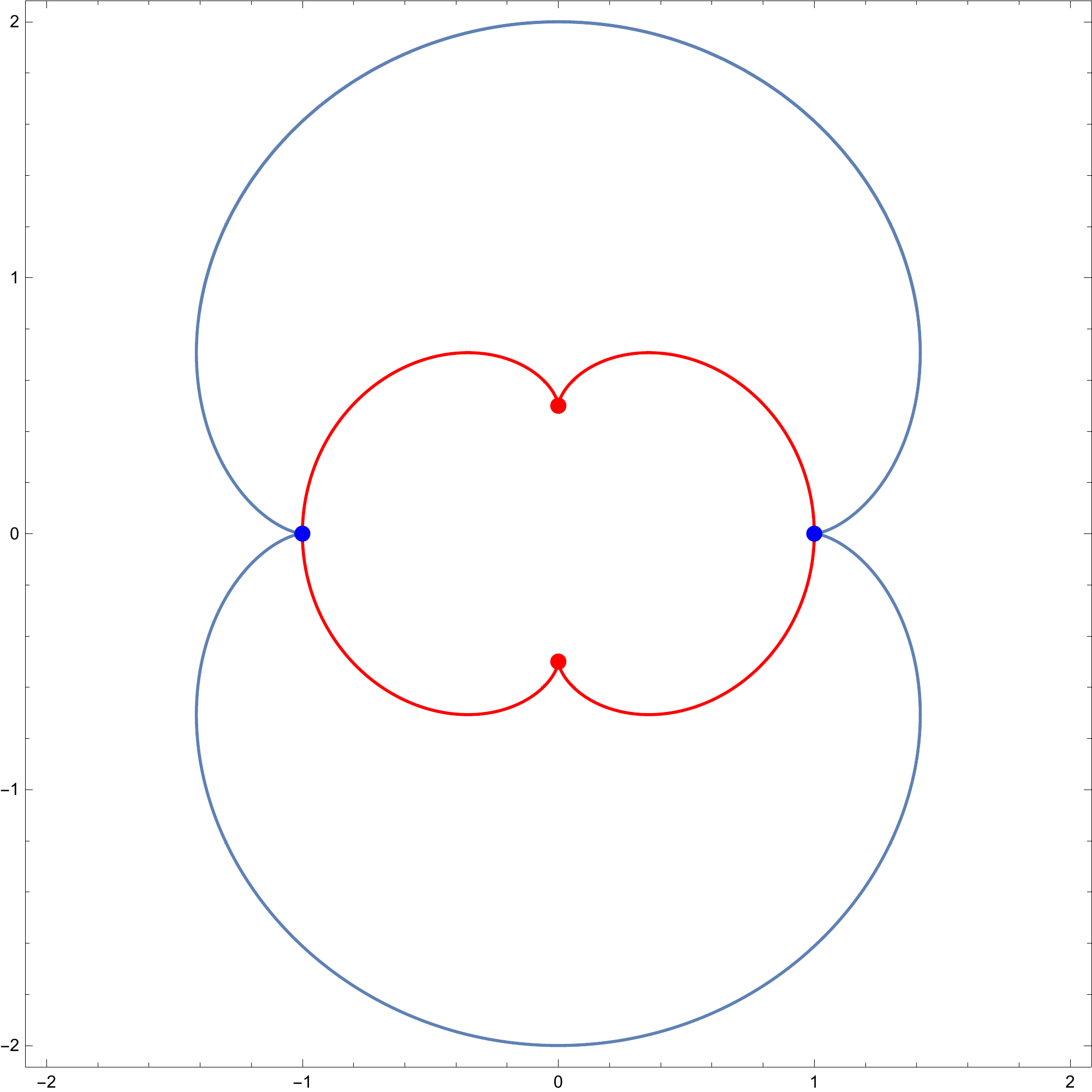} 
       \caption{The nephroid in blue with its evolute in red.}\label{fig:nephleft}
     \end{subfigure}
     \hfill
     \begin{subfigure}[t]{0.49\textwidth}
         \centering
        \includegraphics[width=0.62\columnwidth]{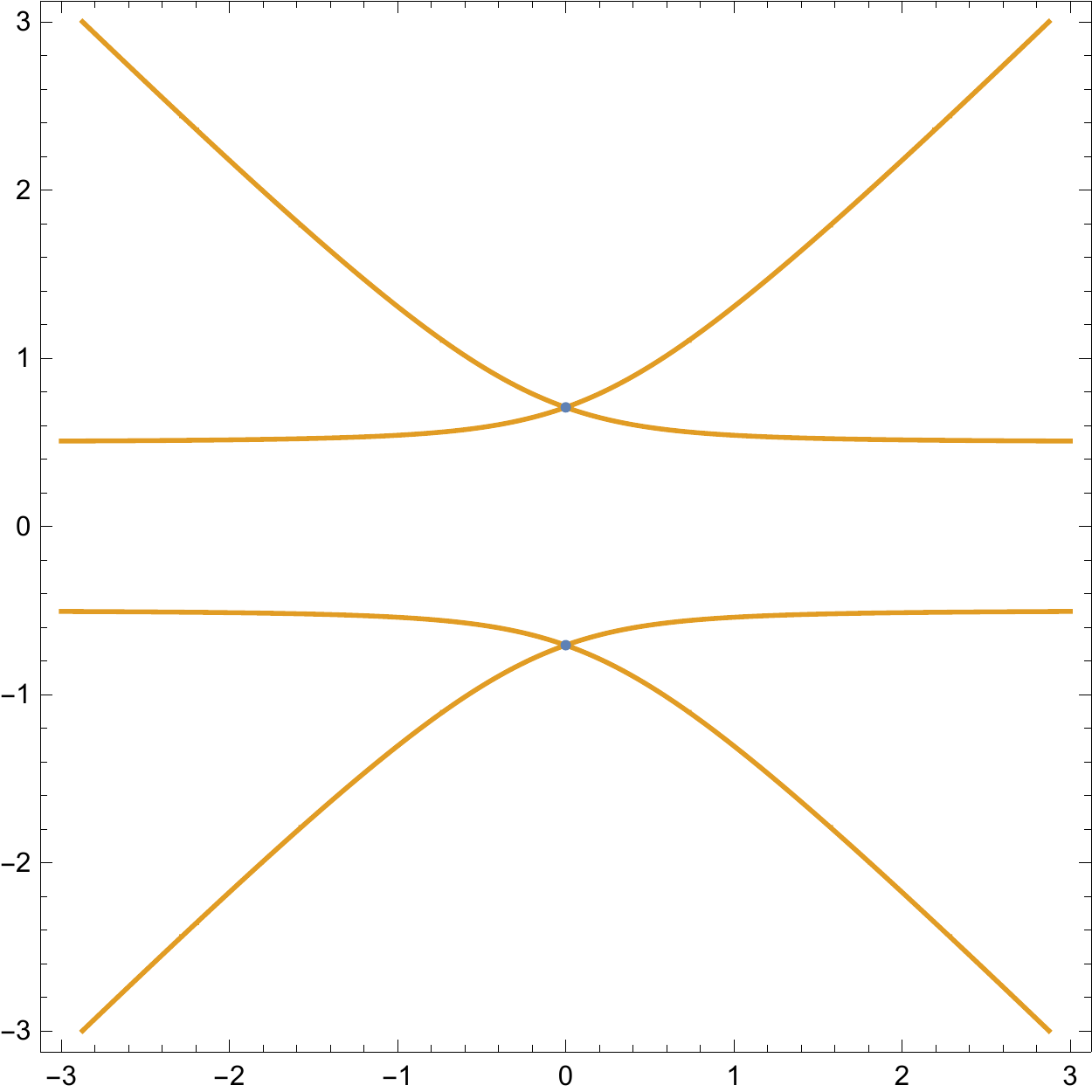}
        \caption{The curve of normals in the $(u,v)$ chart.}\label{fig:nephright}
     \end{subfigure} 

\caption{The nephroid, its evolute, and its curve of normals.}
\label{fig:neph}
\end{figure}

\smallskip
 \noindent
{\bf XIV.} Cayley's sextic is a rational curve given by the equation 
\[4(x^2 + y^2 - x)^3 - 27(x^2 + y^2)^2=0.\]
It is 3-circular, with $E_6$ singularities at the two circular points. 
It has two other singular points: one crunode and one $E_6$ singularity at the origin. Its numerical characters are
$d=6$, $d^\vee=4$, $\delta=10$, $\kappa =6$, $\iota=0$, $\tau=3$, see column XIV of Table~\ref{tb:inv}.

Its evolute $E_\Ga$ is a nephroid, hence $\deg E_\Ga =6$ and $\kappa(E)=6$. Its dual, the curve of normals of $\Ga$, has degree 4 and 3 crunodes (we see one in the affine $(u,v)$-plane).

Cayley's sextic has 2 vertices and 3 diameters.
     
 \begin{figure}[H]
  \centering
     \begin{subfigure}[t]{0.49\textwidth}
         \centering
      \includegraphics[width=0.62\columnwidth]{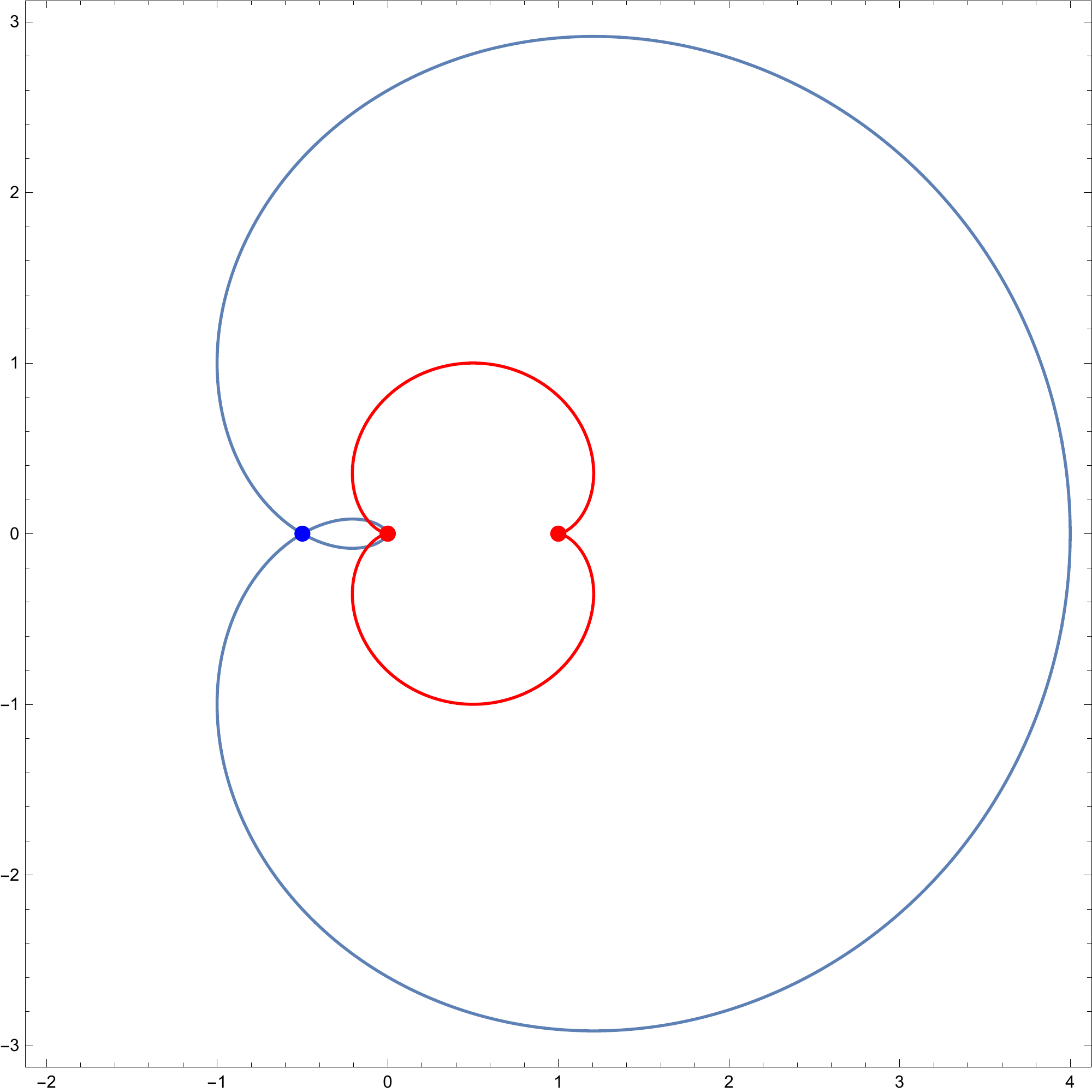} 
       \caption{Cayley's sextic in blue with its evolute in red.}\label{fig:caleyleft}
     \end{subfigure}
     \hfill
     \begin{subfigure}[t]{0.49\textwidth}
         \centering
        \includegraphics[width=0.62\columnwidth]{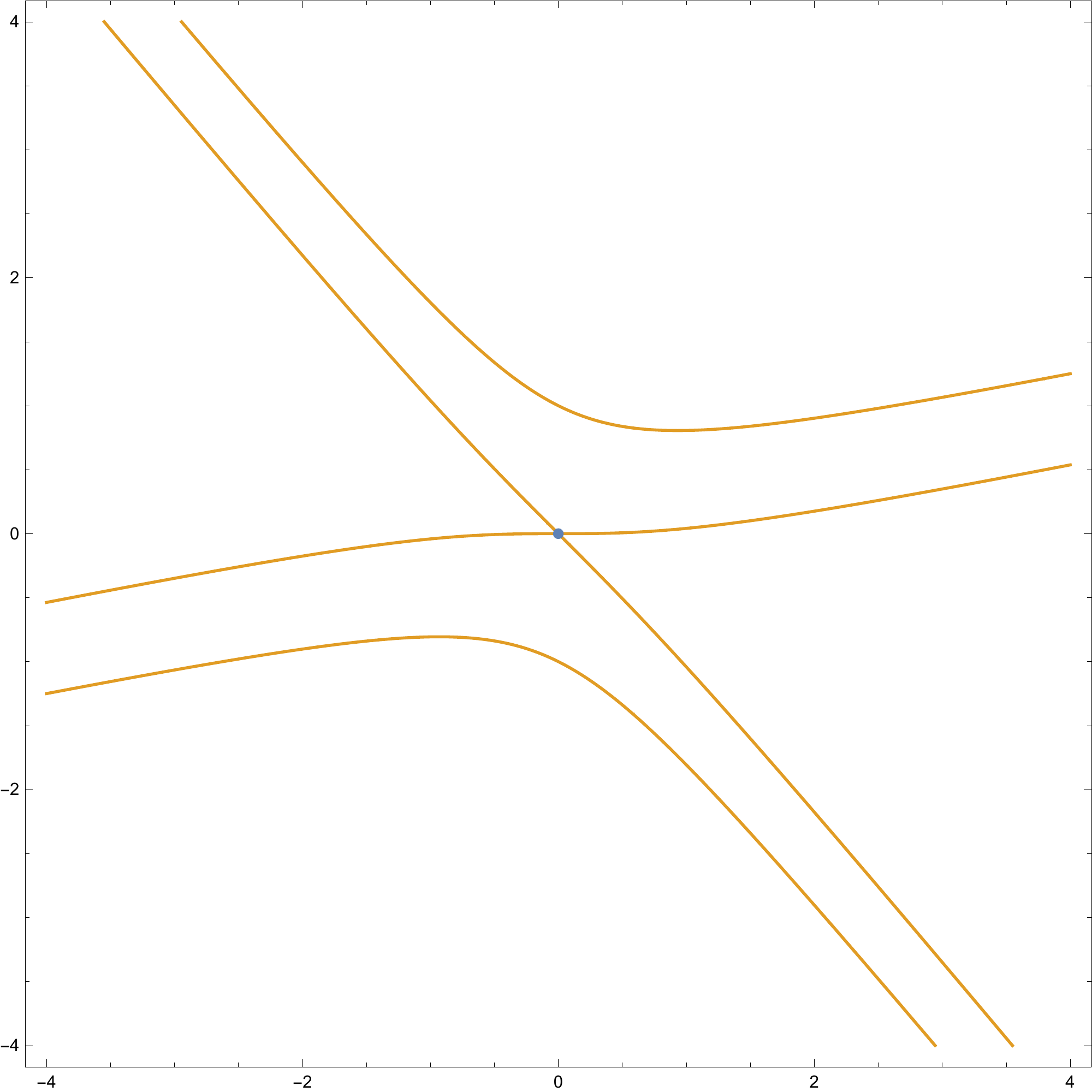}
        \caption{The curve of normals in the $(u,v)$ chart.}\label{fig:caleyright}
     \end{subfigure} \hfil
\caption{Cayley's sextic, its evolute, and its curve of normals.}
\label{fig:caley}
\end{figure}
\medskip

\smallskip
 \noindent
{\bf XV.} The ranunculoid is an epicycloid  given by  the equation 

\noindent{\small 
\begin{dmath*}
0=-52521875 + 93312 x^5 + x^{12} - 1286250 y^2 - 933120 x^3 y^2 - 32025 y^4 + 466560 x y^4 - 812 y^6 - 21 y^8 - 42 y^{10} + y^{12} + 
 6 x^{10} (-7 + y^2) + 3 x^8 (-7 - 70 y^2 + 5 y^4) + 4 x^6 (-203 - 21 y^2 - 105 y^4 + 5 y^6) + 3 x^4 (-10675 - 812 y^2 - 42 y^4 - 140 y^6 + 5 y^8) + 6 x^2 (-214375 - 10675 y^2 - 406 y^4 - 14 y^6 - 35 y^8 + y^{10})\\
 =(x^2+y^2)^6+\text{terms of lower degree}.
 \end{dmath*}}
 
 This is a rational curve which is 6-circular, with cuspidal singularities with multiplicity 6 and $\delta$-invariant 15 at the two circular points, and additional five real cusps.   Its numerical characters are
$d=12$, $d^\vee=7$, $\delta=55$, $\kappa =15$, $\iota=0$, $\tau=15$. 

Its evolute $E_\Ga$ is again a ranunculoid, hence 6-circular, and has $\deg E_\Gamma=12$.
 For the curve of normals, we get
$\deg N_\Ga=7$, $\kappa(N)=0$, $\delta_N=\delta(N)=15$, see column XV of   Table~\ref{tb:inv}. All of the nodes are real and we can see 12 of these in Fig. \ref{fig:ranright}. Additionally, there are 
three nodes at the line at infinity at the points $(\alpha:1:0)$, where $\alpha$ runs through the roots of $\phi(t)=125t^3 - 100t^2 - 20t + 8.$ Changing again to a different affine chart, all of the nodes are visible in Fig. \ref{fig:ranuw} and we find that $N_\Ga$ has 15 crunodes. 

The ranunculoid has 5 vertices and 15 diameters.

\begin{figure}[H]

  \centering
     \begin{subfigure}[t]{0.32\textwidth}
         \centering
      \includegraphics[width=0.95\columnwidth]{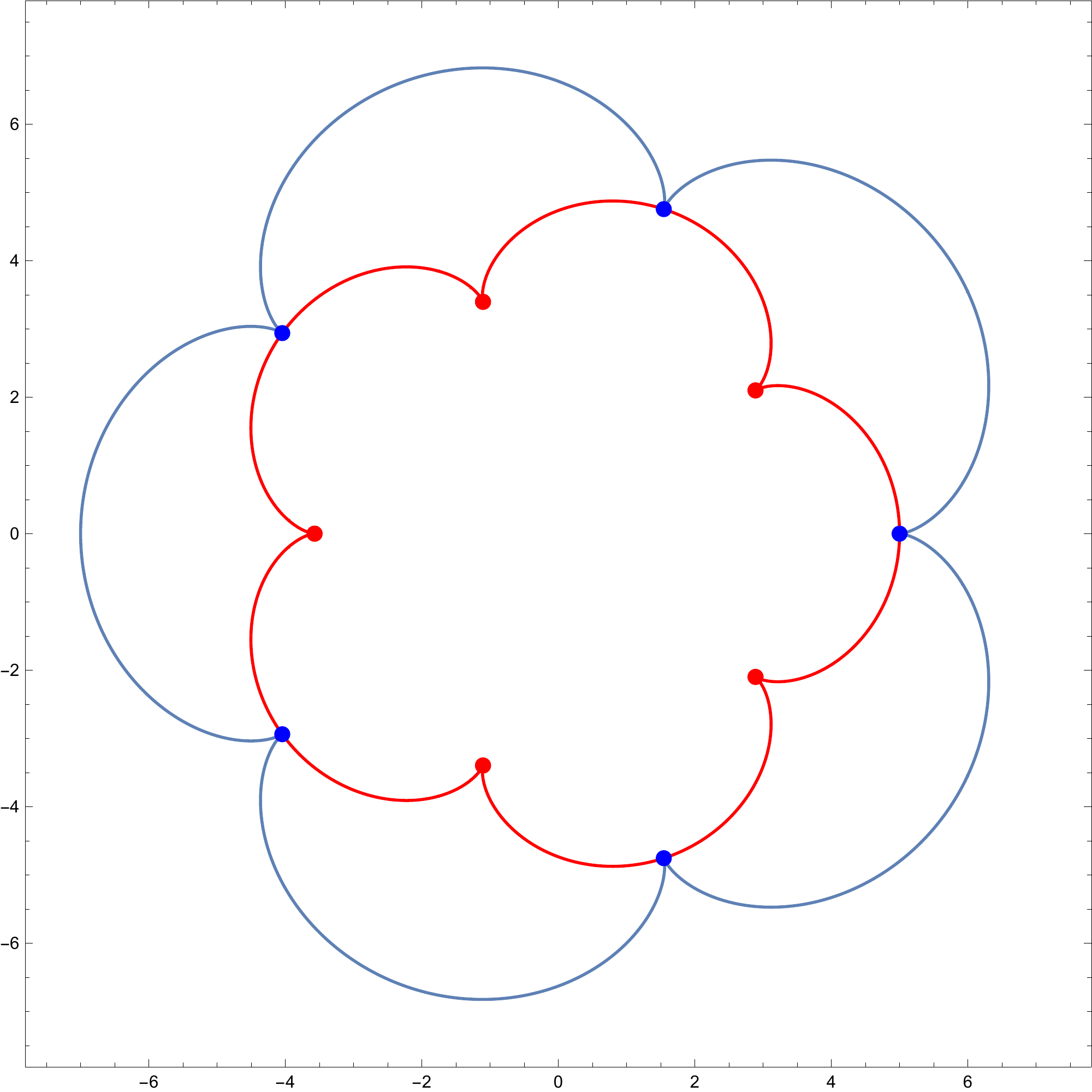} 
       \caption{The ranunculoid in blue with its evolute in red.}\label{fig:ranleft}
     \end{subfigure}
     \hfill
     \begin{subfigure}[t]{0.32\textwidth}
         \centering
        \includegraphics[width=0.95\columnwidth]{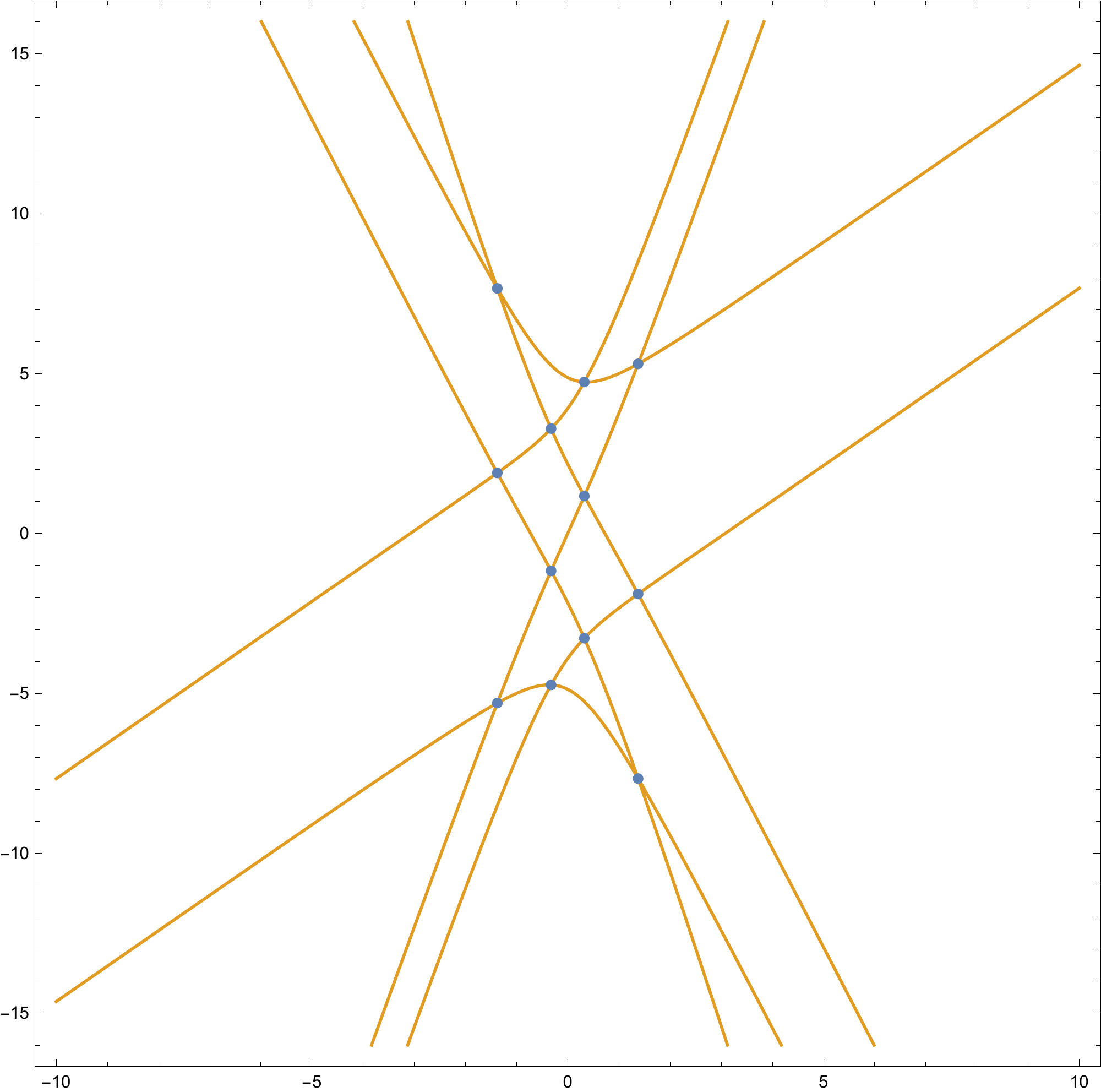}
        \caption{The curve of normals in the $(u,v)$ chart.}\label{fig:ranright}
     \end{subfigure} \hfil
       \begin{subfigure}[t]{0.32\textwidth}
         \centering
            \includegraphics[width=0.95\columnwidth]{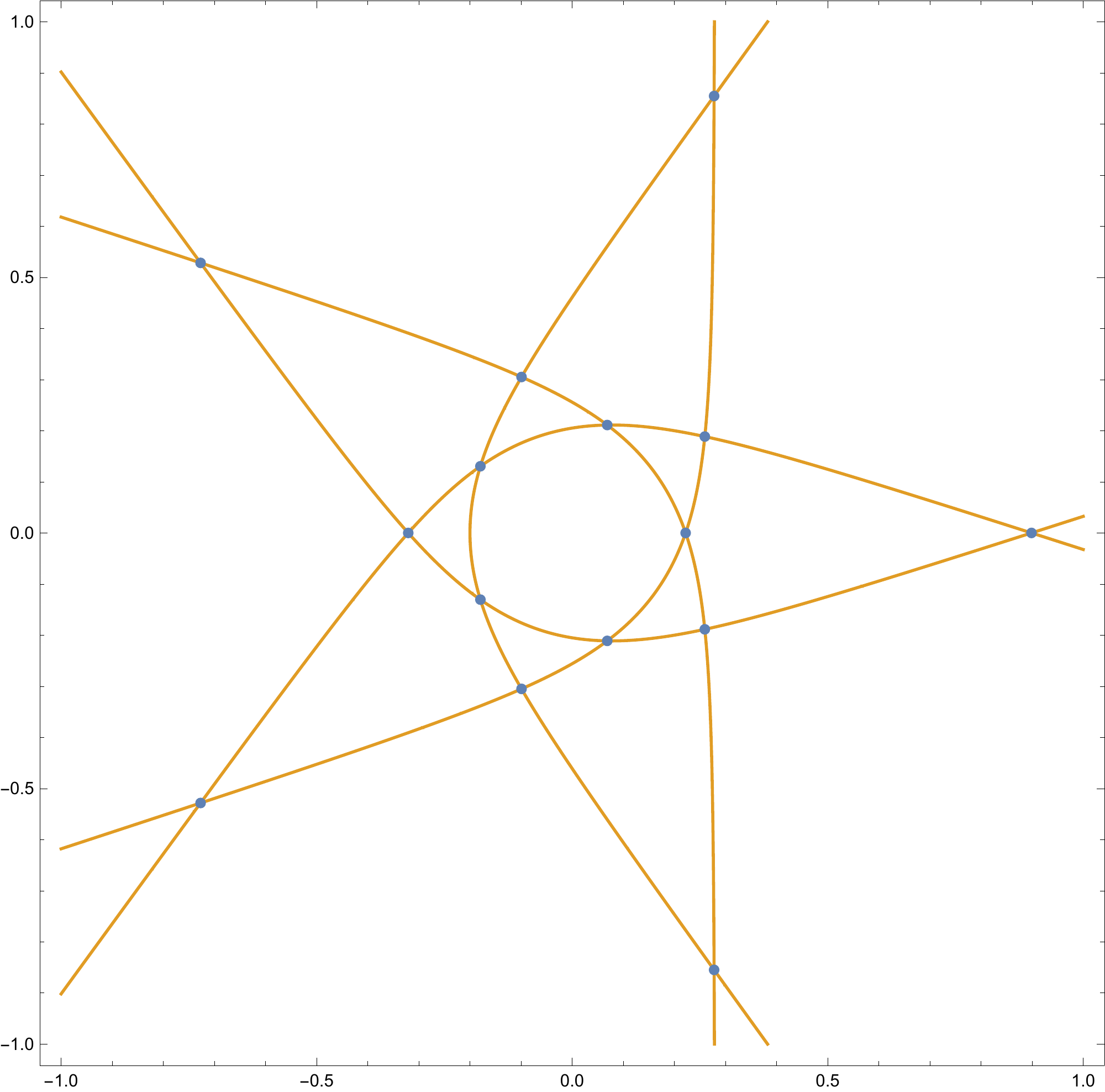}
        \caption{The curve of normals in the $(u,w)$ chart.}\label{fig:ranuw}
     \end{subfigure}

\caption{The ranunculoid, its evolute, and its curve of normals.}
\label{fig:ran}
\end{figure}

\subsection{Tables of invariants of the complexifications of the above examples}\label{subs:tables}  \hfill\\

Let $\Gamma\subset \mathbb R^2$ be a real, plane curve. Let $C_0\subset \mathbb P^2_{\mathbb C}$ denote  projective closure of its complexification and $\nu:C\to C_0$ the normalization map. We are interested in studying the evolute $E_C$  and the curve of normals $N_C$ of $\Gamma$ and of $C$. In Table~\ref{tb:inv} we give the numerical characters (cf. Appendix A) of the curve $C$, its evolute, and its curve of normals for each of the fifteen examples in \S~8.1. We set $d:=\deg C$, $d^\vee :=\deg C^\vee$, $\delta:=\delta$-invariant of $C_0$, $\kappa:=$ the degree of the ramification locus of $\nu$, $\tau:= \delta$-invariant of $C^\vee$, $\iota:=$ the ramification divisor of $C\to C^\vee$, $d(E):=\deg E_C$,  $d(N):=\deg N_C$, $\delta(E):=$ the $\delta$-invariant of $E_C$, $\kappa(E):=$ the degree of the ramification divisor of $C\to E_C$, $\tau(E):=$ the $\delta$-invariant of $E_C$, $\iota(E):=$ the degree of the ramification divisor of $C\to E_C^\vee =N_C$. Since we have $N_C=E_C^\vee$, by duality, we have $\kappa(E)=\iota(N)$, $\iota(E)=\kappa(N)$, $\tau(E)=\delta(N)$, and $\tau(N)=\delta(E)$.


\begin{center}
\begin{table}[h]
\tablestyle[sansbold]
\begin{tabular}{||c c c c c c c c c c c c c c c c||} 
\hline
& I & II & III & IV & V & VI & VII & VIII & IX & X & XI & XII & XIII & XIV & XV\\ [0.5ex] 
 \toprule\hline
 $d$ & 2 & 2 &3 & 3 & 3 & 3 & 4 & 4 & 4 & 4 & 6 & 6 & 6 & 6 & 12 \\ 
 \hline
$d^\vee$ & 2 & 2 & 3 & 4 & 6 & 6 & 6 & 6 & 6 & 6 & 8 & 6 & 4 & 4 & 7 \\ 
\hline
$\delta$ & 0 & 0 & 1 & 1 & 0 & 0 & 3 & 3 & 3 & 3 & 10 & 10 & 10 & 10 & 55  \\ 
\hline
$\kappa$ & 0 & 0 & 1 & 0 & 0 & 0 & 0 & 0 & 0 & 0 & 2 & 4 & 6 & 6 & 15 \\ 
\hline
$\tau$ & 0 & 0 & 1 & 3 & 9 & 9 & 10 & 10 & 10 & 10 & 10 & 10 & 3 & 3 & 15  \\ 
\hline
$\iota$ & 0 & 0 & 1 & 3 & 9 & 9 & 6 & 6 & 6 & 6 & 8 & 4 & 0 & 0 & 0 \\ 
\hline
$d(E)$ & 6 & 6 & 4 & 12 & 12 & 18 & 18 & 18 & 18 & 10 & 14 & 10 & 6 & 6 & 12 \\ 
\hline
$d(N)$ & 4 & 4 & 4 & 7 & 7 & 9 & 10 & 10 & 10 & 6 & 8 & 6 & 4 & 4 & 7 \\ 
\hline
$\delta(E)$ & 10 & 10 & 3 & 55 & 54 & 135 & 136 & 136 & 136 & 36 & 78 & 36 & 10 & 10 & 55 \\ 
\hline
$\kappa(E)$ & 6 & 6 & 2 & 15 & 17 & 27 & 24 & 24 & 24 & 12 & 18 & 12 & 6 & 6 & 15 \\ 
\hline
$\tau(E)$ & 3 & 3 & 3 & 15 & 14 & 27 & 36 & 36 & 36 & 10 & 21 & 10 & 3 & 3 & 15 \\ 
\hline
$\iota(E)$ & 0 & 0 & 2 & 0 & 2 & 0 & 0 & 0 & 0 & 0 & 0 & 0 & 0 & 0 & 0 \\ 
\hline

\end{tabular}

\caption{Table of invariants case-by-case.} \label{tb:inv}
\end{table}
\end{center}

In Table~\ref{ta:singularities} we list the actual complex singularities of the evolute and the curve of normals, namely the number of singularities of types $A_1$, $A_2$, $D_4$, $E_6$, and other. The last row indicates whether the curve of normals has a point of multiplicity $d$ corresponding to the line at infinity with respect to $\Ga$.

\begin{center}
\begin{table}[h]
\tablestyle[sansbold]
\begin{tabular}{||c c c c c c c c c c c c c c c c||} 
 \hline
& I & II & III & IV & V & VI & VII & VIII & IX & X & XI & XII & XIII & XIV & XV\\ [0.5ex] 
 \toprule\hline
  $A_1(E)$ & 4 & 4 & 0 & 39 & 37 & 105 & 112 & 84 & 112 & 24 & 60 & 18 & 2 & 2 & 20 \\
 \hline
  $A_2(E)$ & 6 & 6 & 0 & 13 & 17 & 21 & 24 & 16 & 24 & 12 & 18 & 12 & 2 & 2 & 5 \\
 \hline
   $E_6(E)$ & 0 & 0 & 1 & 1 & 0 & 3 & 0 & 0 & 0 & 0 & 0 & 2 & 2 & 2 & 0\\
   \hline
    other & 0 & 0 & 0 & 0 & 0 & 0 & 0 & $2$ & 0 & 0 & 0 & 0 & 0 & 0 & $2$\\
 \hline
   $A_1(N)$ & 2 & 2 & 1 & 12 & 8 & 21 & 30 & 30 & 30 & 10 & 21 & 10 & 3 & 3 & 15 \\
 \hline
  $A_2(N)$ & 0 & 0 & 2 & 0 & 0 & 0 & 0& 0 & 0 & 0 & 0 & 0 & 0 & 0 & 0\\
 \hline
 $D_4(N)$ & 0 & 0 & 0 & 0 & 0 & 1 & 0 & 0 & 0 & 0 & 0 & 0 & 0 & 0 & 0 \\
 \hline
  $E_6(N)$ & 0 & 0 & 0 & 0 & 1 & 0 & 0 & 0 & 0 & 0 & 0 & 0 & 0 & 0 & 0 \\
 \hline
   $d$-uple$(N)$ & 1 & 1 & 0 & 1 & 1 & 1 & 1 & 1 & 1 & 0 & 0 & 0 & 0 & 0 & 0 \\
 \hline
 
 \end{tabular}
 \medskip
 
 \caption{Table of actual (complex) singularities of the evolute and the curve of normals. The two ``other'' singularities for VIII are singularities with multiplicity 6, $\delta$-invariant 18, and 2 branches: the merging of two $E_6$-singularities, and the two ``other'' singularities for XV are singularities with multiplicity 6, $\delta$-invariant 15, and 1 branch.}\label{ta:singularities}
 \end{table}
\end{center}
\medskip 

In Table~\ref{ta:kleinschuh} we list the terms appearing in the Klein--Schuh formula (Thm.~\ref{th:genKlein}) for the evolute and its dual curve, the curve of normals, and the number of vertices and diameters of $\Ga$.
\begin{center}
\begin{table}[h]
\tablestyle[sansbold]
\begin{tabular}{||c c c c c c c c c c c c c c c c||} 
 \hline
& I & II & III & IV & V & VI & VII & VIII & IX & X & XI & XII & XIII & XIV & XV\\ [0.5ex] 
  \toprule\hline
$d(E)$ & 6 & 6 & 4 & 12 & 12 & 18 & 18 & 18 & 18 & 10 & 14 & 10 & 6 & 6 & 12\\
\hline
$d(N)$ & 4 & 4 & 4 & 7 & 7 & 9 & 10  & 10 & 10 & 6 & 8 & 6 & 4 & 4  & 7\\
\hline
{\small$\sum (m_p(E)-r_p(E))$} & 4 & 4 & 2 & 9 & 9 & 15 & 14 & 16 & 16 & 6 & 8 & 4 & 2 & 2  & 5\\
\hline
{\small $\sum (m_q(N)-r_q(N))$} & 2 & 2 & 2 & 4 & 4 & 6 & 6 & 8 & 8 & 2 & 2 & 0 & 0 & 0 & 0\\
\hline
$V(\Ga)$ & 4 & 2 & 0 & 5 & 9 & 9 & 6 & 4 & 5 & 6 & 8 & 4 & 2 & 2 & 5 \\
\hline
$D(\Ga)$ & 2 & 1 & 0 & 10 & 4 & 13 & 14 & 2 & 2 & 9 & 20 & 10 & 3 & 3 & 15 \\
\hline
\end{tabular}
\medskip

\caption{The first four rows show the terms in the Klein--Schuh formula, applied to $E_\Ga$ and $N_\Ga$. The last two rows show the number of vertices $V(\Ga)$ and the number of diameters $D(\Ga)$.}\label{ta:kleinschuh}
\end{table}
\end{center}

\section {Some further problems}
Below we present very small sample of natural questions related to the topic of the paper.  

\smallskip
\noindent 
{\bf 1.} The basic technical tool which we use to approach Problems 1--4 formulated in \S~\ref{sec:init} is to consider small deformations of real line arrangements.  Line arrangements themselves do not have evolutes in the conventional sense, but probably such evolutes can be defined as certain  limiting piecewise linear geometric objects. In general, our approach has a strong resemblance with  methods of tropical geometry. One wonders if it is possible to develop a rigorous tropical geometry in the context of evolutes and their duals. 

\smallskip
\noindent 
{\bf 2.} Analog of the problem about the maximal number of ovals. 
\begin{problem} For a given degree $d$, what is the maximal number of connected components in the complement of $\bR^2\setminus N_\Ga$, $\bR^2\setminus E_\Ga$ and $\bR^2\setminus (E_\Ga \cup \Ga)$? (One can ask the same question for the complements in $\bR P^2$.)
\end{problem}

This question is an  analog  of the problem about the maximal number of ovals for a plane curve of a given degree answered by Harnack's theorem, but since our curves are always singular it is better to ask about the maximal number of components of the complement. 

\smallskip
\noindent 
{\bf 3.} 
 Notice that if we fix a real polynomial defining $\Ga$ then normals to $\Ga$ split into two complementary classes -- gradient-like and antigradient-like. Namely, each normal is  proportional to the gradient with either a positive  or a negative constant. If we change the sign of the defining polynomial then we interchange gradient-like and anitgradient-like normals.

\begin{problem} Given $d$, what is the maximal number of normals of one type which can pass through a given point?
\end{problem} 

By Proposition~\ref{lm:Shustin}  the total number of real normals through a point can reach $d^2$, but it is not clear at the moment how many of them can belong to one class.

\smallskip
\noindent 
{\bf 4.} Are the leading coefficients in our lower bounds  for the  Problems 1--4  sharp?

\appendix
\section{On dual curves, envelopes, and evolutes over $\bC$.} \label{sec:Ragni} 
In this appendix we consider curves in the complex projective plane. We define dual and reciprocal curves, envelopes, evolutes, and curves of normals, and provide modern proofs for some of the formulas found in \cites{Sa, Hi, Co}.

 \subsection{Dual curves}\label{dual}
 Let $V$ be a complex vector space of dimension 3.
 Let $C_0\subset \mathbb P(V)\cong \mathbb P^2_\bC$ be a plane curve, with normalization map $\nu: C\to C_0$.
 Let $V_{C} \to \mathcal P$ denote the Nash quotient extending $V_{C_{0,\rm sm}}\to \mathcal P_{C_0}^1(1)|_{C_{0,\rm sm}}$ and let $\mathcal K$ denote the kernel.   The \emph{dual curve} $C^\vee \subset \mathbb P(V)^\vee:=\mathbb P(V^\vee)$ is the image of the morphism $\gamma: C\to \mathbb P(V)^\vee$ given by the quotient  $V_C^\vee \cong \bigwedge^2 V_C \to \bigwedge^2 \mathcal P$, or, equivalently, by the quotient $V_C^\vee \to \mathcal K^\vee$. 
 
 Let $d$ denote the degree of $C_0$ and $d^\vee$ that of its dual curve. Let $\kappa$ (resp. $\iota$) denote the degree of the ramification locus of the map $\nu:C\to C_0$ (resp. $\gamma:C\to C^\vee$). It was shown in \cite[Ex.\,(3.5), p.\,265]{Pi1} that
 \begin{equation}\label{cuspinfl}
 3(d^\vee-d)=\iota-\kappa.
 \end{equation}
 We also have the Pl\"ucker formulas \cite[Ex.\,(3.9), p.\,267]{Pi1}
  \begin{eqnarray*}
  d^\vee & = & d(d-1)-e \\
  d &=&d^\vee(d^\vee -1)-e^\vee\\
  d^\vee&=&2d+2g-2-\kappa\\
  d &=&2d^\vee+2g-2-\iota,
  \end{eqnarray*}
where $e$ (resp. $e^\vee$) denotes the sum of the multiplicities of the Jacobian ideal at the singular points of $C_0$ (resp. $C^\vee$), and $g$ denotes the geometric genus of $C_0$ (i.e., the genus of $C$). Note that the multiplicity of the Jacobian ideal at a point of $C_0$ is equal to the degree of the \emph{conductor} plus the degree of the ramification locus at that point.
Since the degree of the conductor is equal to $2\delta$, where $\delta$ is the sum of the $\delta$-invariants of the singular points of $C_0$, the genus of $C$ is equal to $g=\binom{d-1}2 - \delta$.
In the case that  $C_0$ is a \emph{Pl\"ucker curve}, with $\delta_n$ nodes, $\kappa$ cusps, $\iota$ inflection points, and $\tau_t$ double tangents, we have $e=2(\delta_n +\kappa)+\kappa=2\delta_n+3\kappa$ and similarly $e^\vee=2\tau_t+3\iota$.
\medskip

The \emph{reciprocal} curve of $C_0\subset \mathbb P(V)$ was classically defined (cf. \cite[Art.~80, p.~63]{Sa}) as a curve in the same plane $\mathbb P(V)$, defined via a nondegenerate quadric $Q\subset \mathbb P(V)$. Let $p\in C_0$ be a nonsingular point, let $T_p$ denote the tangent line, and let $p^*$ be the polar point of $T_p$ with respect to $Q$. Define $C^*\subset \mathbb P(V)$ as the closure of the set of points $p^*$. Then $C^*$ and $C^\vee$ are isomorphic under the isomorphism $V\cong V^\vee$ given by $Q$.
\bigskip

 Consider a (local or global) parameterization of the curve $C_0$,
 \[r(t):=(r_0(t):r_1(t):r_2(t)):V_C\to \mathcal O_C(1).\]
Then the dual curve has a parameterization
 \[s(t): V_C^\vee \cong \bigwedge^2 V_C \to \bigwedge^2 \mathcal P\cong \gamma^* \mathcal O_{C^\vee}(1),\]
 where  
 \[s(t):=\bigwedge^2 \binom{r(t)}{r'(t)}=(r_1r'_2-r'_1r_2:-r_0r'_2+r'_0r_2:r_0r'_1-r'_0r_1).\]
  We get
  \[s'(t)=\bigwedge^2 \binom{r'(t)}{r'(t)}+\bigwedge^2 \binom{r(t)}{r''(t)}=(r_1r''_2-r''_1r_2:-r_0r''_2+r''_0r_2:r_0r''_1-r''_0r_1),\]
  since the first summand is $0$.
 Therefore the dual of the dual curve is given by
 \[\bigwedge^2\binom{s(t)}{s'(t)}=(r_0(r'_1r''_2-r''_1r'_2)-r_1(r'_0r''_2-r''_0r'_2)+r_2(r'_0r''_1-r''_0r'_1))(r_0:r_1:r_2).\]
 Note that the multiplier is the determinant of the map $V_C\to \mathcal P^2_C(1)$, i.e., the determinant of the matrix
 \[W(r(t)):=\left( \begin{array}{ccc}
 r_0(t) & r_1(t) & r_2(t)\\
 r'_0(t) & r'_1(t) & r'_2(t)\\
 r''_0(t) & r''_1(t) & r''_2(t)\\
 \end{array} \right).
 \]
 This comes as no surprise, since it follows from the well known corresponding identity for the cross product of vectors in 3-dimensional space:
 \[(r\wedge r')\wedge (r\wedge r'')=(r\wedge r\wedge r'')r'+(r\wedge r' \wedge r'')r=(r\wedge r' \wedge r'')r.\]
 In particular this reproves the classical fact that the dual of the dual curve is the curve itself, provided that the determinant of $W(r(t))$ is not identically 0.
 \medskip

\subsection{Envelopes}

Let  $\mathbb P(V)\cong \mathbb P^2_\bC$ be the projective plane and $C$ a smooth curve. Suppose $Z\subset C\times \mathbb P(V)$ is a flat family of plane curves parameterized by $C$. Let $\pi:Z\to C$ and $\psi:Z\to \mathbb P(V)$ denote the projection maps. The \emph{envelope} of $Z$ is the branch locus $D\subset \mathbb P(V)$ of the finite morphism $\psi$. Note that the degree of $\psi$ is equal to the number of curves in the family passing through a given, general point. For a classical definition, see e.g. \cite[Art.~84, p.~67]{Sa}.

\medskip
 
 Let $C_0\subset \mathbb P(V)$ be a plane curve of degree $d$ and geometric genus $g$. Assume given an exact sequence  of locally free sheaves on the normalization $C$ of $C_0$, with $\mathcal F$ of rank 2, $\mathcal L$ of rank 1,
 \[0\to \mathcal L \to V_{C} \to \mathcal F \to 0,\]
 and consider the induced family of lines $Z:=\mathbb P(\mathcal F)\subset C\times \mathbb P(V)$ parameterized by $C$. Set $\mathcal O_Z(1):=\mathcal O_{\mathbb P(\mathcal F)}(1)=\psi^*\mathcal O_{\mathbb P(V)}(1)$.
    
  \begin{proposition}\label{degenv}
  The rational equivalence class of the envelope $D$ of $Z$ is given by
  \[[D]=\psi_*\bigl((\pi^*(c_1(\Omega^1_{X})+c_1(\mathcal F))+c_1(\mathcal O_Z(1)))\cap [Z]\bigr),\]
  and its degree by 
  \[\deg D=2g-2+\deg c_1(\mathcal F)+\deg \psi.\]
  \end{proposition}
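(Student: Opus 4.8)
The plan is to identify $D$ with the branch divisor of the generically finite morphism $\psi\colon Z\to\mathbb{P}(V)$ between the smooth projective surfaces $Z=\mathbb{P}(\mathcal{F})$ and $\mathbb{P}(V)\cong\mathbb{P}^2$, and to compute its rational equivalence class by pushing forward the ramification divisor $R\subset Z$. Working over $\mathbb{C}$, $\psi$ is separable, so the relative canonical formula gives $K_Z=\psi^*K_{\mathbb{P}(V)}+R$, i.e. $\mathcal{O}_Z(R)\cong\omega_Z\otimes\psi^*\omega_{\mathbb{P}(V)}^{\vee}$, and the envelope, which is by definition this branch locus, has class $[D]=\psi_*\bigl([R]\cap[Z]\bigr)$ (away from the generic locus of simple ramification the right-hand side is read as the defining cycle of $D$). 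Since $\omega_{\mathbb{P}(V)}=\mathcal{O}_{\mathbb{P}(V)}(-3)$ and, by hypothesis, $\mathcal{O}_Z(1)=\psi^*\mathcal{O}_{\mathbb{P}(V)}(1)$, this yields $[R]=c_1(\omega_Z)+3\,c_1(\mathcal{O}_Z(1))$.

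Next I would compute $c_1(\omega_Z)$ from the $\mathbb{P}^1$-bundle structure $\pi\colon Z=\mathbb{P}(\mathcal{F})\to C$ (writing $C$ for the base curve, denoted $X$ in the statement). From $\omega_Z\cong\omega_{Z/C}\otimes\pi^*\omega_C$ together with the relative Euler sequence, which for a rank-two bundle gives $\omega_{Z/C}\cong\mathcal{O}_Z(-2)\otimes\pi^*\det\mathcal{F}$ in the convention compatible with $\mathcal{O}_Z(1)=\psi^*\mathcal{O}_{\mathbb{P}(V)}(1)$, one obtains
\[ c_1(\omega_Z)=-2\,c_1(\mathcal{O}_Z(1))+\pi^*\bigl(c_1(\mathcal{F})+c_1(\Omega^1_C)\bigr). \]
Substituting into the expression for $[R]$ and cancelling the $c_1(\mathcal{O}_Z(1))$ terms leaves $[R]=c_1(\mathcal{O}_Z(1))+\pi^*\bigl(c_1(\Omega^1_C)+c_1(\mathcal{F})\bigr)$, and applying $\psi_*(\,\cdot\cap[Z])$ gives the first formula.

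For the degree I would intersect with the hyperplane class $H$ on $\mathbb{P}(V)$ and use the projection formula: $\deg D=[D]\cdot H=\psi_*\bigl([R]\cap[Z]\bigr)\cdot H=[R]\cdot\psi^*H$, which on $Z$ equals $[R]\cdot\xi$ with $\xi:=c_1(\mathcal{O}_Z(1))=\psi^*H$. Expanding, $[R]\cdot\xi=\xi^2+\pi^*c_1(\mathcal{F})\cdot\xi+\pi^*c_1(\Omega^1_C)\cdot\xi$. Three elementary facts about the $\mathbb{P}^1$-bundle then finish the count: $\pi_*\xi=[C]$, so $\int_Z\pi^*\alpha\cdot\xi=\deg\alpha$ for any divisor class $\alpha$ on $C$; the Grothendieck relation $\xi^2=\pi^*c_1(\mathcal{F})\cdot\xi-\pi^*c_2(\mathcal{F})$ with $c_2(\mathcal{F})=0$ since $\dim C=1$; and $\int_Z\xi^2=\int_Z\psi^*H^2=\deg\psi$. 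Hence the three terms contribute $\deg\psi$, $\deg c_1(\mathcal{F})$, and $\deg c_1(\Omega^1_C)=2g-2$, so $\deg D=2g-2+\deg c_1(\mathcal{F})+\deg\psi$.

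The computation is essentially routine; the point that needs the most care is the identification $[D]=\psi_*\bigl([R]\cap[Z]\bigr)$ — that the classically defined envelope (the branch locus of $\psi$) is computed by pushing forward the ramification divisor rather than by the discriminant scheme, the two agreeing as cycles wherever the ramification is simple, which is the generic situation. One should also keep in mind the tacit hypothesis that $\psi$ is genuinely generically finite, equivalently that not all lines of the family pass through one point, i.e. $\deg\psi=\deg c_1(\mathcal{F})\geq 1$ (the equality $\deg\psi=\deg c_1(\mathcal{F})$ itself drops out of the intersection computation above); this is what it means to call $\psi$ a finite morphism here.
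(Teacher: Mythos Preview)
Your proof is correct and follows essentially the same route as the paper: both obtain the ramification class as $c_1(\Omega^1_Z)+3c_1(\mathcal O_Z(1))$ (you via Riemann--Hurwitz, the paper via the principal-parts sequence $0\to V_Z\to\mathcal P^1_Z(1)\to\Omega^1_\psi(1)\to 0$ and the $0$th Fitting ideal of $\Omega^1_\psi$), then decompose $c_1(\Omega^1_Z)$ via the relative cotangent/Euler sequences for $\pi$, and finish the degree computation by intersecting with $\xi$ and identifying $\pi_*\xi^2$ with $\deg\psi$ (the paper phrases this last as the Segre class $s_1(\mathcal F)$).
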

  
  \begin{proof}
  The ramification locus of $\psi$ is the subscheme of  $Z$ defined by the $0$th Fitting ideal $F^0(\Omega^1_\psi)$. The exact sequence
  \[0\to \psi^*\mathcal P^1_{\mathbb P(V)}(1)\cong V_Z \to \mathcal P^1_Z(1) \to \Omega^1_\psi \otimes \mathcal O_Z(1)\to 0,\]
  gives that the class of the ramification locus is equal to 
  \[ c_1(\mathcal P^1_Z(1))\cap [Z]=(c_1(\Omega^1_Z)+3c_1(\mathcal O_Z(1))\cap  [Z].\]
  From the standard exact sequences
\[0 \to \pi^*\Omega^1_{C} \to \Omega^1_Z \to \Omega^1_{Z/C} \to 0 \]
and
\[0 \to \Omega^1_{Z/C} \to \pi^*\mathcal F \otimes \mathcal O_Z(-1) \to \mathcal O_Z \to 0\]
we deduce the formula for the class of $D$. The degree is equal to
\[\deg D=\pi_* \bigl((\pi^*(c_1(\Omega^1_{X})+c_1(\mathcal F))+c_1(\mathcal O_Z(1)) )\psi^*c_1(\mathcal O_{\mathbb P(V)}(1)))\cap [Z]\bigr),\]
hence to
\[\deg D=\bigl((c_1(\Omega^1_C)+c_1(\mathcal F))\pi_*c_1(\mathcal O_Z(1)) +\pi_* c_1(\mathcal O_Z(1))^2\bigr)\cap [C],\]
from which the formula in the proposition follows, since the degree of the Segre class $s_1(\mathcal F)=\pi_* c_1(\mathcal O_Z(1))^2$ is the degree of the finite map $\psi$.
  \end{proof}
  
  The surjection $V_C\to \mathcal F$ induces a surjection $V_{C}^\vee  \cong \bigwedge^2 V_{C}\to \bigwedge^2 \mathcal F$, hence a morphism $C\to \mathbb P(V^\vee)=\mathbb P(V)^\vee$. Let $C'$ denote the image of $C$.
     
  \begin{proposition}\label{env}
  The dual curve of the envelope $D\subset \mathbb P(V)$ of $Z$ is equal to the the curve  $C'$.
    \end{proposition}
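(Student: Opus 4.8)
The plan is to compute explicit (local) parameterizations of the curve $C'$ and of the envelope $D$, to match the parameterization of $D$ with the one given in Subsection~\ref{dual} for the dual of $C'$, and then to conclude by the biduality established there.

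First I would make the construction concrete. After trivializing $\bigwedge^3V$ one identifies $\bigwedge^2V_C$ with $V_C^\vee$, so the surjection $\bigwedge^2V_C\to\bigwedge^2\mathcal F$ is a line-bundle quotient of $V_C^\vee$; on a small open set of $C$ with coordinate $t$ let $a(t)=(a_0(t):a_1(t):a_2(t))\in V^\vee$ be a local generator of the corresponding line subbundle of $V_C^\vee$, so that $C'\subset\mathbb P(V^\vee)$ is locally parameterized by $t\mapsto[a(t)]$, and the line $\ell_t:=\psi(\pi^{-1}(t))\subset\mathbb P(V)$ of the family is the one dual to the point $[a(t)]$, i.e. $\ell_t=\{[v]\in\mathbb P(V):\langle a(t),v\rangle=0\}$. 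Next, using $\psi$ to identify $\pi^{-1}(t)\cong\mathbb P^1$ with the moving point of $\ell_t$ and differentiating, one checks that the Jacobian of $\psi$ drops rank exactly along the locus where, in addition to $\langle a(t),v\rangle=0$, one has $\langle a'(t),v\rangle=0$; for generic $t$ this cuts out the single point $[a(t)\wedge a'(t)]\in\mathbb P(V)$ (the characteristic point of $\ell_t$, under $\bigwedge^2V^\vee\cong V$). Hence the ramification locus of $\psi$ is birational to $C$ via $\pi$, and the branch curve $D$ is parameterized by $t\mapsto[a(t)\wedge a'(t)]$. Here I would invoke Proposition~\ref{degenv}, or equivalently the assumption that the family is nondegenerate, to ensure that $\psi$ is genuinely generically finite and that this map is generically injective onto $D$.

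Now I would use the formulas of Subsection~\ref{dual}: a curve parameterized by $r(t)$ has dual curve parameterized by $\bigwedge^2\binom{r(t)}{r'(t)}$. Taking $r(t)=a(t)$, this is exactly $t\mapsto[a(t)\wedge a'(t)]$, which is the parameterization of $D$ found above. Therefore $D=(C')^\vee$; applying the cross-product identity $(r\wedge r')\wedge(r\wedge r'')=(r\wedge r'\wedge r'')\,r$ of Subsection~\ref{dual}, which gives $(C')^{\vee\vee}=C'$ as long as the relevant Wronskian determinant is not identically zero (i.e. $C'$ is neither a point nor a line), we obtain $D^\vee=(C')^{\vee\vee}=C'$, as claimed.

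I expect the delicate point to be identifying the envelope, defined throughout this appendix by the $0$th Fitting ideal of $\Omega^1_\psi$, with the parameterized curve $t\mapsto[a(t)\wedge a'(t)]$: this requires writing $\psi$ in honest local coordinates, locating where $d\psi$ degenerates, verifying that the ramification scheme is reduced and meets a general fibre of $\pi$ in a single point, and checking that $\psi$ restricts to a birational morphism onto $D$. On the nondegenerate locus all of this is routine, but one must argue separately near the points where $a(t),a'(t),a''(t)$ become linearly dependent and near the base points of the family of lines.
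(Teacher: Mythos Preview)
Your approach is correct and essentially the same as the paper's: both locate the ramification of $\psi$ as the characteristic point of each line in the family and match it with the dual-curve parameterization of Subsection~\ref{dual}. The paper carries this out with two local sections $v(t),w(t)$ of $\mathcal F$ (your $a(t)$ is their $v(t)\wedge w(t)$), computing the Jacobian of $(t,u)\mapsto v(t)+uw(t)$ explicitly; it also handles the possible extra line components of $D$ by observing directly that the dual of a line is a point, so such components do not contribute to $D^\vee$---this sidesteps the small gap in your final step, where you assert $D=(C')^\vee$ and then invoke biduality without having ruled out those extra components.
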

  
  \begin{proof}
Let the map $V_C\to \mathcal F$ be given (locally) by $\binom{v(t)}{w(t)}$. Then the curve $C'$ is parameterized (locally) by $\bigwedge^2 \binom{v(t)}{w(t)}=v(t)\wedge w(t)$. Its dual curve is given by
\[\bigwedge^2 \binom{\bigwedge^2 \binom{v(t)}{w(t)}}{\bigwedge^2 \binom{v(t)}{w(t)}'}=(v\wedge w)\wedge(v'\wedge w)+(v\wedge w)\wedge(v\wedge w').\]
As in Subsection \ref{dual}, this gives
\[(v\wedge v'\wedge w)w-(w\wedge v'\wedge w)v+(v\wedge v\wedge w')w-(w\wedge v \wedge w')v,\]
which is equal to
\[ (v\wedge w \wedge w')v +(v\wedge v'\wedge w)w,\]
since the two middle terms are 0.

A local parameterization of $Z$ is given by $(t,u)\mapsto v(t)+uw(t)$. The map $V_Z\to \mathcal P^1_Z(1)$ is then given locally by the matrix
\[
\left( \begin{array}{c}
v(t)+uw(t)\\
v'(t)+uw'(t)\\
w(t)
\end{array}\right) \cong
\left( \begin{array}{c}
v(t)\\
v'(t)+uw'(t)\\
w(t)
\end{array}\right)
\]
The ramification locus of $q$ is the set of $(t,u)$ satisfying
\[\bigwedge^3 \left( \begin{array}{c}
v(t)\\
v'(t)\\
w(t)
\end{array}\right)
-u\bigwedge^3\left( \begin{array}{c}
v(t)\\
w(t)\\
w'(t)	
\end{array}\right) =0.
\]
With this value of $u$, we see that the ramification curve is parameterized (locally) by 
\[(v\wedge w \wedge w')v+(v\wedge v'\wedge w)w.\]
This shows that the dual of the curve $C'$ is equal to $D$ generically. In addition to $C^{'\vee}$, $D$ can contain a finite number of normals to $C$ (see the argument in Example \ref{tgenv}), but since the dual of a line is a point, the dual of 
$D$ is equal to the curve $C'$. 
\end{proof}
  
  \begin{example}\label{tgenv}{\rm
 Let  $C\to C_0\subset \mathbb P(V)$ and 
 \[ 0\to \mathcal K \to V_C\to \mathcal P \to 0\]
 be as in Subsection \ref{dual}.
  The degree of the branch curve $D$ of $\psi: \mathbb P(\mathcal P) \to \mathbb P(V)$ is equal to $2g-2+\deg \mathcal P+\deg \psi=2g-2+2\deg C^\vee$. Now $\deg C=\deg C^{\vee \vee} =2\deg C^\vee +2g-2-\iota$, where $\iota$ is the (weighted) number of cusps of $C^\vee$, hence equal to the (weighted) number of inflection points of $C$. This corresponds to the well known fact the the branch locus of $\psi$ is equal to the curve $C$ union its inflectional tangents.
Locally we can see this as follows. The map $\psi:\mathbb P(\mathcal P)\to \mathbb P(V)$ is given by $(t,u)\mapsto r(t)+ur'(t)$. Its differential is given by
\[d\psi=\binom{r'(t)+ur''(t)}{r'(t)}.\]
So the rank of $d\psi$ is $<2$ when $u=0$ (corresponding to the points on the curve $C$), and also on points $(t_0,u)$, for $t_0$ such that $r''(t_0)=0$ (these are the points on an inflectional tangent). Since the bundle $\mathcal P$ is twisted, the cusps will not appear here.  }
  \end{example}
  
    \begin{example}\label{curveenv} {\rm
  Let $C\to C_0\subset \mathbb P(V)$ be a plane curve. Let $Q\subset \mathbb P(V)$ be a nondegenerate quadric.
   Set $\mathcal F =: \Coker (\mathcal O_C(-1)\to V^\vee_C)$. Identify $V\cong V^\vee$ via the quadric $Q$. Then the surjection $V_C\to \mathcal F$ gives a family of lines $\mathbb P(\mathcal F)\subset C\times \mathbb P(V)$. As observed by Salmon \cite[Art.~84, p.~67]{Sa}, the reciprocal curve $C^*$ is the envelope of this family of lines.}
  \end{example}

  \begin{example} {\rm
  A \emph{caustic} is the envelope of rays either reflected or refracted by a curve.}
  \end{example}

\subsection{Evolutes}

Fix a line $L_\infty\subset \mathbb P(V)$ and a binary quadratic form giving a degree 2 subscheme $Q_\infty\subset L_\infty$. 
  Let $C_0\subset \mathbb P(V)$ be a plane curve, with normalization map $\nu: C\to C_0$. Let $\mathcal E$ denote the \emph{Euclidean normal bundle} of $C_0$ with respect to $L_\infty$ and $Q_\infty$, defined on $C$ \cite[Section 4]{Pi}. Recall that we have $\mathcal E\subseteq \mathcal K^\vee \oplus \mathcal O_C(1)$, where $\mathcal K = \Ker (V_C\to \mathcal P)$ is as in Subsection \ref{dual}. If $L_\infty =\mathbb P(V')$ and ${V'}^\vee \cong V'$ is the isomorphism given by $Q_\infty$, then the map $V_C\to \mathcal K^\vee \oplus \mathcal O_C(1)$ defined by $V_C\to V'_C\cong {V'}^{\vee}_{C} \to {\mathcal K^{\vee}}$ and $V_C \to \mathcal O_C(1)$ factors via the surjection $V_C\to \mathcal E$.
  
  Consider the exact sequence 
  \[0\to \mathcal Q \to V_C \to \mathcal E \to 0.\]
  Define the  \emph{curve of normals} $N_C\subset \mathbb P(V)^\vee$ to be the image of the morphism $C\to \mathbb P(V)^\vee$ given by the quotient $V_C^\vee \cong \bigwedge^2V_C \to \bigwedge^2 \mathcal E$.
The \emph{evolute} $E_C\subset \mathbb P(V)\cong \mathbb P(V)^{\vee\vee}$ of $C$ is the dual curve  of $N_C$ (cf. \cite[Art.~99, p.~82]{Sa}).
By Proposition \ref{env}, 
the envelope of the family of normals $\mathbb P(\mathcal E) \subset C\times \mathbb P(V)$ is equal to $E_C$ union its inflectional tangents (cf. Example \ref{tgenv}).

 \begin{remark}\label{recipdual}{\rm
 Assume the quadric $Q_\infty \subset L_\infty$ is the restriction of a nondegenerate quadric $Q\subset \mathbb P(V)$. Let $p\in C$ be a nonsingular point. By definition, the normal $N_p$ is the line between the point $p$ and the polar of the intersection $T_p\cap L_\infty$ with respect to $Q_\infty$. But this is the same as the line between the point $p$ and the polar point $p^*$ of $T_p$ with respect to $Q$. It follows that the normal to $C$ at $p$ is the same as the normal to the reciprocal curve $C^*$ at $p^*$ (cf. \cite[Art.~109, p.~93]{Sa}.). We conclude that the evolute of the reciprocal curve $C^*$ is the same as the evolute of $C$ (cf. \cite[Thm.~5.2, p.~123]{DHOST}).}
 \end{remark}

 The degree of $N_C$ is equal to the degree of $c_1(\mathcal E)$. By Equation~(\ref{cuspinfl}) the degree of $E_C$ satisfies
  \begin{equation}\label{2}
  3(\deg E_C-\deg N_C)=\iota(N) -\kappa(N),
  \end{equation}
  where $\kappa(N)$ is the (weighted) number of cusps and $\iota(N)$ the (weighted) number of inflections of $N_C$. Note that $\iota(N)=\kappa(E)$ and $\kappa(N)=\iota(E)$.
 By Proposition \ref{degenv}, the degree of the envelope of $\mathbb P(\mathcal E)$ is equal to
 \[2g-2+\deg c_1(\mathcal E)+\deg \psi=2\deg N_C+2g-2=\deg E_C+\iota(E),\]
 where $\psi:\mathbb P(\mathcal E)\to \mathbb P(V)$ and $\iota(E)$ is the (weighted) number of inflectional pooints on $E_C$. This is the same as we get from one of the Pl\"ucker formulas for the class of the curve $N_C$:
 \[\deg E_C=2\deg N_C+2g-2-\kappa_N=2\deg N_C+2g-2-\iota(E).\]
 Since $N_C$ and $E_C$ have the same geometric genus $g$ as $C$, the same formula applied to the curve $C_0$ gives
 \[2g-2=d-2d^\vee+\iota,\]
 hence we get
 \[\deg E_C=2\deg N_C+d^\vee-2d+\kappa-\iota(E).\]

 \medskip
 
 Assume $C_0$ is in generic position with respect to $L_\infty$ and $Q_\infty$, i.e., $C_0\cap L_\infty$ is equal to $d$ distinct points, none contained in $Q_\infty$. Then, by \cite[Prop. 8]{Pi}, $\mathcal E = \mathcal K^\vee\oplus \mathcal O_C(1)$, hence
 \[\deg N_C=(c_1(\mathcal P)+c_1( \mathcal O_C(1)))\cap [C]=d^\vee +d,\]
 where $d:=\deg C_0$ and $d^\vee:=\deg C^\vee$. 
  
 \begin{remark}{\rm
  For each $p\in C_0\cap L_\infty$, we have $N_p=L_\infty$. Hence $L_\infty\in N_C$ is an ordinary $d$-uple point.
  A way to understand the degree of the curve of normals is the following: Let $q\in Q_\infty \subset L_\infty$. The degree of $N_C$ is the number of normals to $C$ passing through $q$. Since $q^\perp=q$, the normal from a point $p\in C_0$, $p\notin L_\infty$, through $q$ is the same as the tangent at that point. To the number $d^\vee$ of such normals, we must add $d$ times $L_\infty$, so we do get $d^\vee+d$ for the degree of $N_C$ (cf. \cite[Art.~111, p.~95]{Sa}. }
  \end{remark}
  
  \medskip

 When $C_0$ is not necessarily in generic position with respect to $L_\infty$, Salmon gives the formula $\deg N_C=d^\vee+d-f-g$, where $f$ is the number of times the curve passes through a circular point and $g$ is the number of times the curve is tangent to $L_\infty$  \cite[Art.~111, p.~94--95]{Sa}. This formula is made precise in \cite[Thm.\,8, p.\,3]{JoPe2}. Indeed,
if $p\in C\cap Q_\infty$, but $L_\infty\ne T_p$, then $N_p=T_p\ne L_\infty$, so the contribution to $\deg N_C$ from $C\cap L_\infty$ is $d-1$ instead of $d$. If $L_\infty=T_p$ for some $p\in C$, then the number of other tangents to $C$ through $p$ is $d^\vee -1$ instead of $d^\vee$. Both these phenomena produce an inflectional tangent of the evolute $E_C$, as explained in \cite[Art.~108, 112, p.~96]{Sa}, so that Salmon's $f+g$ is equal to $\iota_E$.  In both cases, we have $\mathcal E \subsetneq \mathcal K^\vee\oplus \mathcal O_C(1)$, and $\iota_E=\deg c_1(\mathcal K^\vee)+\deg c_1(\mathcal O_C(1))-\deg c_1(\mathcal E)$. Therefore, under the assumption that the intersections of $C$ with $L_\infty$ are not worse than described above,  we get the following formulas for the degrees of $N_C$ and $E_C$.

\begin{proposition}\label{pr:7}
Let $C_0$ be a plane curve of degree $d$ and class $d^\vee$. Let $N_C$ denote its curve of normals and $E_C$ its evolute. Let $\iota(E)$ denote the (weighted) number of inflections on $E_C$. Then we have, as in \cite[Art.\,113, p.\,96]{Sa},
\[\deg N_C=d+d^\vee -\iota_E \text{ and } \deg E_C= 3d+\iota -3\iota(E). \]
The number of cusps of $E_C$ is equal to, as in \cite[Art.\,113, p.\,97]{Sa},
\[ \kappa(E)=6d-3d^\vee +3\iota-5\iota(E).\]
\end{proposition}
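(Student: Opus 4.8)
The plan is to obtain all three formulas from a single geometric input, the degree of the curve of normals
\[\deg N_C = d + d^\vee - \iota(E),\]
after which the degree of $E_C$ and its number of cusps fall out of the Pl\"ucker relations together with the identity \eqref{cuspinfl}.

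The first and main step is the formula for $\deg N_C$. Since $\deg N_C = \deg c_1(\mathcal E)$ and $\mathcal E$ is a subsheaf of $\mathcal K^\vee \oplus \mathcal O_C(1)$, the difference $\deg c_1(\mathcal K^\vee) + \deg c_1(\mathcal O_C(1)) - \deg c_1(\mathcal E) = d^\vee + d - \deg N_C$ equals the length of the torsion cokernel $(\mathcal K^\vee \oplus \mathcal O_C(1))/\mathcal E$; here $\deg c_1(\mathcal K^\vee) = \deg c_1(\bigwedge^2 \mathcal P) = d^\vee$ because $c_1(V_C) = 0$, and $\deg c_1(\mathcal O_C(1)) = d$. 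This cokernel is supported precisely at the points of $C$ lying over $C_0 \cap L_\infty$ that lie on $Q_\infty$ or at which the tangent line equals $L_\infty$, and the local computation at such a point --- the classical analysis of Salmon \cite[Art.~108, 112]{Sa}, made precise in \cite[Thm.~8]{JoPe2} and \cite[Section~4]{Pi} --- shows that it contributes exactly $1$ to the length while simultaneously forcing exactly one inflectional tangent of $E_C$. Since $E_C$ has exactly $\iota(E)$ inflectional tangents, this gives $\deg N_C = d + d^\vee - \iota(E)$. I expect this identification of the local defect of the Euclidean normal bundle with the inflections of the evolute to be the crux of the argument; everything that follows is purely formal.

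For the degree of $E_C$, apply the Pl\"ucker formula to the pair $(N_C, E_C)$ with $E_C = N_C^\vee$; since $C_0$, $N_C$, $E_C$ all have geometric genus $g$, one has $2g-2 = d^\vee - 2d + \kappa$, and $\deg E_C = 2\deg N_C + 2g - 2 - \kappa(N)$ with $\kappa(N) = \iota(E)$ gives $\deg E_C = 2(d + d^\vee - \iota(E)) + (d^\vee - 2d + \kappa) - \iota(E) = 3d^\vee + \kappa - 3\iota(E)$, which becomes $3d + \iota - 3\iota(E)$ after substituting $3d^\vee + \kappa = 3d + \iota$ from \eqref{cuspinfl}. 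Finally, for the cusps of $E_C$, use the dual Pl\"ucker relation $\deg N_C = 2\deg E_C + 2g - 2 - \kappa(E)$, this time with $2g - 2 = d - 2d^\vee + \iota$, solve for $\kappa(E)$, and insert the two degrees already found:
\[\kappa(E) = 2\bigl(3d + \iota - 3\iota(E)\bigr) + \bigl(d - 2d^\vee + \iota\bigr) - \bigl(d + d^\vee - \iota(E)\bigr) = 6d - 3d^\vee + 3\iota - 5\iota(E).\]
As a consistency check, the first two formulas yield $3(\deg E_C - \deg N_C) = \kappa(E) - \iota(E)$, in agreement with the relation $3(\deg E_C - \deg N_C) = \iota(N) - \kappa(N)$ noted above.
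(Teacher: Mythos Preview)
Your proposal is correct and follows essentially the same approach as the paper. The paper's own proof is a single sentence (``The last formula follows from Equation~(\ref{2})''), relying on the preceding discussion for $\deg N_C$ and $\deg E_C$ exactly as you outline; your derivation of $\kappa(E)$ via the dual Pl\"ucker formula is equivalent to using $3(\deg E_C-\deg N_C)=\kappa(E)-\iota(E)$, which is the paper's Equation~(\ref{2}).
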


\begin{proof}
The last formula follows from Equation~(\ref{2}).
\end{proof}

In the case that the curve is singular at a circular point, or touches the line at infinity at a circular point, these formulas must be modified. If $\iota_E$ denotes the weighted number of inflection points on $E_C$, the formula
\begin{equation}\label{degev}
\deg E_C=2\deg N_C+d^\vee-2d+\kappa-\iota(E)
\end{equation}
still holds. For the degree of the curve of normals, it was shown in \cite[Thm.\,8, p.\,3]{JoPe2} that we have
\begin{equation}\label{degnorm}
\deg N_C=d+d^\vee- \sum_{p\in C_0\cap L_\infty} (i_p(C_0,L_\infty)-m_p(C_0)) - m_{q_1}(C_0)-m_{q_2}(C_0),
\end{equation}
where $i_p$ denotes intersection multiplicity, $m_p$ denotes multiplicity, and  $Q_\infty=\{q_1, q_2\}$.

\begin{corollary}
Assume $C_0$ is in general position with respect to $L_\infty$ and $Q_\infty$. Then the number of cusps of $E_C$ that lie in the affine plane $\mathbb P(V)\setminus L_\infty$ is equal to
\[\kappa(E)-d=5d-3d^\vee+3\iota.\]
\end{corollary}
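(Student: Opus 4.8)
The plan is to reduce the statement to a local analysis of $E_C$ along the line at infinity $L_\infty$. As noted just before Proposition~\ref{pr:7}, general position (every point of $C_0\cap L_\infty$ is a transversal intersection and none of them is a circular point) forces $\iota(E)=0$, so Proposition~\ref{pr:7} gives $\deg E_C=3d+\iota$ and $\kappa(E)=6d-3d^\vee+3\iota$; moreover, by Proposition~\ref{prop:cusps+nodes} all singularities of $E_C$ are ordinary nodes and cusps, so $\kappa(E)$ is exactly the number of cusps. Thus it suffices to prove that precisely $d$ cusps of $E_C$ lie on $L_\infty$; the answer $\kappa(E)-d=5d-3d^\vee+3\iota$ will then follow.

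First I would exhibit the $d$ cusps at infinity. In general position $C_0\cap L_\infty$ consists of $d$ distinct smooth points $p_1,\dots,p_d$, and, as recalled in the Remark above, $[L_\infty]$ is an ordinary $d$-uple point of $N_C\subset\mathbb P(V)^\vee$, whose $i$-th local branch is the image under $\nu^{-1}$ composed with the normal map of a small arc of $C$ through $p_i$. Using the parametrization \eqref{eq:basic4} — say in coordinates with $p_i=(0:1:0)$ and $C_0$ written near $p_i$ as $(x,y)=(h(t),1/t)$ with $h$ analytic, $h(0)=0$ and $h'(0)\neq0$ (this last inequality being part of general position) — a short computation shows that in the affine chart of $\mathbb P(V)^\vee$ centered at $[L_\infty]$ the $i$-th branch of $N_C$ has the form $u=c\,t^{3}+O(t^{4})$, $v=-t+O(t^{2})$ with $c\neq0$, i.e.\ it is an \emph{ordinary} inflection of $N_C$. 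Dualizing (an ordinary inflection of a curve produces an ordinary cusp of the dual curve whose cuspidal tangent is the original inflectional tangent) yields, for each $i$, an ordinary cusp of $E_C=N_C^\vee$ whose cuspidal tangent is a line through $[L_\infty]$; dually, this cusp lies on the line dual to the point $[L_\infty]$, which is $L_\infty$ itself. By general position the $d$ resulting cusps are distinct.

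Next I would show there are no further cusps of $E_C$ on $L_\infty$. Cusps of $E_C$ are the images of the ramification points of the evolute map $\gamma_E\colon C\to E_C$. If $p\in C$ were such a ramification point with $\gamma_E(p)\in L_\infty$ but $p\notin\{p_1,\dots,p_d\}$, then $\nu(p)$ is a finite point of $C_0$ whose centre of curvature $\gamma_E(p)$ lies at infinity, forcing an infinite curvature radius at $p$, i.e.\ $p$ is an inflection point of $C_0$. But in general position $C_0$ has only simple inflections, and at a simple inflection the evolute merely acquires a single smooth branch crossing $L_\infty$ transversally (with a finite asymptote, not a cusp — cf.\ the discussion in Section~\ref{sec:init}); hence $\gamma_E$ is unramified at $p$, a contradiction. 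So the cusps of $E_C$ on $L_\infty$ are exactly the $d$ found above, and $\kappa(E)-d$ cusps lie in the affine plane.

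The main obstacle is the local computation identifying the branches of $N_C$ at $[L_\infty]$: one must verify that each is an \emph{ordinary} (not a higher) inflection, which is precisely where the genericity assumption $h'(0)\neq0$ is used and what guarantees the count of affine cusps is $\kappa(E)-d$ on the nose rather than a weighted quantity. A consistency check on the bookkeeping: the $d$ cusps at infinity each meet $L_\infty$ with multiplicity $3$ along their cuspidal tangent $L_\infty$, contributing $3d$, while the remaining $\deg E_C-3d=\iota$ intersection points of $E_C$ with $L_\infty$ are the transversal crossings produced by the $\iota$ simple inflections of $C_0$, which matches $\deg E_C=3d+\iota$.
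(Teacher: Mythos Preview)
Your argument is correct and follows the same line as the paper's: show that exactly $d$ of the $\kappa(E)$ cusps lie on $L_\infty$ and subtract. The paper's proof is a single sentence citing Hilton for the fact that each $p\in C_0\cap L_\infty$ yields a cusp of $E_C$ at $p^\perp\in L_\infty$ with cuspidal tangent $L_\infty$; you instead make this explicit via the local computation on the branches of $N_C$ at its $d$-uple point, and you also supply the converse (no further cusps on $L_\infty$) and the intersection-number check $3d+\iota=\deg E_C$, both of which the paper leaves implicit.

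One small wording slip to fix: under duality, an ordinary inflection of $N_C$ at a point $q$ with inflectional tangent $T$ produces a cusp of $E_C$ at the \emph{point} $T^\vee$ with \emph{cuspidal tangent} $q^\vee$, not the other way around (these live in different planes, so ``cuspidal tangent $=$ inflectional tangent'' does not parse). Since here $q=[L_\infty]$, the cuspidal tangent is $L_\infty$ and the cusp therefore lies on $L_\infty$ --- exactly as you conclude in the next clause. Your local computation in fact also identifies the cusp point: the inflectional tangent of the $i$-th branch is $U=0$ in your chart, whose dual point is $p_i^\perp$, matching the paper's description.
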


\begin{proof}
By assumption, $\iota(E)=0$.
 If $p\in C_0\cap L_\infty$, then the evolute has a cusp at $p^\perp \in L_\infty$, and $L_\infty$ is the tangent to the evolute at this point (see \cite[p. 162]{Hi}). 
 \end{proof}

 In the situation of the corollary, if $q\in L_\infty$, then the tangents to $E_C$ passing through $q$ are the $d^\vee$ normals to $C_0$ corresponding to the tangents to $C_0$ through $q^\perp$, plus the line $L_\infty$ counted one time for each cusp of $E_C$ on $L_\infty$. Hence the class of $E_C$ is $d^\vee+d$, which is, as it must be, equal to degree of its dual curve $N_C$.

  \begin{corollary}\label{diam}
  Assume $N_C$ has no cusps (i.e., $\kappa(N)=\iota(E)=0$) and that all singular points of $N_C$ other than the $d$-multiple point are nodes. Then the number  $\delta_N$ of nodes of $N_C$, i.e., the number of (complex) \emph{diameters}  of $C_0$, is equal to  
   \[\textstyle\delta_N=  \frac{1}2({d^\vee}^2 +2d^\vee d-4d^\vee -\kappa),\]
in agreement with \cite[Ex. 6, p.~163]{Hi}.

If in addition $C_0=C$ is nonsingular, we get
\[\textstyle \delta_N=\binom{d}2(d^2+d-4),\]
which agrees with the formula for (half of) the ``bottleneck degree'' $\rm{BND}(C)$ in \cite[Cor.~2.11 (1)]{DiEkWe}.
  \end{corollary}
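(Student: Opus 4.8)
The plan is to regard $N_C$ as an abstract plane curve whose degree, geometric genus, and singularity scheme are all pinned down by the hypotheses, and then to read off $\delta_N$ from the genus formula, using the Pl\"ucker relations of Subsection~\ref{dual} to eliminate the genus.

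First I would record the degree: the hypothesis $\kappa(N)=\iota(E)=0$ forces $\iota_E=0$, so Proposition~\ref{pr:7} gives $\deg N_C=d+d^\vee$. Next I would use that $N_C$ has geometric genus $g$, the common genus of $C$, $E_C$, and $N_C$ recorded above, together with the classification of its singularities supplied by the hypotheses: the only singular points of $N_C$ are the ordinary $d$-uple point, with $\delta$-invariant $\binom d2$, and $\delta_N$ nodes, each with $\delta$-invariant $1$; there are no cusps and nothing else. The genus formula for a plane curve of degree $d+d^\vee$ then reads
\[
g=\binom{d+d^\vee-1}{2}-\binom d2-\delta_N .
\]

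The final step is to eliminate $g$. The Pl\"ucker formula $d^\vee=2d+2g-2-\kappa$ of Subsection~\ref{dual} rearranges to $2g-2=d^\vee-2d+\kappa$; substituting this into the displayed identity and solving for $\delta_N$ leaves a polynomial identity in $d$, $d^\vee$, $\kappa$. Expanding $\binom{d+d^\vee-1}{2}$ and collecting terms yields $\delta_N=\tfrac12({d^\vee}^2+2d^\vee d-4d^\vee-\kappa)$, as claimed; comparison with \cite[Ex.~6, p.~163]{Hi} is then a matter of matching normalizations. For the nonsingular special case one substitutes $\kappa=0$ and $d^\vee=d(d-1)$, so that $d^\vee+2d-4=d^2+d-4$ and $\tfrac12 d^\vee=\binom d2$, giving $\delta_N=\binom d2(d^2+d-4)$, which is the half-bottleneck-degree formula of \cite[Cor.~2.11~(1)]{DiEkWe}.

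I do not expect a real obstacle: the only point needing care is the bookkeeping of $\delta$-invariants --- checking that under the stated hypotheses the $d$-uple point is genuinely ordinary (contributing exactly $\binom d2$ with no infinitely near points) and that ``no cusps, all other singularities nodes'' really leaves the singularity scheme of $N_C$ with no further hidden contribution to the arithmetic--geometric genus drop. Once that is verified, everything reduces to the one-line expansion above.
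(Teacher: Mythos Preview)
Your proposal is correct and follows essentially the same route as the paper: compute the total $\delta$-invariant of $N_C$ from the genus formula $\binom{\deg N_C-1}{2}-g$ with $\deg N_C=d+d^\vee$, subtract the $\binom{d}{2}$ coming from the ordinary $d$-uple point, and eliminate $g$ via the Pl\"ucker relation $2g-2=d^\vee-2d+\kappa$. The paper's proof is exactly this computation, with the same specialization $\kappa=0$, $d^\vee=d(d-1)$ for the nonsingular case.
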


\begin{proof}
The delta invariant of $N_C$ is equal to 
  \[
  \textstyle\binom{d^\vee +d-\iota(E)-1}2 -g=\binom{d^\vee +d-\iota(E)-1}2-\frac{1}2 (d^\vee-2(d-1)+\kappa).\]
  When $\iota(E)=0$, this gives
  \[\textstyle \delta_N=\frac{1}2({d^\vee}^2 +2d^\vee d-4d^\vee +d(d-1)-\kappa)-\binom{d}{2}
  = \frac{1}2({d^\vee}^2 +2d^\vee d-4d^\vee -\kappa),\]
  since the contribution to the delta invariant from the $d$-uple point is $\binom{d}2$. When $X_0$ is nonsingular, we have $\kappa=0$ and $d^\vee=d(d-1)$.
\end{proof}

 Note that when $C_0$ has cusps ($\kappa\neq0$), then the formula for $\delta_N$ is not expressable as a polynomial in only the degree and class of $C_0$, but that if $\kappa=0$ (e.g., if $C_0$ is a general projection of a smooth curve in $\mathbb P^n$, $n\ge 3$, then it is.
\medskip

 \begin{corollary}
Assume $C_0$ is in general position with respect to $L_\infty$ and $Q_\infty$ and
has only $\delta$ ordinary nodes and $\kappa$ cusps as singularities. Then
  \begin{eqnarray*}
  \deg N_C & = & d^2-2\delta-3\kappa\\
  \deg E_C & = & 3d(d-1)-6\delta -8\kappa\\
  \kappa(E) & = & 3(d(2d-3)-4\delta -5\kappa).
 \end{eqnarray*}
 
 If in addition $C_0$ is nonsingular, then the number of (complex) nodes $\delta_E$ of $E_C$ is
 \[\textstyle \delta_E  =  \frac{d}2(3d-5)(3d^2-d-6).\]
 \end{corollary}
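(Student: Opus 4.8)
The plan is to reduce the three degree/cusp formulas to Proposition~\ref{pr:7} together with the Plücker relations recorded in Subsection~\ref{dual}, the only genuinely geometric ingredient being the vanishing $\iota(E)=0$. First I would observe that ``general position with respect to $L_\infty$ and $Q_\infty$'' means $C_0\cap L_\infty$ consists of $d$ distinct points, none of them circular; as explained just before Proposition~\ref{pr:7}, an inflectional tangent of $E_C$ occurs only when $C_0$ is tangent to $L_\infty$ or passes through a circular point, so here $\iota(E)=0$. Proposition~\ref{pr:7} then gives $\deg N_C=d+d^\vee$, $\deg E_C=3d+\iota$, and $\kappa(E)=6d-3d^\vee+3\iota$.

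Next I would substitute the Plücker data of a curve whose only singularities are $\delta$ ordinary nodes and $\kappa$ ordinary cusps: from $d^\vee=d(d-1)-e$ with $e=2\delta+3\kappa$ one has $d^\vee=d(d-1)-2\delta-3\kappa$, and Equation~\eqref{cuspinfl} gives $\iota=3(d^\vee-d)+\kappa$. Feeding these in,
\[
\deg N_C=d+d(d-1)-2\delta-3\kappa=d^2-2\delta-3\kappa,
\]
\[
\deg E_C=3d+3(d^\vee-d)+\kappa=3d^\vee+\kappa=3d(d-1)-6\delta-8\kappa,
\]
\[
\kappa(E)=6d-3d^\vee+3\bigl(3(d^\vee-d)+\kappa\bigr)=6d^\vee-3d+3\kappa=3\bigl(d(2d-3)-4\delta-5\kappa\bigr).
\]
These are just algebraic simplifications, which I would carry out and record.

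For the final assertion I would put $\delta=\kappa=0$, so that $C_0=C$ is a smooth plane curve of degree $d$ with geometric genus $g=\binom{d-1}2$, $\deg E_C=3d(d-1)$, and $\kappa(E)=3d(2d-3)$. By Proposition~\ref{prop:cusps+nodes} the singularities of $E_C$ are exactly its $\kappa(E)$ cusps and $\delta_E$ nodes, and $E_C$ has the same geometric genus $g$ as $C$ (as used just before Proposition~\ref{pr:7}). The genus--degree formula $g=\binom{\deg E_C-1}2-\delta(E_C)$ with $\delta(E_C)=\delta_E+\kappa(E)$ then yields
\[
\delta_E=\binom{3d(d-1)-1}2-\binom{d-1}2-3d(2d-3),
\]
and expanding the binomials and collecting powers of $d$ gives $\delta_E=\tfrac d2(3d-5)(3d^2-d-6)$, which I would confirm by comparing coefficients (one checks, for instance, that $(3d-5)(3d^2-d-6)=9d^3-18d^2-13d+30$).

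I do not anticipate a real obstacle: once Proposition~\ref{pr:7} is available the argument is entirely bookkeeping. The only points deserving care are (a) confirming that the stated general-position hypothesis is precisely what forces $\iota(E)=0$, so that Proposition~\ref{pr:7} applies verbatim, and (b) invoking Proposition~\ref{prop:cusps+nodes} for the structural fact that $E_C$ acquires no singularities worse than the enumerated nodes and cusps (equivalently, that $\delta(E_C)=\delta_E+\kappa(E)$), which is what makes the genus formula deliver $\delta_E$ directly. Granting (a) and (b), the formulas drop out of the substitutions above.
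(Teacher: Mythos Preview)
Your proposal is correct and follows essentially the same route as the paper: specialize Proposition~\ref{pr:7} with $\iota(E)=0$, substitute the Pl\"ucker data $d^\vee=d(d-1)-2\delta-3\kappa$ and $\iota=3(d^\vee-d)+\kappa$ to obtain the first three formulas, and then for nonsingular $C_0$ compute $\delta_E=\binom{3d(d-1)-1}{2}-\binom{d-1}{2}-3d(2d-3)$ from the genus formula. The paper's proof is in fact terser than yours, writing out only the $\delta_E$ computation and leaving the first three formulas implicit; your care points (a) and (b) are precisely the assumptions the paper invokes (general position gives $\iota(E)=0$, and one assumes the cusps of $E_C$ are ordinary with the remaining singularities ordinary nodes).
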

 
 \begin{proof}
   The $\delta$-invariant of $E_C$ is equal to
  \[\textstyle \binom{\deg E_C-1}2 -g=\binom{3d+\iota-\iota(E)-1}2-\frac{1}2(d^\vee-2d+2+\kappa).\]
  When $X_0$ is nonsingular and $\iota(E)=0$, this gives
  \[\textstyle \binom{3d(d-1)-1}2-\binom{d-1}2=\frac{d}2(9d^3-18d^2-d+12).\]
  Assuming that all the cusps of $E_C$ are ordinary and that the remaining singular points are ordinary nodes, we get for the number of nodes of $E_C$,
  \[\textstyle \delta_E=\frac{d}2(9d^3-18d^2-d+12)-3d(2d-3)=\frac{d}2(3d-5)(3d^2-d-6).\]
 \end{proof}
 
     
  \begin{example}\label{nodalcub}{\rm 
  Let $C_0\subset \mathbb P(V)$ be a nodal cubic (in general position with respect to $L_\infty$ and $Q_\infty$). Then $\deg C=d=3$, $\delta =1$, $\iota=3$, and $\deg C^\vee =d^\vee=4$. Its curve of normals $N_C$ has degree $d+d^\vee=d^2-2\delta=7$, and its evolute $E_C=N_C^\vee$ has degree $3d+\iota =9+3=12$. The number of diameters of $X_0$ is
  \[\textstyle\delta_N=\frac{1}2 (16+24-16)=12.\]
The number of cusps of $E_C$ is
\[\kappa(E)=  6d-3d^\vee+3\iota=18-12+9=15.\]
The (complex) \emph{vertices} of $C$ are the cusps of $E_C$ that do not lie on $L_\infty$. Hence, the number of vertices is $\kappa_E-d=\kappa_E-3=12$.
Since $\delta_E+\kappa_E=\frac{1}2(11\cdot 10)=55$, we get $\delta_E=40$. }
  \end{example}

  \begin{example}\label{tricusp} {\rm 
  Let $C_0\subset \mathbb P(V)$ be a tri-cuspidal quartic (in general position with respect to $L_\infty$ and $Q_\infty$). Then $\deg C=d=4$, $\delta =0$, $\kappa=3$, $\tau=1$, $\iota=0$, and $\deg C^\vee =d^\vee=4\cdot 3-3\cdot 3=3$.  (This curve is the dual curve of a nodal cubic.) 
  Its curve of normals $N_C$ has degree $d+d^\vee=7$, and its evolute $E_C=N_C^\vee$ has degree $3d+\iota =12+0=12$. The number of (complex) diameters of $C_0$ is
  \[\textstyle\delta_N=\frac{1}2 (9+24-12-3)=9.\]
The number of cusps of $E_C$ is
\[\kappa(E)=  6d-3d^\vee+3\iota=24-9+0=15.\]
The (complex) \emph{vertices} of $C$ are the cusps of $E_C$ that do not lie on $L_\infty$. Hence, the number of (complex) vertices is $\kappa(E)-d=15-4=11$.
Since $\delta_E+\kappa(E)=\frac{1}2(11\cdot 10)=55$, we get $\delta_E=40$.}
  \end{example}
  
  \begin{remark}{\rm 
  We observed in Remark \ref{recipdual} that the evolute of a curve is equal to the evolute of its reciprocal curve. Since the reciprocal curve is just a realization in the original plane of the dual curve, this explains why the evolutes of the 
curves of Examples \ref{nodalcub} and \ref{tricusp} are  ``equal''.}
  \end{remark}

  \begin{example}\label{rational}{\rm
  Let $C_0\subset \mathbb P(V)$ be a nodal, rational curve of degree $d$, in general position with respect to $L_\infty$ and $Q_\infty$. Then $C_0$ has $\delta=\binom{d-1}2$ nodes, $\kappa =0$ cusps, $\iota=3(d-2)$ inflection points, and $\tau=(d-2)(2d-3)$ double tangents.
  
The degree of the curve of normals is $\deg N_C=3d-2$. By genericity, the curve of normals has no cusps: $\kappa(N)=0$. The curve of normals has one ordinary $d$-uple point and $\delta_N=2(d-1)(2d-3)$ nodes, which are the (complex) \emph{diameters} of $C_0$. The degree of the evolute is $\deg E_C=6(d-1)$. The number of cusps of the evolute is $\kappa(E)=3(3d-4)$, of which $d$ lie on $L_\infty$. Hence the number of (complex) vertices of $C_0$ is $\kappa(E)-d=4(2d-3)$. The number of nodes of the evolute is $\delta_E=2(3d-4)(3d-5)$.}
  \end{example}
  
  \begin{example}\label{param}{\rm 
  Choose coordinates $x,y,z$ in $\mathbb P(V)$ such that $L_\infty: z=0$. Let 
  $(x(t),y(t))$ be a local parameterization of the affine curve $C_0\cap (\mathbb P(V)\setminus L_\infty)\subset \mathbb A^2_\bC$. Assume $Q_\infty:x^2+y^2=0$. Then the normal vectors are $(y'(t),-x'(t))$, and hence the curve of normals (in the dual plane) is given by (in line coordinates)
  \[ V^\vee_C\cong \bigwedge^2 V_C\to \bigwedge^2
  \left( \begin{array}{ccc}
 x(t) & y(t) & 1\\
 y'(t) & -x'(t) & 0
 \end{array} \right)=\bigl(x'(t):y'(t):-x(t)x'(t)-y(t)y'(t)\bigr).   \]
 
 The evolute is obtained by taking the dual of the curve of normals:
 \[\bigwedge^2
  \left( \begin{array}{ccc}
 x'(t)&y'(t)&-x(t)x'(t)-y(t)y'(t)\\
 x''(t) & y''(t) & -x(t)x''(t)-x'(t)^2-y(t)y''(t)-y'(t)^2
 \end{array} \right),\]
 which gives
 \[\bigl(x(x'y''-x''y')-y'({x'}^2+{y'}^2): y(x'y''-x''y')+x'({x'}^2+{y'}^2):x'y''-x''y'\bigr).\]}
  \end{example}
  

  
  
  
  
  
  
  

\noindent
\paragraph{\em Acknowledgements.} The third author is sincerely grateful to the mathematics department of the UiT The Arctic University of Norway for the hospitality.  He also wants to thank Professor E.~Shustin of Tel Aviv University for discussions and Professor R.~Fr\"oberg of Stockholm University for his indispensable help with Macaulay2. Finally, financial support by the Troms\o~Research Foundation within the Pure Mathematics in Norway project is greatly acknowledged.

\newpage
\noindent {\bf References}

\begin{biblist}

\bib{Ar1}{book}{
   author={Arnol\cprime d, V. I.},
   title={Huygens and Barrow, Newton and Hooke},
   note={Pioneers in mathematical analysis and catastrophe theory from
   evolvents to quasicrystals;
   Translated from the Russian by Eric J. F. Primrose},
   publisher={Birkh\"{a}user Verlag, Basel},
   date={1990},
   pages={118},
   isbn={3-7643-2383-3},
}

\bib{Ar2}{book}{
   author={Arnol\cprime d, V. I.},
   title={Topological invariants of plane curves and caustics},
   series={University Lecture Series},
   volume={5},
   note={Dean Jacqueline B. Lewis Memorial Lectures presented at Rutgers
   University, New Brunswick, New Jersey},
   publisher={American Mathematical Society, Providence, RI},
   date={1994},
   pages={viii+60},
   isbn={0-8218-0308-5},
}

\bib{Ba}{article}{
   author={Baillif, Mathieu},
   title={Curves of constant diameter and inscribed polygons},
   journal={Elem. Math.},
   volume={64},
   date={2009},
   number={3},
   pages={102--108},
   issn={0013-6018},
}

\bib{Br}{article}{
   author={Brusotti,Luigi},
   title={Sulla ``piccola variazone'' di una curva piana algebraica reali},
   journal={Rend. Rom. Ac. Lincei},
   volume={30},
   date={1921},
   number={5},
   pages={375--379},
   issn={},
}

\bib{CT}{article}{
   author={Catanese, Fabrizio},
   author={Trifogli, Cecilia},
   title={Focal loci of algebraic varieties. I},
   note={Special issue in honor of Robin Hartshorne},
   journal={Comm. Algebra},
   volume={28},
   date={2000},
   number={12},
   pages={6017--6057},
   issn={0092-7872},
}

\bib{Co}{book}{
   author={Coolidge, Julian Lowell},
   title={A treatise on algebraic plane curves},
   publisher={Dover Publications, Inc., New York},
   date={1959},
   pages={xxiv+513},
}

\bib{DiEkWe}{article}{
   author={Di Rocco, Sandra},
   author={Eklund, David},
   author={Weinstein, Madeleine},
   title={The bottleneck degree of algebraic varieties},
   journal={SIAM J. Appl. Algebra Geom.},
   volume={4},
   date={2020},
   number={1},
   pages={227--253},
}

\bib{DHOST}{article}{
   author={Draisma, Jan},
   author={Horobe\c{t}, Emil},
   author={Ottaviani, Giorgio},
   author={Sturmfels, Bernd},
   author={Thomas, Rekha R.},
   title={The Euclidean distance degree of an algebraic variety},
   journal={Found. Comput. Math.},
   volume={16},
   date={2016},
   number={1},
   pages={99--149},
   issn={1615-3375},
}

\bib{Fu}{article}{
   author={Fuchs, Dmitry},
   title={Evolutes and involutes of spatial curves},
   journal={Amer. Math. Monthly},
   volume={120},
   date={2013},
   number={3},
   pages={217--231},
   issn={0002-9890},
}

\bib{FuTa}{article}{
   author={Fukunaga, T.},
   author={Takahashi, M.},
   title={Evolutes of fronts in the Euclidean plane},
   journal={J. Singul.},
   volume={10},
   date={2014},
   pages={92--107},
}

\bib{Ga}{article}{
   author={Garay, Mauricio D.},
   title={On vanishing inflection points of plane curves},
   language={English, with English and French summaries},
   journal={Ann. Inst. Fourier (Grenoble)},
   volume={52},
   date={2002},
   number={3},
   pages={849--880},
}

\bib{M2}{article}{
          author = {Grayson, Daniel R.}, 
          author = {Stillman, Michael E.},
          title = {Macaulay2, a software system for research in algebraic geometry},
        }

\bib{Gu}{article}{
   author={Gudkov, D. A.},
   title={The topology of real projective algebraic varieties},
   language={Russian},
   note={Collection of articles dedicated to the memory of Ivan Georgievi\v{c}
   Petrovski\u{\i}, II},
   journal={Uspehi Mat. Nauk},
   volume={29},
   date={1974},
   number={4(178)},
   pages={3--79},
}

\bib{Gu2}{article}{
   author={Gudkov, D. A.},
   title={ Bifurcation of simple double points and cusps of real, plane, algebraic curves}, 
     language={Russian},
   journal={  Dokl. Akad. Nauk SSSR}, 
   volume={142}, 
     date={1962}, 
      pages={990--993}, 
      }

\bib{Hi}{book}{
   author={Hilton, Harold},
   title={Plane algebraic  curves},
   publisher={Oxford University Press},
   date={1932},
   pages={xv+390},
}

\bib{Huy}{book}{
   author={Huygens,Christiaan},
   title={Horologium Oscillatorium: Sive de Motu Pendulorum ad Horologia Aptato Demonstrationes Geometricae},
   publisher={https://archive.org/details/B-001-004-158/Horologium-oscillatorium/page/n11/mode\-/2up},
   date={1673},
   }

\bib{HuyB}{book}{
   author={Huygens, Christiaan},
   author={Blackwell, Richard J.},
   title={Christiaan Huygens' the pendulum clock, or, Geometrical demonstrations concerning the motion of pendula as applied to clocks},
   publisher={Ames: Iowa State University Press},
   date={1986},
   }

\bib{JoPe}{article}{
   author={Josse, Alfrederic},
   author={P\`ene, Fran\c{c}oise},
   title={On the degree of caustics by reflection},
   journal={Comm. Algebra},
   volume={42},
   date={2014},
   number={6},
   pages={2442--2475},
   issn={0092-7872},
}

\bib{JoPe2}{article}{
  author={Josse, Alfrederic},
   author={P\`ene, Fran\c{c}oise},
   title={Normal class and normal lines of algebraic hypersurfaces},
  journal={hal-00953669v4},
    date={2014},
   pages={pp.~24},
  }

\bib{Kl}{article}{
   author={Klein, Felix},
   title={Eine neue Relation zwischen den Singularit\"{a}ten einer algebraischen
   Curve},
   language={German},
   journal={Math. Ann.},
   volume={10},
   date={1876},
   number={2},
   pages={199--209},
   issn={0025-5831},
}

\bib{LSS}{article}{
   author={Lang, Lionel},
   author={Shapiro,Boris},
   author={Shustin, Eugenii},
   title={On the number of intersection points of the contour of an amoeba with a line},
   journal={Indiana Univ. Math. J},
   date={to appear},
}

\bib{Pi1}{article}{
   author={Piene, Ragni},
   title={Polar classes of singular varieties},
   journal={Ann. Sci. \'{E}cole Norm. Sup. (4)},
   volume={11},
   date={1978},
   number={2},
   pages={247--276},
   issn={0012-9593},
}

\bib{Pi}{article}{
   author={Piene, Ragni},
   title={Higher order polar and reciprocal polar loci},
   journal={To appear in \emph{Facets of Algebraic Geometry: A Volume in honour of William Fulton's 80th Birthday,}
   Cambridge University Press, 2021; arXiv:2002.03691},
}

\bib{Pu1}{article}{
   author={Pushkar\cprime, Petr E.},
   title={Diameters of immersed manifolds and of wave fronts},
   language={English, with English and French summaries},
   journal={C. R. Acad. Sci. Paris S\'{e}r. I Math.},
   volume={326},
   date={1998},
   number={2},
   pages={201--205},
   issn={0764-4442},
}

\bib{Pu2}{article}{
   author={Pushkar\cprime, Petr E.},
   title={A generalization of Chekanov's theorem. Diameters of immersed
   manifolds and wave fronts},
   language={Russian},
   journal={Tr. Mat. Inst. Steklova},
   volume={221},
   date={1998},
   pages={289--304},
   issn={0371-9685},
   translation={
      journal={Proc. Steklov Inst. Math.},
      date={1998},
      number={2(221)},
      pages={279--295},
      issn={0081-5438},
   },
}

\bib{Sa}{book}{
   author={Salmon, George},
   title={A treatise on the higher plane curves: intended as a sequel to ``A
   treatise on conic sections''},
   series={3rd ed},
   publisher={Chelsea Publishing Co., New York},
   date={1960},
   pages={xix+395},
}

\bib{ST}{article}{
   author={Salarinoghabi, Mostafa},
   author={Tari, Farid},
   title={Flat and round singularity theory of plane curves},
   journal={Q. J. Math.},
   volume={68},
   date={2017},
   number={4},
   pages={1289--1312},
   issn={0033-5606},
}

\bib{ScT}{article}{
   author={Dias, Fabio Scalco},
   author={Tari, Farid},
   title={On vertices and inflections of plane curves},
   journal={J. Singul.},
   volume={17},
   date={2018},
   pages={70--80},
}

\bib{Sch}{article}{
   author={Schuh, Fred.},
   title={An equation of reality for real and imaginary plane curves with higher singularities},
   journal={KNAW, Proceedings, 6, 1903--1904, Amsterdam},
   date={1904},
   number={6},
   pages={764--773},
   }

\bib{Ta}{article}{
   author={Tabachnikov, Serge},
   title={The four-vertex theorem revisited---two variations on the old
   theme},
   journal={Amer. Math. Monthly},
   volume={102},
   date={1995},
   number={10},
   pages={912--916},
   issn={0002-9890},
}

\bib{Ya}{book}{
   author={Yates, Robert C.},
   title={A Handbook on Curves and Their Properties},
   publisher={J. W. Edwards, Ann Arbor, Mich.},
   date={1947},
   pages={x+245},
}

\bib{Yo}{book}{
   author={Yoder, Joella G.},
   title={Unrolling time},
   note={Christiaan Huygens and the mathematization of nature},
   publisher={Cambridge University Press, Cambridge},
   date={1988},
   pages={xii + 238},
   isbn={0-521-34140-X},
}

\bib{Vi}{article}{
   author={Viro, O. Ya.},
   title={Some integral calculus based on Euler characteristic},
   conference={
      title={Topology and geometry---Rohlin Seminar},
   },
   book={
      series={Lecture Notes in Math.},
      volume={1346},
      publisher={Springer, Berlin},
   },
   date={1988},
   pages={127--138},
}

\bib{Wa}{article}{
   author={Wall, C. T. C.},
   title={Duality of real projective plane curves: Klein's equation},
   journal={Topology},
   volume={35},
   date={1996},
   number={2},
   pages={355--362},
   issn={0040-9383},
}
		
\bib{Wi}{webpage}{
  author={Wikipedia},
  title={Evolute},
  url={https://en.wikipedia.org/wiki/Evolute},
  accessdate={20. October 2021}
  }
  
  \end{biblist}

 \newgeometry{left=7mm,right=5mm,bottom=10mm}
  \section{Formulas of evolutes and  curves of normals }\label{subsec:formulae}

\noindent We collect here the polynomials describing the evolutes and  curves  of normals associated to the examples presented in Section~\ref{sec:zoo}.

\medskip
\noindent
{\bf I.}  For the  rotated rotated ellipse given by $(x+y)^2+4(y-x)^2-1=0$ we have that its evolute satisfies the equation

  \noindent
     {\tiny
\begin{dmath*}
0=8000x^6  + 28800x^5 y + 58560x^4 y^2  + 71424x^3 y^3  + 58560x^2 y^4  + 28800xy^5+ 8000y^6  + 4752x^4  - 25920x^3 y - 68256x^2 y^2  - 25920xy^3  + 4752y^4  + 4860x^2  + 5832xy + 4860y^2 - 729,
\end{dmath*} }   
and its curve of normals is given by   {\tiny$$9u^4  - 80u^2 v^2  + 96uv^2  - 18u^2  - 80v^2  + 9=0.$$ }
\medskip

\noindent
{\bf II.} For the hyperbola given by $(x+y)^2-4(y-x)^2-1=0$ we find that the evolute satisfies the equation 
  \noindent

      {\tiny
\begin{dmath*}  
0=-15625 + 22500 x^2 - 54000 x^4 + 1728 x^6 + 75000 x y - 72000 x^3 y + 17280 x^5 y + 22500 y^2 - 55200 x^2 y^2 + 62784 x^4 y^2 -  72000 x y^3 + 98560 x^3 y^3 - 54000 y^4 + 62784 x^2 y^4 + 17280 x y^5 + 1728 y^6,
 \end{dmath*}
 }
and its curve of normals is given by 
{\tiny
\[ 25 u^4 - 48 u^2 v^2 + 160 uv^2 - 50 u^2 - 48 v^2 + 25 =0.\]
}
\medskip
\noindent
{\bf III.} For the cissoid given by $(x^2+y^2)x-4y^2=0$ its evolute satisfies the equation
{\tiny
\begin{dmath*}
27y^4+4608y^2+32768x=0,
\end{dmath*}
}
\noindent and its curve of normals
{\tiny
\begin{dmath*}
36u^3v+uv^3+12u^2v^2+64u^2+96uv+128=0.
\end{dmath*}
}

\medskip
\noindent
{\bf IV.}
For the nodal cubic $(x^2-y^2)(x-1)+(x^2+y^2)/5=0$ we find that the evolute satisfies the equation 

{\tiny

\begin{dmath*}
0=31640625\,x^{8}y^{4}+393359375\,x^{6}y^{6}-1369921875 \,x^{4}y^{8}+1433203125\,x^{2}y^{10}-488281250\,y^{12}-412031250\,x^{7}y^{4} -6307500000\,x^{5}y^{6}+13851093750\,x^{3}y^{8}-7131562500\,x\,y^{10}-
  270000000\,x^{8}y^{2}-1235953125\,x^{6}y^{4}+52577250000\,x^{4}y^{6}-44355328125\,x^{2}y^{8}+17990718750\,y^{10}-345600000\,x^{9}-734400000\,x^{7}y^{2}+64522912500\,x^{5}y^{4}-256637837500\,x^{3}y^{6}+82223775000\,x
     \,y^{8}+5391360000\,x^{8}+61315920000\,x^{6}y^{2}-543557615625\,x^{4}y^{4}+569477064375\,x^{2}y^{6}-143503008750\,y^{8}-35043840000\,x^{7}-583922304000\,x^{5}y^{2}+1833671022750\,x^{3}y^{4}-584714902500\,x\,y^{6}+
     116110540800\,x^{6}+2488494614400\,x^{4}y^{2}-2639387549475\,x^{2}y^{4}+487482536250\,y^{6}-194973143040\,x^{5}-5465670635520\,x^{3}y^{2}+1342636676640\,x\,y^{4}+137371852800\,x^{4}+6074689654656\,x^{2}y^{2}- 207256169664\,y^{4}-33191424000\,x^{3}-3188439313920\,x\,y^{2}+     627317913600\,y^{2}
 \end{dmath*}}
 
 and its curve of normals is given by 
 
 {\tiny

\begin{dmath*}
0=576 u^7 + 2640 u^6v + 4225 u^5v^2 - 2208 u^5 + 2750 u^4v^3 - 7800 u^4v + 625 u^3v^4  - 8325 u^3v^2 + 2016 u^3- 2750 u^2v^3 + 4920 u^2v - 625 uv^4 + 3000 uv^2 + 720 v
 \end{dmath*}}
\medskip
\noindent
{\bf V.}  For the  generic cubic in the Weierstra\ss form $y^2+x(x-2)(x+1)=0$ we have that its evolute satisfies the equation 

 {\tiny
\begin{dmath*}
0=800000 - 2880000 x^2 - 64000 x^3 + 3916800 x^4 + 210432 x^5 -   2470400 x^6 - 247296 x^7 + 710400 x^8 + 119296 x^9 - 76800 x^{10} - 
  18432 x^{11} - 19680000 y^2 + 94944000 x y^2 - 69350400 x^2 y^2 + 
  2580480 x^3 y^2 + 31350240 x^4 y^2 - 21081600 x^5 y^2 - 
  4652544 x^6 y^2 + 4719552 x^7 y^2 + 152352 x^8 y^2 - 
  15552 x^9 y^2 - 135854400 y^4 + 53402880 x y^4 - 32003904 x^2 y^4 + 
  180044544 x^3 y^4 - 169396176 x^4 y^4 + 56111376 x^5 y^4 - 
  12544560 x^6 y^4 + 879120 x^7 y^4 + 6561 x^8 y^4 - 54925856 y^6 + 
  174791424 x y^6 - 102761952 x^2 y^6 + 26959328 x^3 y^6 + 
  6351816 x^4 y^6 - 978840 x^5 y^6 - 25779744 y^8 + 31683600 x y^8 - 
  32542272 x^2 y^8 + 3354912 x^3 y^8 - 275562 x^4 y^8 - 
  1113480 y^{10} + 3283416 x y^{10} + 524880 x^2 y^{10} - 964467 y^{12}
\end{dmath*}}

 and its curve of normals is given by 
 
 {\tiny
\begin{dmath*}
0=-20 u^3 + 40 u^5 + 12 u^7 + 8 u^2 v + 112 u^4 v - 24 u^6 v + 20 uv^2 + 4 u^3 v^2 - 24 u^5 v^2 - 8 v^3 - 12 u^2 v^3 +  36 u^4 v^3 + 27 u^3 v^4.
\end{dmath*}}
\medskip
\noindent
{\bf VI.}
For the general cubic $(x^2 - y^2) (x - 1)  + 1/64=0$ we find that the evolute satisfies the equation 

{\tiny

\begin{dmath*}
0=7421703487488\,{x}^{14}{y}^{4}+64494878184177664\,{x}^{12}{y}^{6}-
386768883110903808\,{x}^{10}{y}^{8}+966636472192991232\,{x}^{8}{y}^{10
}-1289014380968542208\,{x}^{6}{y}^{12}+967334112320815104\,{x}^{4}{y}^
{14}-387347775982927872\,{x}^{2}{y}^{16}+64658155660902400\,{y}^{18}+
666304046432256\,{x}^{13}{y}^{4}-778441038325874688\,{x}^{11}{y}^{6}+
3866799875957981184\,{x}^{9}{y}^{8}-7699915113687416832\,{x}^{7}{y}^{
10}+7673361907876626432\,{x}^{5}{y}^{12}-3825970611172147200\,{x}^{3}{
y}^{14}+763498675304398848\,x{y}^{16}-26388279066624\,{x}^{14}{y}^{2}-
205152377068584960\,{x}^{12}{y}^{4}+5083847097719980032\,{x}^{10}{y}^{
6}-19140906446254768128\,{x}^{8}{y}^{8}+29942806645177319424\,{x}^{6}{
y}^{10}-22738968088198250496\,{x}^{4}{y}^{12}+8025005517648691200\,{x}
^{2}{y}^{14}-966605960745320448\,{y}^{16}-2199023255552\,{x}^{15}-
21596538673299456\,{x}^{13}{y}^{2}+2536865915631304704\,{x}^{11}{y}^{4
}-22461106519013326848\,{x}^{9}{y}^{6}+61385662332521152512\,{x}^{7}{y
}^{8}-73317615121905745920\,{x}^{5}{y}^{10}+39792563429809586176\,{x}^
{3}{y}^{12}-7905764100091674624\,x{y}^{14}-158329674399744\,{x}^{14}+
469153502673764352\,{x}^{12}{y}^{2}-14465716639116558336\,{x}^{10}{y}^
{4}+70118067260381724672\,{x}^{8}{y}^{6}-139200248813388300288\,{x}^{6
}{y}^{8}+123331089024040304640\,{x}^{4}{y}^{10}-45671665675624513536\,
{x}^{2}{y}^{12}+5183040690271027200\,{y}^{14}-3084130115911680\,{x}^{
13}-3773463038250713088\,{x}^{11}{y}^{2}+49135006597358026752\,{x}^{9}
{y}^{4}-158402304530102353920\,{x}^{7}{y}^{6}+231593231173476679680\,{
x}^{5}{y}^{8}-141179764176001695744\,{x}^{3}{y}^{10}+
29953016692972978176\,x{y}^{12}+3216831720456192\,{x}^{12}+
15913681463959093248\,{x}^{10}{y}^{2}-110067536968204419072\,{x}^{8}{y
}^{4}+264218350040905678848\,{x}^{6}{y}^{6}-284291446439694827520\,{x}
^{4}{y}^{8}+108088834974485053440\,{x}^{2}{y}^{10}-
13654134374508003328\,{y}^{12}+298979459522887680\,{x}^{11}-
39615577897360687104\,{x}^{9}{y}^{2}+174237151222548135936\,{x}^{7}{y}
^{4}-330863891813719080960\,{x}^{5}{y}^{6}+242242147522191556608\,{x}^
{3}{y}^{8}-48966154269726081024\,x{y}^{10}-2132998239434047488\,{x}^{
10}+60737468293120524288\,{x}^{8}{y}^{2}-207506464597447016448\,{x}^{6
}{y}^{4}+312203958027162746880\,{x}^{4}{y}^{6}-127672280296483454976\,
{x}^{2}{y}^{8}+17922286164400668672\,{y}^{10}+6564306327886102528\,{x}
^{9}-58892920083012648960\,{x}^{7}{y}^{2}+195076184296488173568\,{x}^{
5}{y}^{4}-206261726242340339712\,{x}^{3}{y}^{6}+32860880827384332288\,
x{y}^{8}-10432606285219233792\,{x}^{8}+40660545111173627904\,{x}^{6}{y
}^{2}-147741599316188332032\,{x}^{4}{y}^{4}+74058664818128191488\,{x}^
{2}{y}^{6}-10106230522754629632\,{y}^{8}+7987653229758382080\,{x}^{7}-
28008745012461305856\,{x}^{5}{y}^{2}+85041662198783410176\,{x}^{3}{y}^
{4}-2023434471830716416\,x{y}^{6}-1357465116912074752\,{x}^{6}+
21107281103406907392\,{x}^{4}{y}^{2}-27640381831814283264\,{x}^{2}{y}^
{4}+389697666630647808\,{y}^{6}-1481623791350906880\,{x}^{5}-
12373991979513348096\,{x}^{3}{y}^{2}-666456918859776000\,x{y}^{4}+
476388724672954368\,{x}^{4}+4276799734537617408\,{x}^{2}{y}^{2}+
368722254228062208\,{y}^{4}+85538840944874240\,{x}^{3}-
155934646676264448\,x{y}^{2}-33449372458335744\,{x}^{2}+
30737114622630144\,{y}^{2}-1608355473256704\,x+722219801960291
\end{dmath*}}
\medskip

\noindent
 and its curve of normals is given by 
 
 {\tiny
\begin{dmath*}
0=u^{9}+256\,u^{8}v+16896\,u^{7}v^{2}+65792\,u^{6}v^{3 }+98304\,u^{5}v^{4}+65536\,u^{4}v^{5}+16384\,u^{3}v^{6}+1009\,u^{7}+3328\, u^{6}v-43008\,u^{5}v^{2}-126976\,u^{4}v^{3}-131072\,u^{3}v^{4}-65536\,u^{2}v^{5}-16384\,u\,v^{6}-1973\,u^{5}-3328\,u^{4}v+44544\,u^{3}v^{2}+64768\,u^{2}v^{3}+32768\,u\,v^{4}+899\,u^{3}-256\,u^{2}v-14336\,u\,v^{2}+512\,v^{3}.
\end{dmath*}}
\medskip

\noindent
{\bf VII.}
For the ampersand curve given by the equation  $(-1 + x) (-3 + 2 x) (-x^2 + \frac{y^2}{2}) - 4 (-2 x + x^2 + \frac{y^2}{2})^2=0$ we find that the evolute satisfies the equation 

{\tiny
\begin{dmath*}
0=5511466450944000000 x^{18} + 69009662803968000000 x^{16}y^2 + 
 454041885818880000000x^{14}y^4 + 1880330535124992000000x^{12}y^6 + 
 5345655877287936000000x^{10}y^8 + 10390802432311296000000x^8y^{10} + 
 13339555563405312000000x^6y^{12} + 9139138881454080000000x^4y^{14} + 
 516470220914688000000x^2y^{16} - 4172637165060096000000y^{18} - 
 64556046168883200000x^{17} - 789841921808793600000x^{15}y^2 - 
 4717469091314073600000x^{13}y^4 - 17310072284754739200000x^{11}y^6 - 
 41419320989589504000000x^9y^8 - 64119262672325836800000x^7y^{10} - 
 56510219189315174400000x^5y^{12} - 
 16514783351105126400000x^3y^{14} + 14189132952974131200000 xy^{16} + 
 479273840698982400000x^{16} + 2768688090835845120000x^{14}y^2 + 
 9361119428837498880000x^{12}y^4 + 53036338585741885440000x^{10}y^6 + 
 198593670536522588160000x^8y^8 + 
 385976265612887900160000x^6y^{10} + 
 156102041598933565440000x^4y^{12} - 
 165122138814001643520000x^2y^{14} + 20569725762943057920000y^{16} - 
 2441112028320890880000x^{15} - 1297725291667390464000x^{13}y^2 + 
 59677698364616171520000x^{11}y^4 + 15411963692995633152000x^9y^6 - 
 868539197495937761280000x^7y^8 - 
 1318818013502133583872000x^5y^{10} + 
 161133138253823016960000x^3y^{12} + 
 170398879576067211264000 xy^{14} + 9770202768808083456000x^{14} - 
 49456225953852044083200x^{12}y^2 - 
 328074665789973717504000x^{10}y^4 + 
 93878636823930729369600x^8y^6 + 
 2795509334576630486016000x^6y^8 + 
 464954283105194856038400x^4y^{10} + 
 188958757229861621760000x^2y^{12} - 165200704420888874188800y^{14} - 
 31529832219803878686720x^{13} + 303544424692845998899200x^{11}y^2 + 
 926371061110394275829760x^9y^4 - 
 3747819321749535408230400x^7y^6 - 
 2973429851944718044200960x^5y^8 + 
 1477642254834900016742400x^3y^{10} - 
 381211747138806044590080xy^{12} + 85614841611796218904576x^{12} - 
 1078016613583757824819200x^{10}y^2 - 
 1041849832068904881193728x^8y^4 + 
 11346560638136539363105280x^6y^6 + 
 895893179712472723166208x^4y^8 - 
 2696999420593780028467200x^2y^{10} + 427788300003766762512384y^{12} - 
 198357132132989480730624x^{11} + 2628618143845183404392448x^9y^2 - 
 2036013473969817278971392x^7y^4 - 
 13204872129378101105256960x^5y^6 + 
 492689269773585678532608x^3y^8 + 
 1316567506701481365510144xy^{10} + 396550012793183158714368x^{10} - 
 4588956519916286903657472x^8y^2 + 
 11219652644662107620252928x^6y^4 + 
 3354625992301094644254336x^4y^6 + 
 1190920572545964905977344x^2y^8 - 623038688469087703386624y^{10} - 
 691507745750015024807936x^9 + 5543028047756058225863424x^7y^2 - 
 22590031523777430946108800x^5y^4 + 
 5636759081952628005950720x^3y^6 - 555746023848741721855488xy^8 + 
 1045450945995318948120576x^8 - 3706401234038334720496128x^6y^2 + 
 23662827441239781026569824x^4y^4 - 
 6981194929097981792227488x^2y^6 + 325739861292108362174208y^8 - 
 1380102793606261280311296x^7 - 666183695858358116066496x^5y^2 - 
 11030931021992461708485024x^3y^4 + 
 2376500257894024307732640xy^6 + 1564154216003823193668160x^6 + 
 4579931612162068642790016x^4y^2 + 
 1259188508355594228237792x^2y^4 - 311109184315558946826424y^6 - 
 1506839814034251881049792x^5 - 5131962297288198923626224x^3y^2 + 
 1079660070840409026907104xy^4 + 1215111170084394604620528x^4 + 
 2113492574765255484253248x^2y^2 - 325085551620906170070891y^4 - 
 749312183644081938387648x^3 - 232367367731112288440676xy^2 + 
 368436954348821351723724x^2 - 43158007590429842976492y^2 - 
 72453454503241228456644x + 4591488798441627367329.
\end{dmath*}}

 and its curve of normals is given by 
 
 {\tiny
\begin{dmath*}
0=87362u^{10}+615296u^9v+1715408u^8v^2+2426304u^7v^3+1849632u^6v^4+725760u^5v^5+115200u^4v^6-364287u^8-1181120u^7v-1324760u^6v^2-242464u^5v^3+560784u^4v^4+362880u^3v^5+57600u^2v^6+721335u^6+1284976u^5v+816008u^4v^2+306720u^3v^3+255536u^2v^4+120960uv^5+19200v^6-456152u^4-305200u^3v-215984u^2v^2-124800uv^3-37632v^4+31692u^2-2592uv+13248v^2.
\end{dmath*}}
\medskip
\noindent
{\bf VIII.}
For the cross curve with equation $x^2y^2-4x^2-y^2=0$ we find that the evolute satisfies the equation 

{\tiny
\begin{dmath*}
0=729\,{x}^{10}{y}^{8}+2916\,{x}^{8}{y}^{10}-15552\,{x}^{10}{y}^{6}-
63909\,{x}^{8}{y}^{8}-15552\,{x}^{6}{y}^{10}+193536\,{x}^{10}{y}^{4}+
813888\,{x}^{8}{y}^{6}+405162\,{x}^{6}{y}^{8}+255744\,{x}^{4}{y}^{10}-
1048576\,{x}^{12}-5259264\,{x}^{10}{y}^{2}-4760448\,{x}^{8}{y}^{4}-
5609600\,{x}^{6}{y}^{6}-5464338\,{x}^{4}{y}^{8}-697344\,{x}^{2}{y}^{10
}-1048576\,{y}^{12}+5898240\,{x}^{10}+39121920\,{x}^{8}{y}^{2}+
40709376\,{x}^{6}{y}^{4}+68111424\,{x}^{4}{y}^{6}+17387325\,{x}^{2}{y}
^{8}+25896960\,{y}^{10}-93229056\,{x}^{8}-530841600\,{x}^{6}{y}^{2}-
374813568\,{x}^{4}{y}^{4}-231180480\,{x}^{2}{y}^{6}-350999001\,{y}^{8}
+182255616\,{x}^{6}+2202992640\,{y}^{2}{x}^{4}+3063536640\,{x}^{2}{y}^
{4}+2416483584\,{y}^{6}-1560674304\,{x}^{4}-10510663680\,{x}^{2}{y}^{2
}-9406347264\,{y}^{4}-2293235712\,{x}^{2}+5159780352\,{y}^{2}-
764411904
\end{dmath*}}

 and its curve of normals is given by 
 
 {\tiny
\begin{dmath*}
0=4u^{10} + u^8 - 27u^6v^2 + 3u^4v^4 - u^2v^6 - 32u^6 + 36u^4v^2 + 12u^2v^4 - 8u^4 - 72u^2v^2 + 64u^2 + 16
\end{dmath*}}

{\bf IX.}
For the bean curve with equation $x^4+x^2y^2+y^4-x(x^2+y^2)=0$ we find that the evolute satisfies the equation 

{\tiny
\begin{dmath*}
0=-31539456 x^2 + 350977536 x^3 - 1917858816 x^4 + 7090366464 x^5 -
19873460224 x^6 + 43800502272 x^7 - 77198426112 x^8 +
109664534528 x^9 - 124091891712 x^{10} + 108723437568 x^{11} -
71203553280 x^{12} + 33124515840 x^{13} - 9937354752 x^{14} +
1528823808 x^{15} - 31539456 y^2 + 361490688 x y^2 -
1748083680 x^2 y^2 + 1949253120 x^3 y^2 + 13233944448 x^4 y^2 -
95767079424 x^5 y^2 + 392916556800 x^6 y^2 - 1153118462976 x^7 y^2 +
2541323182080 x^8 y^2 - 4316317065216 x^9 y^2 +
5711357509632 x^{10} y^2 - 5865968222208 x^{11} y^2 +
4611386474496 x^{12} y^2 - 2707355860992 x^{13} y^2 +
1130565206016 x^{14} y^2 - 304427040768 x^{15} y^2 +
41278242816 x^{16} y^2 + 289103904 y^4 - 5905759248 x y^4 +
48750458715 x^2 y^4 - 238306655304 x^3 y^4 + 1040955570732 x^4 y^4 -
3732221426928 x^5 y^4 + 9671916451200 x^6 y^4 -
17913206267904 x^7 y^4 + 24089412325824 x^8 y^4 -
23687011761408 x^9 y^4 + 16982457449472 x^{10} y^4 -
8815705694208 x^{11} y^4 + 3347436616704 x^{12} y^4 -
942421340160 x^{13} y^4 + 154067816448 x^{14} y^4 + 11144688831 y^6 -
82761154920 x y^6 + 539350526268 x^2 y^6 - 2993483501120 x^3 y^6 +
10353776994672 x^4 y^6 - 22032469027008 x^5 y^6 +
30444692917312 x^6 y^6 - 28682276426496 x^7 y^6 +
19309670210304 x^8 y^6 - 10385947063296 x^9 y^6 +
5162451812352 x^{10} y^6 - 2016306597888 x^{11} y^6 +
385024720896 x^{12} y^6 + 10389105792 y^8 - 561650718672 x y^8 +
3482787853200 x^2 y^8 - 9177621332544 x^3 y^8 +
12228895076352 x^4 y^8 - 8069718410496 x^5 y^8 +
3055349272320 x^6 y^8 - 3206575558656 x^7 y^8 +
4143727632384 x^8 y^8 - 2573903278080 x^9 y^8 +
610314227712 x^{10} y^8 + 215802164448 y^{10} - 896116472448 x y^{10} +
412614261120 x^2 y^{10} + 3286274637312 x^3 y^{10} -
6260831013888 x^4 y^{10} + 2994602876928 x^5 y^{10} +
2094082025472 x^6 y^{10} - 2464365441024 x^7 y^{10} +
739237072896 x^8 y^{10} - 186640115968 y^{12} + 1214872899840 x y^{12} -
3115413377280 x^2 y^{12} + 3164174807040 x^3 y^{12} -
223384200192 x^4 y^{12} - 1483045687296 x^5 y^{12} +
631825256448 x^6 y^{12} + 37629709056 y^{14} + 88194438144 x y^{14} -
4616331264 x^2 y^{14} - 621021966336 x^3 y^{14} +
411913506816 x^4 y^{14} - 20777472000 y^{16} - 92565504000 x y^{16} +
170201088000 x^2 y^{16} + 46656000000 y^{18}
\end{dmath*}}

 and its curve of normals is given by 
 
 {\tiny

\begin{dmath*}
0=4 u^2 + 4 u^4 + 24 u^6 + 20 u^8 + 4 u^{10} - 8 u v + 68 u^3 v +
104 u^5 v + 56 u^7 v - 4 u^9 v + 136 u^2 v^2 + 63 u^4 v^2 +
51 u^6 v^2 - 55 u^8 v^2 + 64 u v^3 + 40 u^3 v^3 + 120 u^5 v^3 -
20 u^7 v^3 + 32 v^4 + 276 u^2 v^4 + 312 u^4 v^4 + 220 u^6 v^4 +
336 u v^5 + 336 u^3 v^5 + 336 u^5 v^5 + 144 v^6 + 144 u^2 v^6 +
144 u^4 v^6.
\end{dmath*}}
\smallskip

  \noindent
{\bf X.}
For the trifolium with equation $(x^2+y^2)^2-x^3+3xy^2=0$ we find that the evolute satisfies the equation 

{\tiny
\begin{dmath*}
0=128000x^{10} + 603136x^8y^2 + 1230848x^6y^4 + 1288192x^4y^6 + 
 656384x^2y^8 + 123904y^{10} + 614400x^9 - 110592x^7y^2 - 
 3280896x^5y^4 - 5148672x^3y^6 - 1806336xy^8 + 1175040x^8 - 
 1520640x^6y^2 + 4976640x^4y^4 + 8156160x^2y^6 + 483840y^8 + 
 684288x^7 - 684288x^5y^2 - 3421440x^3y^4 - 2052864xy^6 - 
 1632960x^6 + 14136768x^4y^2 - 17589312x^2y^4 + 482112y^6 - 
 3452544x^5 + 6905088x^3y^2 + 10357632xy^4 - 1527984x^4 - 
 3055968x^2y^2 - 1527984y^4 + 2344464x^3 - 7033392xy^2 + 
 3306744x^2 + 3306744y^2 - 1594323
\end{dmath*}}
\smallskip
  \noindent

 and its curve of normals is given by 
 
 {\tiny
\begin{dmath*}
0=81u^6-108u^5v-540u^4v^2+384u^3v^3+1024u^2v^4-486u^4+216u^3v - 1080u^2v^2-1152uv^3+1024v^4+729u^2+324uv-540v^2
\end{dmath*}}
\smallskip

  \noindent
{\bf XI.}
For the quadrifolium given by the equation $(x^2 + y^2)^3 - 4x^2y^2=0$ we find that the evolute satisfies the equation 

{\tiny
\begin{dmath*}
0=746496x^{14} + 5085504x^{12}y^2 + 14983137x^{10}y^4 + 
 24747363x^8y^6 + 24747363x^6y^8 + 14983137x^4y^{10} + 
 5085504x^2y^{12} + 746496y^{14} - 1741824x^{12} + 6796224x^{10}y^2 + 
 40595580x^8y^4 + 64220040x^6y^6 + 40595580x^4y^8 + 
 6796224x^2y^10 - 1741824y^12 + 2598912x^10 - 8320320x^8y^2 + 
 38078208x^6y^4 + 38078208x^4y^6 - 8320320x^2y^8 + 
 2598912y^10 - 4220928x^8 + 66646080x^6y^2 - 168386688x^4y^4 + 
 66646080x^2y^6 - 4220928y^8 + 4338688x^6 - 19968000x^4y^2 - 
 19968000x^2y^4 + 4338688y^6 - 3228672x^4 + 11126784x^2y^2 - 
 3228672y^4 + 2555904x^2 + 2555904y^2 - 1048576
\end{dmath*}}
\smallskip
  \noindent

 and its curve of normals is given by 

 {\tiny
\begin{dmath*}
0=64u^6v^2 - 432u^4v^4 + 729u^2v^6 - 64u^6 + 336u^4v^2 - 1296u^2v^4 + 729v^6 + 128u^4 + 336u^2v^2 - 432v^4 - 64u^2 + 64v^2 
\end{dmath*}}
\smallskip

  \noindent
{\bf XII.}
For the D\"urer folium given by the equation  $(x^2 + y^2) (2(x^2 + y^2) - 1)^2 - x^2=0$ we find that the evolute satisfies the equation 
\smallskip
  \noindent
  
{\tiny
\begin{dmath*}
0=9000000x^{10} + 45432000x^8y^2 + 91733184x^6y^4 + 
  92607552x^4y^6 + 46743552x^2y^8 + 9437184y^{10} - 330000x^8 - 
  2364960x^6y^2 - 5116176x^4y^4 - 4457472x^2y^6 - 1376256y^8 - 
  31175x^6 - 14424x^4y^2 + 57792x^2y^4 + 40960y^6 + 1149x^4 - 
  11868x^2y^2 + 1536y^4 + 27x^2 - 48y^2 - 1
\end{dmath*}}
\smallskip
  \noindent

 and its curve of normals is given by 
\smallskip
  \noindent 
  
 {\tiny
\begin{dmath*}
0=16u^4v^2 - 432u^2v^4 + 2916v^6 - u^4 + 41u^2v^2 - 540v^4 - u^2 + 25v^2.
\end{dmath*}}
\smallskip
  \noindent
{\bf XV.}
For the ranunculoid we find that the evolute satisfies the equation 

{\tiny
\begin{dmath*}
0=823543x^{12} + 4941258x^{10}y^2 + 12353145x^8y^4 + 
 16470860x^6y^6 + 12353145x^4y^8 + 4941258x^2y^{10} + 
 823543y^{12} - 17647350x^{10} - 88236750x^8y^2 - 176473500x^6y^4 - 
 176473500x^4y^6 - 88236750x^2y^8 - 17647350y^{10} - 4501875x^8 - 
 18007500x^6y^2 - 27011250x^4y^4 - 18007500x^2y^6 - 
 4501875y^8 - 88812500x^6 - 266437500x^4y^2 - 266437500x^2y^4 - 
 88812500y^6 - 7290000000x^5 + 72900000000x^3y^2 - 
 36450000000 xy^4 - 1787109375x^4 - 3574218750x^2y^2 - 
 1787109375y^4 - 36621093750x^2 - 36621093750y^2 - 762939453125
\end{dmath*}}
\smallskip
  \noindent

  and its curve of normals is given by 
  \smallskip
  \noindent
  
 {\tiny
\begin{dmath*}
0=78125u^7-109375u^6v+35000u^4v^3-2800u^2v^5+64v^7-703125u^5-328125u^4v+70000u^2v^3-2800v^5-390625u^3-328125u^2v+35000v^3+390625u-109375v 
\end{dmath*}}
\smallskip
  \noindent
\bigskip

\end{document}